\documentclass[letterpaper, 11pt,  reqno]{amsart}
\usepackage[margin=1.2in,marginparwidth=1.5cm, marginparsep=0.5cm]{geometry}

\usepackage{amsmath,amssymb,amscd,amsthm,amsxtra, esint, xcolor}
\usepackage[implicit=true]{hyperref}

\usepackage[shortlabels]{enumitem}

\allowdisplaybreaks[2]

\usepackage{color}

\definecolor{gr}{rgb}   {0.,   0.69,   0.23 }
\definecolor{bl}{rgb}   {0.,   0.5,   1. }
\definecolor{mg}{rgb}   {0.85,  0.,    0.85}
\definecolor{yl}{rgb}   {0.8,  0.7,   0.}
\definecolor{or}{rgb}  {0.7,0.2,0.2}

\newtheorem{theorem}{Theorem} [section]

\newtheorem{lemma}[theorem]{Lemma}
\newtheorem{proposition}[theorem]{Proposition}
\newtheorem{remark}[theorem]{Remark}

\newtheorem{corollary}[theorem]{Corollary}

\newtheorem{conjecture}[theorem]{Conjecture}

\DeclareMathOperator*{\intt}{\int}

\DeclareMathOperator*{\supp}{supp}
\DeclareMathOperator{\med}{med}

\DeclareMathOperator{\MAX}{MAX}

\newcommand{\I}{\hspace{0.5mm}\text{I}\hspace{0.5mm}}

\newcommand{\II}{\text{I \hspace{-2.8mm} I} }
\newcommand{\III}{\text{I \hspace{-2.9mm} I \hspace{-2.9mm} I}}

\newcommand{\IV}{\text{I \hspace{-2.8mm} V} }

\newcommand{\noi}{\noindent}
\newcommand{\Z}{\mathbb{Z}}
\newcommand{\R}{\mathbb{R}}
\newcommand{\C}{\mathbb{C}}
\newcommand{\T}{\mathbb{T}}

\newcommand{\hi}{\textup{hi}}
\newcommand{\HI}{\textup{HI}}

\let\Im=\undefined\DeclareMathOperator*{\Im}{Im}

\let\P= \undefined
\newcommand{\P}{\mathbf{P}}

\newcommand{\Q}{\mathbf{Q}}

\renewcommand{\L}{\mathcal{L}}

\newcommand{\RR}{\mathcal{R}}

\newcommand{\J}{\mathcal{J}}

\newcommand{\GG}{\mathcal{G}}

\newcommand{\F}{\mathcal{F}}

\newcommand{\be}{\beta}
\newcommand{\dl}{\delta}

\newcommand{\eps}{\varepsilon}
\newcommand{\kk}{\kappa}
\newcommand{\g}{\gamma}
\newcommand{\G}{\Gamma}

\newcommand{\Ld}{\Lambda}
\newcommand{\s}{\sigma}

\newcommand{\ft}{\widehat}
\newcommand{\Ft}{{\mathcal{F}}}
\newcommand{\wt}{\widetilde}
\newcommand{\cj}{\overline}
\newcommand{\dx}{\partial_x}

\newcommand{\dt}{\partial_t}

\newcommand{\embeds}{\hookrightarrow}

\newcommand{\ta}{\theta}

\newcommand{\uu}{\mathbf{u}}
\newcommand{\vv}{\mathbf{v}}
\newcommand{\ww}{\mathbf{w}}
\newcommand{\UU}{\mathbf{U}}
\newcommand{\VV}{\mathbf{V}}
\newcommand{\WW}{\mathbf{W}}

\renewcommand{\l}{\ell}

\newcommand{\les}{\lesssim}
\newcommand{\ges}{\gtrsim}

\newcommand{\jb}[1]
{\langle #1 \rangle}

\renewcommand{\b}{\beta}
\newcommand{\ind}{\mathbf 1}

\renewcommand{\S}{\mathcal{S}}

\newcommand{\M}{\mathcal{M}}

\newcommand{\N}{\mathbb{N}}
\newcommand{\NN}{\mathcal{N}}

\newtheorem*{ackno}{Acknowledgements}

\renewcommand{\H}{\mathcal{H}}

\def\sgn{\textup{sgn}}

\newcommand{\Pbhi}{\mathbf{P}_{\textup{hi}} }
\newcommand{\Pblo}{\mathbf{P}_{\textup{lo}} }
\newcommand{\Pbhip}{\mathbf{P}_{\textup{+,hi}} }
\newcommand{\PbHIp}{\mathbf{P}_{\textup{+,HI}} }
\newcommand{\PbHI}{\mathbf{P}_{\textup{HI}}}
\newcommand{\PbLO}{\mathbf{P}_{\textup{LO}}}

\newcommand{\Id}{\textup{Id}}

\newcommand{\lax}{\mathcal{L}}
\newcommand{\peter}{\mathcal{P}}
\newcommand{\op}{\textup{op}}
\newcommand{\Pih}{\Pi_{+,h}}
\newcommand{\TT}{\mathcal{T}}

\numberwithin{equation}{section}
\numberwithin{theorem}{section}

\makeatletter
\@namedef{subjclassname@2020}{\textup{2020} Mathematics Subject Classification}
\makeatother

\begin{document}
\baselineskip = 14pt

\title[Well-posedness for the periodic INLS]{Well-posedness for the periodic Intermediate nonlinear Schr\"{o}dinger equation}

\author[A.~Chapouto, J.~Forlano, T.~Laurens]
{Andreia Chapouto, Justin Forlano, Thierry Laurens}

\address{Andreia Chapouto,
CNRS, Laboratoire de math\'ematiques de Versailles, UVSQ, Universit\'e Paris-Saclay, CNRS, 45 avenue des \'Etats-Unis, 78035 Versailles Cedex, France, 
and 
School of Mathematics, Monash University, Clayton, VIC 3800, Australia}

\email{andreia.chapouto@monash.edu}

\address{Justin Forlano,
School of Mathematics, Monash University, Clayton, VIC 3800, Australia}

\email{justin.forlano@monash.edu}

\address{Thierry Laurens,
Department of Mathematics, University of Wisconsin--Madison, WI, 53706, USA}
\email{laurens@math.wisc.edu}

\subjclass[2020]{35Q53, 35A02, 76B55}

\keywords{intermediate nonlinear Schr\"{o}dinger equation, Calogero-Moser models, Lax pair, 
 well-posedness, global well-posedness, gauge transform, derivative nonlinear Schr\"{o}dinger equation, Tilbert transform}

\begin{abstract}
We study the well-posedness for the intermediate nonlinear Schr\"{o}dinger equation (INLS) with periodic boundary conditions. Using a gauge transform, we obtain large data local well-posedness in $H^{s}(\T)$ for any $s\geq \frac 12$. We extend this result to global well-posedness under a small $L^2$-norm constraint by exploiting the complete integrability of the continuum Calogero-Moser equation (CCM). 
We also establish additional results such as the unconditional well-posedness in the energy space and the convergence of solutions to INLS to those of CCM in the infinite-depth limit.
\end{abstract}

\maketitle

\tableofcontents

\section{Introduction}

\subsection{The intermediate nonlinear Schr\"odinger equation}

We consider the Cauchy problem for the periodic intermediate nonlinear Schr\"{o}dinger equation~(INLS):
\begin{equation}
\left\{
\begin{aligned}
  & \dt u+i\dx^2 u = \be u (1+i\mathcal{T}_{h})\dx(|u|^2)+i\g |u|^2 u,\\
   & u|_{t=0}=u_0,
\end{aligned}
 \right. \label{INLS}
\end{equation}
where $u:\R\times \T\to \C$, $\T = \R/(2\pi \Z)$, $0<h<\infty$, and $\g,\be\in \R$.
Here, the operator $\mathcal T_h$ is a Fourier multiplier operator with symbol
\begin{align}
\ft{\mathcal{T}_h f}(\xi) = -i\coth(h \xi) \ft f(\xi), \quad \xi \in \Z\setminus\{0\},
\notag
\end{align}
and we define $\ft{\mathcal{T}_h f}(0)=0$.\footnote{We use the notation $\ft{f}$ for the Fourier transform of $f$; see \eqref{Fourier}. }
The sign of $\be$ in \eqref{INLS} determines the defocusing ($\be>0$) or focusing ($\be<0$) variety of this equation.

The equation \eqref{INLS} was proposed in \cite{Pel1, Pel2} as a model for the evolution of quasi-harmonic internal waves in a two-fluid layer system, where the bottom fluid is of a finite depth $h>0$, and with higher order effects accounted for by $\g\neq 0$. 
Additionally, \eqref{INLS} with $\g=0$ and on $\R$ has a rich structure: it is completely integrable \cite{Pel3, CFL}, possesses multi-soliton solutions \cite{Pel1, Tutiya}, and is closely related to two other equations of great interest, as we now discuss. 

Motivated by the observation that $\mathcal{T}_{h}$ formally converges to the Hilbert transform $\H$ as $h\to \infty$, i.e., that $-i\coth(h\xi) \to -i\sgn(\xi)$ as $h\to\infty$ for fixed $\xi\in \Z\setminus\{0\}$,
we rewrite \eqref{INLS} as
\begin{align}
\dt u+i\dx^2 u = \be u (1+i\H)\dx(|u|^2)-i\be u \GG_{h}(|u|^2)+i\g |u|^2 u , \label{INLS20}
\end{align}
where 
\begin{align}
\GG_{h} : = (\H - \mathcal{T}_{h})\dx. \label{Lh}
\end{align}
Now, \eqref{INLS20} admits further rewriting, for which we need to define some frequency projectors. Given a set $A\subseteq \Z$, we write $\ind_{A}$ for the characteristic function of the set $A$. We define $\P_{\pm}$ as the Fourier multiplier with symbol $\ind_{\{ \pm \xi>0\}}$, so that on $\T$, we have
\begin{align}
\P_{+}+\P_{0}+\P_{-}=\text{Id},
\notag
\end{align}
where $\P_{0}$ is the projection onto the zeroth Fourier mode:
\begin{align*}
\P_{0}f =\frac{1}{2\pi} \int_{\T} f(x) dx.
\end{align*}
We define $\P_{\neq 0} : = \text{Id}-\P_{0}$.
With these notations, the Hilbert transform is given by $\H= -i \P_{+}+i\P_{-}$ and thus $(1+i\H)=2\P_{+}+\P_{0}$.
Thus, \eqref{INLS2} becomes:
\begin{align}
\dt u+i\dx^2 u = 2\be u \P_{+}\dx(|u|^2)-i\be u \GG_{h}(|u|^2)+i\g |u|^2 u ,\label{INLS2}
\end{align}
 since $\P_{0}\dx =0$ on $\T$. 
 Now, formally taking the \textit{infinite depth limit} $h\to\infty$ of \eqref{INLS2}, we find 
 \begin{align}
\dt u+i\dx^2 u = 2\be u \P_{+}\dx(|u|^2)+i\g |u|^2 u .\label{CCM1}
\end{align}
It is then natural to define $\GG_{\infty}:=0$ so that we may speak of \eqref{INLS2} for all $0<h\leq \infty$.
In the special case $\g=0$, we obtain the continuum Calogero-Moser equation (CCM)
\begin{align}
\dt u+i\dx^2 u = 2\be u \P_{+}\dx(|u|^2),  \label{CCM}
\end{align}
which is  also known as the Calogero-Sutherland derivative nonlinear Schr\"{o}dinger equation.
  On $\R$, the (focusing) CCM arises as a suitable thermodynamic limit of the $N$-body Calogero-Sutherland-Moser system \cite{Abanov}.

 From the well-posedness viewpoint, scaling considerations identify $L^2$ as a critical space for \eqref{CCM}. This suggests that, within the scope of $L^2$-based Sobolev space $H^s$, it is necessary to have $s\geq 0$ in order for \eqref{CCM} to be well-posed, and we expect the same to hold true for \eqref{INLS2}. 
Some evidence for this is that the solution map for \eqref{INLS2}, if it exists, cannot be $C^3$ at the origin in $H^{s}(\R)$ for any $s<0$ \cite{PdM}.
The investigations of well-posedness of \eqref{CCM} have been accelerated by the observation that it preserves the Hardy space $L^{2}_{+}$, which is defined~as 
 \begin{align}
    L^2_+  
    &=
    \big\{
    f \in L^2 : \ \supp \ft f \subset [0,\infty)
    \big\} .
    \notag
\end{align}
In particular, global well-posedness for large (resp. small) data in the defocusing (resp. focusing) setting has recently been established in the Hardy space $L^2_{+}$ on $\T$ \cite{Rana1} and on $\R$ \cite{KLV2}. This was revisited in \cite{KMarV} on both geometries using the method of commuting flows; see \cite{PdM, PMP, BdMS, GL} for prior works.  
The results \cite{Rana1, KLV2} both exploit the complete integrability of \eqref{CCM} through an explicit formula for solutions in the Hardy space, similar to that for the Benjamin-Ono equation (BO) discovered by Gérard~\cite{Gerard}. 
The explicit formulae have proven very useful beyond well-posedness, such as in the study of travelling waves \cite{Rana3}, zero dispersion limits \cite{Rana2}, scattering with non-vanishing background \cite{Chen}, soliton resolution for BO \cite{GGM}, numerical schemes \cite{ABL}, and blow-up solutions \cite{ChenLenz}.
Unfortunately, it is not known if such an explicit formula holds for \eqref{CCM} outside of the Hardy space, and if at all for \eqref{INLS}. 
Thus, in order to study \eqref{INLS}, we pursue an approach which does not rely on complete integrability.

For CCM outside of the Hardy space and 
for \eqref{INLS}, less is understood regarding the low-regularity well-posedness. On $\R$, using dispersive properties of the linear part of \eqref{INLS} and a gauge transform, de Moura-Pilod \cite{PMP} proved local well-posedness for \eqref{INLS} in $H^{s}(\R)$ for~$s>\frac 12$. See also \cite{demoura1, PdM, BdMS}.
Very recently, in \cite{CFL},  
we lowered this regularity condition to $s>\frac 14$. 
In contrast, key techniques employed in the line setting do not extend to the periodic setting, as they are based on smoothing effects (e.g.\ local smoothing and bilinear Strichartz estimates), which are absent on $\T$. 
One can still apply the standard energy method on $\T$, to cover $H^{s}(\T)$ for $s>\frac{3}{2}$; see for instance \cite[Section 5]{PMP} which can be adapted to the circle case. However, going beyond this result requires a different approach, which is the main goal of this work.

Before we move onto discussing our results, we want to point out that for the long-time behaviour of energy class solutions for CCM on $\R$, even outside of the Hardy space, there have been remarkable results regarding soliton resolution \cite{KKK2}, and blow-up solutions and classification of blow-up rates \cite{HoganKowalski, KKK1, JeongKim, JKKK}. In the periodic setting, traveling waves have been studied in \cite{Rana3,ChenPel,Matsuno1,Matsuno7,Matsuno23}.
Another approach to understanding the long-time qualitative behaviour in the periodic case would be to study \eqref{INLS} in statistical equilibrium; namely, to construct strong solutions on the support of the associated invariant Gibbs measure and apply the Poincar\'e recurrence theorem. 
For \eqref{INLS}, the samples of the Gibbs measure would be supported in $H^{s}(\T)\setminus H^{\frac 12}(\T)$ for any $s<\frac 12$ almost surely, which is just beyond the results we can prove here and we leave this interesting question to a future work.

\subsection{Main results}

Our first main result is on the local well-posedness of \eqref{INLS} and its unconditional uniqueness.

\begin{theorem}\label{THM:LWP}
Let $s\geq \frac 12$, $0<h\leq \infty$, and $\be,\g\in\R$. Then, \text{INLS}~\eqref{INLS} is locally well-posed in $H^{s}(\T)$. More precisely, for any $u_0 \in H^{s}(\T)$, there exist $T_{h}=T(\|u_0\|_{H^s},h)>0$ and a unique solution 
\begin{align}
u \in  Z^{s}_{T_{h}}: =C([-T_h,T_h];H^{s}(\T))\cap L^4_{T_h} W^{s,4}_{x} \cap X^{s-1,1}_{T_h} 
\label{uclass}
\end{align}
to \eqref{INLS} in $[-T_h,T_h]$ with the property that
\begin{align}
\wt{v} (t,x) := \Pbhip[ e^{i\be F[u]}u](t,x-2\be \P_{0}(|u_0|^2)t) \in X^{s,\frac 12+}_{T_h} \quad \text{and} \quad w:= \P_{-,\textup{hi}}u \in X^{s,\frac 12+}_{T_h}, \label{gaugeclass}
\end{align} 
where $F$ is the primitive of $\P_{\neq 0}(|u|^2)$; see \eqref{F}, and the spaces $X^{s,b}_{T}$ are defined in Section~\ref{SEC:Xsb}.
Here $T_h>0$ can be chosen uniformly in $1\le h \le \infty$. 
Furthermore, the solution map $u_0 \mapsto u$ is continuous from $H^{s}(\T)$ to $Z^{s}_{T_h}$
and the solution $u$ is unique in the class:\\ 
\noi
\textup{(i)} $u\in L^{\infty}((0,T_h);H^{\frac 12}(\T))\cap L^{4}((0,T_h); W^{\frac 12,4}_{x}(\T))$ with $\wt{v},w\in X^{\frac 12,\frac 12+}_{T_h}$; 
\\
\textup{(ii)} $u\in L^{\infty}((0,T_h);H^{s}(\T)) \cap L^{4}((0,T_h);W^{s,4}_x (\T))$ if $s>\frac{3}{5}$;
\\
\textup{(iii)} $u\in L^{\infty}((0,T_h);H^{s}(\T))$ if $s>\frac{17}{20}$. 
\end{theorem}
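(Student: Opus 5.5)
The plan is a gauge-transform argument in the spirit of Herr's treatment of periodic DNLS and of the periodic Benjamin--Ono work, carried out in Bourgain spaces. The only derivative nonlinearity that cannot be handled directly is the high$\times$low interaction $2\be u\,\P_{+}\dx(|u|^2)$ in which $u$ sits at high frequency. Note that $\GG_h$ is a bounded operator with a symbol uniformly bounded in $1\le h\le\infty$, so that term is a perturbation. We use the real primitive $F$ of $\P_{\neq0}(|u|^2)$ from \eqref{F} and set $v=e^{i\be F}u$. Differentiating in time and inserting \eqref{INLS2}, the term $u\,\dx(|u|^2)$ cancels against $i\be(\dt F)u$ together with the cross term from $\dx^2(e^{i\be F}u)$ (namely $2i\be\dx F\,\dx u$). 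Here $\dt F$ is computed from the equation for $|u|^2$.

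What remains after this cancellation:
\begin{itemize}
\item a $\P_{-}$-type remainder of $\dx(|u|^2)$ multiplying $u$;
\item cubic and quintic terms without derivative loss;
\item the drift $2\be\P_0(|u|^2)\dx u$, which is conserved in time because $\P_0(|u|^2)$ is the mass. This is why we pass to the Galilean shift $\wt v$ in \eqref{gaugeclass}.
\end{itemize}
Only the positive high frequencies $\Pbhip v$ are well behaved after gauging. The negative high frequencies $w=\P_{-,\hi}u$ are already good, since in $u\,\P_+\dx(|u|^2)$ a negative-frequency output of high modulus forces a non-resonant configuration.

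The core step is to derive the coupled system for $(\wt v,w)$, with $u$ recovered via $u=e^{-i\be F}(\wt v\text{ shifted}+w+\text{low})$. We then prove multilinear estimates in $X^{s,\frac12+}_T$ for $s\ge\frac12$, using the periodic $L^4$ Strichartz estimate $X^{0,\frac38}\subset L^4_{t,x}$ and the $X^{s-1,1}$ control of $u$ coming directly from the equation. Nonlinear factors such as $e^{\pm i\be F}$ are handled through the Sobolev algebra property in $H^{s+1}$ for $F$, together with estimates for $\dt F$. The bilinear pieces like $\P_-\dx(\cj{v}\,\ldots)$ require the resonance relation $\tau-\xi^2$: when the derivative falls on a high-frequency term with opposite sign structure, the maximal modulation is at least $|\xi_{\max}\xi_{\min}|$, which offsets one derivative at $s=\frac12$. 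I expect this to be the main obstacle, specifically the endpoint $s=\frac12$. There we must show the quartic/quintic terms produced by the gauge and by undoing it close in $X^{\frac12,\frac12+}$ and in $L^4W^{\frac12,4}$, with no $\epsilon$ of room. The most delicate configuration is high$\times$high$\to$low in $\dx(|u|^2)$.

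With the estimates in hand, we solve the gauged system by contraction for small time, which is large data thanks to $T^{0+}$ gains. We then show the map $(\wt v,w)\mapsto u$ is well defined and continuous; $F$ is determined by $|u|^2=|v|^2$ roughly, which closes the loop. Continuity of the data-to-solution map follows from Lipschitz bounds for the gauged system plus continuity of the gauge in $H^s$. Uniformity in $1\le h\le\infty$ follows since every estimate uses $\GG_h$ only through $\sup_{h\ge1}\|\xi\,\cdot\,(\sgn\xi-\coth h\xi)\|_{\ell^\infty}<\infty$.

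For the unconditional statements, (i) is immediate: any solution in the stated class has gauged variables in $X^{\frac12,\frac12+}$, so the difference estimates apply. For (ii) and (iii) we argue by normal form / differentiation by parts. Using only $L^\infty H^s$ (and $L^4W^{s,4}$), we show that a solution automatically has $\wt v,w$ in a slightly weaker $X$-space, or we directly run a Babin--Ilyin--Titi normal form reduction on the gauged equation. Non-resonant cubic terms are integrated by parts in time to gain $|\xi_{\max}\xi_{\min}|^{-1}$, and the thresholds $\frac35$ and $\frac{17}{20}$ come from summing the resulting boundary and quintic terms with only Sobolev embedding available.
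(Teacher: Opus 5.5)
Your plan fails at the step ``solve the gauged system by contraction \dots\ continuity follows from Lipschitz bounds for the gauged system plus continuity of the gauge.''

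There are two problems. First, the system is not closed in $(\wt v,w)$. Since $v=\Pbhip[e^{i\be F}u]$ is only the positive high-frequency part, $|v|\neq|u|$, so $F$ is not determined by $v$. The correct recovery formula \eqref{recovery} expresses $u$ through $v$, $w$ \emph{and $u$ itself}, via $e^{\pm i\be F[u]}$, $\P_{+,\textup{lo}}(e^{i\be F}u)$, $\P_0(e^{i\be F}u)$ and $\P_-(e^{i\be F}u)$. The resulting $L^\infty_TH^s$ bound on $u$ contains terms with no power of $T$; the paper needs an extra frequency split at a level $M$ to make them small. So short time alone does not give a contraction for large data.

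Second, and more seriously, the Galilean shift that removes $\frac{\be}{\pi}M(u)\dx v$ cannot also be applied to $w$, because it would create the same drift term in the $\wt w$-equation. You are therefore forced to work in two frames, $u$ and $\wt u=\J_u[u]$, which differ by a translation of size $\frac{\be}{\pi}M(u)t$. For two solutions of different masses, $\J_{u_1}[v_2]-\J_{u_2}[v_2]$ is controlled by $|M(u_1)-M(u_2)|$ only at the cost of a full derivative. Hence Lipschitz difference bounds in $X^{s,b}_T$ or $L^\infty_TH^s$ are false.

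Heuristically, take data $a+\epsilon e^{iNx}$ with $a\in\C$ constant and $N\gg1$. The $N$-th mode then rotates, to leading order, with frequency $N^2+2\be|a|^2N$. Changing $|a|^2$ by about $(Nt)^{-1}$ moves the data only by $O((Nt)^{-1})$, but moves the solution at time $t$ by about $\epsilon N^s$ in $H^s$. So no scheme giving Lipschitz dependence across different masses can exist. This matches the paper, which obtains Lipschitz continuity only on fixed-mass subspaces.

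The missing idea is a way to compare solutions of different masses without Lipschitz bounds. The paper splits the mass-dependent difference into two pieces:
\begin{itemize}
\item $\Pi_{\le\Ld}$, which is Lipschitz at the cost of a positive power of $\Ld$;
\item $\Pi_{>\Ld}$, controlled by an equicontinuity estimate for $\Pi_{>\Ld}\wt v$ and $\Pi_{>\Ld}w$ in $X^{s,\frac12+}_T$ (Proposition~\ref{PROP:tight}).
\end{itemize}
That equicontinuity estimate rests on the sign identity $\Pi_{>\Ld}\P_+[\wt v\,\P_-\dx(|\wt u|^2)]=\Pi_{>\Ld}\P_+[\Pi_{>\Ld}\wt v\cdot\P_-\dx(|\wt u|^2)]$, together with nonlinear smoothing for the cubic terms (Lemma~\ref{LEM:tightv}). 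Solutions are then obtained as limits of smooth solutions using bootstrap a priori bounds: the data difference is sent to zero first and $\Ld\to\infty$ afterwards. This yields continuity of the flow, not Lipschitz dependence.

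Several secondary points:
\begin{itemize}
\item You misplace the $s=\frac12$ difficulty. The trilinear estimates (Proposition~\ref{PROP:tri}) already hold at $s=\frac12$. The real obstruction is the term $\P_{\ll M}[e^{-i\be F_1}-e^{-i\be F_2}]\,\P_{\gtrsim M}v_1$, which would make $M$, and hence $T$, depend on the profile of $u_0$ rather than its norm. Lemma~\ref{LEM:expdiffM} removes this using the fundamental theorem of calculus in time.
\item Controlling $F$ through an $H^{s+1}$ algebra property is unavailable at $s=\frac12$, since $|u|^2\notin H^{\frac12}$ in general. This is why Lemmas~\ref{LEM:expdyad}--\ref{LEM:L2prod} are needed.
\item For (ii)--(iii) you assert the thresholds without deriving them. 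In the paper, $\frac35$ is exactly the condition $1-s<\frac{3s-1}{2}$. It makes the smoothing $w\in X^{\frac{3s-1}{2}-,\frac12+}_T$ of Lemma~\ref{LEM:wreg} compatible with the range $1-s<\s\le s$ of Lemma~\ref{LEM:wunique}. The value $\frac{17}{20}=\frac35+\frac14$ then comes from $H^{s+\frac14}\subset W^{s,4}$.
\item Mass conservation for rough solutions (cf.\ \eqref{MuN}) must be proved before you know that $\wt v$ solves \eqref{veq2b}.
\end{itemize}
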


Theorem~\ref{THM:LWP} establishes the local well-posedness of \eqref{INLS2} in $H^{s}(\T)$ for any $s\geq \frac 12$, which is more than a whole derivative of improvement compared to the previous best result of $s>\frac 32$~\cite{PMP}. 
In proving Theorem~\ref{THM:LWP}, we employ Fourier analytic arguments using a frequency-localized gauge transform which weakens the problematic $\text{low}\times\text{high}\to \text{high}$ interactions in the nonlinear term $u \P_{+}\dx(|u|^2)$.  The gauge transform is a modification to the periodic setting of the one used on the line \cite{PMP, CFL}, which itself goes back to the gauge transform introduced by Tao~\cite{TAO04} for BO. 
The overall structure of the proof follows that of Molinet-Pilod~\cite{MP} in their study of BO in $L^2$, involving a bootstrap argument and the Fourier restriction norm spaces of Bourgain \cite{BO93}. These spaces are well-adapted to the periodic setting allowing us to gain additional smoothing through multilinear dispersion, which is the key for the main trilinear estimates in Proposition~\ref{PROP:tri}. 
However, we encounter some new fundamental difficulties, absent in both \cite{MP} and in the setting of the line $\R$ from our previous work \cite{CFL}. We postpone this discussion to Subsection~\ref{SEC:intro-method}.

We point out that Theorem~\ref{THM:LWP}~(iii) states  that uniqueness holds \textit{unconditionally} in $L^{\infty}_{T_h}H^s(\T)$ for any $s>\frac{17}{20}$; namely, 
without intersecting with any auxiliary function space.
The concept of unconditional uniqueness goes back to Kato~\cite{Kato} and it allows to compare $L^{\infty}_{T}H^{s}(\T)$-solutions irrespective of how they were constructed. In comparison, the uniqueness statements in Theorem~\ref{THM:LWP}~(i)-(ii), when $\frac 12 \leq s \leq \frac{17}{20}$, involve more restrictive classes. The unconditional uniqueness issue has been studied by many authors in the past decades with various methods developed. See for instance \cite{Zhou, GuoKO, CH, K19} for a sampling of results.

We do not expect the range $s>\frac{17}{20}$ to be sharp for unconditional uniqueness. Indeed, the ideal range occurs whenever $u\in L^{\infty}_{T}H^{s}(\T)$ is a distributional solution to \eqref{INLS}, then the nonlinearity is well-defined as a distribution in space-time. For \eqref{INLS}, or more specifically, the rewritten version \eqref{INLS2}, we need to make sense of $u\P_{+}\dx(|u|^2)$ and $|u|^2 u$. The former is clearly more restrictive and requires $s>\frac 12$, which is a consequence of the Moser estimate:
\begin{align}
\| f g\|_{H^{s}} \leq \|f\|_{L^{\infty}}\|g\|_{H^{s}} + \|f\|_{H^s} \|g\|_{L^{\infty}} \quad \text{for} \quad s\geq 0.
\label{moser}
\end{align}
See Remark \ref{RMK:UUDNLS} below on the endpoint $s=\frac 12$. 
Remarkably, the situation is vastly improved for the CCM equation \eqref{CCM} in the Hardy space. Indeed, we have the estimate 
\begin{align}
\| (\P_+ f_1) \P_{+}\dx ( \cj{f_2} f_3)\|_{H^{-3}} \les  \|f_1\|_{L^2} \|f_2\|_{L^2} \|f_3\|_{L^2}, 
\label{UU+}
\end{align}
which suggests that unconditional uniqueness of solutions to \eqref{CCM} in $L^\infty_T L^{2}_{+}(\T)$ may hold. This would be especially interesting as it corresponds to the scaling critical regularity. 

\begin{conjecture}\label{CONJ:UU}
The defocusing CCM equation \eqref{CCM} is unconditionally globally well-posed in $L^{2}_{+}(\T)$ and $L^{2}_{+}(\R)$.
\end{conjecture}

The unconditional uniqueness result in Theorem~\ref{THM:LWP} also applies when restricted to CCM in the Hardy space and presents the first progress towards Conjecture~\ref{CONJ:UU}.

Our interest in our range for unconditional uniqueness in Theorem~\ref{THM:LWP} is that it covers the energy space at $s=1$. Indeed, the equation \eqref{INLS} has the conservation law:
\begin{align}
E(u) & = \int_{\T}
\big\{ 
|\dx u|^2 - \be |u|^2 \text{Im}(u\dx \cj{u}) + \tfrac{\be}{2} |u|^2 \mathcal{T}_h \dx (|u|^2)  
+ \tfrac\g2 |u|^4 + \tfrac{\be^2}{3} |u|^6
\Big\} dx
.
 \label{Energy} 
\end{align}
Although the second term in \eqref{Energy} is not sign-definite, we can still use $E(u)$ to obtain a priori bounds in $H^1(\T)$ for solutions to \eqref{INLS}, which combined with Theorem~\ref{THM:LWP} lead to the following result.

\begin{corollary}\label{COR:gwp}
Let $\g \in \R$, $r>0$, and $B_{r}(0)=\{ f\in L^2(\T) \, :\, \|f\|_{L^2(\T)} \leq r\}$.
Then, the defocusing \eqref{INLS} is unconditionally globally well-posed in $H^{1}(\T)$, and the focusing \eqref{INLS} is unconditionally globally well-posed in $H^{1}(\T)\cap B_{r}(0)$
for some $r>0$ sufficiently small.
\end{corollary}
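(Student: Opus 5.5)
The plan is to combine the local theory of Theorem~\ref{THM:LWP} at $s=1$ with an a priori bound on $\|u(t)\|_{H^1}$ obtained from conservation of mass $M(u)=\int_\T |u|^2dx$ and of the energy $E(u)$ in \eqref{Energy}. Unconditional uniqueness in $L^\infty_T H^1$ comes directly from Theorem~\ref{THM:LWP}~(iii), since $1>\frac{17}{20}$. Continuity of the flow also comes from Theorem~\ref{THM:LWP}. Because the existence time there depends only on $\|u_0\|_{H^1}$ and $h$, a uniform-in-time bound on $\|u(t)\|_{H^1}$ lets us iterate the local result and obtain global existence.

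First I justify that $M$ and $E$ are conserved for $H^1$ solutions. For smooth data (say $H^3$, covered by the energy method for $s>\frac32$) one computes directly: $\frac{d}{dt}E=0$ follows from the Hamiltonian structure, using that $\mathcal T_h\dx$ is self-adjoint. For $u_0\in H^1$ one approximates by smooth data. Continuity of the solution map $H^1\to C([-T,T];H^1)$ from Theorem~\ref{THM:LWP}, together with continuity of $E$ on $H^1$, then transfers the conservation laws. Continuity of $E$ on $H^1$ is checked term by term:
\begin{itemize}
\item $\mathcal T_h\dx$ has symbol $\xi\coth(h\xi)$, of size $|\xi|+h^{-1}$;
\item hence $\int |u|^2\mathcal T_h\dx|u|^2 \les \||u|^2\|_{H^{1/2}}^2+h^{-1}\|u\|_{L^4}^4$;
\item the remaining terms are controlled by Sobolev embedding.
\end{itemize}

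Next comes the coercivity estimate, which is the main step. The symbol $\xi\coth(h\xi)\ge0$, so in the defocusing case $\be>0$ the term $\frac{\be}{2}\int|u|^2\mathcal T_h\dx|u|^2\ge0$, and $\frac{\be^2}{3}\int|u|^6\ge0$. For the non-sign-definite term I use the cubic bound
\[
\Big|\be\int|u|^2\,\text{Im}(u\dx\cj u)\Big|\le |\be|\,\|u\|_{L^6}^3\|\dx u\|_{L^2}\le \tfrac12\|\dx u\|_{L^2}^2+\tfrac{\be^2}{2}\|u\|_{L^6}^6 .
\]
This absorbs the derivative, but it leaves $\frac{\be^2}{2}\|u\|_{L^6}^6$ against only $\frac{\be^2}{3}\|u\|_{L^6}^6$, so the coefficients do not close naively. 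Instead I control $\|u\|_{L^6}^6$ by Gagliardo--Nirenberg on $\T$:
\[
\|u\|_{L^6}^6\les \|u\|_{L^2}^4\|\dx u\|_{L^2}^2+\|u\|_{L^2}^6 .
\]
The $\gamma$ term satisfies $\|u\|_{L^4}^4\les \|u\|_{L^2}^3\|u\|_{H^1}$, and this is absorbed for any mass, with the constant depending on $M$. When the mass is small, these bounds give
\[
\tfrac14\|\dx u\|_{L^2}^2\le E(u)+C(M(u)),
\]
which settles the focusing case for $\|u_0\|_{L^2}\le r$, since the $L^2$ norm is conserved.

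For the defocusing case with large mass I would exploit the positive terms better by completing a square. The combination
\[
|\dx u|^2-\be|u|^2\text{Im}(u\dx\cj u)+\tfrac{\be^2}{4}|u|^6
\]
equals $|\dx u - i\tfrac{\be}{2}|u|^2u|^2$ up to sign conventions. After this, the residual terms $\frac{\be^2}{12}|u|^6$ and $\frac{\be}{2}|u|^2\mathcal T_h\dx|u|^2$ are nonnegative. This yields a bound on $\|\dx u-i\frac\be2|u|^2u\|_{L^2}$ and on $\|u\|_{L^6}$ in terms of $E+C(M)$, and therefore a bound on $\|\dx u\|_{L^2}$.

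The step I expect to be the main obstacle is getting the sign in the completed square to match the sign of $\be$ and of the cross term. If it does not match directly, I would first apply the gauge $u\mapsto e^{ic\int^x|u|^2}u$ to cancel the cross term, and then use the positivity of $\mathcal T_h\dx$ to recover the $H^1$ bound.
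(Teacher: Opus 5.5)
Your overall architecture matches the paper's proof:
\begin{itemize}
\item local theory at $s=1$;
\item unconditional uniqueness from Theorem~\ref{THM:LWP}~(iii);
\item transfer of the conservation of $M$ and $E$ to $H^1$ data by approximation;
\item a time-uniform $H^1$ bound, followed by iteration.
\end{itemize}
Your defocusing argument is correct. The completed square is the paper's device in another form: the paper applies Young in \eqref{energy-bad} with $\frac32<\eps_0<2$, which closes precisely because $\frac{\be^2}{3}>\frac{\be^2}{4}$. So your remark that the coefficients ``do not close naively'' only reflects the choice $\eps_0=1$. The obstacle you anticipate does not arise: for every real $\be$ one has, pointwise,
\[
|\dx u|^2-\be|u|^2\Im(u\dx\cj u)+\tfrac{\be^2}{4}|u|^6=\big|\dx u+\tfrac{i\be}{2}|u|^2u\big|^2 ,
\]
so no gauge transform is needed.

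The genuine gap is in the focusing case: you never estimate $\frac{\be}{2}\int_\T|u|^2\TT_h\dx(|u|^2)\,dx$. Positivity of $\TT_h\dx$ helps only when $\be>0$. For $\be<0$ this term enters the right-hand side of \eqref{E0} as $+\frac{|\be|}{2}\int_\T|u|^2\TT_h\dx(|u|^2)\,dx\geq 0$, and it is of the same order as $\|u\|_{L^2}^2\|\dx u\|_{L^2}^2$. Since the identity above also holds for $\be<0$, this is in fact the \emph{only} term that forces the small-mass hypothesis. Your Gagliardo--Nirenberg control of $\|u\|_{L^6}^6$ spends the smallness on a term that does not need it, and leaves the real obstruction untouched.

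The missing step is the paper's estimate. By H\"older, $L^{\frac32}$-boundedness of $\TT_h$, and Gagliardo--Nirenberg,
\[
\int_\T|u|^2\TT_h\dx(|u|^2)\,dx\le C_h\|u\|_{L^6}^3\|\dx u\|_{L^2}\les C_h\big(\|u\|_{L^2}^2\|\dx u\|_{L^2}^2+\|u\|_{L^2}^3\|\dx u\|_{L^2}\big).
\]
Alternatively, your own symbol bound $\xi\coth(h\xi)\le|\xi|+h^{-1}$ does the same job, combined with
\[
\||u|^2\|_{\dot H^{1/2}}^2\le\||u|^2\|_{L^2}\|\dx(|u|^2)\|_{L^2}\les\|u\|_{L^2}^2\|u\|_{H^1}^2 .
\]
Either way, the term is absorbed into $\|\dx u\|_{L^2}^2$ once $M(u_0)$ is small, and the argument closes.
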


We point out the bounds we obtain to prove Corollary~\ref{COR:gwp} do not depend upon the sign of $\g$ (and only on the sign of $\be$). This seems to be the first time that this has been observed in the literature \cite{demoura1, BdMS}, where $\g\geq 0$ was previously imposed. 
The small $L^2$-restriction in the focusing case has very recently been shown to be necessary for the focusing CCM \cite{ChenLenz}, as otherwise, blowup can occur. 

The next natural question we consider is the globalization of solutions \textit{below} the energy space. It appears again that there is a dichotomy in the behaviour between even CCM \eqref{CCM} in and outside of the Hardy space. Let us consider the defocusing case $(\be >0)$. Using the complete integrability of CCM in $L^{2}_{+}(\T)$, Baddredine~\cite{Rana1}  obtained a priori bounds for solutions in $H^{s}(\T)$ for any $0\leq s\leq 1$. Outside of the Hardy space, however, it appears that the conservation laws lose their efficacy. For example, the momentum
\begin{align}
P(u) = \int_{\T} \text{Im}(\cj u \dx {u}) + \tfrac{\be}{2}|u|^4 dx
\label{momentum-intro}
\end{align} 

\noi
is conserved for \eqref{CCM} (and also for solutions to \eqref{INLS} with $\g\in \R$) but alas the first term in \eqref{momentum-intro} is not coercive. However, in the Hardy space, it can be written as
\begin{align*}
 \int_{\T} \text{Im}(\cj u \dx {u}) dx = \int_{\T} | |\dx|^{\frac 12}u|^2 dx
\end{align*}
which is now coercive and, consequently,  
$P(u)$ gives control over the $\dot{H}^{\frac 12}(\T)$ norm of $u$.
Despite the apparent lack of coercivity, we showed in \cite{CFL} that INLS \eqref{INLS} on $\R$ with $\g=0$ has a Lax pair \emph{outside} of the Hardy space which can be used to obtain a priori bounds in $H^{s}(\R)$ for any $s\geq \frac 14$, under a small mass constraint (even in the defocusing case). 

Returning to our current setting on $\T$, the straightforward approach of mimicking the line case, namely, taking the same pair of operators $( \lax_{u;h}, \peter_{u;h})$, modulo trivial redefinition of $\TT_h$, and showing that they form a Lax pair on $\T$ \emph{fails}. 
In fact, $( \lax_{u;h}, \peter_{u;h})$ fail to be a Lax pair due to the presence of zeroth Fourier modes on $\T$; see Proposition~\ref{PROP:Laxpair}.
This failure stems from the Cotlar identity \eqref{Tilb22} for $\mathcal{T}_h$ on $\T$ being slightly different to that on $\R$, requiring mean-zero assumptions on the functions. 
Note that the mean $\P_{0}u$ is not preserved for solutions to \eqref{INLS}, so we cannot restrict to mean zero functions to avoid this issue.

Nonetheless, we are able to obtain low-regularity a priori bounds for solutions to \eqref{INLS} with $\g=0$ by using \textit{only} the complete integrability of the CCM equation \eqref{CCM}. 
This allows us to extend Corollary~\ref{COR:gwp} as follows. 

 \begin{theorem}\label{THM:GWP}
 Let $\frac 12\leq s<1$, $0<h \leq \infty$, $\be\in\R$, and $\g=0$. Then, there exists $r>0$ such that INLS~\eqref{INLS} is globally well-posed in $H^{s}(\T)\cap B_{r}(0)$.
\end{theorem}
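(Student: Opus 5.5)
Since Theorem~\ref{THM:LWP} gives local well-posedness in $H^{s}(\T)$, $s\geq\frac12$, with existence time depending only on $\|u_0\|_{H^s}$ and $h$, global well-posedness will follow from an a priori bound: for $u_0\in H^s(\T)\cap B_r(0)$ and every $T>0$,
\begin{align*}
\sup_{|t|\le T}\|u(t)\|_{H^s}\le C\big(\|u_0\|_{H^s},h,T\big).
\end{align*}
A constant independent of $T$ would be best, but finite growth suffices. The solution is then extended by iterating the local theory. The mass $\|u\|_{L^2}^2$ is conserved for \eqref{INLS} with $\g=0$, so the constraint $\|u(t)\|_{L^2}\le r$ persists for all times. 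By continuity of the flow map and persistence of regularity, I may assume $u_0$ is smooth and prove the bound there.

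The plan is to use the integrable structure of CCM only. I will write $u=\P_+u+\P_0u+\P_-u$ and treat the problem as CCM plus a perturbation. The key ingredient is the family of conserved quantities of \eqref{CCM} in the Hardy space from \cite{Rana1}. These come from a Lax operator $\lax_u=-i\dx-\be\,\P_+ u\,\P_+\cj u$ (or its analogue) acting on $L^2_+$. For data of small mass they give coercive control of $\|\P_+u\|_{H^s}$ via quantities such as $\langle(\lax_u+\lambda)^{2s}\cdot,\cdot\rangle$, or generating functions $\langle(\lax_u+\lambda)^{-1}\P_+u,\P_+u\rangle$, with $0\le s\le 1$.

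\begin{enumerate}
\item I construct the $H^s$-level quantity $\mathcal{E}_s(u)$ from the CCM Lax operator built out of $\P_+u$ alone, or from a symmetrized version also capturing $\P_-u$ by conjugation symmetry $u\mapsto \cj u(-x)$. Small $L^2$ norm will be used to show $\mathcal{E}_s(u)\simeq\|u\|_{H^s}^2$. Here $\|\be\,\P_+u\P_+\cj u\|$ is a perturbation of $-i\dx$ by Gagliardo--Nirenberg/Hardy-space estimates, which is where $r$ small enters, even in the defocusing case.
\item I compute $\frac{d}{dt}\mathcal{E}_s(u(t))$ along \eqref{INLS2}. The CCM part cancels by the Lax pair structure. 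The remainder comes from two sources: the term $-i\be u\GG_h(|u|^2)$, where $\GG_h=(\H-\TT_h)\dx$ is smoothing with symbol $\xi(\coth(h\xi)-\sgn\xi)$, bounded by $h^{-1}$ and exponentially decaying; and the interactions between $\P_\pm$ parts that break Hardy invariance, including zero-mode contributions.
\item I bound the remainder by $C(r,h)\,(1+\mathcal{E}_s)$ using Moser \eqref{moser}, the smoothing of $\GG_h$, and the $L^4_tW^{s,4}_x$ control from Theorem~\ref{THM:LWP} on each local interval. Gronwall then gives exponential-in-time bounds.
\end{enumerate}

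The main obstacle is step 2, the cross terms between $\P_+u$ and $\P_-u$ in $u\P_+\dx(|u|^2)$. They contain a derivative and so are not obviously of lower order. My first attempt will be to exploit the structure $\P_+\dx(\P_+u\,\cj{\P_-u})$, which forces high-frequency outputs to come from high-frequency $\P_+u$. This should let the derivative fall effectively on the product at a cost recoverable at $s\ge\frac12$ via Sobolev embedding and $L^2$-smallness. If a direct energy estimate fails, I would fall back on estimating $\|\P_+u\|_{H^s}$ by CCM conservation laws applied to the gauged variable $\wt v$ from \eqref{gaugeclass}, together with $w=\P_{-,\textup{hi}}u$ via the Duhamel bounds of the local theory.
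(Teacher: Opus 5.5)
There is a genuine gap, in Steps 1--3. A Lax operator built from $\P_+u$ on $L^2_+$ gives a conserved quantity only when $\P_+u$ itself solves \eqref{CCM}. That fails as soon as $\P_-u$ or $\P_0u$ is nonzero. So in $\frac{d}{dt}\mathcal{E}_s$ nothing involving $\P_-u$ cancels, and you are left with cubic cross terms such as $\P_+[\P_-u\,\P_+\dx(|u|^2)]$ and $\P_+u\,\dx(\P_+u\,\cj{\P_-u})$, each carrying a full derivative across all frequency configurations. You give no mechanism for bounding these by $C(r,h)(1+\mathcal{E}_s)$, and Moser/Sobolev estimates cannot do it. At $\frac12\le s<1$ they lose derivatives; plain energy methods only reach $s>\frac32$, which is why the local theory needs the gauge transform and $X^{s,b}$ multilinear smoothing.

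The local $L^4_tW^{s,4}_x$ bounds do not rescue Gronwall either. They hold only on intervals whose length and size depend on $\|u\|_{H^s}$, so you get a superlinear differential inequality. Moreover, $\mathcal{E}_s$ does not see $\P_-u$ at all. The map $u\mapsto\cj u(-x)$ has Fourier coefficients $\cj{\ft u(\xi)}$, so it preserves Fourier support and cannot symmetrize. Your fallback fails for the same reason: $\wt v$ solves \eqref{veq2b}, not CCM.

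The missing idea is that CCM has an exact Lax pair on all of $L^2(\T)$, not just on the Hardy space. With $\Pi_+=\P_++\P_0$, set
\[
\lax_{u;\infty}=-i\dx+\be u\Pi_+\cj u,\qquad \peter_{u;\infty}=-i\dx^2+2\be u\dx\Pi_+\cj u .
\]
Every CCM solution satisfies both $\frac{d}{dt}\lax_{u;\infty}=[\peter_{u;\infty},\lax_{u;\infty}]$ and $\dt u=\peter_{u;\infty}u$. Hence the entire CCM nonlinearity, including all $\P_\pm$ and zero-mode interactions, cancels in $\frac{d}{dt}\jb{u,(\lax_{u;\infty}^2+\kk^2)^s u}$.

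Since $\lax_{u;\infty}$ is not positive outside $L^2_+$, one works with its square. For $\kk\ge C\max(\|u\|_{L^2}^2,\|u\|_{H^{1/4}}^4)$ one has the form bounds $\frac12(\lax_0^2+\kk^2)\le\lax_{u;\infty}^2+\kk^2\le\frac32(\lax_0^2+\kk^2)$. These bounds plus Loewner's theorem make the quantity comparable to $\|u\|_{H^s}^2$.

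Now write \eqref{INLS} with $\g=0$ as $\dt u=\peter_{u;\infty}u-i\be u\GG_h(|u|^2)$. The only error terms involve $\GG_h(|u|^2)$ and $[\Pi_+,\GG_h(|u|^2)]$, which are smoothing with bounds depending only on $\|u\|_{L^2}$. This gives $\frac{d}{dt}F\le K(\|u\|_{L^2})F$, with $K$ constant in time by mass conservation.

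Finally, run a bootstrap on unit time intervals, choosing $\kk$ from the $H^s$-norm at the start of each interval. Small $r$ is used to absorb the $\kk^{2s}\|u\|_{L^2}^2$ term. This yields the a priori bound needed to globalize the local solutions.
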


Similar to our previous work on $\R$ \cite{CFL}, we need to impose a small $L^{2}$-norm assumption even in the defocusing case to run a bootstrap argument; see Theorem~\ref{THM:apriori}. This seems to go back to the $L^2$-criticality of \eqref{INLS}. We currently do not know if this can be removed in the defocusing case.
Additionally, we have to make the stronger regularity assumption $s\geq \frac 12$ (compared to $s>\frac14$ on $\R$), which we now explain.
Our argument is inspired by the strategy for obtaining a priori bounds for perturbations of completely integrable equations in \cite{CFLOP-2}, where the authors obtained a priori bounds for dispersive perturbations of the BO equation. See also \cite{GassotL, IL}. The main idea there is to define some quantity which is conserved for the unperturbed equation and is equivalent to a fractional Sobolev norm $H^{s}(\T)$, and then use a Gronwall argument to handle the errors introduced due to the lack of a precise conservation. 
In \cite{CFLOP-2}, the error terms are of the form $\mathcal{G}_{h} u$ and are thus \emph{smooth} and \emph{linear} in $u$. 
In our case of \eqref{INLS2} with $\g=0$, the error term is $-i\be u \mathcal{G}_{h}(|u|^2)$, which is neither smooth nor linear! In order to keep the multiplicity of $u$-terms in check for the Gronwall argument, we still rely on the smoothing of $\mathcal{G}_h$ to send more terms to the conserved $L^2$ space. This also explains the restriction to the $\g=0$ setting: a lack of smoothing means that we cannot control the multiplicity any longer for the cubic term $i\g |u|^2 u$.

The conserved quantity we consider is motivated by observations and arguments in \cite{KLV2}, ultimately going back to the fact that there is a Lax pair $(\lax_{u;\infty}, \peter_{u;\infty})$ (see Proposition~\ref{PROP:Laxpair}) for \eqref{CCM} such that \eqref{CCM} is equivalently written as $\dt u  =\mathcal{P}_{u;\infty}u$.
This structure combined with Loewner's theorem makes the quantities $\jb{u, (\L_{u;\infty} +1)^{s} u}$ particularly convenient for establishing low-regularity a priori bounds in $H^s(\T)\cap L^2_+(\T)$. 
Outside of the Hardy space, $\L_{u;\infty}$ is no longer positive definite, so we replace this by $\jb{u, (\L^2_{u;\infty} +1)^{\frac{s}{2}} u}$. The ensuing Gronwall argument becomes much more involved than in \cite{CFLOP-2}.  Ultimately, these reasons are the source of the regularity restriction $s\geq \frac 12$.

Lastly, our final main result concerns the infinite depth limit $h\to \infty$ of solutions to \eqref{INLS} to those of \eqref{CCM1}. For simplicity, we state the result when $\g=0$ and globally-in-time. The local-in-time convergence as $h\to \infty$ for any $\g\in \R$ is a consequence of the estimates we proved in the course of establishing Theorem~\ref{THM:LWP}. See Section~\ref{SEC:conv}.

  \begin{theorem}\label{THM:convergence}
Fix $s\geq \frac 12$, $\be\in \R$, $\g=0$, and $r>0$ as in Theorem~\ref{THM:GWP}. Let $u_0\in H^{s}(\T)\cap B_{r}(0)$ and $\{u_{0,h}\}_{1 \le h <\infty} \subset H^s(\T) \cap B_r(0)$ be a net such that $u_{0,h} \to u_0$ in $H^s(\T)$ as $h\to\infty$. Also, let $u_{\infty}$ and $u_h$ denote the global solutions to \eqref{CCM} and \eqref{INLS}, respectively, with $u_{\infty}\vert_{t=0}=u_0$ and $u_{h} \vert_{t=0} = u_{0,h}$ constructed in Theorem~\ref{THM:GWP}. Then, $u_{h}$ converges to $u_{\infty}$ as $h\to \infty$ in $C(\R;H^{s}(\T))$.\footnote{We endow $C(\R;H^{s}(\T))$ with the compact-open topology in time.}
In particular, when $s>\frac{17}{20}$, the convergence occurs unconditionally: the uniqueness of both the CCM \eqref{CCM} and INLS \eqref{INLS} solutions holds in the entire class $C(\R;H^{s}(\T))$.
\end{theorem}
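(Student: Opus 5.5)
The plan is to combine three ingredients: local-in-time convergence from the local theory, uniform-in-$h$ a priori bounds from Theorem~\ref{THM:GWP}, and an iteration on time intervals of uniform length.

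\emph{Step 1: uniform bounds.} The a priori bound behind Theorem~\ref{THM:GWP} (Theorem~\ref{THM:apriori}) should hold uniformly in $h \geq 1$. The perturbation $-i\be u\GG_h(|u|^2)$ is controlled by the smoothing of $\GG_h$, and the symbol of $\GG_h$ is bounded uniformly for $h\ge 1$; indeed it decays like $|\xi|e^{-2h|\xi|}$. Hence for every $T>0$ there is $R(T)$ with
\[
\sup_{1\le h\le\infty}\sup_{|t|\le T}\|u_h(t)\|_{H^s}\le R(T)
\]
for data in $H^s\cap B_r(0)$ whose $H^s$ norm is bounded. The net $u_{0,h}$ is eventually bounded in $H^s$, and the $L^2$ norm is conserved for \eqref{INLS}. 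So $u_h(t)$ stays in $B_r(0)$ with $H^s$ norm at most $R(T)$ on $[-T,T]$.

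\emph{Step 2: local convergence.} Fix $T>0$. By Theorem~\ref{THM:LWP}, the local time $T_h = T(R(T),h)$ can be taken uniform over $1\le h\le\infty$; call it $\delta>0$. The solutions are built by the bootstrap/gauge argument in $Z^s_\delta$ together with the $X^{s,\frac12+}$ control of $\wt v$ and $w$. I would redo the difference estimates from the proof of Theorem~\ref{THM:LWP}, comparing $u_h$ with $u_\infty$ instead of two solutions of the same equation. The only new term is the forcing
\[
-i\be\, u_h\,\GG_h(|u_h|^2).
\]
Its contribution is bounded by
\[
\|\GG_h\|_{H^{s}\to H^{s}}\, C(R)\les h^{-1}C(R).
\]
This uses $\sup_\xi|\xi|\,|\coth(h\xi)-\sgn\xi|\les h^{-1}$, which follows from $|\xi|(\coth(h|\xi|)-1)\les h^{-1}e^{-h}$ for $|\xi|\ge1$. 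The gauge also differs only through $F[u_h]$ versus $F[u_\infty]$, so it is covered by the same difference estimates.

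The main obstacle is continuity with respect to data combined with $h$. Lipschitz difference bounds for the gauged equation are only available at a lower regularity, such as $H^{s-1}$ or $L^2$-type norms, because of the quasilinear structure. I would handle this with the frequency-envelope / Bona--Smith argument already used to prove continuity of the flow in Theorem~\ref{THM:LWP}. Regularize the data by $P_{\le N}$. The smooth solutions converge at rate $O(h^{-1}+\|u_{0,h}-u_0\|)$ in the weak norm, and their higher norms are bounded by $N^{\theta}$. The high-frequency tails are uniformly small in $h$ by the envelope estimates. This gives
\[
\|u_h-u_\infty\|_{C([-\delta,\delta];H^s)}\to0 .
\]

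\emph{Step 3: iteration and unconditional statement.} Because $\delta$ depends only on $R(T)$, cover $[-T,T]$ by finitely many intervals of length $\delta$. On each interval, restart Step 2 using the convergence $u_h(t_k)\to u_\infty(t_k)$ in $H^s$ obtained from the previous interval. This yields convergence in $C([-T,T];H^s)$ for every $T$, which is convergence in the compact-open topology. For $s>\frac{17}{20}$, Theorem~\ref{THM:LWP}(iii), applied uniformly on each subinterval, shows that any $C(\R;H^s)$ solution coincides with the constructed one. Hence the convergence holds for arbitrary solutions in that class.
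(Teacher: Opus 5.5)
Your Steps 1 and 3, and the first paragraph of Step 2, follow the paper. That part consists of uniform-in-$h$ bounds, a local time uniform in $1\le h\le\infty$ (Remark~\ref{REM:unifLh}), difference estimates redone for the pair $(u_h,u_\infty)$ with the extra source $-i\be u_h\GG_h(|u_h|^2)$ costing $O(\|\GG_h\|_{\op})$, and iteration in time.

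\textbf{The gap is the second paragraph of Step~2.} That is where convergence in $H^s$ is actually decided. Your Bona--Smith argument needs a Lipschitz bound for $u_h-u_\infty$ in a weaker norm, uniform in $h$, which you attribute to the proof of Theorem~\ref{THM:LWP}. That proof contains no such bound: existence and continuity there come from \eqref{sdiff2}--\eqref{sdiff3}, which are estimates at the regularity $H^{\frac12}$ and $H^s$. The weak-norm bound is also not routine, for two reasons.
\begin{enumerate}
\item Controlling $\wt v_h-\wt v_\infty$ in $X^{\sigma,b}_T$ with $\sigma<\frac12$ needs Proposition~\ref{PROP:tri} with the difference in the $v$-slot at regularity $\sigma$. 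That proof uses $s\ge\frac12$: the multiplier in \eqref{X2mult} becomes $N^{\sigma-1}N_{\max}^{\frac12-\sigma}$, which is unbounded as $N_{\max}\to\infty$ with $N$ fixed.
\item Since $M(u_{0,h})\ne M(u_0)$ in general, the translations $\J_{u_h}$ and $\J_{u_\infty}$ differ. That difference is Lipschitz only after losing a full derivative, which at $s=\frac12$ lands below the critical space $L^2$.
\end{enumerate}
So the claimed weak-norm rate $O(h^{-1}+\|u_{0,h}-u_0\|)$ is unsupported, and the Bona--Smith step built on it does not go through as written.

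Your diagnosis of the obstacle is also off, and the repair is simpler. Lemma~\ref{LEM:sdiff} is already at the level $H^{\frac12}$ (and $H^s$). It fails to be Lipschitz only through the mass mismatch in \eqref{VVtd}. That mismatch splits into two pieces:
\begin{itemize}
\item a $\Pi_{\le\Ld}$ part, Lipschitz at the cost of the factor $T^{\frac58}\Ld^{\frac54}$;
\item a tail $\|\Pi_{>\Ld}\wt v_1\|_{X^{s,b}_T}+\|\Pi_{>\Ld}w_1\|_{X^{s,b}_T}$, which Proposition~\ref{PROP:tight} bounds uniformly in $h\ge1$.
\end{itemize}
At $s=\frac12$ the $\P_{\ll M}$ exponential-difference term is handled by Lemma~\ref{LEM:expdiffM}. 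Run this with $(u_1,u_2)=(u_h,u_\infty)$. The only new terms are $\mathcal{A}_5$, $\mathcal{B}_3$, and the $u_h\GG_h(|u_h|^2)$ contributions to the $\mathcal{X}^{\frac12}_T$ bound and to the low-frequency Duhamel part. Each is bounded by a constant depending only on the uniform bounds times $\|\GG_h\|_{\op}$. This gives
\[
\|(U_h,\wt U_h,\wt V_h,W_h)\|_{R^{\frac12}(T)}\les C(K,\Ld,T)\,\|U_h(0)\|_{H^{\frac12}}+C(K)\big[\Ld^{-\ta_3}+\|\Pi_{>\Ld}u_\infty(0)\|_{H^{\frac12}}\big]+\|\GG_h\|_{\op},
\]
together with its analogue in $R^s(T)$. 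Let $h\to\infty$ first, using $\|\GG_h\|_{L^p\to L^p}\to0$ and $U_h(0)\to0$, then let $\Ld\to\infty$. No regularization of the data is needed.

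One minor point: for $s\ge1$, Theorem~\ref{THM:apriori} does not apply directly. In Step 1, get the uniform $H^s$ bound from the uniform $H^{\frac12}$ bound plus persistence of regularity as in \eqref{Hsscale}. The local time there depends only on the $H^{\frac12}$ norm and is uniform in $h\ge1$.
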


The notion of unconditional convergence property stated in Theorem~\ref{THM:convergence} was first introduced in \cite{FLZ}.\footnote{In the context of the intermediate long wave equation converging to BO in the deep-water limit. See \cite{Gli, CLOP}.} The significance of this is that the convergence happens irrespective of how the approximations $\{u_{h}\}_{1\leq h<\infty}$ or the limit $u_{\infty}$ were constructed.

\subsection{On the proof of Theorem~\ref{THM:LWP}}\label{SEC:intro-method}

The proof of local well-posedness in Theorem~\ref{THM:LWP} follows the strategy in \cite{Moli1, Moli2, MP, CFL}. 
However, we encounter some new difficulties, absent in both \cite{MP} and from our previous work in the line setting \cite{CFL}. Moreover, these difficulties are entirely due to working \textit{outside} of the Hardy space and do not appear for CCM in the Hardy space. Combined with our previous work \cite{CFL}, this again highlights the difference in behaviour of CCM in and outside of the Hardy space.

Firstly, given a solution $u$ to \eqref{INLS},
we work with the gauged variables $(\wt v, w)$ defined in \eqref{gaugeclass}, instead of the pair $(v,w)$ where $v(t,x) :=\Pbhip[ e^{i\be F[u]}u](t,x)$, used on the line \cite{CFL}. 
The pair $(\wt v, w)$ is more suitable for the periodic study, as we clarify in the following. 
Examining the equations satisfied by $(v,w)$  (see \eqref{veq}-\eqref{weq}), 
 we note that there are additional terms on the right-hand side when compared to the Euclidean setting \cite[(3.14)-(3.15)]{CFL}, due to $F$ being the mean-zero primitive of $|u|^2$; see \eqref{F}. 
 Some of these are harmless, but 
 the last linear derivative term in \eqref{veq} for $v$, $ 2 \be \P_{0}(|u|^2) \dx v$, creates an obstruction for the well-posedness, as its linearity (in $v$) means that we cannot absorb the derivative loss. However,  the~mass
\begin{align}
M(u) : = 2 \pi \P_{0}(|u|^2)  = \int_\T |u|^2 dx 
\label{mass}
\end{align}
is conserved for \eqref{INLS}. Consequently, it is possible to remove this bad term through the use of the gauge transform
\begin{align}\label{TT}
\wt{v}(t,x): =\J_u [v] (t,x) : = v  \big( t,  x - \tfrac\be\pi M(u) t \big),
\end{align}
which explains the presence of the term $\wt{v}$ in \eqref{gaugeclass}, as a replacement for $v$. As the gauged equations \eqref{veq2b} and \eqref{weq} for $(\wt v, w)$ are not closed, we are forced to work with the variable quartet $(u, \wt u, \wt v, w)$, where $\wt u := \J_u[u]$. Although it may seem more natural to work with $(\wt u, \wt v, \wt w)$, this is also not suitable for \eqref{INLS} on the torus, as we explain below.

In the simpler setting of \eqref{INLS} with $\g=0$ and only positive frequencies, namely CCM \eqref{CCM} in the Hardy space, we have $w \equiv 0$.
 It is then possible to apply $\J_u$ at a ``global level": namely, given a smooth solution $u$ to \eqref{CCM}, it suffices to run the local well-posedness argument for~$(\wt{u},\wt{v})$. Here, our arguments yield that the solution map $u_0 \mapsto \wt{u}$ is locally Lipschitz continuous in $Z^{s}_{T}$; see \eqref{uclass}. 
It is then the inversion of $\J_u$ that causes the solution map $u_0 \mapsto u$ to be merely continuous.

In the more general case of INLS \eqref{INLS}, or \eqref{CCM} outside of the Hardy space, 
$w$  no longer vanishes. 
One could consider using the gauge $\J_u$ at a ``global level'' and work with the trio $(\wt u , \wt v, \wt w)$ instead. Unfortunately, while the change of variable $v \mapsto \wt v$ removes the term $ 2 \be \P_{0}(|u|^2) \dx v$ from the $v$-equation, the change of variable $w \mapsto \wt w$ \emph{introduces} an analogous dangerous term in the $\wt w$-equation!
Hence, the variable \emph{quartet} $(u,\wt{u}, \wt{v}, w)$ is the appropriate choice for our analysis.

The variable $\wt u$ plays only an auxiliary role in showing the relevant a priori bounds, as our final estimates are only in terms of $(u, \wt v, w)$; see Subsection~\ref{SEC:apriori}. Also, some estimates for $u$, such as Lemma~\ref{LEM:uinfo}, rely on the recovery formula \eqref{PbHIu} which involves $(v,w)$, not $(\wt v, w)$, but this is not problematic as the the $L^{\infty}_{T}H^{s}_x$- and $L^{4}_{T}W^{s,4}_x$-norms remain invariant under $\J_u$. However, the inability to completely remove $\wt u$ from the analysis creates a new difficulty in estimating the \emph{difference} of solutions.

When considering the difference of solutions, we now need to invert the gauge $\J_u$ at the ``local level", namely, within our construction of low-regularity solutions, and not afterwards. Thus, our difference estimates in Lemma~\ref{LEM:diffests} become more involved.
Indeed, given two solutions $u_1,u_2$ to \eqref{INLS}, we have
\begin{align*}
\wt v_1 -\wt v_2  = \J_{u_1}[v_1] - \J_{u_2}[v_2] =    \J_{u_1}[v_1-v_2] +  \J_{u_1}[v_2] - \J_{u_2}[v_2] , 
\end{align*}
where the two final terms only differ in the spatial translation parameters depending on the potentially different masses; recall \eqref{TT}. 
In order to see a 
genuine difference $u_1-u_2$ to run the bootstrap argument later on (Section~\ref{SEC:diffs}), we would need to lose derivatives to obtain local Lipschitz continuity of the map $u\mapsto \J_{u}[v]$.\footnote{Recall that on the spatial Fourier side, $\mathcal{F}_{x}\{\J_{u}[v]\}(t,\xi) = e^{i\xi \tfrac{\be}{\pi}M(u)t} \ft v(t,\xi)$.}
This is untenable. 
Instead, we proceed via frequency truncation, writing 
\begin{align*}
 \J_{u_1}[v_2] - \J_{u_2}[v_2] = \Pi_{\leq \Ld} \big(  \J_{u_1}[v_2] - \J_{u_2}[v_2] \big) + \Pi_{>\Ld} \big( \J_{u_1}[v_2] -  \J_{u_2}[v_2] \big),
\end{align*}
where $\Pi_{\leq \Ld}$ is a Fourier projector to frequencies $\{|\xi|\leq \Ld\}$, $\Ld\in \N$, and $\Pi_{>\Ld}= \Id - \Pi_{\leq \Ld}$.
For the low-frequency part, we obtain a difference estimate due to the Lipschitz continuity of $\J_u$, at the cost of 
losing powers of $t\Ld$.  
For the high-frequency part, 
we dispense with the need for a genuine difference by proving an equicontinuity-type result for $\Pi_{>\Ld}\wt{v}$ in the relevant Fourier restriction norms $X^{s,\frac 12+}_{T}$; see Proposition~\ref{PROP:tight}. 

\smallskip

To show this equicontinuity result (Proposition~\ref{PROP:tight}), one would hope to exhibit some nonlinear smoothing effect in order to gain negative powers of $\Ld$. However, this is not possible for the first and last contributions in \eqref{veq2b}. 
Remarkably,  due to the sign structure of the main trilinear term $\Pi_{>\Ld}\P_{+}[\wt{v}\, \P_{-}\dx(|\wt{u}|^2)]$,  the exact same operator $\Pi_{>\Ld}$ can be added to the $\wt{v}$ factor for free, and the main trilinear estimate \eqref{vX1} then applies to give a small power of $T$ and a factor of $\| \Pi_{>\Ld}\wt{v}\|_{X^{s,\frac 12+}_{T}}$, which can be ``hidden'' on the right-hand side of the estimate. The same applies to the last term in \eqref{veq2b}, as it is linear in $\wt v$.  This circumvents any need for nonlinear smoothing for these terms.

For the remaining cubic terms in \eqref{veq2b}, we either observe \emph{nonlinear smoothing} away from their resonances or recover factors of both $\Pi_{>\Ld} \wt v$ and $T$; see Lemma~\ref{LEM:tightv}. Although, nonlinear smoothing effects for the periodic cubic Schr\"odinger equation are well-known \cite[Section 4.1]{ErdoganTzirakis}, they do not apply here, since the regularity assumptions for $(u, \wt u)$ are unconventional. 
Indeed, from \eqref{Xsbu}, we only know that $u, \wt u \in X^{s-\frac 12-,\frac 12}_{T}$ rather than $u\in X^{s,\frac 12}_{T}$, consituting a loss of $\frac 12$ of a derivative. 
Interestingly, our setting is more akin to that encountered in the area of quasi-invariance of Gaussian measures under the flow of Hamiltonian PDE \cite{Tz}, where one needs energy estimates for a $H^{s}$-functional but with input functions only of regularity $s-\frac 12-$; see for instance the case of one dimensional periodic cubic NLS in \cite{FS}.

Combining the arguments above, for sufficiently small $T>0$, we obtain differences estimates depending only on the $H^\frac12$-norms of the initial data $u_1(0), u_2(0)$: 
\begin{align}
\| \wt v_1 -\wt v_2 \|_{X^{\frac 12,\frac 12+}_{T}} +\| w_1-w_2\|_{X^{\frac 12,\frac 12+}_{T}}\leq C(T, \Ld) \|u_1 (0)-u_2(0)\|_{H^{\frac 12}} +\Ld^{-\ta}, 
\notag
\end{align}
together with similar estimates for the relevant norms of $(u_j, \wt u_j)$, $j=1,2$. By first taking the limit as $u_1 (0) \to u_2(0)$, followed by the limit $\Ld\to \infty$, we can construct solutions and obtain the continuity of the solution map. 
Note that we only recover local Lipschitz continuity on the closed subspace of $H^{s}(\T)$ comprising functions of a fixed mass.\footnote{Similar to the case of the periodic derivative nonlinear Schrodinger equation in \cite{Herr}.} This is in contrast to the line setting, where the solution map is locally Lipschitz continuous for any $s>\frac 14$ \cite{PMP, CFL} and, in fact, smooth when $s>\frac 12$ and the initial data is sufficiently small \cite{PMP}. This latter result relies heavily on dispersive smoothing effects on $\R$ and is thus not applicable on~$\T$.

Lastly, since \eqref{INLS} is $L^2$-critical, it is not enough to restrict to the case of small data, via a scaling argument. We thus need to gain factors of $T$ to ensure smallness in the bootstrap estimates. 
In the estimates for $(u_1-u_2)$ and $(\wt u_1 - \wt u_2)$ in $L^{\infty}_{T} H^s(\T)$, there is no obvious gain of $T$. As in \cite{GLM}, we decompose into high and low frequencies depending on a parameter $M\gg 1$. For the high-frequency contribution, we use the recovery formula \eqref{recovery}. However, for the ensuing difference term $\P_{\ges M}[(e^{-i\be F[u_1]}-e^{-i\be F[u_2]})v_1]$, the worst contribution is 
\begin{align}
\P_{\ll M}\big[e^{-i\be F[u_1]}-e^{-i\be F[u_2]}\big] \P_{\ges M} v_1 \label{Msmallness}
\end{align}
since, naively, we only obtain smallness from a factor  $\|\P_{\ges M}u_1(0)\|_{H^{\frac 12}}$ (after inserting the Duhamel formula for $v_1$). Now, if $u_1(0)\in H^s$ for $s>\frac 12$, then 
by Bernstein's inequality, we may choose $M$ large enough depending only on $ \|u_1(0)\|_{H^s}$ to make this contribution small (see \eqref{Mdata}), and then choose $T\sim M^{-a}$ to make other terms small. Unfortunately, when $s=\frac 12$, the dependency of $M$, and hence $T$, on $u_1(0)$ is no longer explicit. While we can still close the local well-posedness argument, it becomes unclear how to globalise solutions when the local time $T$ depends on the profile of the data and not just its norm.
The fix we devise is to keep the projection $\P_{\ll M}$ 
on the first term in \eqref{Msmallness} and exploit the fact that if this term was evaluated at time $t=0$, no smallness would be required to handle it. By the fundamental theorem of calculus, it turns out that the remaining part can be estimated in $L^{\infty}_{T}H^{\frac 12}(\T)$ with a gain of a power of $T$; see Lemma~\ref{LEM:expdiffM}. Upon further inspection, this issue at the endpoint $s=\frac 12$ also appears in \cite{GLM} for the modified BO equation in the energy space. We expect that our approach here may also be effective in \cite{GLM}.

 We conclude this introduction with a number of remarks.

 \begin{remark}\rm
As \eqref{INLS} is critical in $L^2(\T)$, the question of local well-posedness in $H^{s}(\T)$ for $0\leq s <\frac 12$ remains open. An inspection of our trilinear estimates in Proposition~\ref{PROP:tri} shows that the worst case is the nearly-resonant one $|\xi|\sim |\xi_3|$. In the periodic setting, it is crucial to obtain smoothing through multilinear dispersive effects, captured by the phase function
\begin{align*}
\Phi (\xi,\xi_1,\xi_2,\xi_3) = \xi^2 - \xi_1^2 +\xi_2^2 -\xi_3^2= 2(\xi-\xi_3)(\xi_3-\xi_2),
\end{align*}
where the second equality holds under the convolution condition $\xi=\xi_1-\xi_2+\xi_3$. In particular, 
the precise interaction $\xi=\xi_3$ is fatal since $\Phi= 0$. On the line, in \cite{CFL}, we were able to overcome these nearly-resonant interactions by gaining additional smoothing from bilinear Strichartz estimates, which unfortunately, do not hold true on $\T$. It is also not clear to us if these resonant contributions in the nonlinear term $\P_{+}[ \wt v \, \P_{-}\dx(|\wt v|^2)]$ could be gauged away as they do not seem to correspond to conservation laws for \eqref{INLS}.

For \eqref{INLS} with $\g=0$ on $\T$, another potential approach would be to explore the completely integrable structure developed in \cite{Pel3}.  In fact, it may suffice to leverage these features only for $h=\infty$.
Recently, the third author with L. Gassot \cite{GL} proved well-posedness for dispersive perturbations of BO  below $L^{2}(\T)$. The first step in this direction would be to construct Birkhoff coordinates for \eqref{CCM} on $\T$, analogous to the BO setting, which are not yet known, even when restricted to the Hardy space.
  \end{remark}

\begin{remark}\rm \label{RMK:UUDNLS}

Recently, Kishimoto \cite{KishimotoDNLS} proved unconditional uniqueness of the derivative nonlinear Schr\"odinger equation (DNLS)
\begin{align}
\dt u + i\dx^2 u = 2\dx(|u|^2 u) \label{DNLS}
\end{align}
 in $L^{\infty}_{T}H^{\frac 12} (\T)$.
 In view of the nonlinearity in \eqref{DNLS}, we see that unconditional uniqueness makes sense whenever $u(t)\in L^3(\T)$, which is implied by $u(t) \in H^{\frac 16}(\T)$. However, passing to the gauged DNLS variable $v: = e^{-i\dx^{-1}\P_{\neq 0}(|u|^2)} u$, which satisfies 
 \begin{align}\notag
\dt v + i\dx^2 v = v^{2} \dx \cj{v} + \text{lower order terms} 
,
\end{align}
  the roughest nonlinear term is $v^2 \dx \cj{v}$. Using \eqref{moser} and Sobolev embedding, we see that if $u\in L^{\infty}_{T}H^{s} (\T)$, $s>\frac 12$, then this nonlinear term makes sense as a spatial distribution for each fixed time. When $s=\frac 12$, Kishimoto used refined Strichartz estimates to show that if  $u\in L^{\infty}_{T}H^{\frac 12} (\T)$ solves \eqref{DNLS} in the sense of distributions, then $u\in L^{4}_{T}L^{\infty}(\T)$ and hence $v\in L^{4}_{T}L^{\infty}(\T)$. By then integrating in time in the estimate \eqref{moser}, one can make sense of~$v^2 \dx \cj{v}$. 

To apply this approach to \eqref{INLS}, we need to prove refined Strichartz estimates. A key step here is to establish the estimate
\begin{align}
\| \P_{N}[ u \P_{+}\dx(|u|^2)] \|_{H^{-\frac 12-\eps}_x} \leq C_{N} \|u\|_{H^{\frac 12}_x}^{3} \label{UUest}
\end{align}
for some $\eps>0$,  where the constant $C_{N}$ may depend on $N\in \N$.  Unfortunately, the estimate \eqref{UUest} is false.\footnote{For example, one can take $u$ with $\ft{u}(\xi) = \jb{\xi}^{-1} (\log \jb{\xi})^{-\frac 23}$.}
The corresponding estimate for \eqref{DNLS} is 
\begin{align}
\| \P_{N}\dx( |u|^2 u) \|_{H^{-\frac 12-\eps}_x} \leq C_{N} \|u\|_{H^{\frac 12}_x}^{3} , \label{UUest2}
\end{align}
which does hold true. The main difference between \eqref{UUest} and \eqref{UUest2} is that the nonlinearity for \eqref{INLS} cannot be written in divergence form as in \eqref{DNLS}. Moreover, \eqref{UUest} differs from \eqref{UU+} due to the lack of an extra $\P_{+}$ projection on the left-hand side.
\end{remark}

\begin{remark}\rm 
In the recent paper \cite{KMarV}, the authors investigate the Hamiltonian structure of CCM \eqref{CCM} in the Hardy space on both $\R$ and $\T$. They propose that, based on this structure, the correct model to study on $L^{2}_{+}(\T)$ is that of the gauged variable $\wt u$, namely \eqref{ueq2} (when $h=\infty$ and $\g=0$). We point out that our Theorem~\ref{THM:LWP} also applies to \eqref{ueq2}.
At a technical level, we also note that simply starting with \eqref{ueq2} instead of \eqref{INLS} does not solve the difficulties that we encountered in the proof of Theorem~\ref{THM:LWP}, since it is not possible to control the corresponding variable $\wt w=\P_{-,\text{hi}} \wt u$. One would need to consider $w = \mathcal{J}_{u}^{-1}[\wt w]$ which leads to essentially the same quartet of variables as discussed earlier.
\end{remark}

\subsection{Organization of the paper}

The remaining of the paper is organised as follows.
In Section~\ref{SEC:Prelim}, we introduce relevant notation and useful product-type estimates. 
Section~\ref{SEC:Gauge} introduces the gauged variables $v, w$ in \eqref{gauge}, establishes the regularity and difference estimates for $(u,\wt{u})$.  
The crucial trilinear estimates to handle the nonlinearities for $(\wt{v},w)$ are proven in Section~\ref{SEC:trilin}. 
In Section~\ref{SEC:LWP} we present the proof of the local well-posedness part of Theorem~\ref{THM:LWP}. We then prove Theorem~\ref{THM:convergence}, the infinite depth limit $h\to\infty$, in Section~\ref{SEC:conv}. In Section~\ref{SEC:UU}, we complete the proof of Theorem~\ref{THM:LWP} by establishing the unconditional uniqueness result in items (ii) and (iii).
Lastly, in Section~\ref{SEC:GWP}, we consider the global well-posedness results, proving Corollary~\ref{COR:gwp} and Theorem~\ref{THM:GWP}.

\section{Preliminaries}\label{SEC:Prelim}

\subsection{Notation}

We use $A\les B$ to denote $A\leq C B$ for some constant $C>0$, $A\sim B$ when $A \les B$ and $B \les A$, and $A \ll B$ when $A \le \eps B$ for some small $0<\eps<1$. 
Our convention for the Fourier transform on $\M = \T$ or $\M=\R$ is as follows:
\begin{align}
(\F f) (\xi) = \ft f(\xi) = \frac{1}{\sqrt{2\pi}} \int_{\M} f(x) e^{-ix\xi} dx \quad \text{and} \quad f(x) &= \frac{1}{\sqrt{2\pi}} \int_{\ft \M} \ft f(\xi) e^{ix\xi}d\xi
, \label{Fourier}
\end{align}
for $\xi \in \ft{\M}$, where $\ft\M = \Z$ when $\M=\T$ and $\ft\M=\R$ for $\M= \R$. We will write $\F_x$, $\F_t$, and $\F_{t,x}$ to denote the Fourier transform with respect to the spatial variable, the time variable, or both, respectively, unless it is clear from context.  
We then see that
\begin{align}
\P_{0}f =  \frac{1}{\sqrt{2\pi} } \ft f(0) = \frac{1}{2\pi } \int_{0}^{2\pi } f(x)dx, \label{P0}
\end{align}
and $\P_{\neq 0}= \Id -\P_{0}$.
We use the following convention for the inner product on $L^2(\T)$:
\begin{align*}
    \jb{f,g} = \int_{\T} \cj{f(x)}g(x)dx.
\end{align*}

Next, let $\eta:\R\to [0,1]$ be a smooth function  with support in $[-2,2]$ and equal to $1$ on $[-1,1]$. We define $\eta_{N}(\xi)=\eta(\frac{\xi}{N})$ and $\psi_N(\xi)=\eta(\frac{\xi}{N})-\eta(\frac{2\xi}{N})$. 
For $N\in 2^{\Z}$, we denote 
 the Littlewood-Paley projectors by
 \begin{align*}
  \F \,(\P_{\leq N} f)(\xi)  = \eta(\tfrac\xi{N})\ft f(\xi), \quad  \F( \P_N f) = \F (\P_{\leq N} f ) -\F  (\P_{\leq \frac{N}{2}} f) =\eta_N \ft f,
 \end{align*}
 and $\P_{>N}:= \Id-\P_{\leq N}$. We then set
\begin{align}
\begin{split}
 \F\,( \P_{+} f)(\xi) = \ind_{\xi> 0} \ft f(\xi), &\quad   \F\, ( \P_{-} f)(\xi)  = \ind_{\xi< 0} \ft f(\xi),\\
  \Pblo = \P_{\le 1}, \quad \PbLO =\P_{\le 10}, \quad  \Pbhi &=\Id -\Pblo, \quad \Pbhip= \P_{+} \Pbhi, \quad \PbHI = \P_{>10}.
  \end{split} 
  \notag
\end{align}
We also define two types of frequency projections for functions on $\R\times \T$ with Fourier variables  $(\tau,\xi)\in \R\times \Z$. Given $K\in 2^{\N}$, we set 
\begin{align}
\begin{split}
\mathcal{F}_{t,x}\{ \Q_{\ll K}u\}(\tau,\xi) &= \eta_{10^{-10}K}(\tau - \xi^2)\ft u(\tau,\xi),\\
 \mathcal{F}_{t,x}\{ \Q_{\ges K}u\}(\tau,\xi) &=(1- \eta_{10^{-10}K})(\tau - \xi^2)\ft u(\tau,\xi).
 \end{split} \label{Qproj}
\end{align}
We will use $\Pi_{>\Ld}$ to denote the sharp frequency projection with multiplier $\ind_{\{ |\xi|>\Ld\}}$ for $\Ld\in \N$. Similarly, we define $\Pi_{\leq \Ld}$ with multiplier $\ind_{\{ |\xi|\leq \Ld\}}$. 

For $1< p<\infty$, we define the dyadic $L^p$-spaces via the norm
\begin{align}
\| f\|_{ \wt{L^p_{t,x}}} = \bigg( \sum_{N\in 2^{\N}} \| \P_{N}f\|_{L^p_{t,x}}^{2} \bigg)^{\frac 12}.
\notag
\end{align}
By the Littlewood-Payley square function theorem and Minkowski's inequality, it holds that 
\begin{align*}
\|f\|_{L^{p}_{t,x}} \les \|f\|_{\wt{L^{p}_{t,x}}}
\end{align*}
for any $2\leq p<\infty$. We similarly define the spaces $\wt{L^p_{x}}$. Given $T>0$ and a space $X$, we also write $L^p_{T} X = L^p([0,T]; X)$.

\subsection{Product estimates}
In this subsection, we collect some auxiliary product estimates. For a proof of the following lemma, see \cite{CW,GO,BL} on $\R$ and 
\cite{IK2,BOZ} for $\T$.

\begin{lemma}[Fractional Leibniz rule]
\label{LEM:leib}
Let $s>0$ and $1<p_j,q_j,r\leq \infty$
such that $\frac1r=\frac{1}{p_j}+\frac{1}{q_j}$, $j=1,2$. 
Then, we have
\begin{align*}
  \| J^s(fg)\|_{L^r_x} \les \|J^s f\|_{L^{p_1}_x} \|g\|_{L^{q_1}_x}  + \|f\|_{L^{p_2}_x} \| J^s  g\|_{L^{q_2}_x}.
\end{align*}
\end{lemma}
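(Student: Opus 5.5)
We argue via a Littlewood--Paley decomposition and paraproducts, with the usual reductions for the torus. First, observe that it suffices to treat the case where $f$ and $g$ are trigonometric polynomials, by density (when some exponent is $\infty$ we approximate in the weak-$*$ sense and use lower semicontinuity of the left-hand side). Split
\begin{align*}
fg = \sum_{N} \P_{N} f\, \P_{\le N/8} g + \sum_{N} \P_{\le N/8} f\, \P_{N} g + \sum_{N\sim M} \P_N f\, \P_M g =: \Pi_1 + \Pi_2 + \Pi_3,
\end{align*}
where the sums run over dyadic $N\in 2^{\N}\cup\{1\}$, interpreting $\P_1$ as $\P_{\le 1}$. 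We then estimate $\|J^s \Pi_i\|_{L^r}$ separately.

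For the high--low term $\Pi_1$, the $N$-th summand has Fourier support in an annulus $|\xi|\sim N$, so $J^s$ acts on it like $N^s$ times a bounded multiplier. By the Littlewood--Paley square function theorem on $\T$ (valid for $1<r<\infty$),
\begin{align*}
\|J^s\Pi_1\|_{L^r}\les \Big\|\Big(\sum_N N^{2s}|\P_N f|^2\Big)^{1/2} \sup_N|\P_{\le N/8}g|\Big\|_{L^r} \les \|J^s f\|_{L^{p_1}}\|Mg\|_{L^{q_1}},
\end{align*}
using Hölder's inequality, the square function bound for $f$, and domination by the Hardy--Littlewood maximal function $M$ for $g$. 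This gives the first term on the right-hand side when $q_1>1$. The term $\Pi_2$ is symmetric and yields $\|f\|_{L^{p_2}}\|J^s g\|_{L^{q_2}}$. For the high--high term $\Pi_3$, the summand is supported in $|\xi|\les N$, so we cannot use annuli directly. Instead, for $s>0$, we decompose $\P_K \Pi_3 = \sum_{N\ges K}\P_K(\P_N f\,\wt\P_N g)$ and bound
\begin{align*}
K^s|\P_K(\cdots)|\les \sum_{N\ges K}(K/N)^s\, M\big(N^s \P_N f\,\wt\P_N g\big).
\end{align*}
The geometric factor $(K/N)^s$ is summable precisely because $s>0$. We then apply the Fefferman--Stein vector-valued maximal inequality, followed by Cauchy--Schwarz in $N$, which bounds this term by either of the two right-hand terms.

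The main obstacle is the endpoint cases: $r=\infty$ or $r\le 1$ (note $r>1$ is not assumed, only $p_j,q_j>1$), as well as a $q_j=\infty$ exponent paired with $p_j=r$. For $r\in(0,1]$ I would instead use Hardy-space or Peetre maximal function estimates, following \cite{GO}. Since $p_j,q_j>1$ here, the Fefferman--Stein argument still controls the right-hand side, and the left-hand side is handled through the $h^r$ characterization of $L^r$ norms of frequency-localized pieces. For $r=\infty$, i.e.\ all exponents equal to $\infty$, the estimate follows from Bourgain--Li \cite{BL}. Finally, for the transference to $\T$, I would either rerun the argument with periodic Littlewood--Paley theory, which holds verbatim, or invoke the periodic versions in \cite{IK2,BOZ}.
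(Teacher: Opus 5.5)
There is a genuine gap. Your paraproduct argument works in the interior range $1<p_j,q_j,r<\infty$. However, the hypotheses also allow the factor carrying the derivative to lie in $L^\infty$: $p_1=\infty$, $q_1=r\in(1,\infty)$, and symmetrically $q_2=\infty$, $p_2=r$. The paper formally uses the latter pairing in \eqref{rec2term}.

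In that case your treatment of $\Pi_1$ breaks at the step $\|(\sum_N N^{2s}|\P_N f|^2)^{1/2}\|_{L^{p_1}}\les\|J^sf\|_{L^{p_1}}$. The Littlewood--Paley square function maps $L^\infty$ only into $\mathrm{BMO}$, not into $L^\infty$. You also cannot move the derivative onto $g$, since $f$ carries the high frequency.

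You do list ``$q_j=\infty$ paired with $p_j=r$'' as an obstacle. For $j=2$ that is exactly this case; for $j=1$ the bad configuration is the reverse, $p_1=\infty$, $q_1=r$. But your endpoint paragraph then only treats $r\le 1$ and $r=\infty$, so this case is never proved.

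The missing ingredient is the $\mathrm{BMO}\times L^r\to L^r$ bound for paraproducts. After your square-function step, write $N^s\P_Nf=m_N(D)J^sf$ with $m_N(\xi)=N^s\jb{\xi}^{-s}\psi_N(\xi)$. Then use the Carleson-measure (tent space) estimate
\[
\Big\|\Big(\sum_N |m_N(D)b|^2\,|\P_{\le N/8}g|^2\Big)^{1/2}\Big\|_{L^r}\les\|b\|_{\mathrm{BMO}}\|g\|_{L^r}, \qquad b=J^sf\in L^\infty\subset\mathrm{BMO},
\]
or equivalently the Coifman--Meyer bilinear multiplier theorem at the $L^\infty$ endpoint, as in \cite{GO}.

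For $\Pi_3$ this case is harmless if you change the last step. Instead of Cauchy--Schwarz followed by H\"older (which again needs $p_1<\infty$), bound $\sup_N N^s|\P_Nf|\les\|J^sf\|_{L^\infty}$ and put the square function on $g$.

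Two smaller points. First, the statement does assume $r>1$ (``$1<p_j,q_j,r\le\infty$''), so your discussion of $r\le 1$ is unnecessary. Second, for $r=\infty$ on $\T$: Bourgain--Li work on $\R^d$, and $L^\infty$ is exactly where the square-function machinery you use fails. So ``rerunning the argument verbatim with periodic Littlewood--Paley theory'' does not apply, and you must rely on the periodic results.

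For comparison, the paper gives no proof of this lemma. It cites \cite{CW,GO,BL} on $\R$ and \cite{IK2,BOZ} on $\T$. If you fall back on these references for every endpoint case, you are doing what the paper does. Your own sketch then adds a self-contained argument only for the non-endpoint range.
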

\noi

We now give a sequence of product estimates. The complicated nature of these estimates is due to the lack of the Sobolev embedding of $H^{s}(\T)$ into $L^{\infty}(\T)$ when $s=\frac 12$.

\begin{lemma}\label{LEM:dxs1}
Let $f, g: \T \to \C$.
\\
{\rm(i)}
Given $s\geq \frac 12$, 
we have
\begin{align}
\| f \dx g\|_{H^{s-1}_x} \les \big(  \|f\|_{H^{\frac 14}_x} + \| J_x^{\frac 12} f\|_{\wt{L^{\frac 43+}_x}} \big) \|J_x^{s} g\|_{\wt{L^4_x}} + \|f\|_{H^{s-\frac 14}_x}\|J_x^{\frac 12} g\|_{\wt{L^4_x}}.
 \label{prodest}
\end{align}
In particular, for $u_1,u_2,u_3 : \T \to \C$, we have
\begin{align}
 \|  \P_{\pm}(u_1 u_2) \dx u_3\|_{H^{s-1}_x}  
& \les \|u_1\|_{H^{\frac 12}} \|u_2\|_{H^{\frac 12}_x} \|J^{s}_x u_3\|_{\wt{L^4_x}} \notag
 \\
 &
 \quad +  ( \|u_1\|_{H^{\frac 12}_x} \|u_2\|_{H^{s}_x} +  \|u_1\|_{H^{s}_x} \|u_2\|_{H^{\frac 12}_x} ) \| J_x^{\frac 12} u_3\|_{\wt{L^4_x}}. 
\label{prodest1}
\end{align}

\noi{\rm{(ii)}} 
It holds that 
\begin{align}
\| f\dx g\|_{H^{-1}_x} \les \|f\|_{H^{\frac 12}_x} \|g\|_{H^{\frac 12}_x}.
\label{uniqueL2}
\end{align}
Moreover, given $s>\frac 12$, it holds that
\begin{align}
\| f \dx g\|_{H^{s-1}_x} \les \|f\|_{H^{\frac 12+}_x} \|g\|_{H^{s}_x}+\|f\|_{H^{s}_x} \|g\|_{H^{\frac 12+}_x}. \label{prodestalg}
\end{align}
\end{lemma}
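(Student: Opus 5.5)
The plan is a Littlewood--Paley (paraproduct) decomposition of $f\dx g$ into frequency-localized pieces $\P_{N_1} f\,\dx \P_{N_2} g$, with the output localized by $\P_N$. I treat three regimes separately: low--high ($N_1\ll N_2\sim N$), high--low ($N_2\ll N_1\sim N$), and high--high ($N_1\sim N_2\gtrsim N$). In each regime the derivative $\dx$ costs a factor $N_2$. Since $s-1<0$ whenever $s<1$, the weight $N^{s-1}$ can be used to absorb part of that loss. For $s\ge 1$ the weight is favourable anyway.

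For \eqref{prodest}, the regimes go as follows.

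\emph{Low--high.} This piece is $\approx \P_{N}(\P_{\ll N} f\,\dx\P_N g)$ with multiplier weight $N^{s}$. I place $J^s g$ in $\wt{L^4}$ and $f$ in $L^4$, and use $H^{1/4}\hookrightarrow L^4$. By duality with $L^2$ and H\"older this gives $\|f\|_{H^{1/4}}\|J^s g\|_{\wt{L^4}}$; square-summing over $N$ is what produces the dyadic norm $\wt{L^4}$.

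\emph{High--low.} This is $\P_N f\,\dx \P_{\ll N} g$. I put $N^{s-1}\P_N f$ in $L^2$ and bound $\dx\P_{\ll N} g$ by summing $N_2\|\P_{N_2}g\|_{L^\infty}$. Bernstein gives $\|\P_{N_2}g\|_{L^\infty}\les N_2^{1/4}\|\P_{N_2} g\|_{L^4}$, so each term is $N_2^{1/4}\|J^{1/2}\P_{N_2}g\|_{L^4}$ times $N_2^{1/2}$. The resulting $N_2^{3/4}$ is summed against $N^{s-1}$ and transfers $\frac34$ derivatives onto $f$, giving $\|f\|_{H^{s-1/4}}\|J^{1/2}g\|_{\wt{L^4}}$. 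If $N_2\sim N$ the bound is the same with room to spare.

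\emph{High--high.} Here $N_1\sim N_2\gtrsim N$ with weight $N^{s-1}N_2$. Put $g$ in $\wt{L^4}$ with $J^s$, $f$ with $J^{1/2}$ in $L^{4/3+}$, and the output at frequency $N$ in $L^2$. Sobolev/Bernstein on the output costs $N^{1/2-}$ to pass from $L^{1+}$ to $L^2$, giving the factor $N^{s-1}N^{1/2-}N_2\,N_2^{-s-1/2}$. This is summable because $s-\frac12\ge0$ and there are $N\le N_2$ terms, and it yields $\|J^{1/2}f\|_{\wt{L^{4/3+}}}\|J^sg\|_{\wt{L^4}}$. I expect this step to be the main obstacle: the exponent bookkeeping at $s=\frac12$ is exactly borderline, and the $\wt{L^{4/3+}}$ and $\wt{L^4}$ square-sum structure must be used, via Cauchy--Schwarz in $N_1\sim N_2$, to avoid a logarithmic loss.

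For \eqref{prodest1}, I apply \eqref{prodest} with $f=\P_\pm(u_1u_2)$ and $g=u_3$, bounding the $f$-norms by product estimates.
\begin{itemize}
\item $\|u_1u_2\|_{H^{1/4}}\les\|u_1\|_{H^{1/2}}\|u_2\|_{H^{1/2}}$, from Lemma~\ref{LEM:leib} and $H^{1/2}\hookrightarrow L^p$ for $p<\infty$.
\item $\|J^{1/2}(u_1u_2)\|_{\wt{L^{4/3+}}}$ is bounded by $\|J^{1/2}u_1\|_{L^2}\|u_2\|_{L^{4+}}$ plus the symmetric term, by Lemma~\ref{LEM:leib}.
\item $\|u_1u_2\|_{H^{s-1/4}}\les \|u_1\|_{H^s}\|u_2\|_{H^{1/2}}+\|u_1\|_{H^{1/2}}\|u_2\|_{H^{s}}$, using $H^{1/2}\subset L^{p}$ with $p<\infty$ large together with the $\frac14$ derivative of room.
\end{itemize}
$\P_\pm$ is bounded on all of these spaces since $1<p<\infty$.

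For (ii), \eqref{uniqueL2} follows by duality: $\jb{h, f\dx g}$ with $h\in H^1\subset L^\infty$. Moving $\dx$ onto $h f$ gives $\|hf\|_{H^{1/2}}\les\|h\|_{H^1}\|f\|_{H^{1/2}}$ by Lemma~\ref{LEM:leib}, since $H^1$ is an algebra and a multiplier on $H^{1/2}$. Then $H^{1/2}$--$H^{-1/2}$ duality against $\dx g\in H^{-1/2}$ closes the estimate. For \eqref{prodestalg}, I run the same paraproduct, now using $H^{1/2+}\hookrightarrow L^\infty$ together with Bernstein, so every regime sums directly; the high--high regime is where $s-1>-\frac12$ is needed, and it holds here.
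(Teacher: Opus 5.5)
Your proposal is correct, and for \eqref{prodest} it is essentially the paper's proof. It uses the same three frequency regimes and the same exponents: $f\in L^4$ via $H^{\frac14}$ when $N_2\sim N$; $f\in H^{s-\frac14}$ when $N_2\ll N$; and $J^{\frac12}f\in\wt{L^{\frac43+}}$ with Bernstein on the output when $N_1\sim N_2\gtrsim N$. In the $N_2\ll N$ case you put $\dx\P_{\ll N}g$ in $L^\infty$ by Bernstein, where the paper uses H\"older in $L^4\times L^4\times L^2$ against the dual function; this is equivalent.

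In the high--high case, your explanation of the summation needs one correction. At $s=\frac12$ the factor $(N/N_2)^{s-\frac12}$ gives no decay, so summing over $N\le N_2$ alone would cost $\log N_2$. What saves you is the small power $N^{0-}$ produced by the ``$+$'' in $L^{\frac43+}$, not the square-sum structure; the latter is needed only for Cauchy--Schwarz in $N_1\sim N_2$.

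The genuine differences from the paper are in the remaining parts.

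For the $\wt{L^{\frac43+}}$ bound in \eqref{prodest1}, the paper decomposes $\P_N(u_1u_2)$ by hand, whereas you apply Lemma~\ref{LEM:leib} directly. This is shorter, but the lemma only controls $\|J^{\frac12}(u_1u_2)\|_{L^{\frac43+}}$. You therefore also need $\|F\|_{\wt{L^p}}\les\|F\|_{L^p}$ for $1<p\le2$. This holds by the square function theorem and Minkowski's inequality precisely because $p\le 2$; it is the reverse of the inequality the paper records for $p\ge2$.

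For \eqref{uniqueL2}, you bound $\|\cj{f}h\|_{H^{\frac12}}\les\|h\|_{H^1}\|f\|_{H^{\frac12}}$ and pair with $\dx g\in H^{-\frac12}$. The paper instead uses the Fourier-side estimate \eqref{Hminus1a}, which splits into $|\xi_2|\gg|\xi_1|$ (Young with $\jb{\cdot}^{-1}u\in\l^1$) and $|\xi_2|\les|\xi_1|$ ($\jb{\xi}^{-1}\in\l^2$ against $u\ast v\in\l^\infty$). Your route trades that case split for a multiplier bound on $H^{\frac12}$.

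Your paraproduct sketch of \eqref{prodestalg} also works.
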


\begin{proof}
We first prove (i).
By duality and dyadic decomposition, it is enough to consider the quantity
\begin{align}
\sum_{N,N_1, N_2} N^{s-1}N_2 \int_{\T} \P_{N}h \cdot \P_{N_1} f \cdot \P_{N_2}g  dx =:\sum_{N,N_1, N_2} I_{N,N_1,N_2}\label{prodest2}
\end{align}
where $h\in L^2(\T)$ with $\|h\|_{L^2}\le 1$.
We split the summation in \eqref{prodest2} into three pieces. If $N_2 \sim N$, then by Cauchy-Schwarz and Bernstein inequalities, 
\begin{align*}
\bigg| \sum_{N\sim N_2} \sum_{N_1 } I_{N,N_1,N_2} \bigg| 
= \bigg|  \sum_{N\sim N_2} N^{s}_2 \int_{\T} \P_{N}h \cdot f \cdot \P_{N_2}g dx \bigg| 
&  
\les \| f\|_{L^4} \sum_{N\sim N_2} N_2^s  \|\P_N h\|_{L^2} \| \P_{N_2} g\|_{L^4}  
\\
& 
\les \| f\|_{H^{\frac 14}} \|J^s g\|_{\wt{L^4}} . 
\end{align*}
If $N_2 \ll N$, then we have $N \sim N_1$ and hence
\begin{align*}
\sum_{N\sim N_1} \sum_{N_2 \ll N}| I_{N,N_1,N_2}| 
& \les \sum_{N\sim N_1} \sum_{N_2 \ll N } \Big(\frac{N_2}{N} \Big)^{\frac 12} N_2^{\frac 12}\|\P_{N_2}g\|_{L^4} N^{-\frac14} \|\P_{N}h\|_{L^4}  N_1^{s-\frac 14} \|\P_{N_1}f\|_{L^2} \\
& \les \| J^{\frac 12} g\|_{\wt{L^4}} \|h\|_{L^2} \| f\|_{H^{s-\frac 14}}.
\end{align*}
Lastly, when $N_2 \gg N$ so that $N_2 \sim N_1$, we have 
\begin{align*}
\sum_{N_2\sim N_1} \sum_{N \ll N_2 }| I_{N,N_1,N_2}|  & \les \sum_{N_2\sim N_1} \sum_{N \ll N_2 } N_2 N^{s-1}  \| \P_{N}h\|_{L^{\infty-}} \| \P_{N_2}g\|_{L^4}  \|\P_{N_1}f\|_{L^{\frac 43+}} \\
& \les  \sum_{N_2\sim N_1}   \| J^{s} \P_{N_2}g\|_{L^4}  \| J^{\frac 12} \P_{N_1}f\|_{L^{\frac 43+}} \sum_{N \ll N_2} \Big( \frac{N}{N_2}\Big)^{s-\frac 12-} \|\P_{N}h\|_{L^{2}} \\
& \les \| J^{s} g\|_{\wt{L^4}} \|J^{\frac 12} f\|_{\wt{L^{\frac 43 +}}} \|h\|_{L^2}.
\end{align*}
This completes the proof of \eqref{prodest}. 

For \eqref{prodest1}, we apply \eqref{prodest} with $f=\P_{\pm}(u_1 u_2)$ and $g=u_3$. By the fractional Leibniz rule and Sobolev embedding we have
\begin{align*} 
\| u_1 u_2\|_{H^{\frac 14}} &\les \| J^{\frac 14} u_1\|_{L^{2+}} \|u_2\|_{L^{\infty-}} + \|  u_1\|_{L^{\infty-}} \|J^{\frac 14} u_2\|_{L^{2+}}   \les \|u_1\|_{H^{\frac 12}} \|u_2\|_{H^{\frac 12}} ,\\
\| u_1 u_2\|_{H^{s-\frac 14}} & \les \| J^{s-}u_1\|_{L^{2+}} \|u_2\|_{L^{\infty-}}  + \|u_1\|_{L^{\infty-}} \|J^{s-}u_2\|_{L^{2+}}  \\
& \les  \|u_1\|_{H^{\frac 12}} \|u_2\|_{H^{s}} +  \|u_1\|_{H^{s}} \|u_2\|_{H^{\frac 12}}.
\end{align*}
Next, by H\"{o}lder and Bernstein inequalities, we have 
\begin{align*}
\sum_{N} N  \| \P_{\ll N}u_1 \cdot \wt{\P}_{N} u_2\|_{L^{\frac 43+}}^{2} &\les  \bigg(\sum_{N}N \|\wt{\P}_{N}u_2\|_{L^{2}}^2 \bigg)  \| u_1\|_{H^{\frac 14+}}^2 \les \|u_2\|_{H^{\frac 12}}^2 \|u_1\|_{H^{\frac 12}_x}^2, \\
 \sum_{N} N \|  \P_{\ges N} u_1 \cdot \P_{\ges N} u_2]\|_{L^{\frac 43 +}}^2  & \les \sum_{N} N^{-\frac 12+} \|u_1\|_{H^{\frac 12}}^2 \|u_2\|_{H^{\frac 12}}^2 \les  \|u_1\|_{H^{\frac 12}}^2 \|u_2\|_{H^{\frac 12}}^2, 
\end{align*}
where $\wt\P_N$ denotes a wider projection. 
Then, combining these inequalities with the boundedness of $\P_{\pm}$ on $L^p$ for $1<p<\infty$ and
\begin{align*}
\| \P_{N} [ u_1 u_2]\|_{L^{\frac 43 +}}  \leq \|  \P_{\ges N} u_1 \cdot \P_{\ges N} u_2]\|_{L^{\frac 43 +}}  + \| \P_{\ll N}u_1 \cdot \wt{\P}_{N} u_2\|_{L^{\frac 43+}}+ \| \wt{\P}_{N}u_1 \cdot \P_{\ll N} u_2\|_{L^{\frac 43+}},
\end{align*}
we obtain 
\begin{align*}
\| J^{\frac 12} \P_{\pm}(u_1 u_2)\|_{\wt{L^{\frac 43+}}} \les   \|u_1\|_{H^{\frac 12}} \|u_2\|_{H^{\frac 12}}.
\end{align*}
This completes the proof of \eqref{prodest1}.

To show (ii), note that \eqref{uniqueL2} is implied by the following bound: 
\begin{align}
\bigg\|\sum_{\xi=\xi_1+\xi_2} \frac{ \jb{\xi_2}^{\frac 12}}{ \jb{\xi} \jb{\xi_1}^{\frac 12}} u(\xi_1) v(\xi_2) d\xi_1 \bigg\|_{\l^2_{\xi}} \les \|u\|_{\l^2} \|v\|_{\l^2}.
\label{Hminus1a}
\end{align}
If $|\xi_2|\gg |\xi_1|$, then by Young's and Cauchy-Schwarz inequality, we have
\begin{align*}
\text{LHS \eqref{Hminus1a}}
& \les \big\| \big( \jb{\cdot}^{-1}u) \ast v\big\|_{\l^2} \les \| \jb{\cdot}^{-1}u\|_{\l^1} \| v\|_{\l^2} \les \| u\|_{\l^2}\| v\|_{\l^2}.
\end{align*}
If instead $|\xi_2|\les |\xi_1|$, then 
\begin{align*}
\text{LHS \eqref{Hminus1a}}
\les \| \jb{\xi}^{-1}( u \ast v)\|_{\l^2_\xi} \les \|u \ast v\|_{\l^{\infty}_\xi} \les \|u\|_{\l^2}\|v\|_{\l^2}.
\end{align*} 
The proof of \eqref{prodestalg} is similar to that of \eqref{uniqueL2} and thus we omit it.
This completes the proof. \qedhere 
\end{proof}

We now state some estimates concerning products with exponential factors $e^{iF}$ for $F$ real-valued.
As $e^{iF}\in L^{\infty}(\T)$, it follows that all Littlewood-Payley type frequency projectors acting on expressions of the form $e^{iF}$ are well-defined. In particular, the function $\Pblo e^{iF}$ is supported on frequencies $|
\xi|\les 1$. This is in stark contrast to the case on the real-line, where 
$e^{iF}\in L^{\infty}(\R)\setminus L^2(\R)$ and more care is needed. See for example \cite{MP, MP-2023, CFL}.

\begin{lemma}\label{LEM:expdyad}
Let $2\leq p<\infty$ and assume $F_1, F_2$ are two real-valued functions such that 
$\dx F_j  =|f_j|^2$  with $f_j\in H^{\frac 12}(\T)$, for $j=1,2$. Then, it holds that 
\begin{align}
&\|\P_{\ges N}e^{i F_1}\|_{L^{\infty}_{x}}  \les N^{-1+} \|f_1\|_{H^{\frac 12}_x}^2, \label{LinftyE11} \\
&\|\P_{N} e^{i F_1}\|_{L^{p}_x}  \les N^{-2+}\|f_1\|_{H^{\frac 12}_x}^{4} +N^{-1+}\| \P_{\ges N} f_1\|_{H^{\frac 12-\frac 1p}_x} \|f_1\|_{H^{\frac 12}_x}, \label{LpE}\\
 &\| \P_{\ges N}(e^{i F_1}-e^{i F_2})\|_{L^{\infty}_{x}} \notag  \\
 & 
 \les  N^{-1+}
 (\|f_1\|_{H^{\frac 12}_x}+\|f_2\|_{H^{\frac 12}_x}) 
 \big\{ \|f_1-f_2\|_{H^{\frac 12}_x} + 
 (\|f_1\|_{H^{\frac 12}_x}+\|f_2\|_{H^{\frac 12}_x})
 \|e^{i F_1}-e^{i F_2}\|_{L^{\infty}_{x}} \big\}, \label{L4est2} \\
 & \| \P_{ N}(e^{i F_1}-e^{i F_2})\|_{L^{p}_{x}}  \notag \\
 & \les N^{-2+}(\|f_1\|_{H^{\frac 12}_x}+\|f_2\|_{H^{\frac 12}_x})^{3} \big\{ \|f_1-f_2\|_{H^{\frac 12}_x} 
 + 
 (\|f_1\|_{H^{\frac 12}_x}+\|f_2\|_{H^{\frac 12}_x})
 \|e^{i F_1}-e^{i  F_2}\|_{L^{\infty}_{x}} \big\}  \notag \\
 &\quad +N^{-1+} \big\{  \|\P_{\ges N}(f_1-f_2)\|_{H^{\frac 12-\frac 1p}_x} \| f_1+f_2\|_{H^{\frac 12}_x}  +\|\P_{\ges N}(f_1+f_2)\|_{H^{\frac 12-\frac 1p}_x} \| f_1-f_2\|_{H^{\frac 12}_x}   \big\} \notag \\
 &\quad +N^{-1+}  \| \P_{\ges N}f_2\|_{H^{\frac 12-\frac 1p}_x}\|f_2\|_{H^{\frac 12}_x}\|e^{i  F_1}-e^{i  F_2}\|_{L^{\infty}_{x}}. \label{umod2b}
\end{align}
\end{lemma}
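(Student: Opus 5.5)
The plan rests on the identity $\dx e^{iF_1} = i|f_1|^2 e^{iF_1}$, so that for $N\ge 1$ (or for $\xi\neq0$) we can write $\P_{N} e^{iF_1} = -i\,\P_N \dx^{-1}\big(|f_1|^2 e^{iF_1}\big)$. Each frequency localisation thus gains one power $N^{-1}$ at the cost of a factor $|f_1|^2$. Iterating once more, $|f_1|^2 e^{iF_1}$ is split as
\[
|f_1|^2 e^{iF_1} = |f_1|^2 \P_{\ll N} e^{iF_1} + |f_1|^2 \P_{\gtrsim N} e^{iF_1}.
\]
In the first piece, the output frequency $N$ forces $|f_1|^2$ to be at frequency $\sim N$. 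In the second piece, we apply the identity again and gain a further $N^{-1}$ paired with $|f_1|^4$.

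\textbf{Step 1: \eqref{LinftyE11}.} Write $\P_{\gtrsim N} e^{iF_1} = \sum_{M\gtrsim N} \P_M \dx^{-1}(|f_1|^2 e^{iF_1})$. By Bernstein, $\|\P_M g\|_{L^\infty}\lesssim M^{0+}\|g\|_{L^{1+}}$, and by Sobolev embedding $\||f_1|^2 e^{iF_1}\|_{L^{1+}}\le \|f_1\|_{L^{2+}}^2\lesssim \|f_1\|_{H^{1/2}}^2$, since $|e^{iF_1}|=1$. Summing $M^{-1+}$ over $M\gtrsim N$ gives the bound.

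\textbf{Step 2: \eqref{LpE}.} Using $\P_N e^{iF_1} = -iN^{-1}\widetilde{\P}_N\big[|f_1|^2 e^{iF_1}\big]$ (with a multiplier bounded on $L^p$), split $e^{iF_1}=\P_{\ll N}e^{iF_1}+\P_{\gtrsim N}e^{iF_1}$.
\begin{itemize}
\item For the high part, use Step 1 in $L^\infty$ and then $\|\widetilde{\P}_N(|f_1|^2 g)\|_{L^p}\lesssim N^{0+}\||f_1|^2\|_{L^{1+}}\|g\|_{L^\infty}$ via Bernstein. This gives $N^{-2+}\|f_1\|_{H^{1/2}}^4$.
\item For the low part, the factor $|f_1|^2$ must carry frequency $\sim N$, so at least one copy of $f_1$ is $\P_{\gtrsim N}f_1$. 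Bound that copy in $L^p$ by $\|\P_{\gtrsim N} f_1\|_{H^{1/2-1/p}}$ via Sobolev embedding. Place the other copy in $L^{\infty-}$ and trade the small Hölder excess with Bernstein for an $N^{0+}$ loss, using $\|\P_{\ll N}e^{iF_1}\|_{L^\infty}\lesssim1$.
\end{itemize}
The $N^{0+}$ losses are absorbed into the $N^{-1+}$ and $N^{-2+}$ powers.

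\textbf{Step 3: the difference estimates \eqref{L4est2} and \eqref{umod2b}.} Repeat the above with
\[
\dx(e^{iF_1}-e^{iF_2}) = i(|f_1|^2-|f_2|^2)e^{iF_1} + i|f_2|^2(e^{iF_1}-e^{iF_2}).
\]
Write $|f_1|^2-|f_2|^2 = \mathrm{Re}\big((f_1-f_2)\overline{(f_1+f_2)}\big)$, which produces the $\|f_1-f_2\|_{H^{1/2}}$ terms. The second summand produces the $\|e^{iF_1}-e^{iF_2}\|_{L^\infty}$ terms.

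For \eqref{umod2b}, perform the second iteration on the differenced expression. Split both $e^{iF_1}$ and $e^{iF_1}-e^{iF_2}$ into $\P_{\ll N}$ and $\P_{\gtrsim N}$ parts, and apply Step 1 and \eqref{L4est2} to the high parts. On the low parts, the output frequency again forces a $\P_{\gtrsim N}$ onto $f_1\pm f_2$ or $f_2$, which yields exactly the three $N^{-1+}$ terms listed.

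\textbf{Main obstacle.} The delicate point is the low-part bookkeeping in Step 2 and its differenced version. We must ensure the high-frequency factor is measured only in $H^{1/2-1/p}$ while the other factors stay in $H^{1/2}$, despite $H^{1/2}\not\subset L^\infty$. I would handle this with $L^{\infty-}$ Sobolev embedding and Bernstein at frequency $N$, accepting the $N^{0+}$ loss built into the exponents.
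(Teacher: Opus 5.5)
Your overall strategy matches the paper's. You write $\P_M e^{iF_1}$ as $M^{-1}$ times a frequency-localised piece of $\dx e^{iF_1}=i(\dx F_1)e^{iF_1}$, and split the remaining exponential (or difference of exponentials) into $\P_{\ll N}$ and $\P_{\ges N}$ parts. The high part is then handled by \eqref{LinftyE11} or \eqref{L4est2}, and the low part by a product bound of type \eqref{fgLp]}. Your Step 3 bookkeeping does produce every term of \eqref{L4est2} and \eqref{umod2b}.

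However, the Lebesgue exponents in Step 1 and in the high part of Step 2 are wrong, and those steps fail as written. Bernstein on $\T$ gives $\|\P_M g\|_{L^\infty}\les M^{1/q}\|g\|_{L^q}$, so starting from $L^{1+}$ costs $M^{1-}$, not $M^{0+}$. Step 1 would then only give $\sum_{M\ges N}M^{-1}M^{1-}\sim N^{0-}$, with no useful decay. Likewise, Bernstein from $L^{1+}$ to $L^p$ in Step 2 costs $N^{1-\frac1p-}$ rather than $N^{0+}$, which destroys the $N^{-2+}$ bound.

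The repair is to go in the opposite direction, as the paper does in \eqref{LinftyE1}. In Step 1, put $|f_1|^2$ in $L^{\infty-}$:
\[
\|\P_M(|f_1|^2e^{iF_1})\|_{L^\infty}\les M^{0+}\||f_1|^2\|_{L^{\infty-}}\les M^{0+}\|f_1\|_{H^{\frac12}}^2,
\]
using $H^{\frac12}(\T)\hookrightarrow L^q$ for all $q<\infty$. In the high part of Step 2 no Bernstein is needed at all, since
\[
\|\wt\P_N(|f_1|^2\,\P_{\ges N}e^{iF_1})\|_{L^p}\les \|f_1\|_{L^{2p}}^2\,\|\P_{\ges N}e^{iF_1}\|_{L^\infty}.
\]
The same correction is needed in the high parts of Step 3: use $\||f_1|^2-|f_2|^2\|_{L^p}$ and $\||f_2|^2\|_{L^p}$, and put the differences in $L^{\infty-}$ for \eqref{L4est2}. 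With this fix your argument goes through.

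One minor point: on $\T$ the hypothesis must be read as $\dx F_j=\P_{\neq0}(|f_j|^2)$, since a periodic $F_j$ has $\dx F_j$ of mean zero; this is what the paper's proof actually uses. Replacing $|f_j|^2$ by $\P_{\neq0}(|f_j|^2)$ does not affect your argument.
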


\begin{proof}
By Cauchy-Schwarz, we have 
\begin{align}
\| \P_{\les N} f\|_{L^{\infty}_{x}}  \les (\log N)^{\frac 12} \|f\|_{H^{\frac 12}_x}.
\label{Linfty0+}
\end{align}
Then, we get
\begin{align}
\begin{split}
&\| \P_{\ges N}(fg )\|_{L^{\infty}_{x}} \\
& \les N^{\frac 12} \|\P_{\ges N}(fg)\|_{L^2_x}  \\
&\les N^{\frac 12} \big\{ \| \P_{\ges N}f \cdot \P_{\ll N}g\|_{L^2_x} +  \| \P_{\ges N}g \cdot \P_{\ll N}f\|_{L^2_x}+  \| \P_{\ges N}f\cdot \P_{\ges N}g\|_{L^2_x}\big\} \\
& \les N^{\frac 12} \big\{ \|\P_{\ges N}f\|_{L^2_x} \|\P_{\ll N}g\|_{L^{\infty}_{x}} +\|\P_{\ges N}g\|_{L^2_x} \|\P_{\ll N}f\|_{L^{\infty}_{x}}+ \|\P_{\ges N} f\|_{L^4_x}\|\P_{\ges N}g\|_{L^4_x}\big\}  \\
& \les N^{0+} \|f\|_{H^{\frac 12}_x}\|g\|_{H^{\frac 12}_x}.
\end{split} 
\label{L4est1}
\end{align}
It then follows from \eqref{L4est1} that 
\begin{align}
\begin{split}
\| \P_{\ges N}e^{i  F_1}\|_{L^{\infty}_{x}}  
 &\les N^{-1}   \| \P_{\ges N}( \P_{\neq 0}(|f_1|^2) e^{i  F_1}) \|_{L^{\infty}_{x}}\\
 & \les N^{-1} \big\{  \| \P_{\ges N}[ \P_{\neq 0}(|f_1|^2)]\|_{L^{\infty}_{x}} + \| \P_{\ll N}\P_{\neq 0}(|f_1|^2) \cdot \P_{\ges N}e^{i  F_1}\|_{L^{\infty}_{x}} \big\} \\
 &\les N^{-1+}  \|f_1\|_{H^{\frac 12}_x}^2  +N^{-1+} \||f_1|^2 \|_{L^{\infty-}_{x}}  \les N^{-1+}\|f_1\|_{H^{\frac 12}_x}^{2}.
 \end{split} \label{LinftyE1}
\end{align}
This proves \eqref{LinftyE11}. 
Therefore, proceeding as in \eqref{LinftyE1} with \eqref{L4est1}, we have
\begin{align*}
 &\| \P_{\ges N}[e^{i  F_1}-e^{i  F_2}]\|_{L^{\infty}_{x}}  \\
 &\les N^{-1} \big\{  \| \P_{\ges N} [\P_{\neq 0}(|f_1|^2- |f_2|^2) e^{i  F_1}] \|_{L^{\infty}_{x}}+ \| \P_{\ges N}[ \P_{\neq 0}(|f_2|^2) (e^{i  F_1}-e^{i  F_2})] \|_{L^{\infty}_{x}}
 \big\}
 \\
 & \les N^{-1} \big\{  \| \P_{\ges N}[ \P_{\neq 0}(|f_1|^2-|f_2|^2)]\|_{L^{\infty}_{x}} + \| \P_{\ll N}\P_{\neq 0}(|f_1|^2-|f_2|^2) \cdot \P_{\ges N}e^{i  F_1}\|_{L^{\infty}_{x}} + \  \\
 & \hphantom{X}+\| \P_{\ges N}[ \P_{\neq 0}(|f_2|^2)]\|_{L^{\infty}_{x}} \|e^{i  F_1}-e^{i  F_2}\|_{L^{\infty}_x} +\| \P_{\ll N}\P_{\neq 0}(|f_2|^2) \cdot \P_{\ges N}(e^{i  F_1}-e^{i  F_2})\|_{L^{\infty}_{x}}   \big\} \\
 &\les N^{-1+} \|f_1 +f_2\|_{H^{\frac 12}_x} \|f_1-f_2\|_{H^{\frac 12}_x}  +N^{-1+} \||f_1|^2 - |f_2|^2 \|_{L^{\infty-}_{x}} +N^{-1+}\|f_2\|_{H^{\frac 12}_x}^2 \| e^{i  F_1}-e^{i  F_2}\|_{L^{\infty}_{x}}  \\
 & \les N^{-1+}
 (\|f_1\|_{H^{\frac 12}_x}+\|f_2\|_{H^{\frac 12}_x}) 
 \big\{ \|f_1-f_2\|_{H^{\frac 12}_x} + (\|f_1\|_{H^{\frac 12}_x}+\|f_2\|_{H^{\frac 12}_x}) 
 \|e^{i  F_1}-e^{i  F_2}\|_{L^{\infty}_{x}} \big\} , 
\end{align*}
proving \eqref{L4est2}.
 Let $2\leq p<\infty$. By Bernstein's inequality and \eqref{Linfty0+}, we have
 \begin{align}
\begin{split}
\| \wt{\P}_{N}( f g)\|_{L^p_x}& \les N^{0+}\| \wt{\P}_{N}( f g)\|_{L^{p-}_x} \\
&\les N^{0+} \big\{  \| \P_{\ges N} f \cdot g\|_{L^{p-}_{x}} + \|\P_{\ll N}f \cdot \wt{\P}_{N} g\|_{L^{p-}_x}    \big\} \\
& \les N^{0+} \big\{ \|\P_{\ges N} f\|_{L^{p}_x} \|g\|_{L^{\infty-}_x} + \|f\|_{H^{\frac 12}_x} \| \wt{\P}_{N} g\|_{L^{p}_x}\big\} \\
&\les N^{0+} \big\{   \|\P_{\ges N}f\|_{H^{\frac 12-\frac 1p}_x}  \|g\|_{H^{\frac 12}_x} +\|f\|_{H^{\frac 12}_{x}} \|\P_{\ges N}g\|_{H^{\frac 12-\frac 1p}_x}   \big\}.
\end{split} 
\label{fgLp}
\end{align}

 Then, from \eqref{L4est1}, \eqref{LinftyE1}, \eqref{fgLp}, and \eqref{L4est2}, we have
\begin{align}
&\| \P_{N}(e^{i  F_1}-e^{i  F_2}) \|_{L^p_x} \notag \\
& \les N^{-1}\big\{ \|\P_{\ll N}(|f_1|^2-|f_2|^2)\|_{L^{p}_x} \| \wt{\P}_{N}e^{i  F_1}\|_{L^{\infty}_x} + \| \wt{\P}_{N}(|f_1|^2-|f_2|^2)\|_{L^p_x}  \notag \\ 
& \hphantom{XX}+ \|\P_{\gg N}(|f_1|^2-|f_2|^2) \P_{\gg N}e^{i  F_1}\|_{L^p_x} + \|\P_{\ll N}(|f_2|^2)\|_{L^{\infty}_x} \|\wt{\P}_{N}(e^{i  F_1}-e^{i  F_2})\|_{L^{\infty}_x} \notag \\
& \hphantom{XX} +\|\wt{\P}_{N}(|f_2|^2)\|_{L^p_x}\| e^{i  F_1}-e^{i  F_2}\|_{L^{\infty}_{x}}  + \|\P_{\gg N}(|f_2|^2) \P_{\gg N}(e^{i  F_1}-e^{i  F_2})\|_{L^p_x}  \big\} \notag \\
& \les N^{-2+}(\|f_1\|_{H^{\frac 12}_x}+\|f_2\|_{H^{\frac 12}_x})^{3} \|f_1-f_2\|_{H^{\frac 12}_x} \notag  \\
&\hphantom{XX} +N^{-1+} \big\{  \|\P_{\ges N}(f_1-f_2)\|_{H^{\frac 12-\frac 1p}_x} \| f_1+f_2\|_{H^{\frac 12}_x}  +\|\P_{\ges N}(f_1+f_2)\|_{H^{\frac 12-\frac 1p}_x} \| f_1-f_2\|_{H^{\frac 12}_x}   \big\} \notag \\
& \hphantom{XX} +N^{-2+}(\|f_1\|_{H^{\frac 12}_x}+\|f_2\|_{H^{\frac 12}_x})^{3} 
\big\{ \|f_1-f_2\|_{H^{\frac 12}_x} 
+
(\|f_1\|_{H^{\frac 12}_x}+\|f_2\|_{H^{\frac 12}_x})
\|e^{i  F_1}-e^{i  F_2}\|_{L^{\infty}_{x}} \big\} \notag \\
& \hphantom{XX} +N^{-1+}  \| \P_{\ges N}f_2\|_{H^{\frac 12-\frac 1p}_x}\|f_2\|_{H^{\frac 12}_x}\|e^{i  F_1}-e^{i  F_2}\|_{L^{\infty}_{x}}
\notag
\end{align}
where the slight loss in the powers of $N$ can be chosen arbitrarily small. This proves \eqref{umod2b} and similar arguments establish \eqref{LpE}.
\end{proof}

\begin{lemma}\label{LEM:L2prod}
Let $0\leq s< 2$ and assume that $F$ is a real-valued function such that $\dx F= |f|^2$ for $f\in H^{\frac12}(\T)$. Then,  we have
\begin{align}
\| J^{s}\Pbhi( e^{i F} g)\|_{L^{2}_x} \les \|f\|_{H^{\frac 12}_x}^{2}\|g\|_{H^s_x} +\|f\|_{H^{\frac 12}_x}(\|f\|_{H^{\frac 12}_x}^3 + \|f\|_{H^{\max(\frac 12,s-\frac 34)}_x}) \|g\|_{H^{\frac 12}_x}
.
 \label{eFg}
\end{align}
Moreover, if $F_j$ are real-valued functions such that $\dx F_j =|f_j|^2$, $j=1,2$, $R:=\|f_1\|_{H^{\frac 12}_x}+\|f_2\|_{H^{\frac 12}_x}$, and $\s=\max(\frac 12, s-\frac 34)$, then it holds that
\begin{align}
&\| J^{s}\Pbhi( [e^{i F_1}-e^{iF_2}] g)\|_{L^{2}_x}  \notag \\
&\les R(1+R^2)\{ \|f_1-f_2\|_{H^{\frac 12}_x} +R \|e^{iF_1}-e^{iF_2}\|_{L^{\infty}_x}\big\} \|g\|_{H^{s}_x} \label{eFgdiff} \\
&  \hphantom{X} +\{R \|f_1-f_2\|_{H^{\s}_x} + (\|f_1\|_{H^{\s}_x}+\|f_2\|_{H^{\s}_x})( \|f_1-f_2\|_{H^{\frac 12}_x} + R \|e^{iF_1}-e^{iF_2}\|_{L^{\infty}_x}) \}
\|g\|_{H^{\frac12}_x} . \notag
\end{align}
\end{lemma}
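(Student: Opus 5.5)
The plan is to decompose $e^{iF}$ in frequency and treat the resulting paraproducts with the exponential estimates of Lemma~\ref{LEM:expdyad}. Before doing so, note that the first term on the right of \eqref{eFg} cannot be literally $\|f\|_{H^{\frac12}_x}^2\|g\|_{H^s_x}$ with no additive $\|g\|_{H^s_x}$. Indeed, take $f=0$. Then $F$ is constant, so $e^{iF}$ is a unimodular constant, and the left-hand side equals $\|J^s\Pbhi g\|_{L^2_x}$. This is nonzero for generic $g$, while the right-hand side vanishes.

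The same obstruction persists for small nonzero $f$, since then $\Pblo e^{iF}\approx e^{iF(0)}$. The low$\times$high paraproduct $\Pblo e^{iF}\cdot \Pbhi g$ therefore always contributes a bare $\|g\|_{H^s_x}$. So the statement can only hold with the first term read as $(1+\|f\|_{H^{\frac12}_x}^2)\|g\|_{H^s_x}$, which is how the estimate is subsequently used. Granting that, the plan is as follows; every other contribution does carry the stated $f$-factors.

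\emph{Step 1 (low part of the exponential).} Write $e^{iF}=\Pblo e^{iF}+\Pbhi e^{iF}$. Since $\Pblo e^{iF}$ is supported on $|\xi|\les 1$ and $\|\Pblo e^{iF}\|_{L^\infty}\les 1$, the multiplier $\jb{\xi}^s$ is comparable on $\xi$ and $\xi-\eta$ for $|\eta|\les1$. Hence $\|J^s\Pbhi(\Pblo e^{iF}\, g)\|_{L^2}\les\|g\|_{H^s}$, and this is the unavoidable bare term discussed above.

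\emph{Step 2 (high part of the exponential, dyadic paraproduct).} Decompose $\Pbhi e^{iF}\cdot g=\sum_{N,N_2}\P_N e^{iF}\,\P_{N_2}g$ with $N\ges 1$, and split into three regimes.

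\emph{(a) $N\ll N_2$.} Use \eqref{LinftyE11}: $\|\P_{\ges N}e^{iF}\|_{L^\infty}\les N^{-1+}\|f\|_{H^{\frac12}}^2$. Summing over $N\ges1$ gives $\|f\|_{H^{\frac12}}^2\|g\|_{H^s}$; this is exactly the first term of \eqref{eFg}.

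\emph{(b) $N\sim N_2$.} Put $\P_N e^{iF}$ in $L^4$ via \eqref{LpE} with $p=4$, and $\P_{N_2}g$ in $L^4\subset H^{\frac14}$. The $N^s$ weight falls on the output at frequency $\les N$. The gain $N^{-1+}$ from \eqref{LpE}, combined with $N_2^{-\frac14}$ from $g\in H^{\frac12}$, absorbs $N^{s}$ provided $s<2$. This yields $\|f\|^4_{H^{\frac12}}\|g\|_{H^{\frac12}}$ plus $\|f\|_{H^{\frac12}}\|\P_{\ges N}f\|_{H^{\frac14}}\|g\|_{H^{\frac12}}$, up to the weight $N^{s-\frac54+}$.

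\emph{(c) $N\gg N_2$.} Here the output is at frequency $\sim N$. Place $g$ in $L^{\infty-}$ via $H^{\frac12}$, with only a $\log$ loss by \eqref{Linfty0+}, and $\P_Ne^{iF}$ in $L^{2+}$ via \eqref{LpE}. This gives
\begin{align*}
N^{s}\|\P_N e^{iF}\|_{L^{2+}}\les N^{s-2+}\|f\|^4_{H^{\frac12}}+N^{s-1+}\|\P_{\ges N}f\|_{H^{0+}}\|f\|_{H^{\frac12}} .
\end{align*}

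\emph{Step 3 (summation, the main obstacle).} The second term in (c) requires $\sum_N N^{s-1+}\|\P_{\ges N} f\|_{H^{0+}}\les\|f\|_{H^{\max(\frac12,s-\frac34)}}$. I would obtain this by trading derivatives: $N^{s-1+}\|\P_{\ges N}f\|_{H^{0+}}\les N^{-\frac14+}\|f\|_{H^{s-\frac34}}$ when $s-\frac34\ge\frac12$, and $N^{s-\frac32+}\|f\|_{H^{\frac12}}$ otherwise. The threshold for $\s=\max(\frac12,s-\frac34)$ in \eqref{eFg} is exactly where this bookkeeping balances, so I would choose the $L^p$ exponents in (b) and (c) to optimise this trade. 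The $N^{s-2+}$ terms are summable because $s<2$.

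\emph{Step 4 (difference estimate \eqref{eFgdiff}).} Repeat Steps 1–3 for $(e^{iF_1}-e^{iF_2})g$, using \eqref{L4est2} and \eqref{umod2b} in place of \eqref{LinftyE11} and \eqref{LpE}. The low part is now bounded by $\|\Pblo(e^{iF_1}-e^{iF_2})\|_{L^\infty}\les\|e^{iF_1}-e^{iF_2}\|_{L^\infty}$, which does carry a difference factor.
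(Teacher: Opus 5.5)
Your objection is correct, and it points to a genuine defect in the paper. For $f=0$ the left side of \eqref{eFg} equals $\|J^s\Pbhi g\|_{L^2_x}$ while the right side vanishes. The same happens in the limit $f\to0$ under the normalization \eqref{F}, since then $\|F\|_{L^\infty_x}\le\|\P_{\neq0}(|f|^2)\|_{L^1_x}\to0$.

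The culprit is exactly the zero mode of $e^{iF}$. The modes with $0<|\xi|\les1$ are harmless: they are $O(\|f\|_{L^2}^2)$, because $\widehat{e^{iF}}(\xi)=(i\xi)^{-1}\widehat{\dx e^{iF}}(\xi)$. The paper's proof overlooks the zero mode. In the regime $N_1\ll N_2$ it bounds every block $\P_{N_1}e^{iF}$ by \eqref{LinftyE11}, including the block containing $\xi=0$. But \eqref{LinftyE11} does not apply there, since its proof inverts $\dx$.

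With the first term read as $(1+\|f\|_{H^{\frac12}_x}^2)\|g\|_{H^s_x}$, your argument is the paper's argument with this missing piece supplied. The paper merges your regimes (b) and (c) into $N_1\ges N_2$: it puts $\P_{N_1}e^{iF}$ in $L^2$ via \eqref{LpE} with $p=2$ and $\P_{N_2}g$ in $L^\infty$ via \eqref{Linfty0+}. The final summation only needs $\s>s-1$, so $\s=\max(\frac12,s-\frac34)$ has room to spare; it is not an exact balance, as you suggest.

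You are right that the later uses only need the corrected form; compare the final bound in \eqref{v0w0}. Some intermediate displays inherit the misprint, e.g.\ the $L^\infty_TH^s_x$ bound on $\wt y$ in the proof of Lemma~\ref{LEM:Xsby}.

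The step that does not close as written is Step~4. The bare low-frequency term you obtain there is $\|e^{iF_1}-e^{iF_2}\|_{L^\infty_x}\|g\|_{H^s_x}$. It is not controlled by the right side of \eqref{eFgdiff}, where the coefficient of $\|e^{iF_1}-e^{iF_2}\|_{L^\infty_x}\|g\|_{H^s_x}$ is only $R^2(1+R^2)$. Saying that it ``carries a difference factor'' is not enough. In fact, if the $F_j$ are only required to satisfy the derivative identity, \eqref{eFgdiff} fails for $f_1=f_2=0$, $F_1=0$, $F_2\equiv c\notin2\pi\Z$.

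You must either add this term to \eqref{eFgdiff}, or use the normalization $F_j=\dx^{-1}\P_{\neq0}(|f_j|^2)$ of \eqref{F}, which is the only case the paper uses. In the latter case $F_1-F_2$ is real, continuous and of mean zero, so it vanishes somewhere, and
\[
\|e^{iF_1}-e^{iF_2}\|_{L^\infty_x}\le\|F_1-F_2\|_{L^\infty_x}\le\|\P_{\neq0}(|f_1|^2-|f_2|^2)\|_{L^1_x}\les R\|f_1-f_2\|_{L^2_x}.
\]
The bare term is then absorbed into $R\|f_1-f_2\|_{H^{\frac12}_x}\|g\|_{H^s_x}$, and \eqref{eFgdiff} holds in the stated form. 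No normalization can rescue \eqref{eFg} in the same way, since there $\P_0e^{iF}\to1$ as $f\to0$.
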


\begin{proof}
We begin with \eqref{eFg}. By dyadic decompositions, we have 
\begin{align*}
\| J^{s}_x \Pbhi (e^{iF} g)\|_{L^2_x}^{2}  \les \sum_{N} N^{2s} \bigg( \sum_{N_1, N_2} \|\P_{N_1}e^{iF} \cdot \P_{N_2}g\|_{L^2_x} \bigg)^2.
\end{align*}
We split the above sum into two pieces. When $N_1 \ll N_2$, we use \eqref{LinftyE11} to bound this contribution by 
\begin{align*}
\sum_{N} N^{2s} \bigg( \sum_{N_1\ll N_2 \sim N} N_1^{-1+} \|f\|_{H^{\frac 12}_x}^{2} \|\P_{N_2}g\|_{L^2_x}\bigg)^2 \les \|f\|_{H^{\frac 12}_x}^{4} \|g\|_{H^{s}_x}^{2}.
\end{align*}
For the contribution from $N_1 \ges N_2$, we use \eqref{Linfty0+} and \eqref{LpE} to obtain
\begin{align*} 
\sum_{N}& N^{2s} \bigg( \sum_{N_1 \ges N_2} \|\P_{N_1}e^{iF}\|_{L^2_x}   \|\P_{N_2}g\|_{L^{\infty}_x}\bigg)^2  \\
& \les \sum_{N}N^{2s} \bigg( \sum_{N_1 }  \big\{ N_1^{-2+}\|f\|_{H^{\frac 12}_x}^{4} +N_1^{-1+} \|\P_{\ges N_1}f\|_{L^2_x} \|f\|_{H^{\frac 12}_x} \big\}  \sum_{N_2 \les N_1 } N_{2}^{0+} \|g\|_{H^{\frac 12}_x}\bigg)^2  \\
& \les \|f\|_{H^{\frac 12}_x}^{8} \|g\|_{H^{\frac 12}_x}^2 + \|f\|_{H^{\frac 12}_x}^2 \|g\|_{H^{\frac 12}_x}^{2} \sum_{N}N^{2s} \bigg(  \sum_{N_1 \ges N} N_1^{-1+} \|\P_{\ges N_1}f\|_{L^2_x} \bigg)^{2} \\
& \les  \|f\|_{H^{\frac 12}_x}^{8} \|g\|_{H^{\frac 12}_x}^2 + \|f\|_{H^{\frac 12}_x}^{2} \|f\|^2_{H^{\max(\frac 12, s-\frac 34)}_x} \|g\|_{H^{\frac 12}_x}^2, 
\end{align*}
completing the proof of \eqref{eFg}. The difference estimate in \eqref{eFgdiff} follows analogously. \qedhere
\end{proof}

\section{The gauge transform and properties of solutions}\label{SEC:Gauge}
\subsection{Fourier restriction norm spaces}\label{SEC:Xsb}

For $s,b\in \R$, we consider the Fourier restriction norm spaces $X^{s, b} = X^{s,b}(\R\times \T)$ which are the completion of $\S(\R\times \T)$ under the norm:
\noi
\begin{align}
\| u\|_{X^{s, b}(\R\times \T)} &= \big\| \jb{\tau-\xi^2}^{b}\jb{\xi}^{s} \ft u(\tau, \xi)\big\|_{L^2_\xi(\Z) L^{2}_{\tau}(\R)}. 
\notag
\end{align}
We will use the notation $\s =\s(\tau,\xi): = \tau-\xi^2$.
Given a time interval $I \subset \R$, we define localised in time versions of these spaces as follows: if $u: I \times \T \to \C$, then 
\begin{align}
\|u\|_{X^{s,b}_{I}}: =\inf\{ \| z \|_{X^{s,b}} \, : \, z :\R\times \T \to \C, \,\, z \vert_{I\times \T} = u\}. 
\notag
\end{align}
 When $I=[0,T]$ for some $T>0$, we denote the spaces $X^{s,b}_{I}=X^{s,b}_{T}$.
For the time localised spaces, the following embedding holds:
\begin{align}
X_{T}^{s,b} \subset C([0,T];H^{s}(\T)) \label{YsCTHs}
\end{align}
for any $b>\frac 12$.
We recall the following linear estimates related to the $X^{s,b}$-spaces \cite{BO93, TAObook}.

\begin{lemma}\label{LEM:linXsb}
Let $s, b \in \R$ , $\eta$ denote a smooth time cutoff, and $S(t) = e^{i t\dx^2}$.

\noi
\textup{(i)} Let $0<\dl<\frac12$. Then,  
\begin{align}
\| \eta(t) S(t)f\|_{X^{s,b}}  \les \|f\|_{H^{s}} \quad \text{and} \quad 
\bigg\| \eta(t) \int_{0}^{t} S(t-t') g(t')dt' \bigg\|_{X^{s,\frac 12+\dl}}  \les \|g\|_{X^{s, -\frac 12 +\dl}}
.
\notag
\end{align}
\noi
\textup{(ii)} For any $T>0$ and $-\frac 12 < b'\leq b<\frac 12$, it holds that 
\begin{align*}
\| u\|_{X^{s,b'}_{T}} \les T^{b-b'}\|u\|_{X^{s,b}_{T}}.
\end{align*}
\textup{(iii)} It holds that 
\begin{align}
\|u\|_{L^{4}_{t,x}(\R\times \T)} \les \|u\|_{X^{0,\frac 38}} \quad \text{and} \quad
 \|u\|_{L^{4}_{T,x}}  \les T^{\frac 18} \|u\|_{X^{0,\frac 12}_{T}}.  \label{L4}
\end{align}
\end{lemma}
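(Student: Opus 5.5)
The three parts of Lemma~\ref{LEM:linXsb} are standard facts about Bourgain spaces, and I plan to prove each one directly on the space-time Fourier side.

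For (i), the first bound is immediate. Since $\F_{t,x}\{\eta(t)S(t)f\}(\tau,\xi)=\ft\eta(\tau-\xi^2)\ft f(\xi)$ up to the sign convention for $S(t)$, we get
\[
\|\eta S(t)f\|_{X^{s,b}}=\|\jb{\cdot}^b\ft\eta\|_{L^2_\tau}\|f\|_{H^s}.
\]
For the Duhamel bound, I will conjugate by the free flow. Writing $g=S(t)G$, so that $\|g\|_{X^{s,b}}=\|G\|_{H^b_tH^s_x}$, reduces the claim to the one-dimensional time estimate
\[
\Big\|\eta(t)\int_0^t G(t')dt'\Big\|_{H^{\frac12+\dl}_t}\les\|G\|_{H^{-\frac12+\dl}_t},
\]
applied frequency by frequency in $\xi$. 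To prove it, I write $\int_0^t G\,dt'=\int\ft G(\tau)\frac{e^{it\tau}-1}{i\tau}d\tau$ and split $\ft G$ into $|\tau|\le1$ and $|\tau|>1$. On the low part, I Taylor expand $e^{it\tau}-1$; each term is a Schwartz function in $t$ times a coefficient bounded by $\int_{|\tau|\le1}|\ft G|\les\|G\|_{H^{-\frac12+\dl}}$. On the high part, the term $\eta(t)e^{it\tau}/\tau$ contributes $\ft\eta(\cdot-\tau)/\tau$, and this is bounded in $H^{\frac12+\dl}$ because $\jb{\tau'}^{\frac12+\dl}\les\jb{\tau'-\tau}^{\frac12+\dl}\jb{\tau}^{\frac12+\dl}$ and the weight $\jb\tau^{-\frac12+\dl}$ is exactly the one available. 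The remaining term $\eta(t)\int_{|\tau|>1}\ft G/\tau$ is handled by Cauchy--Schwarz, using $\int_{|\tau|>1}\jb\tau^{-1-2\dl}d\tau<\infty$.

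For (ii), I again reduce to time alone: it suffices to show $\|\eta(t/T)u\|_{H^{b'}_t}\les T^{b-b'}\|u\|_{H^b_t}$ for $-\frac12<b'\le b<\frac12$, applied after conjugating by $S(t)$ and choosing a near-optimal extension. I will treat the case $b'\ge0$ first. Splitting $u$ into frequencies $|\tau|\le1/T$ and $|\tau|>1/T$, the high part is immediate since $\jb\tau^{b'}\le T^{b-b'}\jb\tau^b$ there. The low part uses $\|\eta(t/T)\|_{H^{b'}}\les T^{\frac12-b'}$ together with Hölder/Sobolev control of the low frequencies of $u$ in $L^\infty_t$, gaining the factor $T^{b-\frac12}$. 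Here the restriction $b<\frac12$ is what makes multiplication by the sharp-ish cutoff bounded. The case $b\le0$ then follows by duality, and the mixed case $b'<0\le b$ by passing through $b''=0$.

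For (iii), the first estimate is Bourgain's/Zygmund's $L^4$ Strichartz estimate. I decompose $u=\sum_{M}u_M$ dyadically in the modulation $\jb{\tau-\xi^2}\sim M$ and expand $\|u_{M_1}\cj{u_{M_2}}\|_{L^2}$. The key input is a counting bound: for fixed $(\tau,\xi)$, the set of $(\tau_1,\xi_1)$ with both modulations controlled has measure $\les \max(M_1,M_2)^{\frac12}\min(M_1,M_2)^{\frac12}$ up to acceptable factors, since the resonance relation is $2\xi_1^2-2\xi\xi_1+\xi^2-\tau=O(M)$, which has $O(1+M^{1/2})$ integer solutions in $\xi_1$. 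Cauchy--Schwarz then gives $\|u_{M_1}u_{M_2}\|_{L^2}\les(M_1M_2)^{\frac38}\|u_{M_1}\|_{L^2}\|u_{M_2}\|_{L^2}$, and this dyadic bound is summable in $M_1,M_2$. The second estimate follows by combining the first with (ii), namely $X^{0,\frac38}_T$ controlled by $T^{\frac18}X^{0,\frac12-}_T$. At the endpoint $b=\frac12$, I will instead use interpolation between $L^2_{T,x}\les T^{\frac12}\|u\|_{X^{0,\frac12+}_T}$ and the $L^4$ estimate, or equivalently apply (ii) with $b$ slightly below $\frac12$ and accept the resulting $T^{\frac18-}$, which is harmless.

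I expect the main obstacle to be the counting argument in (iii).
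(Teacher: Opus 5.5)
\textbf{Parts (i) and (ii).} These are the standard arguments; the paper gives no proof and only cites Bourgain and Tao's book. Your sketches are sound, with one small point in (ii). The high-frequency piece is not literally immediate, because multiplication by $\eta(t/T)$ moves frequencies. It is handled by splitting $\jb{\tau}^{b'}\les\jb{\tau-\tau_1}^{b'}+\jb{\tau_1}^{b'}$ and using $\|\jb{\cdot}^{b'}\F_t[\eta(\cdot/T)]\|_{L^1}\les T^{-b'}$ together with $\|\ind_{\{|\tau|>1/T\}}\ft u\|_{L^2}\le T^{b}\|u\|_{H^b}$. The genuine problems are in (iii).

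\textbf{First estimate in (iii).} Your count and summation do not close. Fix $(\tau,\xi)$ and an admissible $\xi_1$. The allowed $\tau_1$ form an intersection of intervals of lengths $\sim M_1$ and $\sim M_2$, so they have measure $\les\min(M_1,M_2)$, not $\min(M_1,M_2)^{1/2}$. With $M_{\min}=\min(M_1,M_2)$ and $M_{\max}=\max(M_1,M_2)$, the correct bound is $|A_{\tau,\xi}|\les M_{\min}(1+M_{\max})^{1/2}$, and Cauchy--Schwarz gives
\[
\|u_{M_1}u_{M_2}\|_{L^2}\les M_{\min}^{\frac12}M_{\max}^{\frac14}\|u_{M_1}\|_{L^2}\|u_{M_2}\|_{L^2}=(M_1M_2)^{\frac38}\Big(\frac{M_{\min}}{M_{\max}}\Big)^{\frac18}\|u_{M_1}\|_{L^2}\|u_{M_2}\|_{L^2}.
\]
The off-diagonal factor $(M_{\min}/M_{\max})^{1/8}$ is indispensable. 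With only the symmetric bound $(M_1M_2)^{3/8}$ that you wrote, the double sum is $(\sum_M M^{3/8}\|u_M\|_{L^2})^2$. That is not controlled by $\|u\|_{X^{0,3/8}}^2=\sum_M M^{3/4}\|u_M\|_{L^2}^2$ (try $\|u_{2^k}\|_{L^2}=2^{-3k/8}k^{-1}$), so your argument as stated only yields $X^{0,\frac38+}$. Keeping the factor, Schur's test closes the sum.

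Also expand $\|u\|_{L^4}^2=\|u^2\|_{L^2}$, i.e.\ products $u_{M_1}u_{M_2}$, which is what your resonance relation describes. Do not use $u_{M_1}\cj{u_{M_2}}$: there the constraint is $\tau\approx 2\xi\xi_1-\xi^2$, which is linear in $\xi_1$ and imposes nothing when $\xi=0$.

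\textbf{Second estimate in (iii).} Interpolating an $L^2_{T,x}$ bound with the $L^4$ bound cannot help: it only produces $L^p$ with $p<4$, and at $p=4$ it returns the endpoint with no power of $T$. Your fallback proves only $\|u\|_{L^4_{T,x}}\les T^{\frac18-}\|u\|_{X^{0,\frac12}_T}$.

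However, no argument can give the exact $T^{\frac18}$, because the stated bound fails by a factor $(\log\frac1T)^{1/2}$. Take $N=T^{-1/2}$, $\ft\phi=N^{-1/2}\ind_{\{|\xi|\le N\}}$, $\ft g(\lambda)=(\log\frac1T)^{-1/2}\jb{\lambda}^{-1}\ind_{\{|\lambda|\le 1/T\}}$, and $u(t)=g(t)\F_x^{-1}[e^{it\xi^2}\ft\phi]$. Then:
\begin{itemize}
\item $\|u\|_{X^{0,\frac12}}=\|g\|_{H^{1/2}}\|\phi\|_{L^2}\sim 1$;
\item $|u|\gtrsim(\log\frac1T)^{1/2}N^{1/2}$ on $[0,\frac{T}{10}]\times\{|x|\le\frac{1}{10N}\}$;
\item hence $\|u\|_{L^4([0,T]\times\T)}\gtrsim T^{\frac18}(\log\frac1T)^{1/2}$.
\end{itemize}

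So what you can and should prove is one of two versions. The first is your $T^{\frac18-}$ bound. The second is $\|u\|_{L^4_{T,x}}\les T^{\frac18}\|u\|_{X^{0,b}_T}$ for $b>\frac12$: split modulations at $1/T$, use the first estimate on the high part, and on the low part use Minkowski in the modulation variable with the free bound $\|\eta(t/T)\F_x^{-1}[e^{it\xi^2}\ft\phi]\|_{L^4}\les T^{1/8}\|\phi\|_{L^2}$. Either version suffices for every use of \eqref{L4} in the paper, since only some positive power of $T$ is needed there.
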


Finally, we have the following useful $L^p$-boundedness result related to the modulation operators $\Q_{\ll K}$. See \cite[Lemma 4.6]{GLM}.

\begin{lemma}\label{LEM:Qbd}
Let $1<p<\infty$,  $N,K\in 2^{\N}$ satisfying $K\gg N^2$, and $\Q_{\ll K}$ as in \eqref{Qproj}. Then, there exists $C_p>0$ such that
\begin{align}
\| \P_{N} \Q_{\ll K}f\|_{L^{p}_{t,x}} \leq C_{p} \| \P_{N}f\|_{L^p_{t,x}}. \label{QKLp}
\end{align}
\end{lemma}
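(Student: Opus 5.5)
The plan is to reduce \eqref{QKLp} to the $L^p_{t}$-boundedness of a time-frequency multiplier that acts on each spatial frequency separately. Write $\P_N \Q_{\ll K} f = \sum_{\xi} \psi_N(\xi)\, e^{ix\xi}\, \F_t^{-1}\big[\eta_{10^{-10}K}(\tau-\xi^2)\, \F_{t}\ft f(\cdot,\xi)\big](t)$. Conjugating by the phase $e^{it\xi^2}$ turns this into
\begin{align*}
\P_N\Q_{\ll K} f(t,x) = \sum_{\xi} \psi_N(\xi) e^{ix\xi} e^{it\xi^2} \big(\check\eta_{K'} \ast_t [e^{-it'\xi^2}\ft f(t',\xi)]\big)(t), \qquad K' = 10^{-10}K,
\end{align*}
where $\check\eta_{K'}(t)=K'\check\eta(K't)$ is an $L^1_t$-normalized kernel. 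The key idea is that $\xi^2\lesssim N^2\ll K$ on the support of $\psi_N$, so the phase $e^{i(t-t')\xi^2}$ barely oscillates on the time scale $1/K$ of the kernel.

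The first step is to Taylor expand $e^{i(t-t')\xi^2}=\sum_{k\ge0}\frac{(i(t-t')\xi^2)^k}{k!}$ inside the convolution. This writes
\begin{align*}
\P_N\Q_{\ll K}f = \sum_{k\ge 0}\frac{1}{k!}\,\big(K^{-1}\cdot\big)^k\, \big[(K'(t)^k\check\eta_{K'})\ast_t\big]\,\big[(i\xi^2)^k \psi_N(\xi)\big](D_x)\, f ,
\end{align*}
up to harmless constants. Each term factors as a time convolution with the kernel $t^k\check\eta_{K'}(t)$, whose $L^1_t$ norm is at most $C^k k!^{1/2}K^{-k}$ (or a similar bound, using that $\check\eta$ is Schwartz), composed with the spatial Fourier multiplier $(\xi^2)^k\psi_N(\xi)$. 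Writing this multiplier as $N^{2k}m_k(\xi/N)$, with $m_k(y)=y^{2k}\psi(y)$, Mikhlin/Marcinkiewicz multiplier theory on $\T$ (transference from $\R$) bounds $m_k(D/N)$ on $L^p_x$ by $C_p\,C^k$, since the derivatives of $m_k$ grow only geometrically in $k$ on the compact support $|y|\sim 1$. One may also insert a wider projector $\wt{\P}_N$ so that each term acts on $\P_N f$, which keeps the right-hand side as stated.

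Combining these bounds via Young's inequality in $t$ and Minkowski, the $k$-th term is controlled by $\frac{C^k k!^{1/2}}{k!}(N^2/K)^k C_p\|\P_N f\|_{L^p_{t,x}}$. The series converges because $N^2/K\ll1$, and this gives \eqref{QKLp}.

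The main obstacle is keeping the constants in $k$ summable. This requires a careful bound on $\|t^k\check\eta\|_{L^1}$ and on the multiplier norms of $m_k$. If the Taylor expansion turns out to be too lossy, I would instead use a Schur/kernel argument directly: show that the full space-time kernel $\sum_\xi \psi_N(\xi)e^{ix\xi}\check\eta_{K'}(t)e^{it\xi^2}$ has uniformly bounded $L^1_{t,x}$-type norm after factoring. Alternatively I would follow \cite[Lemma 4.6]{GLM} and treat $e^{it\xi^2}$ as a smooth perturbation on the window $|t|\lesssim K^{-1}$.
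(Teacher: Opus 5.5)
Your main argument has a genuine gap: the Taylor series does not converge, and better bookkeeping cannot fix this.

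For the cutoff $\eta\in C^\infty_c$ that defines $\Q_{\ll K}$ in \eqref{Qproj}, the bound $\|t^k\check\eta\|_{L^1_t}\le C^k(k!)^{1/2}$ is false, and so is any bound of the form $C^k\,k!$. Indeed, $\eta^{(k)}$ is the Fourier transform of $(-it)^k\check\eta(t)$, so $\|\eta^{(k)}\|_{L^\infty}\les\|t^k\check\eta\|_{L^1}$. Suppose your series $\sum_k\frac{1}{k!}(CN^2/K)^k\|t^k\check\eta\|_{L^1}$ converged for some value $\epsilon=CN^2/K>0$. Then its terms would be bounded, giving $\|\eta^{(k)}\|_{L^\infty}\les k!\,\epsilon^{-k}$ for all $k$. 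By Taylor's theorem with Lagrange remainder, $\eta$ would then be real-analytic on $\R$, hence identically zero since it has compact support. This contradicts $\eta=1$ on $[-1,1]$.

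So the series diverges however small $N^2/K$ is, and the hypothesis $K\gg N^2$ cannot be exploited through an expansion in powers of $t\xi^2$. Expanding on the Fourier side instead, $\eta(\frac{\tau-\xi^2}{K'})=\sum_k\frac{1}{k!}(-\xi^2/K')^k\,\eta^{(k)}(\tau/K')$, hits the same wall. The $L^p_t$-multiplier norm of $\eta^{(k)}(\cdot/K')$ is at least $\|\eta^{(k)}\|_{L^\infty}$. Your bound $C^k(k!)^{1/2}$ is what a Gaussian $\check\eta$ would give, but you cannot swap the cutoff: its compact support is used elsewhere (e.g.\ for \eqref{Xres} to vanish). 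The spatial half of your argument, bounding $(\xi^2)^k\psi_N(\xi)$ on $L^p_x$, is fine.

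The fallback you mention at the end does work, and it should be the proof.

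Take $\wt\P_N$ a slightly wider projector, so that $\P_N\Q_{\ll K}f=\Q_{\ll K}\wt\P_N(\P_Nf)$. Then $\Q_{\ll K}\wt\P_N$ is convolution on $\R\times\T$ with the kernel $k(t,x)=c\,K'\check\eta(K't)\,S_N(t,x)$, where $K'=10^{-10}K$ and $S_N(t,x)=\sum_\xi\wt\psi_N(\xi)e^{ix\xi+it\xi^2}$. By Young's inequality it suffices to show $\|k\|_{L^1(\R\times\T)}\les1$.

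Use the elementary bound $\|g\|_{L^1(\T)}\les\|g\|_{L^2}^{1/2}\|(e^{ix}-1)g\|_{L^2}^{1/2}$ (split into $|x|<\delta$ and $|x|\ge\delta$, then optimize $\delta$). Combine it with Plancherel, $\|(e^{ix}-1)g\|_{L^2}=\|\ft g(\cdot-1)-\ft g\|_{\ell^2}$. Since $|\wt\psi_N(\xi-1)e^{it(\xi-1)^2}-\wt\psi_N(\xi)e^{it\xi^2}|\les N^{-1}+N|t|$ on $O(N)$ frequencies, you get $\|S_N(t)\|_{L^1_x}\les(1+N^2|t|)^{1/2}$. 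Hence $\|k\|_{L^1_{t,x}}\les\int_\R|\check\eta(s)|\,(1+N^2|s|/K')^{1/2}\,ds\les1$, because $N^2\les K'$.

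This makes precise your heuristic that $e^{it\xi^2}$ barely oscillates on the window $|t|\les K^{-1}$. The decay of $\check\eta$ handles large $|t|$, no series is needed, and it even gives \eqref{QKLp} for $1\le p\le\infty$.

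Alternatively, $m(\tau,\xi)=\eta_{K'}(\tau-\xi^2)\wt\psi_N(\xi)$ satisfies $|\tau^a\xi^b\partial_\tau^a\partial_\xi^b m|\les1$ for $a,b\in\{0,1\}$, since $|\tau|+\xi^2\les K'$ on its support. The product Marcinkiewicz multiplier theorem plus transference to $\R\times\T$ then gives the stated range $1<p<\infty$.
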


\subsection{Gauge transform} \label{SUBSEC:Gauge}

We construct a gauge transform as in \cite{PMP}. 
For a function $f:\T \to \C$ such that $\P_{0}f=0$, we define its periodic mean zero primitive:
\begin{align*}
(\dx^{-1}f)(x) : = \mathcal{F}^{-1}[ (i\xi)^{-1}\mathcal{F}f](x)  = \frac{1}{2\pi} \int_{0}^{2\pi} \bigg( \int_{z}^{x}f(y)dy\bigg)dz. 
\end{align*}
Note that $\dx^{-1} \dx = \dx \dx^{-1} \P_{\ne0} =  \P_{\ne0}$.
We then set 
\begin{align}
F=F[u] = \dx^{-1}\P_{\neq 0} (|u|^2) \label{F}
\end{align}
and thus $\dx F = \P_{\neq 0} (|u|^2).$
We then define the gauged variables 
\begin{align}
v := \P_{+,\text{hi}}[ e^{i\be F[u]} u]
  \quad
\text{and}
\quad  w:= \P_{-, \text{hi}} u . 
\label{gauge}
\end{align}
The choice of adding a $\Pbhi$ operator to the terms in \eqref{gauge} is convenient as we cannot assume a mean zero condition for solutions to \eqref{INLS}.
The following lemma determines the dynamics for $(v,w)$ in \eqref{gauge}. 
We need to introduce further gauged variables $\wt{u}, \wt{v}, \wt{w}$, to remove a problematic linear term in the equation for $v$ defined in \eqref{gauge}.

\begin{lemma}\label{LEM:Xsbu}
Let $u \in C([0,T];H^{\infty}(\T))$ be a solution of \eqref{INLS2} and $v,w$ be the gauged variables defined in \eqref{gauge}.

\noi
{\rm(i)} The gauged variables $v$ and $w$ defined in \eqref{gauge} satisfy the following equations:
\begin{align}
\dt v + i\dx^2 v & = -2\be\Pbhip[ v \P_{-}\dx(|u|^2)] 
+i\g \Pbhip[ e^{i\be F} |u|^2 u] -i\be \Pbhip[ e^{i\be F} u \GG_{h}(|u|^2)]
   \notag \\
&  \hphantom{X}
+ i \tfrac\be\pi\big[  \tfrac{\be}{4\pi} M(u)^2 - P(u) \big] v + \tfrac\be\pi M(u) \dx v, 
\label{veq}\\
\dt w+i\dx^2 w&  =  2\be \P_{-,\textup{hi}}[ w \P_{+}\dx(|u|^2)]-i\be \P_{-, \hi}[ u \GG_{h}(|u|^2)] + i\g \P_{-, \hi}[ |u|^2 u]
=:
\mathcal{N}_{w}(u)
,
\label{weq}
\end{align}
where $\GG_h$ is as in \eqref{Lh}, and $M(u)$ and $P(u)$ denote the mass and momentum, as in \eqref{mass} and~\eqref{momentum-intro}.

\noi 
{\rm(ii)}
We define new gauged variables
\begin{align}
\label{gauge2}
\begin{split}
\wt v & : = \J_u [v], 
\quad 
\wt w : = \J_u[w ], 
\\
\wt u & : = \J_u [u], 
\quad 
\wt{F}   : = \J_u[F] = F [ \wt u ]  = \dx^{-1} \P_{\ne0}(|\wt u|^2)
, 
\end{split}
\end{align}
where $\J_u$ is as in \eqref{TT}.
Then,  $\wt u,\wt v$ satisfy:
\begin{align}
\dt \wt u + i \dx^2 \wt u 
& =
2 \be \wt u \P_+ \dx (|\wt u|^2) - i \be \wt u \GG_h (|\wt u |^2) + i \g |\wt u|^2 \wt u - \tfrac\be\pi M(\wt u) \dx \wt u
, 
\label{ueq2}
\\
\dt \wt{v} +i\dx^2 \wt{v} 
& = -2\be\Pbhip[ \wt{v} \P_{-}\dx(|\wt{u}|^2)]  
+i\g \Pbhip[ e^{i\be \wt F} |\wt u|^2 \wt u] 
\nonumber
\\
&  
\qquad 
-i\be \Pbhip[ e^{i\be \wt F} \wt u \GG_{h}(|\wt u|^2)]+ i \tfrac\be\pi\big[  \tfrac{\be}{4\pi} M(\wt u)^2 - P(\wt u) \big] \wt v
=: \NN_{\wt v}(\wt u )
.
\label{veq2b}
\end{align}

\end{lemma}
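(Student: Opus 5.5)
The plan is a direct computation for smooth solutions. The only structural inputs are the identity for $\dt F$, the conservation of $M(u)$ and $P(u)$, and a frequency-support argument that turns $e^{i\be F}u$ into $v$ and $u$ into $w$ inside the projected trilinear terms.

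\emph{Step 1: the time derivative of $F$.} Set $\rho=|u|^2$ and $\rho_0=\P_0\rho=\frac{M(u)}{2\pi}$. Compute $\dt\rho=2\,\textup{Re}(\cj u\,\dt u)$ using \eqref{INLS2}.
\begin{itemize}
\item The dispersive term gives $\dx\big(2\,\textup{Im}(\cj u\dx u)\big)$.
\item The terms $-i\be u\GG_h(\rho)$ and $i\g\rho u$ drop out, because $\GG_h$ maps real functions to real functions.
\item The term $2\be u\P_+\dx\rho$ contributes $2\be\rho\,\textup{Re}(\P_+\dx\rho)=\be\rho\,\dx\rho$, since $\textup{Re}\,\P_+ g=\frac12\P_{\ne0}g$ for real $g$.
\end{itemize}
Hence $\dt\rho=\dx\big(2\,\textup{Im}(\cj u\dx u)+\be\rho^2\big)$. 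Applying $\dx^{-1}\P_{\ne0}$ gives
\begin{align*}
\dt F=2\,\textup{Im}(\cj u\dx u)+\be\rho^2-\tfrac1\pi P(u).
\end{align*}
Here the subtracted mean is $\frac{1}{2\pi}\int(2\,\textup{Im}(\cj u\dx u)+\be\rho^2)\,dx=\frac{P(u)}{\pi}$, by \eqref{momentum-intro}. In particular $M(u)$ is conserved (integrate $\dt\rho$), and $P(u)$ is conserved as stated in the introduction.

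\emph{Step 2: the $v$-equation.} Expand $(\dt+i\dx^2)(e^{i\be F}u)$. This equals $e^{i\be F}$ times the sum of
\begin{align*}
(\dt+i\dx^2)u,\qquad i\be(\dt F)u,\qquad -2\be F_xu_x,\qquad -\be F_{xx}u,\qquad -i\be^2F_x^2u,
\end{align*}
where $F_x=\rho-\rho_0$. Insert the formula for $\dt F$ from Step 1.
\begin{itemize}
\item The derivative terms $i\be\cdot 2\,\textup{Im}(\cj u u_x)u-2\be\rho u_x-\be u\dx\rho+2\be u\P_+\dx\rho$ combine to $-2\be u\P_-\dx\rho$.
\item The leftover $2\be\rho_0u_x$ and $i\be^2(\rho^2-(\rho-\rho_0)^2)u$ regroup as
\begin{align*}
2\be\rho_0\,e^{-i\be F}\dx(e^{i\be F}u)+i\be^2\rho_0^2u.
\end{align*}
\item Together with $-i\frac\be\pi P(u)u$, the zero-order terms give the constant $i\frac\be\pi\big[\frac{\be}{4\pi}M^2-P\big]$, and $2\be\rho_0=\frac\be\pi M$.
\end{itemize}
Applying $\Pbhip$, which commutes with multiplication by constants and with $\dx$, yields \eqref{veq}. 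The only catch is that the first term initially reads $-2\be\Pbhip[e^{i\be F}u\,\P_-\dx\rho]$.

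To replace $e^{i\be F}u$ by $v$, write $e^{i\be F}u=v+(\Id-\Pbhip)(e^{i\be F}u)$. The second piece has Fourier support in $\{\xi\le 1\}$ ($\P_{\le1}$ has support in $|\xi|\le 2$ but equals $1$ at $\xi=1$; on integers the complement of $\Pbhip$ is supported in $\xi\le1$ up to the $\xi=2$ tail, which must be checked carefully against $1-\eta$). Multiplying it by $\P_-\dx\rho$, which is supported in $\xi\le-1$, produces frequencies $\le 0$ or $\le 1$, and these are annihilated by $\Pbhip$. I expect this support bookkeeping, with the smooth cutoff $\eta$ on the integer lattice, to be the only genuinely delicate point. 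If the $\xi=2$ mode of $\Pblo$ causes trouble, I would instead check directly that $\xi_1+\xi_2\le 1$ whenever $\xi_2\le -1$ and $\eta(\xi_1)\ne0$.

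\emph{Step 3: the $w$-equation and the translated variables.} \eqref{weq} follows by applying $\P_{-,\textup{hi}}$ to \eqref{INLS2}. The same support argument gives $\P_{-,\textup{hi}}[u\P_+\dx\rho]=\P_{-,\textup{hi}}[w\P_+\dx\rho]$.

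For (ii), use that $\J_u$ is translation by $\frac\be\pi M(u)t$ with $M(u)$ constant in time. Translation commutes with products, with Fourier multipliers ($\P_\pm$, $\GG_h$, $\dx^{-1}\P_{\ne0}$, $\Pbhip$) and with $\dx$, and it preserves $M$ and $P$; this gives $\wt F=F[\wt u]$. The chain rule gives
\begin{align*}
\dt\J_u[f]=\J_u[\dt f]-\tfrac\be\pi M(u)\,\dx\J_u[f].
\end{align*}
Applied to $u$, this gives \eqref{ueq2}. Applied to $v$, the new term cancels $\frac\be\pi M(u)\dx v$ in \eqref{veq}, which gives \eqref{veq2b}.
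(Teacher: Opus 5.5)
Your proposal is correct and follows essentially the same route as the paper. You compute $\dt F$ from $\dt|u|^2$, with the $-\frac1\pi P(u)$ correction coming from the zero mode; you expand $(\dt+i\dx^2)(e^{i\be F}u)$, whose derivative terms collapse to $-2\be u\,\mathbf{P}_{-}\dx(|u|^2)$ and whose remaining terms give the transport and phase terms depending on $M(u)$ and $P(u)$; you replace $e^{i\be F}u$ by $v$ and $u$ by $w$ via frequency support; and you remove $\frac\be\pi M(u)\dx v$ with the constant-speed translation $\J_u$, using mass conservation. The paper does the same, with the expansion delegated to its earlier work on the line. The support step you flagged is not actually delicate: $\eta$ is continuous with support in $[-2,2]$, so $\eta(2)=0$, and on $\Z$ the operator $\Pbhip$ is exactly the sharp projection onto $\{\xi\ge 2\}$.
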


\begin{proof}
Part (ii) follows from \eqref{veq}-\eqref{weq}, the definition of the gauged variables \eqref{gauge2} and the conservation of mass $M(u)$ for solutions $u$ to \eqref{INLS2}. Thus, it suffices to show Part (i), which follows from a modification of the proof in \cite[Lemma 3.4]{PMP}, with more terms appearing here due to the zeroth frequency.
We only detail the changes needed in the proof of \cite[Lemma 3.5]{CFL} in the periodic setting.

Using \eqref{INLS2}, the fact that $\be, \g\in\R$, and that $\GG_h$ preserves real-valuedness, we~have
\begin{align}
\dt F & = \P_{\neq 0}[ i (u\dx \cj{u}-\cj{u}\dx u )+\be |u|^4]
= 
i (u\dx \cj{u}-\cj{u}\dx u )+\be |u|^4
-
\tfrac1\pi P(u) \label{dtF}
\end{align}
where $P(u)$ denotes the momentum in \eqref{momentum-intro}, 
and hence
\begin{align}
\dt F+ i\dx^2 F - \be (\dx F)^2 &  =
2i u \dx\cj{u} 
+ \tfrac\be\pi M(u) |u|^2 - \tfrac{\be}{4\pi^2} M(u)^2 - \tfrac1\pi P(u). 
\notag
\end{align}
Combining the above with \cite[(3.16)]{CFL} and the subsequent simplifications, gives
\begin{align*}
\dt v +i\dx^2 v &  
=
-2 \be \P_{+, \hi} [ v \P_{-}\dx(|u|^2)] + i \g \P_{+, \hi} [ e^{i\be F} |u|^2 u ] - i \be \P_{+, \hi} [ e^{i\be F} u \GG_h (|u|^2)]
\\
&
\quad 
+ i \tfrac\be\pi\big[  \tfrac{\be}{4\pi} M(u)^2 - P(u) \big] v + \tfrac\be\pi M(u) \dx v, 
\end{align*}
from which \eqref{veq} follows. The assertion for $\wt{v}$ in \eqref{veq2b} follows from the conservation of mass and momentum for $u$, $M(u(t)) = M(u_0)$ and $P(u(t))=  P(u_0)$, and direct computation.

To obtain \eqref{weq}, we simply apply $\P_{-, \text{hi}}$ to both sides of \eqref{INLS2} together with $\P_{-,\text{hi}}[ u \P_+ \dx(|u|^2)]  = \P_{-,\text{hi}}[ w \P_{+}\dx(|u|^2)]$, 
completing the proof.
\end{proof}

\begin{remark}
\rm 
The transformation $\J_u$ in \eqref{TT} and the mapping $v \mapsto \wt v$ explicitly exploit the conservation of mass $M(u)$ as in \eqref{mass}, to remove the problematic $\dx v$-term in \eqref{veq}. 
Note that \eqref{veq2b} still contains a linear $\wt v$-term, which could also be removed by considering $\wt v \mapsto e^{- i \frac\be\pi  [ \frac{\be}{4\pi} M(\wt u)^2 - P(\wt u) ] t} \wt v$, taking advantage of the conservation of both mass $M(\wt u)$ and momentum $P(\wt u)$ in \eqref{mass} and \eqref{momentum-intro}. 
However, we choose to not incorporate this additional phase rotation and work with $\wt v$ instead, as this linear term is harmless for the well-posedness analysis. 
Moreover, 
this allows us to simplify the unconditional uniqueness argument in Section~\ref{SEC:UU}, as we only need to justify conservation of mass $M(u)$ (see \eqref{MuN}, for example) for $u \in L^\infty_T H^s_x \cap L^4_T W^{s, 4}_x$, but not the conservation of momentum $P(u)$.

\end{remark}

\subsection{Properties of the gauged variables}

We have the following recovery result for the solution $u$ to \eqref{INLS} depending on the gauged variables $v,w$ in \eqref{gauge}:
\begin{align}
 u&= \Pbhip  u +  w+\Pblo  u  \notag \\
&  =\Pbhip [ e^{-i\be  F}  v] +  \Pbhip [ e^{-i\be  F} \P_{+,\text{lo}}( e^{i\be  F} u)]+    \Pbhip (e^{-i\be  F})\P_{0}(e^{i\be  F} u) \notag \\
&\hphantom{X}+ \Pbhip[ \P_{+}(e^{-i\be  F}) \P_{-}(e^{i\be  F} u)] +  w+\Pblo  u. \label{recovery}
\end{align}
Applying $\P_{\text{HI}}$ to both sides, we obtain
\begin{align}
\begin{split}
\PbHI   u & = \PbHIp[ e^{-i\be  F}  v] +\PbHIp[ \Pbhi(e^{-i\be  F}) \P_{+,\text{lo}}( e^{i\be   F} u)]\\
& \hphantom{X}+    \PbHIp(e^{-i\be  F})\P_{0}(e^{i\be  F}  u )  + \PbHIp[ \P_{+}(e^{-i\be  F}) \P_{-}(e^{i\be  F}  u)]  + \PbHI  w. 
\end{split} \label{PbHIu}
\end{align}
Note that in the second term, the projectors $\PbHI$ and $\Pblo$ allow us to place for free an extra projector $\Pbhi$ onto the first factor $e^{-i\be  F}$.
Moreover, the recovery formulas in \eqref{recovery} and \eqref{PbHIu} also hold for the gauged variable $\wt u$ via applying the translation operator $\TT_u$ in \eqref{TT}, replacing every instance of $u, v, w, F$ with $\wt u, \wt v, \wt w, \wt F$ as in \eqref{gauge2}, respectively.

In the following, we obtain estimates for $u$ (resp. $\wt u$) in terms of the gauged variables $v$ and $w$ in \eqref{gauge} (resp. $\wt v$ and $\wt w$ in \eqref{gauge2}). To get estimates with small constants, we need to refine the estimate for the $L^{\infty}_{T}H^s_x$-norm in \eqref{uHsbd}, and we employ an idea from \cite[Proposition 3.2]{GLM}.

\begin{lemma}
\label{LEM:uinfo}
Let $\frac 12 \leq s<2$, $0\leq T\leq 1$, $u$ be a solution to \eqref{INLS2} on $[0,T]$, 
$v,w$ be the gauged variables in \eqref{gauge},
and
$\wt u, \wt v, \wt w$ be as in~\eqref{gauge2}.

\noi{\rm(i)} The following estimate holds:
\begin{align}
\begin{split}
 \sup_{0 \le \ta \le 1} \|  u\|_{X^{ s-\ta, \ta}_{T}} 
+
\sup_{0 \le \ta \le 1} \|  \wt u\|_{X^{ s-\ta, \ta}_{T}} 
& \les 
T^{\frac 14}(1+ \|\GG_{h}\|_{\op})  
\| u\|_{L^{\infty}_{T}H^{\frac 12}_x}^2 
\|J^s_x  u\|_{\wt{L^{4}_{T,x}}} 
\\
& 
\quad 
+ T^{\frac 14} \|J_x^{\frac 12} u\|_{L^{4}_{T,x}} \| u\|_{L^{\infty}_{T}H^{\frac 12}_x} \|  u\|_{L^{\infty}_{T}H^{s}_x}
+
 \| u\|_{L^{\infty}_{T}H^{s}_{x}}
.
\end{split}
\label{Xsbu}
\end{align}

\smallskip
\noi{\rm(ii)}
Given $\ta>0$ and $M\in\N$ sufficiently large, we define the quantities
\begin{align}
G_1(\uu, \vv,\ww)
& :=
T^{\frac 18}
( 1+\| \uu\|_{L^{\infty}_{T}H^{\frac 12}_{x}}^2 + 
\| \uu\|_{L^{\infty}_{T}H^{\frac 12}_{x}}
\| \uu\|_{L^\infty_T H^{\max(\frac12,s-\frac 34+)}_{x}})
\| \vv\|_{X^{s,\frac 12}_{T}}   
\nonumber
\\
& 
\quad 
+ T^{\frac 14} (1+\|\uu\|_{L^{\infty}_{T}H^{ \frac12}_{x}}^5 )
\|\uu\|_{L^{\infty}_{T}H^{ \frac12}_{x}}^2
\|\uu\|_{L^{\infty}_{T}H^{ \max(\frac12, s-\frac{3}{4}+)}_{x}} + T^{\frac 18} \| \ww\|_{X^{s, \frac 12}_{T}}  
,
\notag
\\
G_2(\uu,\vv,\ww)
&
:=
\| \uu(0) \|_{H^{s}} 
 + \| \vv \|_{X^{s,\frac 12+}_T}
+\| \ww \|_{X^{s,\frac 12+}_T} 
\nonumber
\\
&\qquad+
T^{\frac 34}M^3 ( 1 + \| \GG_h\|_{\op}) \| \uu\|_{L^\infty_T H^{ \frac12}_{x}}^2( \|\uu\|_{L^\infty_T H^{ \frac12}_x}+  \| J_x^{\frac 12} \uu \|_{\wt{L^4_{T,x}}})
\nonumber
\\
&
\qquad+ 
 M^{-\ta}( 1 +  \| \uu\|_{L^\infty_T H^{ \frac12}_x}^4 ) (1 +  \|\vv\|_{X^{s,\frac 12+}_{T}} )  \| \uu\|_{L^\infty_T H^{ \max(\frac12, s-\frac34+)}_x}.
\notag
\end{align}
Then, the following estimates hold
\begin{align}
\| J_x^{s}\PbHI u\|_{\wt{L^{4}_{T,x}}} 
 &
\les 
G_1(u, \wt v, w)
, 
&
\| J_x^{s}\PbHI \wt u\|_{\wt{L^{4}_{T,x}}} 
& \les   
G_1(\wt u, \wt v, w)
, 
\label{Jsu}
\\
\|  u\|_{L^{\infty}_{T}H^{s}_x} 
&
\les 
G_2( u, \wt v, w) 
, 
&
\| \wt u \|_{L^\infty_T H^s_x}
&
\les 
G_2 (\wt u , \wt v, w)
.
\label{uHsbd}
\end{align}

\end{lemma}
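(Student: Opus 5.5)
For part (i), I would interpolate between the endpoints $\ta=0$ and $\ta=1$, since $\|u\|_{X^{s-\ta,\ta}_T}\le \|u\|_{X^{s,0}_T}^{1-\ta}\|u\|_{X^{s-1,1}_T}^{\ta}$. The $\ta=0$ endpoint is $\|u\|_{L^2_TH^s_x}\le \|u\|_{L^\infty_TH^s_x}$. For $\ta=1$, the $X^{s-1,1}_T$-norm is controlled by $\|u\|_{L^2_TH^{s-1}_x}+\|(\dt+i\dx^2)u\|_{L^2_TH^{s-1}_x}$, and I substitute the equation \eqref{INLS2}. The derivative nonlinearity $u\,\P_+\dx(|u|^2)=u\,\P_+(u\dx\cj u+\cj u\dx u)$ is treated with \eqref{prodest1}, putting $\P_+$ and the undifferentiated factors into the $(u_1u_2)$ slot. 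This produces $\|u\|_{H^{1/2}}^2\|J^su\|_{\wt{L^4}}+\|u\|_{H^{1/2}}\|u\|_{H^s}\|J^{1/2}u\|_{\wt{L^4}}$ pointwise in time. Integrating in $L^2_T$ and applying Hölder in time gives the factor $T^{1/4}$ from $L^4_T\subset L^2_T$.

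The remaining terms are easier. The term $u\,\GG_h(|u|^2)$ uses $\|\GG_h\|_{\op}$ (the operator $\GG_h$ is bounded, since $\coth(h\xi)-\sgn\xi$ decays exponentially), and the cubic term uses the fractional Leibniz rule with Sobolev embedding; both fit within the same bound. For $\wt u$ the argument is identical using \eqref{ueq2}. The extra term $\frac\be\pi M(\wt u)\dx\wt u$ in $H^{s-1}$ costs only $\|u\|_{L^2}^2\|u\|_{H^s}$. Alternatively, since $\J_u$ is a translation, I would note that $X^{s-\ta,\ta}$-type bounds transfer after accounting for this drift. All the Lebesgue/Sobolev norms involved are translation invariant.

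For \eqref{Jsu}, I apply $J^s$ to the recovery formula \eqref{PbHIu}. The main term $\PbHIp[e^{-i\be F}v]$ is treated by paraproduct splitting. When $e^{-i\be F}$ is at low frequency and $v$ at high frequency, the term is bounded by $\|J^s v\|_{\wt{L^4}}\le T^{1/8}\|v\|_{X^{s,\frac12}_T}$ via \eqref{L4}. Since $\J_u$ commutes with spatial norms, $v$ can be replaced by $\wt v$. When $e^{-i\be F}$ is at high frequency, I use \eqref{LinftyE11} and \eqref{LpE}, which gain $N^{-1+}$ and produce the $\|u\|_{H^{\frac12}}\|u\|_{H^{\max(\frac12,s-\frac34+)}}$ factor. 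The other terms in \eqref{PbHIu} involve $\Pbhi e^{-i\be F}$ multiplied by low-frequency factors like $\P_{+,\mathrm{lo}}(e^{i\be F}u)$, $\P_0(\cdot)$, or the $\P_+(e^{-i\be F})\P_-(e^{i\be F}u)$ high-high interaction. These are estimated by \eqref{LpE} and Lemma~\ref{LEM:L2prod} together with Bernstein, giving the pure-$u$ term with $T^{1/4}$ from Hölder in time. The $w$ term gives $T^{1/8}\|w\|_{X^{s,\frac12}_T}$.

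For \eqref{uHsbd}, a direct estimate gives no smallness, so, following \cite{GLM}, I split $\P_{\le M}u+\P_{>M}u$. The low part is bounded by $\|u(0)\|_{H^s}$ plus $\int_0^t\|\P_{\le M}\dt u\|_{H^s}$. Using the equation and Bernstein, the latter costs $M^3$ times the cubic quantities times a power of $T$, which yields the $T^{3/4}M^3$ term. For the high part I use \eqref{PbHIu} with $X^{s,\frac12+}_T\subset C_TH^s$ for $\wt v$ and $w$. The term $\P_{>M}[\P_{\ll M}(e^{-i\be F})\P_{\gtrsim M}v]$ is simply bounded by $\|\wt v\|_{X^{s,\frac12+}}$, so no smallness is needed there. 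Every term with $e^{-i\be F}$ at frequency $\gtrsim M$ gains $M^{-\ta}$ via \eqref{LinftyE11}, \eqref{LpE} and Lemma~\ref{LEM:L2prod}. The main obstacle I expect is bookkeeping the high-frequency-exponential contributions at $s$ close to $2$. There the derivatives must fall on $f$ inside $e^{iF}$, and I would track them through \eqref{eFg} to ensure only $\|u\|_{H^{\max(\frac12,s-\frac34+)}}$ appears.
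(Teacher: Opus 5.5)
Your plan follows the paper's proof in outline: the Molinet--Pilod bound $\sup_\ta\|u\|_{X^{s-\ta,\ta}_T}\les\|(\dt+i\dx^2)u\|_{L^2_TH^{s-1}_x}+\|u\|_{L^\infty_TH^s_x}$ for (i), the recovery formula \eqref{PbHIu} with translation invariance of spatial norms (to pass from $v$ to $\wt v$) for \eqref{Jsu}, and the $\P_{\le M}/\P_{>M}$ splitting for \eqref{uHsbd}. However, two steps fail as you wrote them.

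The first is the main term in (i). Expanding $\dx(|u|^2)=u\dx\cj u+\cj u\dx u$ leaves the differentiated factor \emph{inside} $\P_+$. So $u\,\P_+(\cj u\,\dx u)$ is not of the form $\P_\pm(u_1u_2)\dx u_3$ that \eqref{prodest1} requires. You also cannot move $\P_+$ onto the undifferentiated factors without a commutator estimate you do not have. You must first take the derivative off $|u|^2$:
\[
u\,\P_+\dx(|u|^2)=\dx\big(u\,\P_+(|u|^2)\big)-\P_+(|u|^2)\,\dx u .
\]
The first piece costs $\|u\,\P_+(|u|^2)\|_{H^s}\les\|u\|_{H^{1/2}}^2\|J^s_xu\|_{L^4_x}$ by fractional Leibniz and Sobolev. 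The second is exactly \eqref{prodest1} with $(u_1,u_2,u_3)=(u,\cj u,u)$, and H\"older in time then gives the $T^{1/4}$. The same identity is needed for the low-frequency part of \eqref{uHsbd}. There, the high$\times$high$\to$low interactions in $\P_{\le M}[\P_+(|u|^2)\dx u]$ bring in $\|J^{1/2}_xu\|_{\wt{L^4_{T,x}}}$. That norm is only available in $L^4_T$, which is why $G_2$ has $T^{3/4}$ rather than $T$.

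The second is the low-frequency bound $\|\P_{\le M}u(t)\|_{H^s}\le\|u(0)\|_{H^s}+\int_0^t\|\P_{\le M}\dt u\|_{H^s}$. It picks up the dispersive term $-i\dx^2u$, i.e.\ a linear contribution $TM^2\|u\|_{L^\infty_TH^s_x}$ that does not appear in $G_2$. Use the Duhamel formula and unitarity of $S(t)$ on $H^s$ instead, so that only the nonlinearity is integrated in time. For $\wt u$, the extra drift term $\frac{\be}{\pi}M(\wt u)\dx\wt u$ is then cubic in $\wt u$ and harmless.

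A minor point in (i): you cannot interpolate the restriction norms $X^{s,0}_T$ and $X^{s-1,1}_T$ separately, because their near-optimal extensions differ. You need one extension controlling both endpoints, e.g.\ $\eta(t)S(t)y(t)$ with $y=S(-t)u$ on $[0,T]$ frozen outside $[0,T]$. This is why the $\ta=0$ endpoint ends up as $\|u\|_{L^\infty_TH^s_x}$.

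With these corrections your argument is the paper's.
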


\begin{proof}

We first prove \eqref{Xsbu} in (i) by arguing as in \cite[Proposition 3.2]{MP}.
The main point is that we have
\begin{align}\label{uXsb1a}
\sup_{0\le \ta \le 1} 
\| u\|_{X_{T}^{ s-\ta, \ta}} \les \| \dt u +i\dx^2 u\|_{L^{2}_{T}H^{ s-1}_{x}} + \| u\|_{L^{\infty}_{T}H^{ s}_{x}}.
\end{align}
To estimate the first contribution above, we consider the equation for $ u$ \eqref{INLS2}. Let $\mathcal{Q}_{h} \in \{ \text{Id}, \mathcal{G}_{h}\}$.
When $s\leq 1$, Sobolev embedding and the fractional Leibniz rule give
\begin{align}
\label{uXsb1aa}
\begin{split}
\| \mathcal{Q}_{h}( | u|^2)  u\|_{L^{2}_{T}H^{s-1}_x}
\leq \| \mathcal{Q}_{h}(| u|^2)  u\|_{L^{2}_{T,x}} 
& \les T^\frac14 \|  \mathcal{Q}_{h}( | u|^2)\|_{L^{\infty}_{T}L^{4}_x} \| u\|_{L^{4}_{T,x}} \\
& \les T^{\frac 14} \|\mathcal{Q}_{h}\|_{\text{op}}  \|J_x^{\frac 14} ( | u|^2)\|_{L^{\infty}_{T}L^{2}_x} \|  u\|_{L^{4}_{T,x}} \\
& \les  T^{\frac 14} \|\mathcal{Q}_{h}\|_{\text{op}} \| u\|_{L^{\infty}_{T}H^{\frac 38}_x}^2 \| u\|_{L^{4}_{T,x}}
.
\end{split}
\end{align}
When $s\in (1,2)$, we have 
\begin{align}
\label{uXsb1ab}
\begin{split}
\| \mathcal{Q}_{h} (| u|^2) u\|_{L^{2}_{T}H^{s-1}_x} 
& 
\les \| \mathcal{Q}_{h}(| u|^2) u\|_{L^{2}_{T, x}}
+
\| \mathcal{Q}_{h}(| u|^2) \dx  u\|_{L^{2}_{T, x}} + \| \mathcal{Q}_{h}\dx(| u|^2)  u\|_{L^{2}_{T}L^{2-}_{x}} \\
& 
\les T^{\frac 14} \| \mathcal{Q}_{h}(| u|^2) \|_{L^{\infty}_{T}L^{4}_{x}} \| J_x  u\|_{L^{4}_{T, x}} 
+  \| \mathcal{Q}_{h} \dx(| u|^2)\|_{L^{2}_{T, x}} \| u\|_{L^{\infty}_{T}L^{\infty-}_{x}}  \\
& \les  T^{\frac 14} \| \mathcal{Q}_{h}\|_{\text{op}} \|u\|_{L^{\infty}_{T}H^{\frac 12}_x}^2 \| J_x^{s}u\|_{L^{4}_{T,x}}
.
\end{split}
\end{align}
By \eqref{prodest1}, Minkowski's and H\"{o}lder's inequality, we have 
\begin{align*}
\| 2\be u \P_{+}\dx(| u|^2)\|_{L^{2}_{T}H^{s-1}_x}\les T^{\frac 14} \| u\|_{L^{\infty}_{T}H^{\frac 12}_{x}}^2 \|J^{s}_x  u\|_{\wt{L^{4}_{T,x}}} + T^{\frac 14} \|J^{\frac 12}_x  u\|_{L^{4}_{T,x}} \| u\|_{L^{\infty}_{T}H^{\frac 12}_x} \| u\|_{L^{\infty}_{T}H^{s}_x}.
\end{align*}
Combining the estimates above, we obtain \eqref{uXsb1a}
for $u$. 
The bound for $\wt u$ follows from \eqref{uXsb1a} and using the equation \eqref{ueq2} for the first term. Comparing \eqref{INLS} and \eqref{ueq2}, there is only one additional term $-\frac\be\pi M(\wt u) \dx \wt u$, whose $L^2_T H^{s-1}_x$-norm is controlled by the first term in \eqref{Xsbu}. 
This completes the proof of \eqref{Xsbu}.

We now prove the first estimate in \eqref{Jsu} in (ii). 
First, we prove some auxiliary estimates on $v$, $w$, and $|u|^2$. By \eqref{L4} with
\eqref{gauge2}, we have
\begin{align}
\| J_x^{s} v\|_{\wt{L^{4}_{T,x}}}=\| J_x^{s} \wt{v}\|_{\wt{L^{4}_{T,x}}} \les  T^{\frac 18} \|\wt{v}\|_{X^{s, \frac12}_{T}} \quad \text{and}\quad 
\| J_x^{s}  w\|_{\wt{L^{4}_{T, x}}}=\| J_x^{s} \wt{w}\|_{\wt{L^{4}_{T,x}}} \les  T^{\frac 18} \|w\|_{X^{s, \frac12}_{T}}.
 \label{LpLqvw}
\end{align}

By the recovery formula in \eqref{PbHIu} and the triangle inequality, we~have
\begin{align}
\begin{split}
\| \P_{N}\PbHI u\|_{L^{4}_{T,x} }
& 
\les 
\| \P_{N}\PbHIp[ e^{-i\be F} v]\|_{L^{4}_{T,x}}
+\|\P_{N} \PbHIp[ \Pbhi(e^{-i\be F}) \P_{+,\text{lo}}( e^{i\be F} u)]\|_{L^{4}_{T,x}}
 \\
& \hphantom{XX} 
+
\|\P_{0}(e^{i\be F}u)\|_{L^\infty_T} \| \P_{N}\PbHIp (e^{-i\be F})\|_{L^{4}_{T,x}} 
 \\
&\hphantom{XX} 
+\|\P_{N}\PbHIp [ \P_{+}(e^{-i\be F}) \P_{-}(e^{i\be F}u)] \|_{L^{4}_{T,x}} + \|\P_{N}\PbHI w\|_{L^{4}_{T,x}}
\\
&
=: \I_N+\II_N+\III_N+\IV_N+ \|\P_{N}\PbHI w\|_{L^{4}_{T,x}}
\end{split} \label{Jsu1}
\end{align}
and we need to estimate each of these terms. 
For the first contribution in \eqref{Jsu1}, we have
\begin{align}
\I_N \sim \|\P_{N}[ e^{-i\be F} v]\|_{L^{4}_{T,x}} \les \|\P_{\ll N}(e^{-i\be F}) \wt{\P}_{N} v\|_{L^{4}_{T,x}} + \|\P_{N}\big[\P_{\ges N}(e^{-i\be F}) v \big]\|_{L^{4}_{T,x}}.
\label{INa}
\end{align}

\noi 
Then, summing the first term in \eqref{INa}, we get
\begin{align*}
\sum_{N} N^{2s} \|\P_{\ll N}(e^{-i\be F}) \wt{\P}_{N} v\|_{L^{4}_{T,x}}^{2} \les  \| J^{s}_x v\|^2_{\wt{L^{4}_{T,x}}}
.
\end{align*}
For the second term in \eqref{INa}, by
\eqref{LinftyE11}, we have 
\begin{align*}
 \|\P_N [\P_{\ges N}(e^{-i\be F}) v] \|_{L^{4}_{T,x}} \les \| \P_{\ges N}(e^{-i\be F})\|_{L^{\infty}_{T, x}} \| v\|_{L^4_{T,x}} \les N^{-1+} \|u\|_{L^{\infty}_{T}H^{\frac 12}_x}^2  \| J_x^{\frac 12}v\|_{L^4_{T,x}}
 ,
\end{align*}
so that if $s<1$, then
\begin{align*}
\sum_{N} N^{2s}  \| \P_N [\P_{\ges N}(e^{-i\be F}) v]\|_{L^{4}_{T,x}}^2 \les  \|u\|_{L^{\infty}_{T}H^{\frac 12}_x}^4  \| J_x^{\frac 12}v\|_{L^4_{T,x}}^2.
\end{align*}
When $1\leq s<2$, using \eqref{LinftyE11}, H\"older's inequality, and Sobolev embedding, we obtain
\begin{align*}
&
\|\P_{N}\big[\P_{\ges N}(e^{-i\be F}) v \big]\|_{L^{4}_{T,x}}
\\
& \les N^{-1} \|\dx \P_{N}\big[\P_{\ges N}(e^{-i\be F}) v \big]\|_{L^{4}_{T,x}} \\
& \les N^{-1} \Big( \| \P_{\gg N}(e^{-i\be F})\|_{L^{\infty}_{T, x}} \| \P_{\ne0} (|u|^2) v\|_{L^{4}_{T, x}} + \|\P_{\ges N}u\|_{L^{\infty}_{T}L^{4+}_x} \| u v\|_{L^{4}_{T}L^{\infty-}_{x}}  \\
& \hphantom{XXXXXX}+ \| \P_{\ges N}(e^{-i\be F}) \|_{L^{\infty}_{T, x}} \| \dx v\|_{L^{4}_{T,x}} \Big) \\
& \les N^{-2+} (1+\|u\|_{L^{\infty}_{T}H^{\frac 12}_x}^{2})\|u\|_{L^{\infty}_{T}H^{\frac 12}_x}^{2} \|J_x v\|_{L^{4}_{T,x}} \\
& \hphantom{XXX} +N^{-\frac 34 -\max(\frac 12,s-\frac 34+)} \|u\|_{L^{\infty}_{T}H^{\frac 12}_x} \|u\|_{L^{\infty}_{T}H^{\max(\frac 12, s-\frac 34+)}_x} \|J^{\frac 12}_x v\|_{L^{4}_{T,x}}.
\end{align*}
Therefore, combining the estimates above with and summing in $N$, gives
\begin{align}
\bigg(\sum_{N} N^{2s} \I_N^2 \bigg)^{\frac 12}  \les T^{\frac 18} \big(
1+\|u\|_{L^{\infty}_{T}H^{\frac 12}_x}^{2} 
+ \| u\|_{L^\infty_T H^{\frac12}_x }
\|u\|_{L^\infty_T H^{ \max(\frac12, s-\frac34+)}_x}
\big)
\| J^s_x v \|_{\wt{L^4_{T,x}}}.
\label{Jsu2Ia}
\end{align}

Considering the third contribution in \eqref{Jsu1},
we estimate the first factor 
by \eqref{P0} and Cauchy-Schwarz:
\begin{align}
\label{P0e}
\|\P_{0}(e^{i\be F}u) \|_{L^\infty_T} & \les \| u\|_{ L^\infty_T L^2_x}.
\end{align}
For the second factor, 
if $s < 1$, using \eqref{LinftyE11},  we have
\begin{align}
\bigg(\sum_{N} N^{2s} \| \P_{N}\P_{+,\HI} (e^{-i\be F})\|_{L^{4}_{T,x}}^2 \bigg)^{\frac 12}
\les
T^{\frac14} \|u\|_{L^\infty_T H^{ \frac12}_x}^2,
\label{Jsu211b}
\end{align}
while for $1 \leq  s \le 2$, using \eqref{LpE} with $p=4$, we obtain
\begin{align}
\begin{split}
\bigg(\sum_{N}N^{2s}\| \P_{N}\P_{+,\HI} (e^{-i\be F})\|_{L^{4}_{T,x}}^2 \bigg)^{\frac 12}
&
\les  \bigg(\sum_{N}N^{2s-4+} \bigg)^{\frac 12} T^{\frac 14}  \|u\|_{L^{\infty}_{T}H^{\frac 12}_x}^4  \\ 
& \hphantom{X} +T^{\frac 14} \bigg(\sum_{N}N^{2s-2+} \| J_{x}^{\frac 14}\P_{\ges N}u\|_{L^{\infty}_{T}L^{2}_x}^2 \bigg)^{\frac 12} \| u\|_{L^{\infty}_{T}H^{\frac 12}_x} \\
& \les T^{\frac 14}\|u\|_{L^{\infty}_{T}H^{\frac 12}_x}\big[   \|u\|_{L^{\infty}_{T}H^{\frac 12}_x}^3+ \|  u\|_{L^{\infty}_{T}H^{s-\frac 34+}_x}\big].
\end{split}
\label{Jsu211c}
\end{align}
Thus, combining \eqref{P0e}, \eqref{Jsu211b}, and \eqref{Jsu211c}, we conclude that
\begin{align}
\label{Jsu2III}
\bigg( \sum_{N} N^{2s} \III_{N}^2 \bigg)^{\frac 12} 
&
\les
T^{\frac14} 
\|u\|_{L^{\infty}_{T}H^{\frac 12}_x}^2 \big[   \|u\|_{L^{\infty}_{T}H^{\frac 12}_x}^3 + \|  u\|_{L^{\infty}_{T}H^{\max(\frac 12, s-\frac 34+)}_x}\big]. 
\end{align}

For the second term in \eqref{Jsu1}, we notice that by impossibility of frequency interactions, we must have    
\begin{align*}
\II_{N}
&=\|\P_{N} \PbHIp[ \wt{\P}_{N} \Pbhi(e^{-i\be F}) \P_{+,\text{lo}}( e^{i\be F} u)]\|_{L^{4}_{T,x}},
\end{align*}
and thus by \eqref{Jsu211b}, \eqref{Jsu211c}, and Bernstein's inequality, we have 
\begin{align}
\begin{split}
\bigg( \sum_{N} N^{2s} \II_{N}^2 \bigg)^{\frac 12} 
& \les  \|  \P_{+,\text{lo}}( e^{i\be F} u)\|_{L^{\infty}_{T,x}} \bigg( \sum_{N}N^{2s} \| \wt{\P}_{N} \Pbhi(e^{-i\be F})\|_{L^{4}_{T,x}}^{2} \bigg)^{\frac 12} \\
& \les T^{\frac14} 
\|u\|_{L^{\infty}_{T}H^{\frac 12}_x}^2 \big[   \|u\|_{L^{\infty}_{T}H^{\frac 12}_x}^3+ \|  u\|_{L^{\infty}_{T}H^{\max(\frac 12, s-\frac 34+)}_x}\big].
\end{split} \label{Jsu2II}
\end{align}

Next, we consider $\IV_{N}$ in \eqref{Jsu1}. Using Bernstein's inequality and \eqref{LinftyE11}, we have 
\begin{align}
\| \P_{M}(e^{i\be F}u)\|_{L^{\infty}_{x}} & \leq \| \P_{M}[\P_{\ll M} e^{i\be F}\cdot  \wt{\P}_{M}u]\|_{L^{\infty}_{x}} + \|\P_{M} [ \P_{\ges M}e^{i\be F} \cdot u]\|_{L^{\infty}_{x}} 
\nonumber
\\
& \les  \|\wt{\P}_{M} u\|_{L^{\infty}_{x}}   + M^{\frac 12} \|\P_{\ges M}e^{i\be F}\|_{L^{\infty}_x} \|u\|_{L^2_x} 
\nonumber
\\
& \les \|u\|_{H^{\frac 12}_x}+ M^{-\frac 12+} \|u\|_{H^{\frac 12}_x}^{3}.
\label{IVNa}
\end{align}
Therefore, if $s<1$, we have from \eqref{LinftyE11} and \eqref{IVNa} that
\begin{align}
\IV_{N} & \les \sum_{N_1 \ges N\vee N_2} \| \P_{N_1}e^{-i\be F} \|_{L^{4}_{T,x}} \| \P_{N_2} (e^{i\be F}u)\|_{L^{\infty}_{T,x}} \notag \\
& \les N^{-1+} T^{\frac 14} \|u\|_{L^{\infty}_{T}H^{\frac 12}_x}^{3}(1+\|u\|_{L^{\infty}_{T}H^{\frac 12}_x}^{2}),
 \notag
\end{align}
while if $1\leq s <2$, we instead use \eqref{LpE} which implies 
\begin{align*}
\IV_{N} & \les T^{\frac 14} N^{-s-} \|u\|_{L^{\infty}_{T}H^{\frac 12}_x}^{2}(1+\|u\|_{L^{\infty}_{T}H^{\frac 12}_x}^2) ( \| u\|_{L^\infty_T H^\frac12_x}^3+ \|u\|_{L^{\infty}_{T}H^{s-\frac 34+}_x}).
\end{align*}
Combining the two estimates above, we get
\begin{align}
\bigg( \sum_{N} N^{2s} \IV_{N}^{2} \bigg)^{\frac 12} 
\les 
T^{\frac14}
( 1 + \| u \|_{L^\infty_T H^{ \frac12}_x}^5 ) \| u \|_{ L^\infty_T H^{ \frac12}_x }^2
 \| u\|_{L^\infty_T H^{ \max(\frac 12, s - \frac34+) }_x} 
. 
\label{Jsu2IV}
\end{align}

\noi
Therefore, the first estimate in \eqref{Jsu} follows once we combine \eqref{Jsu1}, \eqref{Jsu2Ia}, 
\eqref{Jsu2II}, 
\eqref{Jsu2III},
\eqref{Jsu2IV}, 
together
with 
\eqref{LpLqvw} for the $v$ term in \eqref{Jsu2Ia} and \eqref{L4} for the last term in \eqref{Jsu1}.
The second estimate in \eqref{Jsu} follows from the approach above, since the recovery formula \eqref{PbHIu} extends to $\wt u$ by replacing all $v,w$ with $\wt v, \wt w$, where we apply \eqref{L4} to the corresponding $\wt v$ term in \eqref{Jsu2Ia} and to the $\wt w$ term in \eqref{Jsu1} as follows
$$\| J^s_x \P_\HI \wt w\|_{\wt{L^4}_{T,x}} = \| J^s_x \P_\HI w \|_{\wt{L^4_{T,x}}} \les \| w\|_{X^{s, \frac38}_T},  $$
completing the proof of \eqref{Jsu}.

We now move on to the first estimate in \eqref{uHsbd}. We first control the high-frequency portion of $u$ using \eqref{PbHIu}. If we followed the same strategy as in \eqref{Jsu1}, we would no longer obtain extra factors of $T$ from the terms $\II-\IV$, so we need to refine the estimates for those terms. Moreover, our estimate for $\I$ includes contributions from the norms of $u$ which causes issues in the bootstrap argument if we do not assume small data. We need to ensure there is a small factor at the front of such terms.  Fix $M\in \N$ large, and decompose $ u = \P_{\leq M}u+\P_{>M}u$.
For the contribution from $\P_{\leq M}u$, we use the Duhamel formula for $u$ and H\"{o}lder's inequality in time to get
\begin{align}
\begin{split}
\|\P_{\leq M} u \|_{L^{\infty}_{T}H^{s}_x}
\les& \| u_0\|_{H^{s}} +  T^{\frac 34}\| \P_{\leq M} [u \P_{+}\dx(|u|^2)]\|_{L^{4}_{T}H^{s}_x} \\
& + T\big\{\| \P_{\leq M}( u \GG_{h}(|u|^2))\|_{L^{\infty}_{T}H^{s}_x}  
 +  \| \P_{\leq M}( |u|^2 u)\|_{L^{\infty}_{T}H^{s}_x} \big\}.
 \end{split}
\label{JsuPLO}
\end{align}

\noi
The third and fourth terms in \eqref{JsuPLO} are easily bounded by
\begin{align}
\begin{split}
TM^{s+1} (1+\|\GG_{h}\|_{\text{op}})  \|u\|_{ L^{\infty}_{T}H^{\frac 12}_x}^{3}
,
\end{split} 
\notag
\end{align}

\noi
which can be proved easily using similar ideas to Lemma~\ref{LEM:dxs1}~(i).
Then, using the projection $\P_{\leq M}$ followed by \eqref{prodest1}, Sobolev embedding, and Minkowski's inequality, we have
\begin{align*}
\| \P_{\leq M}[ u \P_{+}\dx(|u|^2)]\|_{L^{4}_{T}H^{s}_x}& \les
\| \P_{\le M } \dx [ u \P_+(|u|^2) ] \|_{L^4_T H^s_x} 
+ 
\| \P_{\le M } [ \dx u \cdot \P_+(|u|^2)] \|_{L^4_T H^s_x}
\\
&
\les 
M^{1+s} 
\big\{ 
\| u \P_+(|u|^2) \|_{L^4_T L^2_x} 
+ 
\| \dx u \cdot \P_+(|u|^2) \|_{L^4_T H^{-1}_x}
\big\}
\\
& \les M^{3}\|u\|_{L^{\infty}_{T}H^{\frac 12}_x}^2(T^{\frac 14}\|u\|_{L^{\infty}_{T}H^{\frac 12}_x} + \|J_x^{\frac 12}u\|_{L^4_{T}\wt{L^4_x}} )\\
&\les M^{3}\|u\|_{L^{\infty}_{T}H^{\frac 12}_x}^2 (\|u\|_{L^{\infty}_{T}H^{\frac 12}_x}+\|J_x^{\frac 12}u\|_{\wt{L^4_{T,x}}} ),
\end{align*}
using $s\leq 2$.
Combining these estimates, we have
\begin{align}\label{uHsPM}
\| \P_{\leq M} u \|_{L^\infty_T H^s_x} 
& 
\les 
\|u_0\|_{H^s} + T^{\frac 34} M^{3}(1 + \|\GG_{h}\|_{\op})\|u\|_{L^{\infty}_{T}H^{\frac 12}_x}^2 (\|u\|_{L^{\infty}_{T}H^{\frac 12}_x}+\|J_x^{\frac 12}u\|_{\wt{L^4_{T,x}}} ).
\end{align}

Now we consider the contribution from $\P_{>M}u$ for which we use \eqref{PbHIu}, leading to the terms in \eqref{Jsu1} with this additional projection $\P_{>M}$ and in $L^{\infty}_{T}H_x^s$. 
For the contribution of the first term in \eqref{PbHIu}, we have
\begin{align}
\begin{split} 
\| \P_{>M}J_x^s &\PbHIp[ e^{-i\be F}v]\|_{L^{\infty}_{T}L^2_x}  \\
& \les \| \P_{>M}J_x^s[\P_{\ll M}( e^{-i\be F}) \P_{\ges M }v]\|_{L^{\infty}_{T}L^2_x}  + \| \P_{>M}J_x^s[\P_{\ges M}( e^{-i\be F})v]\|_{L^{\infty}_{T}L^2_x}. 
\end{split} \label{rec1term}
\end{align}

\noi
For the first term in \eqref{rec1term}, by the fractional Leibniz rule, and \eqref{LpE}, we have
\begin{align}
\begin{split}
&
\| \P_{>M} 
J_x^s[\P_{\ll M}( e^{-i\be F}) \P_{\ges M }v]\|_{L^{\infty}_{T}L^2_x} 
\\
& \les \|J_x^{s} \P_{\ll M}(e^{-i\be F})\|_{L^{\infty}_{T}L^{4}_{x}} \|\P_{\ges M}v\|_{L^{\infty}_{T}L^4_x} 
+ \|\P_{\ll M}(e^{-i\be F})\|_{L^{\infty}_{T,x}} \|J_x^{s}\P_{\ges M}v\|_{L^{\infty}_{T}L^2_x} \\
& \les M^{-\frac 14}\|u\|_{L^{\infty}_{T}H^{\frac 12}_x}(1+\|u\|_{L^{\infty}_{T}H^{\frac 12}_x}^3+  \|u\|_{L^{\infty}_{T}H^{\max(\frac 12, s-\frac 34+)}_x}  )\|v\|_{L^\infty_T H^{\frac12}_x} + \|  v \|_{L^\infty_T H^s_x}, 
\end{split}
\label{rec1term1}
\end{align}
for $s<2$.
Next, in a similar way, for the second term in \eqref{rec1term}, we get
\begin{align}
\begin{split}
\|& \P_{>M}J_x^s[\P_{\ges M}( e^{-i\be F})v]\|_{L^{\infty}_{T}L^2_x}  \\
& \les \| J_x^{s}\P_{\ges M}e^{-i\be F}\|_{L^{\infty}_{T}L^{2+}_{x}} \|v\|_{L^{\infty}_{T}L^{\infty-}_{x}} + \|\P_{\ges M}e^{-i\be F}\|_{L^{\infty}_{T,x}}\|J^{s}_x v\|_{L^{\infty}_{T}L^2_x} \\
& \les M^{-\ta} (1+\|u\|_{L^{\infty}_{T}H^{\frac 12}_x}^3)
\|u\|_{L^{\infty}_{T}H^{\max(\frac 12,s-\frac 34+)}_x} \| {v}\|_{L^\infty_T H^s_x}  ,
\end{split} \label{rec1high}
\end{align}
for some $\ta>0$, 
where we used \eqref{LpE}.

For the contribution from the second term in \eqref{PbHIu}, from the $\Pblo$ and for $M$ sufficiently large, we note that 
\begin{align*}
\|\P_{>M}&J_x^s \PbHIp[ \Pbhi(e^{-i\be F}) \P_{+,\text{lo}}( e^{i\be F} u)]\|_{L^{\infty}_{T}L^2_x} \\
& \les \|J_x^s \PbHIp[ \P_{\ges M}(e^{-i\be F}) \P_{+,\text{lo}}( e^{i\be F} u)]\|_{L^{\infty}_{T}L^2_x}.
\end{align*}
Then, we use the fractional Leibniz rule and similar estimates as in \eqref{rec1high} to bound this by
\begin{align}
\| J_x^{s}\P_{\ges M}(&e^{-i\be F})\|_{L^{\infty}_{T}L^2_x}\|  \P_{+,\text{lo}}( e^{i\be F} u)\|_{L^{\infty}_{T,x}}+\|\P_{\ges M}(e^{-i\be F})\|_{L^{\infty}_{T}L^2_x} \| J_x^{s}\P_{+,\text{lo}}( e^{i\be F} u)\|_{L^{\infty}_{T,x}}
\nonumber 
\\
& \les M^{-\ta} (1+\|u\|^4_{L^{\infty}_{T}H^{\frac 12}_x})
\|u\|_{L^{\infty}_{T}H^{\max(\frac 12,s-\frac 34+)}_x},
\label{rec2term}
\end{align}
for some $\ta>0$.
In the course of this estimate, we have also essentially bounded the contribution coming from the similar third term in \eqref{PbHIu}.
For the fourth term in \eqref{PbHIu}, we note that frequency signs imply 
\begin{align*}
\P_{>M}\P_{+, \HI}[ \P_{+}(e^{-i\be F})  \P_{-}(e^{i\be F}u) ]= \P_{>M}\P_{+,\HI}[ \P_{+}\P_{>M}(e^{-i\be F})  \P_{-}(e^{i\be F}u) ],  
\end{align*}
for which we proceed as the first term in \eqref{rec1high} with Sobolev inequality to get
\begin{align}
     &
 \| J^s_x \P_{>M}\P_{+, \HI}[ \P_{+}(e^{-i\be F})  \P_{-}(e^{i\be F}u) ] \|_{L^\infty_T L^2_x}
\nonumber
\\
    &\quad 
    \les \| J^s_x \P_+ \P_{>M}  e^{-i\be F} \|_{L^\infty_T L^{2+}_x} \| \P_-(e^{i\be F} u )\|_{L^\infty_T L^{\infty-}_x}
\nonumber
    \\
    &
\quad 
    \les M^{-\ta } (1+ \|u\|^4_{L^\infty_T H^{\frac12}_x}) \|u\|_{L^\infty_T H^{\max(\frac12, s-\frac34+)}_x}. 
\label{rec3term}
\end{align}
Then, the first estimate in \eqref{uHsbd} follows from combining 
\eqref{uHsPM}, \eqref{PbHIu}, \eqref{rec1term}-\eqref{rec1high}, \eqref{rec2term}, \eqref{rec3term}, together with applying \eqref{YsCTHs} to the last term in \eqref{PbHIu} and to the $v$ terms in in \eqref{rec1term1}-\eqref{rec1high}, after replacing $\| v\|_{L^\infty_T H^s}$ with $\|\wt v\|_{L^\infty_T H^s_x}$. 

For the second estimate in \eqref{uHsbd}, we handle the low frequency part $\P_{\le M} \wt u$ by using the Duhamel formulation for \eqref{ueq2}, where we estimate  the last contribution in \eqref{ueq2} by
\begin{align*}
T \| M(\wt u ) \P_{\le M} \dx \wt u\|_{L^\infty_T H^s_x} \les T M^2 \| \wt u\|^3_{L^\infty_T H^\frac12_x}
.
\end{align*}
The high frequency part is handled as for $\P_{>M}u$, only changing the estimate for the last term in \eqref{PbHIu}, where we first replace the $\wt w$-norm by $w$ and then use \eqref{YsCTHs}.  
 This completes the proof of \eqref{uHsbd}. \qedhere

\end{proof}

We now extend \eqref{Xsbu}, \eqref{Jsu}, and \eqref{uHsbd}
to difference estimates.

\begin{lemma}[Difference estimates] \label{LEM:diffests}
Let $\frac 12 \leq s<2$, $0\leq T\leq 1$, and $u_j$ be smooth solutions to \eqref{INLS2} on $[0,T]$ with initial data $u_j(0)=u_{0,j}$, $j=1,2$.  Let $F_j=F_j [u_j]$ be as defined in \eqref{F}, set $v_j= v_j [u_j]$ and $w_j= w_{j}[u_j]$ as in \eqref{gauge}, $U: =u_1-u_2$ and $W:=w_1-w_2$. 
Also, given $\wt{u}_j : = \J_{u_j}[u_j]$ and $\wt{v}_j : = \J_{u_j}[v_j]$ as in \eqref{gauge2}, $j=1,2$, set
$\wt{V}:= \wt{v}_1 - \wt{v}_2 : = \J_{u_1}[v_1]- \J_{u_2}[v_2]$. 
Lastly, let $R_{s_0}:=R_{s_0}(u_1,u_2): = \|u_1\|_{L^{\infty}_{T}H^{s_0}_{x}}+\|u_2\|_{L^{\infty}_{T}H^{s_0}_{x}}$ and $\s : = \max(\tfrac 12 , s-\tfrac34+).$

\noi{\rm(i)} The following estimates hold:
\begin{align}
\sup_{0 \le \ta \le 1}\| U\|_{X^{ \frac 12-\ta, \ta}_{T}} 
&
\les 
\wt G_0(u_1,u_2,U)
\quad 
\text{and}
\quad 
\sup_{0 \le \ta \le 1 } \|\wt U \|_{X^{\frac12- \ta, \ta}_T}
\les 
\wt G_0(\wt u_1, \wt u_2, \wt U),
 \label{XsbU}
\end{align}
where $G_0$ is given by
\begin{align}
\wt G_0(\uu_1, \uu_2,  \UU)
&
:=
 \|\UU\|_{L^{\infty}_{T}H^{\frac 12}_x}+ T^\frac14(1+\|\GG_{h}\|_{\op}) R_{\frac 12}(\uu_1, \uu_2)
\nonumber
\\
&
\quad 
\times 
\Big\{R_{\frac 12}(\uu_1,\uu_2)
\| J^{\frac 12}_{x}\UU\|_{\wt{L^4_{T,x}}}
+  \max_{j=1,2}  \| J^{\frac 12}_{x}\uu_j\|_{\wt{L^4_{T,x}}} 
\|\UU\|_{L^{\infty}_{T}H^{\frac 12}_x}
\Big\}
.
\notag
\end{align}

\smallskip
\noi{\rm(ii)}
Given $\ta>0$, $M, \Ld\in \N$ sufficiently large, we define the quantities
\begin{align*}
&
\wt G_1(\uu_1, \uu_2, \vv_1, \mathbf{z}, \UU, \VV, \WW)
\\
&
:=
 T^{\frac 18} \big[1+Q\big(R_{\s}(\uu_1,\uu_2) \big)\big]
\Big\{ 
\| \VV\|_{X^{s,\frac 12+}_{T}} 
+\| \WW\|_{X^{s,\frac 12+}_{T}}  
+ 
\| \Pi_{>\Ld} \mathbf{z} \|_{X^{s, \frac12+}_T}
\\
& \hspace{5cm}
+
\big[ \|e^{i\be F[\uu_1]}-e^{i\be F[\uu_2]}\|_{L^{\infty}_{T,x}} + \|\UU\|_{L^{\infty}_{T} H^{\s}_x}\big]  
(
1+ \| J^s \vv_1\|_{\wt{L^4_{T,x}}} 
 )  
\Big\}
\\
&
\quad
+
T^\frac18
\Ld^{\frac54} \big| M\big(\uu_1(0)\big) - M\big(\uu_2(0)\big) \big|   \| \mathbf{z}\|_{L^\infty_T H^s_x}
, 
\\
&
\wt G_2(\uu_1, \uu_2, \vv_1,\mathbf{z}, \UU, \VV, \WW)
\\
&
:=
 \|\UU(0)\|_{H^s}
        +
        \| \VV\|_{X^{s, \frac12+}_T}
        +
        \| \WW\|_{X^{s, \frac12+}_T}
        \\
        &
        \quad 
        + T^\frac34 M^3(1 + \|\GG_h\|_{\op})
        \big(
        1 + R^2_\frac12(\uu_1,\uu_2) + \max_{j=1,2}\| J^\frac12_x \uu_j \|_{\wt{L^4_{T,x}}}
        \big)
        \\
        &\qquad \times 
        \Big\{
        \| \UU\|_{L^\infty_T H^\frac12_x} + \| J_x^\frac12 \UU \|_{\wt{L^4_{T,x}}}
        + \| e^{\pm i \be F[\uu_1]} - e^{\pm i \be F[\uu_2]} \|_{L^\infty_{T,x}}
        \Big\}
        \\
        &
        \quad 
        + M^{-\ta} Q\big(R_\s (\uu_1,\uu_2) \big) \Big\{ 
        \|\VV\|_{X^{s, \frac12+}_T}
        +
        \| \UU \|_{L^\infty_T H^s_x}
        +
        \| e^{i\be F[\uu_1]} - e^{i \be F[\uu_2]} \|_{L^\infty_{T,x}}
        \Big\}
        \\
        &
        \quad 
        + \| \P_{\ll M}[ e^{-i \be F[\uu_1]} - e^{-i \be F[\uu_2]}] \|_{L^\infty_{T,x}} \| \P_{\ges M } \vv_1 \|_{X^{s, \frac12+}_T}
        \\
        &
        \quad 
        + \Ld T \big| M(\uu_1(0)) - M(\uu_2(0)) \big| \| \mathbf{z} \|_{X^{s, \frac12+}_T} + \| \Pi_{>\Ld} \mathbf{z} \|_{X^{s, \frac12+}_T}, 
\end{align*}
where $Q$ is a polynomial of degree at least 1, $F$ is as in \eqref{F}, and $M(\uu)$ denotes the mass of $\uu$ as in \eqref{mass}.
 Then, the following  estimates hold
\begin{align}
\| J^{s}\PbHI U\|_{\wt{L^{4}_{T,x}}}  
&
\les  \wt G_1( u_1,u_2, \wt v_1, \wt v_1, U, \wt V, W )
, 
\label{JsU}
\\
\| J^{s}\PbHI \wt U\|_{\wt{L^{4}_{T,x}}}  
&\les 
\wt G_1(\wt u_1, \wt u_2, \wt v_1, w_1, \wt U, \wt V, W)
,
\label{JsU2}
\\
\| U\|_{L^{\infty}_{T}H^{s}_x} 
        &
        \les 
       \wt G_2( u_1,u_2, \wt v_1, \wt v_1, U, \wt V, W )
\label{uHsbddiff}
,
\\
\| \wt U\|_{L^{\infty}_{T}H^{s}_x} 
        &
        \les 
\wt G_2(\wt u_1, \wt u_2, \wt v_1, w_1, \wt U, \wt V, W)
\label{uHsbddiff2}
.
\end{align}

\end{lemma}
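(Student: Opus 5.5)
The plan is to redo the proof of Lemma~\ref{LEM:uinfo} line by line for differences, writing every multilinear expression $A(u_1,\dots)-A(u_2,\dots)$ as a telescoping sum in which exactly one slot carries a difference ($U$, $\wt V$, $W$, or $e^{\pm i\be F[u_1]}-e^{\pm i\be F[u_2]}$). For (i), \eqref{uXsb1a} is linear, so
\[
\sup_\ta\|U\|_{X^{\frac12-\ta,\ta}_T}\les \|(\dt+i\dx^2)U\|_{L^2_TH^{-\frac12}_x}+\|U\|_{L^\infty_TH^{\frac12}_x}.
\]
The nonlinearities of \eqref{INLS2} are then split trilinearly and each piece is bounded by \eqref{prodest1} with $s=\frac12$ together with \eqref{uXsb1aa}, which produces $\wt G_0$. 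For $\wt U$ I use \eqref{ueq2}. The only new term is $M(\wt u_1)\dx\wt u_1-M(\wt u_2)\dx\wt u_2$. Its first part is $M(\wt u_1)\dx\wt U$, which is handled as before. The second part is $(M(\wt u_1)-M(\wt u_2))\dx\wt u_2$, and since $|M(\wt u_1)-M(\wt u_2)|\les R_{\frac12}\|\wt U\|_{L^\infty_TL^2_x}$ it fits into $\wt G_0$ after the $T^{\frac14}$ Hölder gain.

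For (ii) I start from the recovery formula \eqref{PbHIu} for $u_1$ and $u_2$, subtract, and bound each difference term exactly as in \eqref{Jsu1}--\eqref{Jsu2IV}. Wherever a difference of exponentials appears, I use \eqref{L4est2} and \eqref{umod2b} in place of \eqref{LinftyE11} and \eqref{LpE}. This is what generates the factors $\|e^{i\be F[u_1]}-e^{i\be F[u_2]}\|_{L^\infty}+\|U\|_{H^\s}$ multiplied by $\|J^s v_1\|$. The delicate term is $\PbHIp[e^{-i\be F_1}v_1-e^{-i\be F_2}v_2]$, because $v_j$ has to be traded for $\wt v_j=\J_{u_j}[v_j]$, and the translations differ. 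I write
\[
v_1-v_2=\J_{u_1}^{-1}\big[\wt V\big]+\big(\J_{u_1}^{-1}-\J_{u_2}^{-1}\big)[\wt v_2],
\]
and rearrange so that the second piece involves $\wt v_1$ (the variable $\mathbf z$). I split $\mathbf z=\Pi_{\le\Ld}\mathbf z+\Pi_{>\Ld}\mathbf z$. On high frequencies the two translates are each bounded in $X^{s,\frac12+}_T$ via \eqref{L4} and translation invariance of the $L^4$ and $L^\infty H^s$ norms, which gives $\|\Pi_{>\Ld}\mathbf z\|$. On low frequencies the Fourier multiplier $e^{i\xi\frac\be\pi M t}$ is Lipschitz in $M$ with constant $\les|\xi|t\le\Ld T$. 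In $L^4_{T,x}$ I combine this with Bernstein on $\{|\xi|\le\Ld\}$, which costs $\Ld^{\frac14}$, and the $J^s$ weight, giving $\Ld^{\frac54}T^{\frac18}|M(u_1(0))-M(u_2(0))|\,\|\mathbf z\|_{L^\infty_TH^s}$. The same splitting in $L^\infty_TH^s$ gives the term $\Ld T|\Delta M|\,\|\mathbf z\|_{X^{s,\frac12+}}$. Conservation of mass lets me evaluate the masses at time $0$. For $\wt U$ the recovery formula is gauged, so the roles swap: $\wt w_j$ now carries the translation, which is why $\mathbf z=w_1$ appears in \eqref{JsU2} and \eqref{uHsbddiff2}.

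For \eqref{uHsbddiff} I follow \eqref{JsuPLO}--\eqref{rec3term}. The part $\P_{\le M}U$ is handled through the Duhamel difference with the $T^{\frac34}M^3$ loss. For $\P_{>M}U$ I use the recovery formula, with $M^{-\ta}$ gains on every term except one. That term is $\P_{\ll M}[e^{-i\be F_1}-e^{-i\be F_2}]\,\P_{\ges M}v_1$, which has no small factor, so I keep it exactly as it stands in $\wt G_2$ (this matches the discussion around \eqref{Msmallness}). I expect the main obstacle to be the bookkeeping of the translation mismatch between $v_j$ and $\wt v_j$ inside products with $e^{-i\be F_j}$. The translation does not commute with multiplication by $e^{-i\be F_j}$ unless $F_j$ is translated as well, so I would first rewrite everything in the fully gauged frame $(\wt u_j,\wt F_j)$, where the $L^4$ and $L^\infty H^s$ norms are invariant, and only then compare the two frames, so that the mass-difference terms arise in a single place.
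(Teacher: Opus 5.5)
Your proposal is correct and follows essentially the same route as the paper. For (i) you apply the linear estimate \eqref{uXsb1a} to $U$ and $\wt U$ with trilinear splitting, plus the extra mass term from \eqref{ueq2}. For (ii) you use the differenced recovery formula \eqref{rec1}--\eqref{rec10} with \eqref{L4est2}--\eqref{umod2b}, the decomposition \eqref{VVtd} (or its analogue for $\wt W$) split by $\Pi_{\le\Ld}$ and $\Pi_{>\Ld}$, and you keep the term $\P_{\ll M}[e^{-i\be F_1}-e^{-i\be F_2}]\,\P_{\ges M}v_1$ unestimated in $\wt G_2$, exactly as the paper does.

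Two small remarks. First, the rewrite in the fully gauged frame from your last paragraph is unnecessary: $\PbHIp[e^{-i\be F_2}V]$ is first bounded by norms of $V$ exactly as for $\I_N$ in \eqref{Jsu1}, and only $V$ itself is then compared across frames. Second, in \eqref{uHsbddiff} the $\wt V$- and $W$-terms also come without an $M^{-\ta}$ gain and enter $\wt G_2$ directly, so more than one term lacks that gain.
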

\begin{proof}
We follow the same strategy as for Lemma~\ref{LEM:uinfo}.
To estimate $\|U\|_{X^{\frac12-\ta, \ta}_T}$ in \eqref{XsbU}, we proceed as in the proof of \eqref{Xsbu}, 
which reduces to controlling
\begin{align*}
\| \dt U +i\dx^2 U\|_{L^{2}_{T}H^{ -\frac 12}_x} 
&
\les 
\| u_1 \dx \P_{+}( |u_1|^2)-u_2 \dx \P_{+}( |u_2|^2)\|_{L^{2}_{T}H^{ -\frac{1}{2}}_x}  \\
&
\quad 
  +  \| u_1 \mathcal{G}_{h}(|u_1|^2)-u_2 \mathcal{G}_{h}(|u_2|^2) \|_{L^{2}_{T}H^{  -\frac{1}{2}}_x}  + \| |u_1|^2 u_1 - |u_2|^2 u_2\|_{L^{2}_{T}H^{  -\frac{1}{2}}_x},
\end{align*}
and each of these can be estimated as in the proof of \eqref{Xsbu}. 
In the estimate for $\| \wt U\|_{X^{\frac12-\ta, \ta}_T}$, we proceed as above, and use the equation \eqref{ueq2} for $\wt u_1, \wt u_2$, which contributes with the additional~term
\begin{align*}
\|  \tfrac\be\pi M(\wt u_1) \dx \wt{u}_1 - \tfrac\be\pi M(\wt u_2) \dx \wt{u}_2    \|_{L^2_T H^{-\frac12}_x}
&
\le | M(\wt u_1) - M(\wt u_2) | \| \wt u_1 \|_{L^2_T H^{\frac12}_x} + | M(\wt u_2) | \| \wt U \|_{L^2_T H^\frac12_x}
\\
&
\les T^\frac14 R_\frac12^2 \| J^\frac12_x \wt U\|_{L^4_{T,x}}, 
\end{align*}
completing the proof of \eqref{XsbU} for $\wt U$.

\smallskip 
 We move onto \eqref{JsU}. Letting $F_j:= F[u_j]$, $j=1,2$, we see from \eqref{PbHIu} that 
 \begin{align}
\PbHI U  =&  \PbHIp[ (e^{-i\be F_1}-e^{-i\be F_2})v_1] 
+ \PbHIp[ e^{-i\be F_2} V] 
\label{rec1}  \\
& + \PbHI \big[  \Pbhi( e^{-i\be F_1}-e^{-i\be F_2})  \P_{+,\text{lo}} (e^{i\be F_1}u_1)  \big]
 \label{rec2} 
 \\
& + \PbHI \big[  \Pbhi( e^{-i\be F_2})  \P_{+,\text{lo}} ((e^{i\be F_1}-e^{i\be F_2})u_1)  \big] \label{rec3}\\
&+\PbHI \big[  \Pbhi( e^{-i\be F_2})  \P_{+,\text{lo}} (e^{i\be F_2} U)  \big]
\label{rec4} 
\\
&+ \PbHIp[ e^{-i\be F_1}-e^{-i\be F_2}] \P_0( e^{i\be F_1}u_1)+ \PbHIp[e^{-i\be F_2}] \P_0( (e^{i\be F_1}-e^{i\be F_2})u_1) \label{rec5} \\
& + \PbHIp[e^{-i\be F_2}] \P_0( e^{i\be F_2} U) 
\label{rec6} 
\\
&  +\PbHIp \big[  \P_{+}( e^{-i\be F_1}-e^{-i\be F_2})  \P_{-} (e^{i\be F_1}u_1)\big] \label{rec7} \\
& + \PbHIp \big[  \P_{+}(e^{-i\be F_2})  \P_{-} ((e^{i\be F_1}-e^{i\be F_2})u_1)\big] \label{rec8} \\
&+  \PbHIp \big[  \P_{+}( e^{-i\be F_2})  \P_{-} (e^{i\be F_2}U)\big] 
\label{rec9}
\\
& + \PbHI W. 
\label{rec10}
\end{align}
We then estimate each of these terms similarly to how we did for \eqref{Jsu} and with the additional terms in (first one in) \eqref{rec1}, \eqref{rec2},  \eqref{rec3}, \eqref{rec5}, \eqref{rec7}, and \eqref{rec8}, for which we use the difference estimates \eqref{L4est2} and \eqref{umod2b}. 
We omit the similar details and note the only change in handling the second term in \eqref{rec1}.

Proceeding as in the proof of \eqref{Jsu}, namely the estimate for $\I_N$ in \eqref{Jsu2Ia}, we have
\begin{align}
    \| J^s_x \PbHIp[ e^{-i\be F_2} V] \|_{\wt{L^4_{T,x}}}
    & 
    \les 
T^\frac18 \big(1 + R_{\frac12}(u_1,u_2) \big)
\big(1 + R_\s(u_1,u_2) \big)
\| J^s_x V \|_{\wt{L^4_{T,x}}}
. 
\notag
\end{align}
Note that we would like to replace $V$ by $\wt{V}$ above, but  \eqref{LpLqvw} does not apply directly to the differences $V$ and $\wt{V}$, since $u_1,u_2$ may have distinct mass $M_j:= M(u_j)$, $j=1,2$; see \eqref{gauge2}. Instead, we expand $V$ further and consider a high and low-frequency decomposition. First, we write
\begin{align}
V(t,x)
&
= (v_1 - v_2 )(t,x)
\nonumber
\\
&
=  \J_{u_1}[v_1](t, x + \tfrac\be\pi M_1 t) - \J_{u_2}[v_2](t, x + \tfrac\be\pi M_2 t)
\nonumber
\\
&
=  \J_{u_1}[v_1] (t, x + \tfrac\be\pi M_1 t) - \J_{u_1}[v_1](t, x + \tfrac\be\pi M_2 t)
+ (\J_{u_1}[v_1]  - \J_{u_2}[v_2] )  (t, x + \tfrac\be\pi M_2 t)
\nonumber
\\
&
= \wt{v}_1 (t, x + \tfrac\be\pi M_1 t)
-
\wt{v}_1 (t, x + \tfrac\be\pi M_2 t)
+
( \wt{v}_1  - \wt{v}_2) (t, x + \tfrac\be\pi M_2 t)
, 
\label{VVtd}
\end{align}
and then consider a high and low-frequency decomposition of the first contribution to obtain
\begin{align*}
\| &J^s_x \P_\HI V \|_{\wt{L^4_{T,x}}} \\
&
\le 
\| J^s_x \wt V \|_{\wt{L^4_{T,x}}}
+ \| \Pi_{\le \Ld}[ \wt{v}_1 (t, x + \tfrac\be\pi M_1 t)
-
\wt{v}_1 (t, x + \tfrac\be\pi M_2 t)] \|_{\wt{L^4_{T,x}}}
\\
&
\quad 
+ \| \Pi_{> \Ld}[ \wt{v}_1 (t, x + \tfrac\be\pi M_1 t)
-
\wt{v}_1 (t, x + \tfrac\be\pi M_2 t)] \|_{\wt{L^4_{T,x}}}
\\
&
\les \| J^s_x \wt V \|_{\wt{L^4_{T,x}}}
+ 
T^{\frac14} \Ld^{\frac14}
\|  (e^{-it \frac\be\pi (M_1-M_2) \xi } -1)| \ind_{|\xi|\le \Ld} \jb{\xi}^s \Ft_x(\wt{v}_1)(t,\xi)  \|_{L^\infty_T \l^2_\xi }
+ 
\|  \Pi_{>\Ld} \wt{v}_1 \|_{X^{s, \frac12+}_T}
\\
&
\les 
\| J^s_x \wt V \|_{\wt{L^4_{T,x}}}
+ 
T^{\frac14} \Ld^{\frac54} \|M_1-M_2\|_{L^\infty_T}
\| \wt{v}_1 \|_{L^\infty_T H^s_x}
+ 
T^\frac18 \|  \Pi_{>\Ld} \wt{v}_1 \|_{X^{s, \frac12}_T}
\\
&
\les T^\frac18
\big\{  \| \wt V\|_{X^{s, \frac12}_T} +  \Ld^{\frac54} \big|M\big(u_1(0)\big) - M\big(u_2(0)\big)\big| \times \| \wt v_1\|_{L^\infty_T H^s_x} 
+  \| \Pi_{>\Ld} \wt{v}_1 \|_{X^{s, \frac12}_T} \big\}
,
\end{align*}
where $\Ld \in \N$, $\Pi_{\le  \Ld}$ and $\Pi_{>\Ld}$ are the sharp frequency projectors, and we used \eqref{YsCTHs}, mass conservation on the second contribution, and \eqref{YsCTHs} on the third.
The estimate in \eqref{JsU} for $U$ follows. 

Similarly, 
for the $\wt U$ estimate in \eqref{JsU2}, we use the recovery formula in \eqref{PbHIu} for $\wt u_1, \wt u_2$ and obtain an expansion for $\P_{\HI} \wt U$, where most terms can be handled as in the proof of \eqref{JsU}. Here, the second term in \eqref{rec1} is replaced by $\P_{+, \HI} [ e^{-i\be F[\wt u_2]} \wt{V}]$ which can be estimated as $\I_N$ in \eqref{Jsu2Ia}. Instead, we need an analogous decomposition to \eqref{VVtd} for the last term $\P_{\HI} \wt{W}$:
\begin{align}
\wt W
&
= 
\TT_{u_1}[w_1] - \TT_{u_2}[w_2]
= w_1 (t, x - \tfrac\be\pi M_1 t) - w_1(t, x - \tfrac\be\pi M_2 t ) + W(t, x - \tfrac\be\pi M_2 t ), 
\notag
\end{align}
which we can estimate as above to obtain
\begin{align*}
\| J^s_x \P_\HI \wt W \|_{\wt{L^4_{T,x}}}
&
\les
T^\frac18
\big\{
\| W \|_{X^{s, \frac12}_T }
+ 
T^\frac14 \Ld^\frac54 \big|M\big(u_1(0)\big) - M\big(u_2(0)\big)\big|  \| w_1 \|_{L^\infty_T H^s_x } + \| \Pi_{>\Ld} w_1 \|_{X^{s ,\frac12+}_T} 
\big\}. 
\end{align*}

Now we consider \eqref{uHsbddiff}. We split $U=\P_{\leq M} U+ \P_{>M} U$. 
 For the contribution $\P_{\leq M} U$ we use the Duhamel formula for $\P_{\leq M} U$ and gain a factor of $T$ at the cost of a positive power of 
 $M$, as in \eqref{uHsPM}
We omit the similar details.

For the contribution due to $\P_{>M} U$, we adapt the argument used for establishing \eqref{JsU}.
For the terms \eqref{rec2}-\eqref{rec9}, we proceed as in the proof of \eqref{JsU} with \eqref{L4est2}-\eqref{umod2b} instead of \eqref{LinftyE11}-\eqref{LpE} whenever we see a difference of exponentials, 
which always leads to a gain of $M^{-\ta}$.
For \eqref{rec10}, there is no gain of powers of $M$. For the second term in \eqref{rec1}, we proceed as in \eqref{rec1term}-\eqref{rec1high}, together with \eqref{VVtd} to see a $\wt{V}$ contribution.

It only remains to more carefully estimate the first term in \eqref{rec1}. 
We write it as in \eqref{rec1term}:
\begin{align}
 \P_{>M}\PbHIp[ (e^{-i\be F_1}-e^{-i\be F_2})v_1]  
 & =  \P_{>M}\PbHIp[ \P_{\ll M}(e^{-i\be F_1}-e^{-i\be F_2})v_1]  \label{rec11}  \\
 &  \quad +  \P_{>M} \PbHIp[ \P_{\ges M}(e^{-i\be F_1}-e^{-i\be F_2})v_1]  
 \notag
 .
\end{align}
For the second term, as in \eqref{rec1high}, we gain a negative power of $M$ due to the inner projection: 
\begin{align*}
\| &J_x^{s}\P_{>M} \PbHIp[ \P_{\ges M}(e^{-i\be F_1}-e^{-i\be F_2})v_1] \|_{L^{\infty}_{T}L^2_x}  \\
&\les  
M^{-\ta} 
Q\big( R_\s(u_1,u_2) \big)
\big\{ 
\| U\|_{L^\infty_T H^\s_x} + 
\|  e^{-i\be F_1} - e^{-i \be F_2} \|_{L^{\infty}_{T,x}}
\big\} \| \wt{v}_1 \|_{X^{s, \frac12+}_T}
,
\end{align*}
for some polynomial $Q$ of degree at least one, where we used \eqref{L4est2}-\eqref{umod2b} and \eqref{YsCTHs}.
For the first term in \eqref{rec11} we keep both of the projections.
This completes the proof of \eqref{uHsbddiff}.

The proof of \eqref{uHsbddiff2} follows similarly to that of \eqref{uHsbddiff}. For the low frequency part $\P_{\le M } \wt U$, we use the Duhamel formula for $\wt{U}$ corresponding to \eqref{ueq2}. The only distinction is an additional contribution coming from the last term in \eqref{ueq2}, which we control as follows
\begin{align*}
   & T 
    \big\| \P_{\le M} 
    \big( 
    M_1 \dx \wt u_1 - M_2 \dx \wt u_2
    \big)
    \big\|_{L^\infty_T H^s_x} 
    \\
    & \quad 
    \les T M^3 Q \big( R_\frac12(\wt u_1,\wt u_2) \big) 
    \big\{ 
    \big| M(u_1(0)) - M(u_2(0)) \big| 
    +  \| \wt{U} \|_{L^\infty_T H^\frac12_x}
    \big\}
    .
\end{align*}
Similarly, for the large frequency part $\P_{>M} \wt U$, we use the decomposition in \eqref{rec1}-\eqref{rec10}, with $\wt U$, $\wt V$, $\wt W$, $\wt F_j$, $j=1,2$, and $\wt u_1$, proceeding as for \eqref{uHsbddiff}. However, we do not need to proceed via \eqref{VVtd} since we already see $\wt {V}$ term, and instead use the analogous rewriting to replace $\wt W$ by $W$, completing the proof. \qedhere

\end{proof}

\section{Trilinear estimates} \label{SEC:trilin}

We now show the crucial trilinear estimates.

\begin{proposition}\label{PROP:tri}
Let $s\geq \frac 12$, $0<\dl_0<\frac{1}{16}$, and $0<T\leq 1$. Then, for any $v,w\in X_{T}^{s,\frac 12+\dl_0}$, and $u_j\in L^{\infty}_{T}H_{x}^{\frac 12} \cap X_{T}^{- \frac 12,1}$ with $J^{\frac 12}_x u_j\in \wt{L^{4}_{T,x}}$, $j=2,3$, 
it holds that:
\begin{align}
\| \ind_{[0,T]} \Pbhip[ v \, \P_{-}\dx( \cj{u_2} u_3)] \|_{X^{s,-\frac 12+2\dl_0}} & \les  T^{\dl_0} \|v\|_{X_{T}^{s,\frac12+\dl_0}} B(u_2, u_3)
,
\label{vX1}\\
\|\ind_{[0,T]} \P_{-,\textup{hi}}[ w \, \P_{+}\dx(\cj{u_2} u_3)] \|_{X^{s,-\frac 12+2\dl_0}} &  \les T^{\dl_0}  \|w\|_{X_{T}^{s,\frac12+\dl_0}}B(u_2,u_3)
, 
\label{wX1}
\end{align}
where
\begin{align}
B(u_2, u_3) : =   &\prod_{j=2}^3 \big(  \|J_x^{\frac 12} u_j\|_{\wt{L^{4}_{T,x}}}  
+
\|u_j\|_{X^{-\frac12, 1}_T}  +\|u_j\|_{X^{\frac18, \frac38}_T} 
\big) \notag \\
&  + \max_{ \substack{ j_1,j_2 \in \{2,3\} \\ j_1 \neq j_2}} \|u_{j_1}\|_{L^{\infty}_{T}H_x^{\frac 12}}
\Big\{ \|J_{x}^{\frac 12}\PbHI u_{j_2}\|_{\wt{L^{4}_{T,x}}} + \|u_{j_2}\|_{X^{-\frac 12,1}_{T}}\Big\} 
.
\label{Bu}
\end{align}

\end{proposition}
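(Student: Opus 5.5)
By duality, \eqref{vX1} reduces to bounding the space-time integral
\[
\int_{\R\times\T} \Pbhip[v\,\P_-\dx(\cj{u_2}u_3)]\,\cj{z}\,dx\,dt ,
\qquad \|z\|_{X^{-s,\frac12-2\dl_0}}\le 1,
\]
where $v,u_2,u_3$ are replaced by extensions realising their $X_T$-norms, up to the factor $\ind_{[0,T]}$. The factor $T^{\dl_0}$ comes from the time cutoff, in the standard way: $\ind_{[0,T]}v$ is estimated in $X^{s,\frac12-}$ and Lemma~\ref{LEM:linXsb}(ii) converts the slack in $b$ into a power of $T$. I dyadically decompose $v=\sum\P_{N_1}v$, $u_j=\sum\P_{N_j}u_j$, $z=\sum\P_N z$, and write $\xi=\xi_1+\xi_{23}$ with $\xi_{23}=-\xi_2+\xi_3<0$. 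Since $\xi>1$ and $\xi_{23}<0$, we get $\xi_1>\xi$. Hence the output frequency is never larger than the $v$-frequency, and this sign structure is what makes the derivative $|\xi_{23}|$ recoverable. The key algebraic input is the resonance identity
\[
\s-\s_1+\s_2-\s_3=-2\,\xi_{23}\,(\xi_1-\xi_2)\quad(\text{up to relabelling}),
\]
which shows that $\max(|\s|,|\s_1|,|\s_2|,|\s_3|)\ges|\xi_{23}|\,|\xi_1-\xi_2|$. In the regime where $v$ is the high frequency and $u_2,u_3$ are low, this gives $\max|\s_\ast|\ges|\xi_{23}|\,N_1$.

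\emph{Case A: low$\times$high$\to$high, $N_2,N_3\ll N_1\sim N$.} Here $|\xi_{23}|\les N_2\vee N_3$, and the maximal modulation is $\ges N_1|\xi_{23}|$. I split according to which function carries the large modulation.
\begin{itemize}
\item If it is $z$ or $v$, the factor $\jb{\s}^{\frac12-2\dl_0}$ or $\jb{\s_1}^{\frac12+\dl_0}$ pays $(N|\xi_{23}|)^{\frac12-}$. The remaining $|\xi_{23}|^{\frac12+}N^{-\frac12+}$ is absorbed by placing $J^{\frac12}u_j$ in $\wt{L^4_{T,x}}$ and the other factors in $L^4$ via \eqref{L4}. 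This requires the remaining $X^{0,\frac38}$ norms on $z$ and $v$, and that is why $\dl_0<\frac1{16}$.
\item If it is $u_2$ or $u_3$, I use $\|u_j\|_{X^{-\frac12,1}_T}$: the weight $\jb{\s_j}\ges N|\xi_{23}|$ gains a full power of $N$, and the factors $v,z$ are then put in $L^2$ and $L^\infty$.
\end{itemize}

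\emph{Case B: high$\times$high interactions in $\cj{u_2}u_3$, $N_2\sim N_3\gtrsim N_1$.} Here the output $\xi_{23}$ may be comparable to or smaller than $N_2$. I use the Sobolev/Bernstein bounds $L^\infty_TH^{\frac12}$ on one $u_j$ and $J^{\frac12}\PbHI u_{j}$ in $\wt{L^4}$ on the other. This is the source of the second line of $B(u_2,u_3)$. The derivative $N_{23}\les N_1$ is shared between the two $u_j$ factors, each of which contributes half a derivative. The mixed case, with one $u_j$ at frequency comparable to $N_1$ and the other low, is treated like Case A using the $X^{\frac18,\frac38}$ norm and \eqref{L4}.

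\emph{Main obstacle.} The hardest regime is the near-resonant one, $\xi_1$ close to $\xi_2$, equivalently $|\xi|\sim|\xi_3|$ as the introduction warns. There the phase $(\xi_1-\xi_2)\xi_{23}$ can be small even though $|\xi_{23}|$ is large. In that regime the frequencies satisfy $N_1\sim N_2$ and $|\xi_{23}|\les N$. My first attempt is to give up modulation gains and instead distribute the derivative as $|\xi_{23}|\les N^{\frac12}N_2^{\frac12}$. Then $N^{\frac12}$ is charged to $z$ against $N^{-s}$ and $N_1^s$ is used on $v$, with $u_2\in L^\infty_TH^{\frac12}$ and the rest in $L^4$ estimates. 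If this logarithmically fails, I will exploit the resonance further: split $\xi_1-\xi_2$ into $\les1$ and dyadic blocks, and use counting (the divisor bound on $\Z$) for the blocks.

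The estimate \eqref{wX1} is identical after complex conjugation / reflection $\xi\mapsto-\xi$, with the roles of $\P_\pm$ swapped.
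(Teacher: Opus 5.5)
\textbf{There is a genuine gap: your norm allocation fails in exactly the regimes you treat as the obstacle, and that obstacle is misdiagnosed.} Your toolbox is the right one: duality, dyadic blocks, the sign condition $N_1\ges N\vee N_{23}$, the resonance identity, \eqref{L4}, and the $X^{-\frac12,1}$ norm at high modulation.

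\textbf{Your Case~B and your ``main obstacle''.} In both you put one of $u_2,u_3$ in $L^\infty_TH^{\frac12}$. That norm gives $N_j^{-\frac12}$ only in $L^\infty_tL^2_x$. H\"older then forces a Bernstein step on some factor, so effectively only $N_j^{-\frac14}$ survives.
\begin{itemize}
\item Take $\xi_1=\xi_2=3n$ and $\xi=\xi_3=n$. Then $\xi_{23}=-2n$, the phase $-2\xi_{23}(\xi_1-\xi_2)$ vanishes, and all frequencies are $\sim n$. The multiplier $\jb{\xi}^s|\xi_{23}|\jb{\xi_1}^{-s}\sim n$ can at best be reduced to $n^{\frac14}$ this way.
\item Your claim that $|\xi_{23}|\les N$ near resonance is false. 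For $\xi_1=m$, $\xi_2=m-1$, $\xi_3=n-1$, $\xi=n$ with $m\gg n$, one has $|\xi_1-\xi_2|=1$ but $|\xi_{23}|\sim N_1\gg N$. So the split $|\xi_{23}|\les N^{\frac12}N_2^{\frac12}$ fails.
\end{itemize}
The missing observation is that for $s\ge\frac12$ these regimes need no phase information at all. Put $z,v$ in $X^{0,\frac38}$ via \eqref{L4}, and put \emph{both} $J^{\frac12}_xu_2$ and $J^{\frac12}_xu_3$ in $\wt{L^4_{T,x}}$; this is the first line of $B$. The multiplier becomes $N^sN_{23}N_1^{-s}(N_2N_3)^{-\frac12}$. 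This is $\les (N/N_1)^sN_{23}/N_2$ when $N_2\sim N_3$. It is $\les (N/N_1)^{s-\frac12}(N/(N_2\wedge N_3))^{\frac12}$ when $N_2\vee N_3\gg N_2\wedge N_3\ges N$. Both are summable. Near-resonance only obstructs below $s=\frac12$, which is what the introduction warns about, and no divisor counting is needed.

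\textbf{The modulation regime.} The modulation argument is needed only when $N_2\wedge N_3\ll N$. There the interaction is automatically non-resonant: $|\xi_{23}|\sim N_2\vee N_3$ and $|\xi_1-\xi_2|=|\xi-\xi_3|\ges N\vee N_3$. Hence the largest modulation is $\ges K:=(N_2\vee N_3)(N\vee N_3)$. Your H\"older placements in this regime also need repair:
\begin{itemize}
\item With the high-modulation factor in $L^2$, the other three cannot all go in $L^4$.
\item ``$v,z$ in $L^2$ and $L^\infty$'' is unavailable. Neither $z\in X^{0,\frac12-2\dl_0}$ nor $\ind_{[0,T]}v$ (controlled only in $X^{s,b}$ with $b<\frac12$) is bounded in $L^\infty_t$.
\end{itemize}
The workable scheme is: the high-modulation factor in $L^2$, two factors in $L^4$, and one of the $u_j$ in $L^\infty_{t,x}$ via $L^\infty_TH^{\frac12}$ and Bernstein. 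This produces the pairing of $\|u_{j_1}\|_{L^\infty_TH^{\frac12}}$ with $\|J^{\frac12}_x\PbHI u_{j_2}\|_{\wt{L^4_{T,x}}}$ or $\|u_{j_2}\|_{X^{-\frac12,1}_T}$. That pairing, not the high$\times$high case, is the second line of $B$.

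\textbf{Two smaller points.} If the factor you would put in $L^\infty$ has itself been truncated by $\Q_{\ll K}$, as in \eqref{X3}, use a large finite $L^p$ via Lemma~\ref{LEM:Qbd}, or the $X^{\frac18,\frac38}$ norm, instead. Finally, \eqref{wX1} follows from \eqref{vX1} by the reflection $x\mapsto-x$ alone. Complex conjugation should not be used, since it does not preserve $X^{s,b}$.
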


\begin{proof}
We first consider \eqref{vX1}. We take any extension $u_j^{\dag}$ of $u_j$  on $[0,T]$, $j=2,3$. To simplify notation, we simply write $u_j^{\dag}$ as $u_j$.
 By duality and dyadic decomposition, we have 
\begin{align}
&\| \ind_{[0,T]} \Pbhip[ v \, \P_{-}\dx(\cj{ u_2}  u_3)] \|_{X^{s,-\frac 12+2\dl_0}}  \notag \\
 =& \sup_{ \|g\|_{X^{0,\frac 12-2\dl_0}} \leq 1} \bigg|
 \sum_{N,N_1,N_2, N_3, N_{23}} \int_{\R} \int_{\T} \jb{\dx}^{s} \cj{\P_{N}g} \cdot \Pbhip[ \P_{N_1}( \ind_{[0,T]}v) \, \P_{-}\P_{N_{23}}\dx( \cj{\P_{N_2} u_2} \P_{N_3}u_3)]dx dt
 \bigg| 
 .
 \label{Xdyad}
\end{align}
The projections $\P_{+}$ and $\P_{-}$ here imply that:
\begin{align}
N_{1} \ges N_{23} \vee N. \label{N1cond}
\end{align}
In particular, $N_1 \ges 1$.
Thus, we control \eqref{Xdyad} under the additional restriction \eqref{N1cond}. We let $
N_{\max}: = \max(N_1,N_2,N_3,N_{23},N).
$
We also note that 
\begin{align}
N_{23}\les N_2 \vee N_3. \label{N23}
\end{align}
The function $\ind_{[0,T]}v$ will always be placed in $X^{s,b}$ for some $0<b<\frac 12$. As multiplication by the sharp cut-off $\ind_{[0,T]}$ is a bounded operator in these spaces, we have 
\begin{align*}
\|\ind_{[0,T]}v\|_{X^{s,b}} \les \|v\|_{X^{s,b}_{T}}. 
\end{align*}
We will apply this inequality without further mention in the following, and to simplify notation, we denote $\ind_{[0,T]}v$ by $v_{T}$.

\smallskip
\noi
\underline{\textbf{Case 1:} $N_2 \sim N_3$:} 

\noi
By H\"{o}lder's inequality and \eqref{L4}, we have 
\begin{align}
\begin{split}
 &\bigg| \int_{\R} \int_{\T} \jb{\dx}^{s}\cj{\P_{N}g} \cdot \Pbhip[ \P_{N_1}v_{T} \, \P_{-}\P_{N_{23}}\dx( \cj{\P_{N_2}u_2} \P_{N_3}u_3)]dx dt \bigg| \\
 & \les N^{s} N_{23} \big\| \P_{N}  g\big\|_{L^{4}_{t,x}} \| \P_{N_1} v_{T}\|_{L^{4}_{t,x}} \| \P_{N_2}u_2\|_{L^{4}_{t,x}} \|\P_{N_3}u_3\|_{L^{4}_{t,x}}  \\
 & \les N^{s} N_{23}N_{1}^{-s} (N_{2}N_{3})^{-\frac 12}  \|\P_{N} g\|_{X^{0,\frac 38}}\| \P_{N_1}v_T\|_{X^{s,\frac 38}} \|J_x^{\frac 12} \P_{N_2}u_2\|_{L^{4}_{t,x}} \| J_x^{\frac 12} \P_{N_3}u_3\|_{L^{4}_{t,x}}.
 \end{split} \label{L4arg}
\end{align}
To sum over the dyadics, we first have for $(N_2,N_3,N_{23})$
\begin{align*}
\sum_{N_2 \sim N_3}&   \|J_x^{\frac 12} \P_{N_2}u_2\|_{L^{4}_{t,x}} \| J_x^{\frac 12} \P_{N_3}u_3\|_{L^{4}_{t,x}} \sum_{N_{23}\les N_2} \frac{N_{23}}{N_2}  
\les \| J_x^{\frac 12} u_2\|_{\wt{L^{4}_{t,x}}} \| J_x^{\frac 12} u_3\|_{\wt{L^{4}_{t,x}}}.
\end{align*}
Meanwhile, for $(N,N_1)$, with \eqref{N1cond}, we have 
\begin{align*}
\sum_{N} \| \P_{N}g\|_{X^{0,\frac 38}}  \sum_{N_1 \ges N}\bigg( \frac{N}{N_1}\bigg)^s \|\P_{N_1}v_T\|_{X^{s,\frac 38}} &\les \sum_{k =0}^{\infty}2^{-sk}  \sum_{N} \|\P_{N}g\|_{X^{0,\frac 38}} \| \P_{2^{k}N} v_T\|_{X^{s,\frac 38}} \\
& \les \| g\|_{X^{0,\frac 38}} \|v_T\|_{X^{s,\frac 38}}.
\end{align*}
This is acceptable as long as $\dl_0 \leq \frac{1}{16}$.

\smallskip
\noi
\underline{\textbf{Case 2:} $N_2\vee N_3 \gg N_2\wedge N_3$ and $N_2 \wedge N_3 \ges N$:}

\noi
In this case, we can also proceed by using \eqref{Xdyad} and the $L^4$-argument as in \eqref{L4arg}. By \eqref{N1cond}, we must have $N_1\sim N_2 \vee N_3 \sim N_{23} \sim N_{\max}$. Without loss of generality, assume that $N_2 \vee N_3 =N_2$. Then we bound this contribution to \eqref{Xdyad} by
\begin{align*}
\sum_{ \substack{N,N_3 \\ N_3 \ges N}} &N^s N_{3}^{-\frac 12} \|\P_{N_3}J_x^{\frac 12} u_3\|_{L^{4}_{t,x}} \|\P_{N} g\|_{X^{0,\frac 38}} \sum_{N_1\sim N_2} \| \P_{N_1}v_T\|_{X^{0,\frac 38}} \|\P_{N_2}J_x^{\frac 12} u_2\|_{L^{4}_{t,x}} \sum_{N_{23} \ges N} N_{23}N_1^{-s}N_{2}^{-\frac 12} \\
& \les \|v_T\|_{X^{s,\frac 38}} \|J_x^{\frac 12} u_2\|_{\wt{L^{4}_{t,x}}}  \sum_{ \substack{N,N_3 \\ N_3 \ges N}} \bigg(\frac{N}{N_3} \bigg)^{\frac 12}\|\P_{N_3}J_x^{\frac 12} u_3\|_{L^{4}_{t,x}} \|\P_{N} g\|_{X^{0,\frac 38}}  \\
&\les \|g\|_{X^{0,\frac 38}} \| v_T\|_{X^{0,\frac 38}}  \|J_x^{\frac 12} u_2\|_{\wt{L^{4}_{t,x}}}  \|J_x^{\frac 12} u_3\|_{\wt{L^{4}_{t,x}}} .
\end{align*}

\noi
\underline{\textbf{Case 3:} $N_2\vee N_3 \gg N_2\wedge N_3$ and $N_2 \wedge N_3 \ll N$:}

\smallskip
\noi
Here, we 
need to make use of the phase function, since the weight $(N_2 \land N_3)^{-\frac12}$ in Case 2 is not useful.  
We note the following resonance identity

\noi
\begin{align}
\s_1-\s_2 + \s_3 -\s= \xi_{1}^{2}-\xi_{2}^{2}+\xi_{3}^{2} -\xi^{2} = -2(\xi-\xi_1)(\xi-\xi_3) : = \Phi(\cj{\xi}), 
\notag
\end{align}
where $\s=\tau-\xi^2$, $\s_j=\tau_j - \xi_j^2$, $j=1,2,3$, $\tau=\tau_1-\tau_2+\tau_3$, and $\xi=\xi_1-\xi_2+\xi_3$.
Note that under the last condition, we have $\xi-\xi_1=\xi_3-\xi_2$.
Thus, we have 
\begin{align}
\s_{\max}:=\max( |\s_1|,|\s_2|,|\s_3|,|\s|) \ges (N_2 \vee N_3)|\xi-\xi_3|
\ges (N_2 \vee N_3) (N \lor N_3) =: K,
\label{nonres}
\end{align}
and we note that
\begin{align}
 N_{1} \sim N_{\max}. \label{N1max}
\end{align}
In \eqref{Xdyad} we sum over $N_{23}$ as we no longer need to use this dyadic restriction.
For fixed $(N,N_1,N_2,N_3)$ and $K$ as in \eqref{nonres}, we have
\begin{align}
&\int_{\R} \int_{\T} \jb{\dx}^{s}\cj{\P_{N}g} \cdot \Pbhip[ \P_{N_1}v_T \, \P_{-}\dx( \cj{\P_{N_2}u_2} \P_{N_3}u_3)]dx dt \notag  \\
& = \int_{\R} \int_{\T} \jb{\dx}^{s}\cj{ \Q_{\ges K}\P_{N}g} \cdot \Pbhip[ \P_{N_1}v_T \, \P_{-}\dx( \cj{\P_{N_2}u_2} \P_{N_3}u_3)]dx dt   \label{X0}  \\
& \hphantom{X} + \int_{\R} \int_{\T} \jb{\dx}^{s}\cj{ \Q_{\ll K}\P_{N}g} \cdot \Pbhip[ \Q_{\ges K}\P_{N_1}v_T  \cdot  \P_{-}\dx( \cj{\P_{N_2}u_2} \P_{N_3}u_3)]dx dt   \label{X1} \\ 
& \hphantom{X} + \int_{\R} \int_{\T} \jb{\dx}^{s}\cj{ \Q_{\ll K}\P_{N}g} \cdot \Pbhip[ \Q_{\ll K}\P_{N_1}v_T  \cdot  \P_{-}\dx( \cj{ \Q_{\ges K}\P_{N_2}u_2} \P_{N_3}u_3)]dx dt  \label{X2} \\
& \hphantom{X} + \int_{\R} \int_{\T} \jb{\dx}^{s}\cj{ \Q_{\ll K}\P_{N}g} \cdot \Pbhip[ \Q_{\ll K}\P_{N_1}v_T  \cdot  \P_{-}\dx( \cj{ \Q_{\ll K}\P_{N_2}u_2} \Q_{\ges K}\P_{N_3}u_3)]dx dt  \label{X3}\\
& \hphantom{X} + \int_{\R} \int_{\T} \jb{\dx}^{s}\cj{ \Q_{\ll K}\P_{N}g} \cdot \Pbhip[ \Q_{\ll K}\P_{N_1}v_T  \cdot \P_{-}\dx( \cj{ \Q_{\ll K}\P_{N_2}u_2} \Q_{\ll K}\P_{N_3}u_3)]dx dt ,  \label{Xres}
\end{align}
where the projections $\Q_{\ges K}$ and $\Q_{\ll K}$ are as in \eqref{Qproj}.
The terms in \eqref{X0}--\eqref{X3} are the non-resonant contributions while \eqref{Xres} is the nearly-resonant contribution.

\smallskip
\noi
\underline{Bounds for \eqref{X0}, \eqref{X1}, and \eqref{Xres}:}
The bounds for \eqref{X0}-\eqref{X1} follow exactly as in \cite[Bounds for (4.13)-(4.14) on pp.41-42]{CFL}, while \eqref{Xres} vanishes as in \cite[(4.21)]{CFL}. We omit details.

\smallskip
\noi
\underline{Bound for \eqref{X2}:}
By  H\"{o}lder's inequality, \eqref{L4}, and Bernstein's inequality, we have
\begin{align}
\begin{split}
&N^s (N_{2}\vee N_3)   \big\|\Q_{\ll K}\P_{N}g\big\|_{L^{4}_{t,x}}  \| \P_{N_1} \Q_{\ll K} v_{T}\|_{L^{4}_{t,x}} \|\P_{N_2} \Q_{\ges K} u_2\|_{L^{2}_{t,x}} \|\P_{N_3}u_3\|_{L^{\infty}_{t,x}} \\
& \les \frac{N^{s} (N_2 \vee N_3) N_2^{\frac 12}}{ N_1^{s} K}\|\P_{N}g\|_{X^{0,\frac 12 - 2 \dl_0}} \|\P_{N_1}v_{T}\|_{X^{s,\frac 38}} \|\P_{N_2}u_2\|_{X^{-\frac 12,1}} \| \P_{N_3}u_3\|_{L^{\infty}_{t}H^{\frac 12}_x}.
\end{split} \label{X2L4}  
\end{align}
We now control the ensuing multiplier. If $N\ges N_3$, then,
\begin{align}
 N^{s} (N_2 \vee N_3) N_2^{\frac 12} N_1^{-s} K^{-1}  \les 
 \begin{cases}
 N_{\max}^{-\frac 12} \quad& \text{if} \quad s\geq 1, \\
 N^{-1+s} N_{\max}^{\frac 12-s} \quad& \text{if} \quad \frac 12 \leq s <  1, 
 \end{cases}
\label{X2mult}
\end{align}
which always gives a negative power of $N_{\max}$, unless in the case $s=\frac12$ and $N_1 \sim N_2 \gg N \gg N_3$, where we instead use $N^{-\frac 12}$ to sum over $(N,N_3)$ and then the Cauchy-Schwarz argument to sum over $(N_1,N_2)$ under $N_1 \sim N_2$.
The remaining case when $N \ll N_3$ is better. It follows that
 $N_2 \ll N \ll N_3  \sim N_1$, and we argue as in \eqref{X2L4} with
\begin{align*}
\text{LHS of } \eqref{X2mult} \les N_{2}^{\frac 12}N_{3}^{-1} \les N_{\max}^{-\frac 12}, 
\end{align*}
from which the estimate easily follows.

\smallskip
\noi
\underline{Bound for \eqref{X3}:}
Recall that $N_1 \sim N_{\max}$.

\smallskip
\noi
\underline{$\bullet$ \textbf{Case A:} $N_3 \gg N_2$:}
Fix $0<\eps <\frac 13$. Note that by \eqref{nonres}, we have $K\ges  N_3^2 \gg N_2^2$.
 By H\"{o}lder's inequality, \eqref{L4}, Bernstein's inequality, and Lemma~\ref{LEM:Qbd}, we have 
\begin{align*}
&N^{s} N_3  \| \Q_{\ll K}\P_{N} g\|_{L^{4}_{t,x}}  \| \P_{N_1} \Q_{\ll K} v_{T}\|_{L^{4}_{t,x}} \| \P_{N_2} \Q_{\ll K} u_2  \|_{L_{t,x}^{\frac{2(1+\eps)}{\eps}}} \| \P_{N_3} \Q_{\ges K} u_3\|_{L^{2(1+\eps)}_{t,x}} \\
& \les 
N^{s}  N_3  N_{3}^{\frac{1}{2}+\frac{\eps}{2(1+\eps)}}
N_1^{-s} K^{-1+\frac{\eps}{2(1+\eps)}}
\|\P_{N}g\|_{X^{0,\frac 12- 2\dl_0}} \|\P_{N_1}v_{T}\|_{X^{s,\frac 38}}   \| \P_{N_2} u_2  \|_{L_{t,x}^{\frac{2(1+\eps)}{\eps}}} \| \P_{N_3}  u_3\|_{X^{-\frac 12,1}} \\
&\les N_{\max}^{-\frac 12 +\frac{3\eps}{2(1+\eps)}} \|\P_{N}g\|_{X^{0,\frac 12 - 2\dl_0}} \|\P_{N_1}v_{T}\|_{X^{s,\frac 38}}   \| J_x^{\frac 12}\P_{N_2} u_2  \|_{L^{\frac{2(1+\eps)}{\eps}}_{t}L^{2}_x } \| \P_{N_3}  u_3\|_{X^{-\frac 12,1}}, 
\end{align*}
where the power of $N_{\max}$ is always negative and we can easily perform all of the dyadic summations.

\smallskip
\noi
\underline{$\bullet$ \textbf{Case B:} $N_3 \ll N_2$:}
Fix $\eps>0$ sufficiently small to be chosen later.
 By duality, H\"{o}lder's inequality, \eqref{L4}, Bernstein's inequality,  and Lemma~\ref{LEM:Qbd}, we have 
\begin{align*}
&N^{s} N_2 \| \P_{N} \Q_{\ll K} g\|_{L^{\frac{2(1+\eps)}{\eps}}_{t,x}}  \| \P_{N_1} \Q_{\ll K} v_{T}\|_{L^{4}_{t,x}} \| \P_{N_2} \Q_{\ll K} u_2  \|_{L_{t,x}^{4}} \| \P_{N_3} \Q_{\ges K} u_3\|_{L^{2(1+\eps)}_{t,x}} \\
& \les \frac{N^{s+\frac{1}{2}-\frac{\eps}{2(1+\eps)}} N_{2}  N_{3}^{\frac{1}{2}+\frac{\eps}{2(1+\eps)}}  }{N_{1}^s  N_{2}^{\frac 18} K^{1-\frac{\eps}{2(1+\eps)}}} K^{2\dl_0 - \frac{\eps}{2(1+\eps)}} \|\P_{N}g\|_{X^{0,\frac 12- 2\dl_0}} \|\P_{N_1}v_T\|_{X^{s,\frac 38}}   \| \P_{N_2} u_2  \|_{X^{\frac 18,\frac38}} \| \P_{N_3}  u_3\|_{X^{-\frac 12 ,1}}, 
\end{align*}
by taking $0 < \frac{\eps}{2(1+\eps)}\le 2\dl_0$. 
We now consider the dyadic multiplier, for which we have
\begin{align*}
 \frac{N^{s+\frac{1}{2}-\frac{\eps}{2(1+\eps)}} N_{2}  N_{3}^{\frac{1}{2}+\frac{\eps}{2(1+\eps)}}  }{N_{1}^s  N_{2}^{\frac 18} K^{1-\frac{\eps}{2(1+\eps)}}} K^{2\dl_0 - \frac{\eps}{2(1+\eps)}}
 \les 
 \frac{ 
 N^{s-\frac12+2\dl_0} N_3^{\frac12 + \frac{\eps}{2(1+\eps)}} }{N_1^s N_2^{\frac18 - 2\dl_0}}
 \les 
 N_{\max}^{-\frac18 + 6\dl_0}, 
\end{align*}
by considering $N_1\sim N$ or $N_1 \sim N_2$, for small $\eps>0$ as above and $0<\dl_0<\frac{1}{12}$. 
 Again, this factor can then deal with the dyadic summations as for any $0<\eps<\frac 12$, we gain a negative power of $N_{\max}$.

\smallskip

The proof of \eqref{wX1} follows exactly the same arguments as \eqref{vX1} in view of the following observation: 
on the support of $\P_{-,\text{hi}}[ w \P_{+}\dx (\cj{u_2} u_3)]$, we must have 
$
|\xi_1| =|\xi|+|\xi_3-\xi_2|,
$
which leads to \eqref{N1cond} after dyadic decompositions in space.
\end{proof}

We now bound the remaining easier terms on the right-hand sides of \eqref{veq} and \eqref{weq}.

\begin{lemma}\label{LEM:easyterms}
Given $s\geq \frac 12$, $0<\dl_0<\frac 14$, $0<T\leq 1$,  and $\mathcal{Q}\in \{ \textup{Id}, \mathcal{G}_h\}$, we have
\begin{align}
\|  \ind_{[0,T]} \P_{\pm,\textup{hi}}[ e^{i\be F}u\mathcal{Q}(|u|^2)]\|_{X^{s,-\frac 12+ 2 \dl_0}} 
 &
 \les 
 T^{\dl_0} [\|\mathcal{Q}\|_{L^4 \to L^4}+\|\mathcal{Q}\|_{\op} ]  
 \|u\|_{L^\infty_T H^{\frac 12}_x}^3
 (1+  \|u\|_{L^\infty_T H^{\frac 12}_x}^2)
 \notag
 \\
 &
 \quad \times 
  \big(   \|u\|_{L^{\infty}_{T}H^{s}_x} \|J^{\frac12}_x u \|_{L^4_{T,x}
 } + \|J^{s}_x u\|_{L^4_{T,x}} \big)  
 \label{vbd5},  \\
  \|  \ind_{[0,T]} \P_{-,\textup{hi}}[  u \mathcal{Q}(|u|^2)]\|_{X^{s,-\frac 12+ 2\dl_0}} 
  &
  \les 
  T^{\dl_0}
  [\|\mathcal{Q}\|_{L^4 \to L^4}+\|\mathcal{Q}\|_{\op} ]  
   \|u\|_{L^{\infty}_{T}H_x^{\frac 12}}^{2}  \|J^{s}_x u\|_{L^{4}_{T,x}}
  \label{vbd6}
  .
\end{align}
\end{lemma}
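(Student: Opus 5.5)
The plan is to avoid the $X^{s,b}$ structure entirely: both nonlinearities contain no derivative loss beyond what is controlled by $J^s_x$ in $L^4$, so we can bound the $X^{s,-\frac12+2\dl_0}$ norm by an $L^2$-based norm in time.

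\emph{Step 1: reduction to $L^2_T H^s_x$.} Since $-\frac12+2\dl_0<0$, we have the trivial embedding $L^2_tH^s_x=X^{s,0}\subset X^{s,-\frac12+2\dl_0}$. To gain the factor $T^{\dl_0}$, we use instead the dual of Lemma~\ref{LEM:linXsb}~(ii) on the cutoff function, that is, $\|\ind_{[0,T]}G\|_{X^{s,-\frac12+2\dl_0}}\les T^{\dl_0}\|\ind_{[0,T]}G\|_{X^{s,-\frac12+3\dl_0}}$. Alternatively, we can work directly with Hölder in time, writing $\|\ind_{[0,T]}G\|_{X^{s,-\frac12+2\dl_0}}\les \|\ind_{[0,T]}G\|_{L^{p}_tH^s_x}$ for some $p<2$ close to $2$ (by Sobolev in time, valid since $\frac12-2\dl_0>\frac1p-\frac12$), and then Hölder gives $\|G\|_{L^p_TH^s_x}\les T^{\dl_0}\|G\|_{L^{4}_TH^s_x}$ once $p$ is chosen with $\frac1p-\frac14\ge\dl_0$, which is possible since $\dl_0<\frac14$. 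So it suffices to bound $G$ in $L^4_TH^s_x$, pointwise in time by quantities whose $L^4_T$ norm is the right-hand side.

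\emph{Step 2: estimate \eqref{vbd6}.} By fractional Leibniz (Lemma~\ref{LEM:leib}),
\[
\|u\mathcal{Q}(|u|^2)\|_{H^s_x}\les \|J^s_xu\|_{L^4_x}\|\mathcal{Q}(|u|^2)\|_{L^4_x}+\|u\|_{L^{\infty-}_x}\|J^s_x\mathcal{Q}(|u|^2)\|_{L^{2+}_x}.
\]
The first term is handled by $\|\mathcal{Q}\|_{L^4\to L^4}\|u\|_{L^8}^2\les\|u\|_{H^{1/2}}^2$. For the second term, we use $\|\mathcal{Q}\|_{\op}$ and Leibniz again on $|u|^2$, placing $J^s u$ in $L^4$ and the other factor in $L^{4+}\subset H^{1/2}$. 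Taking $L^4_T$ then yields \eqref{vbd6}.

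\emph{Step 3: estimate \eqref{vbd5}.} The extra factor $e^{i\be F}$ is handled via Lemma~\ref{LEM:L2prod}, which gives
\[
\|J^s\Pbhi(e^{i\be F}g)\|_{L^2}\les \|u\|_{H^{1/2}}^2\|g\|_{H^s}+\|u\|_{H^{1/2}}(\|u\|_{H^{1/2}}^3+\|u\|_{H^{\max(\frac12,s-\frac34)}})\|g\|_{H^{1/2}}
\]
with $g=u\mathcal{Q}(|u|^2)$. We bound $\|g\|_{H^s}$ by Step 2, and $\|g\|_{H^{1/2}}$ by the same argument with $s=\frac12$, giving $\|u\|_{H^{1/2}}^2\|J^{1/2}u\|_{L^4}$. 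We also use $\|u\|_{H^{\max(\frac12,s-\frac34)}}\le\|u\|_{H^s}$. Collecting the powers gives the product displayed in \eqref{vbd5}.

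The restriction $s<2$ in Lemma~\ref{LEM:L2prod} is the main obstacle. For $s\ge2$ we would instead expand $J^s$ via Leibniz directly on $e^{i\be F}$, using $\dx e^{i\be F}=i\be\P_{\neq0}(|u|^2)e^{i\be F}$, which costs only lower-order norms. The remaining care is bookkeeping in the time integrability: exactly one factor ($J^su$ or $J^{1/2}u$) may sit in $L^4_T$, with all others in $L^\infty_T$.
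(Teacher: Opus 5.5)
\textbf{The gap is in Step 3, and it is inherited from the paper.} Step 1 (in its H\"older form) and Step 2 are sound and follow the paper's proof. There, $\|\ind_{[0,T]}G\|_{X^{s,-\frac12+2\dl_0}}$ is bounded by $\|G\|_{L^2_TH^s_x}$ via \eqref{ZXembed}. The gain $T^{\frac14}\le T^{\dl_0}$ then comes from H\"older in time with exactly one factor in $L^4_T$, so $p=2$ already suffices. Finally, \eqref{vbd6} is proved by fractional Leibniz as in \eqref{cubic}. One small correction: for the $L^{2+}$ bound on $\mathcal{Q}$ you must interpolate between the $L^2$ and $L^4$ bounds, as the paper does for $L^3$, since $\|\mathcal{Q}\|_{\op}$ alone is only the $L^2$ norm.

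The estimate \eqref{eFg} that you quote cannot hold as written. For $f=0$ the phase $F$ is constant, so the left side is $\|J^s\Pbhi g\|_{L^2}$ while the right side vanishes. Its proof applies \eqref{LinftyE11} to $\P_{N_1}e^{iF}$ also on the lowest dyadic block. That block contains $\P_0e^{iF}$, which has size $O(1)$ rather than $O(\|f\|_{H^{1/2}}^2)$. The correct first term is $(1+\|f\|_{H^{1/2}}^2)\|g\|_{H^s}$.

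Consequently, ``collecting the powers'' cannot produce the displayed product, and \eqref{vbd5} itself is false as stated. Take $\mathcal{Q}=\textup{Id}$, the $+$ sign, and $u(t,x)=\eps e^{2ix}$. Then $F[u]=0$, so the left side equals $\eps^3\|\ind_{[0,T]}(t)e^{2ix}\|_{X^{s,-\frac12+2\dl_0}}=c(T)\eps^3$ with $c(T)>0$. Every term on the right, however, is of order at least $\eps^4$.

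What your argument actually proves is a corrected version. Use the corrected \eqref{eFg} together with your Step 2 bound $\|u\mathcal{Q}(|u|^2)\|_{H^{s_1}}\les\|u\|_{H^{1/2}}^2\|J^{s_1}u\|_{L^4}$ for $s_1\in\{\frac12,s\}$. This gives \eqref{vbd5} with $\|u\|^3_{L^\infty_TH^{1/2}_x}(1+\|u\|^2_{L^\infty_TH^{1/2}_x})$ replaced by $\|u\|^2_{L^\infty_TH^{1/2}_x}(1+\|u\|^2_{L^\infty_TH^{1/2}_x})^2$. This has the correct cubic homogeneity and is all that is needed later: in \eqref{Nfu} the polynomial $Q$ has degree at least $2$.

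Your remark on $s\ge2$ does not rescue the general statement either. Every derivative landing on $e^{i\be F}$ produces another factor $\be\P_{\ne0}(|u|^2)$. The expansion therefore yields polynomials in $\|u\|_{L^\infty_TH^{1/2}_x}$ whose degree grows with $s$, and the displayed shape is lost. The paper's argument also runs through \eqref{eFg}, so it is implicitly restricted to $s<2$, which is enough for how the lemma is used ($\frac12\le s\le\frac32$).
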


\begin{proof}
Since
\begin{align}
\|\ind_{[0,T]} f\|_{X^{s, -\frac{1}{2}+2\dl_0}}  \les
\|f\|_{L^{2}_{T}H^{s}_x} \label{ZXembed}
, 
\end{align}
the estimates in \eqref{vbd5}-\eqref{vbd6} follow easily from first applying \eqref{ZXembed}
and then using the fractional Leibniz rule, Sobolev embedding, \eqref{YsCTHs}, \eqref{Lh}, and \eqref{eFg}. 
From \eqref{eFg}, we reduce to estimating
$
\| u \mathcal{Q} (|u|^2)\|_{L^2_{T}H^{s_1}_x}
$
for $s_1\in \{\frac 12,s\}$. By the fractional Leibniz rule, Cauchy-Schwarz inequality, and Sobolev embedding, we have
\begin{align}
\| u \mathcal{Q} (|u|^2)\|_{L^2_{T}H^{s_1}_x}
 & \les \|J^{s_1}_x u\|_{L^{4}_{T,x}} \| \mathcal{Q} (|u|^2)\|_{L^4_{T,x}} + \|  u\|_{L^{6}_{T,x}} \| J^{s_1}_x \mathcal{Q}(|u|^2)\|_{L^{3}_{T,x}} 
\notag
\\
 & \les T^{\frac 14} \|\mathcal{Q}\|_{L^{4}\to L^4}  \|u\|_{L^{\infty}_{T}H^{\frac 12}_x}^{2} \|J_x^{s_1}u\|_{L^{4}_{T,x}} + \|\mathcal{Q}\|_{L^3 \to L^3}\|  u\|_{L^{6}_{T,x}}  \|J_x^{s_1}u\|_{L^{4}_{T,x}} \|u\|_{L^{12}_{T,x}} 
\notag
\\
&
\les T^{\frac14} ( \| \mathcal{Q } \|_{\op} + \| \mathcal{Q} \|_{L^4\to L^4})   \|u\|_{L^{\infty}_{T}H^{\frac 12}_x}^{2} 
\|J_x^{s_1}u\|_{L^{4}_{T,x}}
 , 
\label{cubic}
\end{align} 
where the control of $\|\mathcal{Q}\|_{L^3\to L^3}$ in terms of the $L^2\to L^2$ and $L^4 \to L^4$ operator norms, comes from interpolation. 
\end{proof}

\begin{remark}\rm
It is clear that the results of Lemma~\ref{LEM:easyterms} generalise to multilinear estimates.
\end{remark}

\section{Well-posedness}\label{SEC:LWP}

By adapting the argument in \cite[Section 2.1]{GL} with \cite[Lemma 5.4]{MP}, we have the following local well-posedness result for \eqref{INLS} in high regularity. 

\begin{proposition}\label{PROP:LWPH2}
Let $s>\frac 32$. Then, given $R>0$, there is $T=T(R)>0$ such that for every $u_0\in H^{s}(\T)$ with $\|u_0\|_{H^{s}}\leq R$, there exists a unique solution $u\in C([-T,T];H^{s}(\T))$ of \eqref{INLS} with $u(0)=u_0$. Moreover, the flow map $u_0\mapsto u(t)$ is continuous from $H^{s}(\T)$ to $C([-T,T];H^{s}(\T))$. 
\end{proposition}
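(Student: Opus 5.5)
I propose to prove Proposition~\ref{PROP:LWPH2} by the classical energy method (Kato-type), combining a parabolic/Bona--Smith regularisation with $H^s$ energy estimates for $s>\frac32$. Write \eqref{INLS} in the form \eqref{INLS2}, with nonlinearity $\NN(u)=2\be u\P_+\dx(|u|^2)-i\be u\GG_h(|u|^2)+i\g|u|^2u$. The key point is that $\GG_h=(\H-\TT_h)\dx$ has symbol $\xi(\coth(h\xi)-\sgn\xi)$, which is bounded on $\Z\setminus\{0\}$ (it decays exponentially in $|\xi|$). Hence the last two terms are harmless bounded perturbations in $H^s$. Only the term $u\P_+\dx(|u|^2)$ loses a derivative.

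The first step is to construct approximate solutions $u^\eps$. I would use either the viscous regularisation $\dt u+i\dx^2u-\eps\dx^2 u=\NN(u)$, or a Galerkin truncation $\P_{\le N}$. These are solvable by a contraction argument on a time interval depending on $\eps$ (respectively $N$). The second step is an $H^s$ a priori bound uniform in $\eps$. Apply $J^s$ and pair with $J^su$. The linear and dissipative parts give a non-positive contribution. For the main term,
\[
\mathrm{Re}\jb{J^su,J^s(u\,\P_+\dx(|u|^2))},
\]
use the Kato--Ponce commutator estimate (Lemma~\ref{LEM:leib} and its commutator analogue on $\T$). The dangerous piece is $u\,\P_+\dx J^s(|u|^2)$. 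Expanding $J^s(|u|^2)=\cj u J^su+uJ^s\cj u+$(commutator), the top-order contributions are of the form $\jb{J^su,\,u\P_+\dx(\cj uJ^su)}$. Here one derivative lands on a factor of $J^su$, which I would move by integration by parts onto the coefficients, using that $\P_+$ commutes with $\dx$. A symmetrisation argument (writing $\P_+=\tfrac12(1+i\H)+\tfrac12\P_0$, where the Hilbert transform part is skew-adjoint up to a commutator $[\H,a]\dx$ that is smoothing for smooth $a$) should leave only terms bounded by $\|u\|_{W^{1,\infty}}^2\|u\|_{H^s}^2$. Since $s>\frac32$, Sobolev embedding gives the energy inequality
\[
\tfrac{d}{dt}\|u\|_{H^s}^2\les (1+\|\GG_h\|_{\op})\big(\|u\|_{H^s}^2+\|u\|_{H^s}^4\big)\|u\|_{H^s}^2,
\]
which yields a uniform time $T(R)$.

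The third step is uniqueness and convergence via a difference estimate in $L^2$ (or $H^{s-1}$) for two solutions. Because of the derivative loss, I would run it in $L^2$, where the same symmetrisation gives
\[
\tfrac{d}{dt}\|u_1-u_2\|_{L^2}^2\les C(\|u_j\|_{H^s})\|u_1-u_2\|_{L^2}^2.
\]
Gronwall then gives uniqueness, and interpolation gives convergence of $u^\eps$ in $C_TH^{s'}$ for $s'<s$. Weak-$*$ compactness places the limit in $L^\infty_TH^s$. Continuity in time with values in $H^s$, and continuity of the flow map, follow from the Bona--Smith argument: regularise the data as $\P_{\le N}u_0$, and use the $L^2$ difference bound together with the $H^{s+1}$ growth of the smoothed solutions, balanced against the decay of $\|\P_{>N}u_0\|_{H^s}$. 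This is standard once the first three steps are in place, as in \cite[Section 2.1]{GL} and \cite[Lemma 5.4]{MP}.

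The main obstacle is the energy and difference estimate for the non-divergence, non-self-adjoint term $u\P_+\dx(|u|^2)$. There the derivative can fall on the highest-frequency factor through $\cj u\,\dx u$ inside $|u|^2$. I would first attempt to show the top-order part is $\mathrm{Re}\jb{J^su,\,u\cj u\,\P_+\dx J^su}+\mathrm{Re}\jb{J^su,\,u^2\P_+\dx J^s\cj u}$. The first is handled by the commutator $[\P_+,a]\dx$ being bounded for $a\in H^{s}$ with $s>\frac32$, together with $\mathrm{Re}\jb{f,\,a\dx f}=-\tfrac12\jb{f,(\dx a) f}$ for real $a=|u|^2$. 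For the second, the $\P_+$ projection applied to $\cj u$ (whose high frequencies are negative) forces $\P_+\dx J^s\cj u$ to be of lower order, again via a commutator with the low-frequency factor $u^2$. If this symmetrisation fails at the difference level, I would fall back on a modified energy with a correction term, in the style of \cite[Lemma 5.4]{MP}.
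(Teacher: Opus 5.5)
Your Step~2, and with it Step~3, rests on the claim that $\P_+\dx J^s\cj u$ is of lower order ``because the high frequencies of $\cj u$ are negative''. That holds only for $u\in L^2_+(\T)$. For general $u\in H^s(\T)$, the positive high frequencies of $\cj u$ are the conjugates of the negative high frequencies of $u$, and the resulting top-order term does not cancel.

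Since $J^s\cj u=\cj{J^s u}$, set $g:=u\,\cj{J^s u}$. The contribution of $2\be u\,\P_+\dx(u\,J^s\cj u)$ to $\frac{d}{dt}\|J^s u\|_{L^2}^2$ is then $4\be\,\mathrm{Re}\int_\T g\,\P_+\dx g\,dx$. Using $\P_+\dx=\frac12\dx+\frac i2\H\dx$ and $\int_\T g\,\dx g\,dx=0$, this becomes
\[
4\be\,\mathrm{Re}\int_\T g\,\P_+\dx g\,dx=2\be\,\mathrm{Re}\Big(i\sum_{\xi\in\Z}|\xi|\,\ft g(\xi)\,\ft g(-\xi)\Big).
\]
This is a bilinear (not sesquilinear) form in $J^s u$ with even symbol $|\xi|$, so no integration by parts or commutator estimate removes it. It couples $\ft u(N)$ and $\ft u(-N)$ through the low frequencies of $u^2$, with weight $N$ and uncontrolled phase.

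Concretely, take $u=e^{i\pi/4}+N^{-s}(e^{iNx}+e^{-iNx})$. Then $\|u\|_{H^s}+\|u\|_{W^{1,\infty}}\les 1$, while this term has size $\sim|\be|N$ and all other terms stay bounded. So the energy inequality you aim for is false: the naive $H^s$ energy loses a full derivative. The same structure, with $g=u_2\,\cj U$, appears in $\frac{d}{dt}\|U\|_{L^2}^2$ for $U=u_1-u_2$ and costs $\|U\|_{H^{1/2}}^2$, so your $L^2$ Gronwall bound also fails. This is exactly the ``outside the Hardy space'' obstruction the paper stresses; in $L^2_+$ the term is harmless, which is what your heuristic captures.

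The paper does not prove the proposition itself; it points to the energy method of \cite[Section 5]{PMP}, \cite[Section 2.1]{GL} and \cite[Lemma 5.4]{MP}. Any such argument on $\T$ has to neutralise this term. One way is to run your energy and Bona--Smith scheme on the gauged variables $v=\Pbhip[e^{i\be F}u]$, $w=\P_{-,\hi}u$ of \eqref{gauge}.
\begin{itemize}
\item Apart from $\GG_h$, which is bounded, the only derivative terms in \eqref{veq}--\eqref{weq} are $\Pbhip[v\,\P_-\dx(|u|^2)]$, $\P_{-,\hi}[w\,\P_+\dx(|u|^2)]$ and the transport term $\frac\be\pi M(u)\dx v$.
\item In the first two, $|\xi_1|=|\xi|+|\xi_{23}|$, so for $s>\frac32$
\[
\big\|J^s\P_{-,\hi}[w\,\P_+\dx(|u|^2)]\big\|_{L^2}\les\|w\|_{H^s}\,\big\|\ft{\dx(|u|^2)}\big\|_{\ell^1}\les\|w\|_{H^s}\|u\|_{H^s}^2 ,
\]
and similarly for $v$.
\item At the difference level, the same frequency constraint lets you move the derivative in $\Pbhip[v_2\,\P_-\dx(|u_1|^2-|u_2|^2)]$ onto $v_2$.
\item The transport term has a real constant coefficient, so it drops out of the energy.
\item Finally, $u$ is recovered from $(v,w,\Pblo u)$ via \eqref{recovery}.
\end{itemize}

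Alternatively, keep $u$ and add a normal-form correction to $\|J^s u\|_{L^2}^2$. This works because the $(N,-N)$ interaction is strongly non-resonant: it oscillates at time frequency $\approx 2N^2$ against a loss of only $N$. Your closing fallback points this way, but the correction is needed already for the a priori bound, not just for differences, and it must be built and then differenced too. With either fix, the rest of your scheme (approximation, compactness, Bona--Smith) goes through as you describe.
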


Our goal in this section is to extend the local well-posedness result in Proposition~\ref{PROP:LWPH2} to all regularities $s\geq \frac 12$. Namely, 
an improvement by a full derivative below this previous result. Our estimates in the previous sections are sensitive to this gap and (mildly) change between the regimens $s\geq 1$ and $s<1$; 
see for example \eqref{Jsu} and \eqref{uHsbd}. 
Thus, in order to give a rigorous proof of Theorem~\ref{THM:LWP}, we need a two-stage argument: 
\begin{enumerate}[(a)]
\item Extend the result of Proposition~\ref{PROP:LWPH2} to $H^{s}(\T)$ for any $1\leq s\leq \frac 32$;
\item Using part (a), we then cover the remaining range $\frac 12 \leq s<1$.
\end{enumerate}
As the details for proving part (a) are essentially the same as that for part (b), we focus on the lower regularity result and thus provide details only for part (b). Namely, in the following, we regard the local well-posedness of \eqref{INLS} in $H^s(\T)$ for any $s\geq 1$, as in the form of Proposition~\ref{PROP:LWPH2}, to have been proven already.

\subsection{A priori bounds} \label{SEC:apriori}

Similar to the setting in \cite{MP2}, we do not know that the solutions ensured by Proposition~\ref{PROP:LWPH2} are global-in-time.\footnote{In the defocusing case, the solutions can be extended globally in time. In the focusing case, we expect global solutions for small initial data but  for all large data. Thus, to handle both defocusing/focusing cases simultaneously, we do not assume, in general, that the $H^{1}$ solutions exist globally-in-time.}
 Thus, we first need to obtain a priori estimates for these $H^1$-solutions. Let $u_0\in  H^{1}(\T)$ be given, $u$ be the corresponding solution to \eqref{INLS} from Proposition~\ref{PROP:LWPH2}  and let $v$ and $w$ be defined as in \eqref{gauge}. Then,
 we define $\wt{u}$ and $\wt{v}$ as in \eqref{gauge2}
 and $L_{h}: = \|\GG_{h}\|_{\op}+\|\GG_{h}\|_{L^4\to L^4}$.

Given $0<T\leq 1$, $\frac 12\leq s \leq 1$, $0<\dl_0<\frac{1}{16}$, $b:=\frac 12+\dl_0$, and $b':=1-b-\dl_0=\frac 12-2\dl_0$, we~set
\begin{align}
\begin{split}
N_{T}^{s}(u) = \max\Big(  \|u\|_{L^{\infty}_{T}H^{s}_x}, &\|J^{s}_x u\|_{\wt{L^4_{T,x}}},  \|v(0)\|_{H^{s}}, \|w(0)\|_{H^{s}},  \\
&\|\ind_{[0,T]}\NN_{\wt{v}}(\wt{u})\|_{X^{s, -b'}}, \| \ind_{[0,T]}\NN_{w}(u)\|_{X^{s, -b'}}  \Big),
\end{split} \label{NTu}
\end{align}
where $\NN_{\wt{v}}$ and $\NN_{w}$ are the nonlinear terms defined in \eqref{veq2b} and \eqref{weq}.

By the Duhamel formulation of \eqref{weq} and \eqref{veq2b} and Lemma~\ref{LEM:linXsb}, we have
\begin{align}
\|\wt{v}\|_{X^{s,b}_{T}} & \les \|v(0)\|_{H^{s}} +T^{\dl_0} \|\ind_{[0,T]}\NN_{\wt{v}}(\wt{u})\|_{X^{s, -b'}} \les N_{T}^{s}(u), 
\label{vNs} \\
\|w\|_{X^{s,b}_{T}} & \les \|w(0)\|_{H^{s}} +T^{\dl_0} \|\ind_{[0,T]}\NN_{w}(u)\|_{X^{s, -b'}} \les N_{T}^{s}(u). 
\label{wNs}
\end{align}
From the smoothness of $u, \wt{u},\wt{v},$ and $w$, the map $T \mapsto N_{T}^{s}(u)$ is continuous and non-decreasing (see for instance \cite[Remark 3.7]{CLOP}). It follows from \eqref{gauge} and Lemma~\ref{LEM:L2prod} that
\begin{align}
\| v(0)\|_{H^{s}} + \|w(0)\|_{H^{s}} \les \|u_0\|_{H^{\frac 12}}^2 (1+\|u_0\|_{H^{\frac 12}}^2) \|u_0\|_{H^{s}} \les (1+\|u_0\|_{H^{\frac 12}})^4\|u_0\|_{H^{s}} .
\label{v0w0}
\end{align}
Using the first bounds in \eqref{vNs} and \eqref{wNs}, Proposition~\ref{PROP:tri}, and Lemma~\ref{LEM:easyterms}, we find~that
\begin{align} 
\lim_{T\to 0^{+}} \Big( \| \ind_{[0,T]} \NN_{\wt{v}}(\wt{u})\|_{X^{s,-b'}} +\| \ind_{[0,T]} \NN_{w}(u)\|_{X^{s,-b'}}  \Big)=0. \label{Nslim0}
\end{align}
Therefore, from \eqref{Nslim0}, \eqref{vNs}, \eqref{wNs}, and \eqref{v0w0}, we obtain
\begin{align}
\lim_{T\to 0^{+}} N_{T}^{s}(u)  \les (1+\|u_0\|_{H^{\frac 12}} )^4 \|u_0\|_{H^{s}}.
\notag
\end{align}

From \eqref{Bu} and \eqref{Xsbu}, we see that 
\begin{align}
B(u,u)+B(\wt{u},\wt{u})\les (1+T^\frac14 L_h^2)N_{T}^{\frac12}(u)^{2} \big( 1 + N_T^{1/2}(u)^4 \big). \label{Bus0}
\end{align}
Then, by Proposition~\ref{PROP:tri}, Lemma~\ref{LEM:easyterms}, \eqref{Bus0}, \eqref{vNs}-\eqref{wNs}, and \eqref{Xsbu}, we have
\begin{align}
\|\ind_{[0,T]}\NN_{\wt{v}}(\wt{u})\|_{X^{s, -b'}} +\|\ind_{[0,T]}\NN_{w}(u)\|_{X^{s, -b'}} 
& \les T^{\ta_1}(1+L_h) Q(N^{1/2}_{T}(u))N_{T}^{s}(u), \label{Nfu}
\end{align}
where $Q$ is some non-negative polynomial of at least degree $1$ (in fact, here it is of at least degree $2$) and $\ta_1>0$.
Next, combining \eqref{Jsu}, \eqref{uHsbd}, \eqref{v0w0}, \eqref{vNs}, \eqref{wNs}, and  \eqref{Nfu}, we have
\begin{align}
&\max(\|u\|_{L^{\infty}_{T}H^{s}_x} , \|J^{s}_x \PbHI u\|_{\wt{L^4_{T,x}}}) \notag \\
&\les \|u_0\|_{H^{s}}+ \|\wt{v}\|_{X^{s,b}_{T}}+\|w\|_{X^{s,b}_{T}} \notag  \\
& \hphantom{C}+\big(TM^3(1+L_h) +T^{\frac 14}+M^{-\ta_2}(1+\|\wt{v}\|_{X^{\frac 12,b}_{T}})\big)Q(N^{1/2}_{T}(u)) N_{T}^{s}(u) \notag \\
& \les (1+\|u_0\|_{H^{\frac 12}})^{4} \|u_0\|_{H^{s}}  + \big(T(1+L_h )M^3+M^{-\ta_2} \big)Q(N^{1/2}_{T}(u)) N_{T}^{s}(u)
\label{unorms}
\end{align}
for some polynomial $Q \geq 1$ of degree at least $1$ (which may change from line to line), any $0<T\leq 1$, some $\ta_2>0$, and for any $M\in \N$ sufficiently large.
Thus, from \eqref{v0w0}, \eqref{vNs}-\eqref{wNs}, \eqref{Nfu}, and \eqref{unorms}, we have
\begin{align}
\begin{split}
N_{T}^{s}(u) & \leq C_1 (1+\|u_0\|_{H^{\frac12}})^5 \|u_0\|_{H^{s}} \\
& \ 
+C_2 \big(T^{\ta_1}(1+L_h)M^3 +M^{-\ta_2})Q(N^{1/2}_{T}(u)\big) N_{T}^{s}(u),
\end{split} \label{NsTbd}
\end{align}
for some absolute constants $C_1,C_2>0$, $\ta_1, \ta_2>0$.\footnote{Note that the constants $C_1,C_2$ and the polynomial $Q$ in \eqref{NsTbd} and \eqref{Tast} depend on $s$ here. In the following, we only need to apply \eqref{NsTbd} for three regularities: $\frac 12, 2$ and some fixed $\frac12  <s\leq \frac 32$ so we may choose the largest of the constants and polynomials.}   
We now choose $s=\frac12$ and $M\gg 1$ depending on $\|u_0\|_{H^{\frac 12}}$ so that 
\begin{align}
C_{2}M^{-\ta_2}Q(4R) <\tfrac{1}{4} \quad \text{where} \quad R: = C_1 (1+\|u_0\|_{H^{\frac 12}})^5 \|u_0\|_{H^{\frac 12}}. \label{M}
\end{align}
Then, given this choice of $M$, we choose $T_{\ast}=T_{\ast}(M, L_h)>0$
so that 
\begin{align}
C_{2}T_{\ast}^{\ta_1}M^3 (1+L_h)Q (4R) <\tfrac{1}{4}. \label{Tast}
\end{align}
By a continuity argument with the choices \eqref{M} and \eqref{Tast}, we obtain
\begin{align}
N_{T}^{1/2}(u) \leq 2R.  \label{Ns0Tbd}
\end{align}
for any $0<T\leq T_{\ast}$.
Next, inserting \eqref{Ns0Tbd} into \eqref{NsTbd} with $s=1$, and reducing $T_*=T_*(M,L_h, \|u_0\|_{H^{\frac12}})$ if necessary, we find
\begin{align}
\|u\|_{L^{\infty}_{T} H^{ 1}_x} \leq N^{1}_{T}(u) \leq 2C_1(1+\|u_0\|_{H^{\frac 12}})^{5} \|u_0\|_{H^1}. 
\notag
\end{align}
This a priori bound implies that the maximal time of existence $T_{\max}$ for a $C_{T}H^1(\T)$ solution has the uniform lower bound $T_{\max}\geq  T_{\ast}$ which only depends on $\|u_0\|_{H^{\frac 12}}$ and not on $\|u_0\|_{H^{1}(\T)}$.  Finally, from \eqref{NsTbd} with $s\in (\frac 12,1)$ and using \eqref{Ns0Tbd},  for $0\le T \le T_*$, we also have that
\begin{align}
N_{T}^{s}(u) \leq 2C_1(1+\|u(0)\|_{H^{\frac 12}})^{5} \|u(0)\|_{H^{s}}. \label{Hsscale}
\end{align}

\begin{remark}\rm \label{REM:unifLh}
When $\sup_{1\leq h\leq \infty} L_{h}<\infty,$
we may choose $T_{\ast}>0$ uniformly in $h\geq 1$.
\end{remark}

\subsection{Tightness estimates}

The goal of this subsection is to establish bounds for the terms $\| \Pi_{>\Ld} \wt{v}\|_{X^{s,b}_{T}}$ and $\|\Pi_{>\Ld}w\|_{X^{s,b}_{T}}$, which appear in the difference estimates in Lemma~\ref{LEM:diffests}~(ii).

\begin{proposition} \label{PROP:tight}

Assume that the results of Section~\ref{SEC:apriori} hold true, with $R$ as in  \eqref{M} and $T_*>0$ so that \eqref{Ns0Tbd} and~\eqref{Hsscale} hold for $0\le T \le T_*$. Then, there exist $\ta_3>0$, constants $C_3>0$ and $C_4(R,L_{h})>0$, and $0<T_{\ast \ast} = T_{**}(R, L_h)\leq T_{\ast}$, such that 
\begin{align}
\| \Pi_{>\Ld} \wt{v}\|_{X^{s,b}_{T}} & \leq C_3 \|\Pi_{>\Ld}v(0)\|_{H^{s}} + C_4(R, L_{h}) \|u(0)\|_{H^s}\Ld^{-\ta_3}, \label{tightv} \\
\| \Pi_{>\Ld} w\|_{X^{s,b}_{T}} & \leq C_3 \|\Pi_{>\Ld}w(0)\|_{H^{s}} + C_4(R,L_{h}) \|u(0)\|_{H^s}\Ld^{-\ta_3}, 
\label{tightw}
\end{align}
for all $\Ld\in \N$ sufficiently large and $0<T\leq T_{\ast \ast}$.
\end{proposition}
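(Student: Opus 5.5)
Apply $\Pi_{>\Ld}$ to the Duhamel formulations of \eqref{veq2b} and \eqref{weq}. Since $\Pi_{>\Ld}$ commutes with the linear propagator and with multiplication by $\ind_{[0,T]}$ in frequency, Lemma~\ref{LEM:linXsb} gives
\begin{align*}
\|\Pi_{>\Ld}\wt v\|_{X^{s,b}_T}\les \|\Pi_{>\Ld} v(0)\|_{H^s} + T^{\dl_0}\|\ind_{[0,T]}\Pi_{>\Ld}\NN_{\wt v}(\wt u)\|_{X^{s,-b'}},
\end{align*}
and similarly for $w$. We have used that $\J_u$ is the identity at $t=0$. The task is then to bound each piece of $\Pi_{>\Ld}\NN_{\wt v}$ and $\Pi_{>\Ld}\NN_w$ in one of two ways. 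Either it is at most $T^{\ta}C(R,L_h)\|\Pi_{>\Ld}\wt v\|_{X^{s,b}_T}$ (resp.\ $\|\Pi_{>\Ld} w\|_{X^{s,b}_T}$), or it is at most $C(R,L_h)\Ld^{-\ta_3}\|u(0)\|_{H^s}$. In the second case we use \eqref{Ns0Tbd}, \eqref{Hsscale}, \eqref{vNs}--\eqref{wNs}, \eqref{Bus0} and \eqref{Xsbu} to convert every norm of $u,\wt u,\wt v, w$ into $R$ or $\|u(0)\|_{H^s}$. Terms of the first kind are absorbed into the left-hand side once $T\le T_{**}(R,L_h)$ is small.

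The easy terms are handled first. The linear term $i\frac\be\pi[\frac{\be}{4\pi}M^2-P]\wt v$ commutes with $\Pi_{>\Ld}$, and \eqref{ZXembed} with \eqref{YsCTHs} bounds it by $T^{\frac12}C(R)\|\Pi_{>\Ld}\wt v\|_{X^{s,b}_T}$. For the main trilinear term we use the sign observation from the introduction. On the support of $\Pbhip[\wt v\,\P_-\dx(|\wt u|^2)]$ the output frequency satisfies $\xi=\xi_1-|\xi_{23}|$ with $\xi_1>0$, so $\xi_1\ge\xi>\Ld$. Hence $\Pi_{>\Ld}\Pbhip[\wt v\,\P_-\dx(|\wt u|^2)]=\Pi_{>\Ld}\Pbhip[(\Pi_{>\Ld}\wt v)\P_-\dx(|\wt u|^2)]$. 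Proposition~\ref{PROP:tri}, namely \eqref{vX1} applied with $v=\Pi_{>\Ld}\wt v$, then gives $T^{\dl_0}B(\wt u,\wt u)\|\Pi_{>\Ld}\wt v\|_{X^{s,b}_T}$. The same argument with \eqref{wX1} handles $\P_{-,\hi}[w\P_+\dx(|u|^2)]$, since there $|\xi_1|=|\xi|+|\xi_3-\xi_2|\ge|\xi|$.

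The remaining cubic/quintic-type terms are those with $\GG_h$ and $\g$: $\Pbhip[e^{i\be\wt F}\wt u\mathcal Q(|\wt u|^2)]$ and $\P_{-,\hi}[u\mathcal Q(|u|^2)]$, with $\mathcal Q\in\{\Id,\GG_h\}$. For these I would decompose dyadically and split according to which factor carries frequency $\gtrsim\Ld$. If $e^{i\be\wt F}$ is at high frequency, \eqref{LinftyE11} and \eqref{LpE} give a gain of $\Ld^{-1+}$ directly. If an input $u$ factor is at frequency $\gg\Ld$ while the output is at $\Ld$, we have $|\xi|\sim|\xi_j|$. Here I would look for nonlinear smoothing: use the resonance function $\Phi=2(\xi-\xi_3)(\xi_3-\xi_2)$ and the modulation decomposition \eqref{X0}--\eqref{Xres}, as in Case 3 of Proposition~\ref{PROP:tri}. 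In the non-resonant regime $|\Phi|\gtrsim\Ld^{\ta}\cdot$(stuff), so we gain a negative power of $\Ld$ at the cost of placing one factor in $X^{-\frac12,1}$, controlled by \eqref{Xsbu}. The inputs $u,\wt u$ only lie in $X^{s-\frac12-,\frac12}$, so each gain must beat this half-derivative loss; since the output weight is only $\jb{\xi}^s$ against $L^2$-type inputs at $H^{\frac12}$ level, I expect a margin of $\Ld^{-\ta_3}$ to remain. In the nearly resonant regime $\xi=\xi_3$ or $\xi_2=\xi_3$ the output frequency is inherited from an input. Here I would write that input in terms of $\Pi_{>\Ld/4}\wt v$ or $\Pi_{>\Ld/4}w$ plus remainders via the recovery formula \eqref{PbHIu}. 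These remainders carry high frequencies of $e^{\pm i\be F}$ and gain $\Ld^{-1+}$. This yields a bound of the form $T^{\ta}C(R,L_h)(\|\Pi_{>\Ld/4}\wt v\|+\|\Pi_{>\Ld/4}w\|)+C\Ld^{-\ta_3}\|u(0)\|_{H^s}$.

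The main obstacle is exactly this nearly resonant cubic analysis with the low regularity $X^{s-\frac12-,\frac12}$ of $u,\wt u$. A further problem is that the projection thresholds shift from $\Ld$ to $\Ld/4$. I would close the latter by running the estimate simultaneously for $\Pi_{>\Ld'}$ with all $\Ld'\ge\Ld/4$ and taking a supremum. Alternatively, one can note that $\|\Pi_{>\Ld/4}\wt v\|-\|\Pi_{>\Ld}\wt v\|$ is controlled by the finitely many dyadic pieces between them, each of which can be estimated directly by the same argument. After absorbing the $T^{\ta}$ terms for $T\le T_{**}$, we obtain \eqref{tightv}--\eqref{tightw}.
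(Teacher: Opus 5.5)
The linear term and the main trilinear terms are handled exactly as in the paper: the sign identity $\Pi_{>\Ld}\P_+[\wt v\,\P_-\dx(|\wt u|^2)]=\Pi_{>\Ld}\P_+[\Pi_{>\Ld}\wt v\cdot\P_-\dx(|\wt u|^2)]$ followed by Proposition~\ref{PROP:tri}. The gap is in the cubic terms of \eqref{veq2b}.

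\textbf{The threshold shift.} You treat $\Pbhip[e^{i\be\wt F}\wt u\,\mathcal Q(|\wt u|^2)]$ as the exponential times a cubic in $\wt u$. In the resonant regime you convert the inherited high factor $\P_+\wt u$ into $\wt v$ through \eqref{PbHIu}. Convolution with the low frequencies of $e^{\mp i\be\wt F}$ then lowers the frequency threshold, and what you actually obtain is
\begin{align*}
\|\Pi_{>\Ld}\wt v\|_{X^{s,b}_T}\le C_3\|\Pi_{>\Ld}v(0)\|_{H^s}+C\Ld^{-\ta}\|u(0)\|_{H^s}+\eps\,\|\Pi_{>\Ld/4}\wt v\|_{X^{s,b}_T},\qquad \eps=T^{\ta}C(R,L_h).
\end{align*}
The last term is only of size $\eps\|u(0)\|_{H^s}$. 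It does not tend to zero as $\Ld\to\infty$, so it cannot be absorbed. Neither of your fixes closes this:
\begin{itemize}
\item A supremum over thresholds does not help. The norm $\|\Pi_{>\Ld'}\wt v\|_{X^{s,b}_T}$ is non-increasing in $\Ld'$, so the right-hand side always carries the larger quantity.
\item Estimating the pieces between $\Ld/4$ and $\Ld$ ``by the same argument'' reintroduces $\Pi_{>\Ld/16}\wt v$, so it is circular.
\item Iterating the inequality only yields a bound in terms of $\|\Pi_{>\Ld^{c}}v(0)\|_{H^s}$ for some $c<1$, which is weaker than \eqref{tightv}.
\end{itemize}

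\textbf{The modulation argument.} Your non-resonant argument relies on the trilinear identity $\s_1-\s_2+\s_3-\s=\Phi$. That identity fails for the actual product whenever $e^{i\be\wt F}$ sits at a nonzero frequency $\ll\Ld$. The exponential is then a fourth factor, and you have no control of its time-frequency.

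\textbf{The missing idea: absorb the gauge factor exactly.} Since $|e^{i\be\wt F}|=1$, the function $\wt y:=e^{i\be\wt F}\wt u$ satisfies $e^{i\be\wt F}\wt u\,\mathcal Q_h(|\wt u|^2)=\wt y\,\mathcal Q_h(|\wt y|^2)$. This is a genuinely trilinear expression in a single function, and $\Pbhip\wt y=\wt v$ by definition. The paper then proceeds as follows.
\begin{itemize}
\item Lemma~\ref{LEM:Xsby} places $\wt y$ in $\mathcal X^s_T$, with bounds in terms of $R$ and $\|u(0)\|_{H^s}$.
\item Lemma~\ref{LEM:tightv} splits $y\,\mathcal Q_h(|y|^2)$ into exactly resonant pieces and a remainder.
\item The exactly resonant pieces are $\ft{\mathcal Q_h}(0)M(y)\ft y(\xi)$, $\RR_1$ and $\RR_2$. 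Each is $\ft y(\xi)$ times a bounded coefficient. After applying $\Pi_{>\Ld}\P_+$ they are bounded by $T^{\frac12}\|y\|_{L^\infty_TL^2_x}^2\|\Pi_{>\Ld}\wt v\|_{X^{s,b}_T}$. The threshold is the \emph{same} $\Ld$, so these terms are absorbed.
\item The remainder $\NN_h$ gains $\Ld^{-\frac1{10}}$. Where the phase is large, the gain comes from $|\Phi|\sim|\xi_1-\xi_4||\xi_3-\xi_4|$. In the nearly resonant regime, it comes from placing the sign-matching high factor $\P_{\pm,\hi}y$ in $X^{s,b}$.
\end{itemize}

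For the $w$-equation no gauge factor is present: one takes $y=u$, and $\P_{-,\hi}u=w$. There your plan would go through without any threshold shift.
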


The proof of Proposition~\ref{PROP:tight} requires some preliminary results.

\begin{lemma}\label{LEM:Xsby}
Given $0<T\leq 1$, let $u$ be a smooth solution to \eqref{INLS2} on $[0,T]$ and $\wt{u}$ be as in~\eqref{gauge2}. 
Define $\wt{y}: =e^{i\be \wt{F}}\wt{u} : = e^{i\be {F}[\wt u]}\wt{u}$. Then, for any $\frac 12 \leq s<2$, it holds that
\begin{align}
\begin{split}
\sup_{0\leq \ta \leq 1} \| \wt{y}\|_{X^{s-\ta,\ta}_{T}}&  \les  \|\wt{u}\|_{L^{\infty}_{T}H^{\frac 12}_x}(1+ \|\wt{u}\|_{L^{\infty}_{T}H^{\frac 12}_x}^5) \\
& \quad \times \Big[  (1 + \|J^{\frac 12}_x\wt{u}\|_{\wt{L^4_{T,x}}} ) \|\wt{u}\|_{L^{\infty}_{T}H^{s}_x}  +  (1+\|\GG_h\|_{\textup{op}}) \|J^{s}_x\wt{u}\|_{\wt{L^4_{T,x}}} \Big].
\end{split} 
\label{yXsb}
\end{align}
\end{lemma}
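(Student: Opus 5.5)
As in the proof of \eqref{Xsbu}, I would start from the general bound (cf.\ \eqref{uXsb1a})
\begin{align*}
\sup_{0\le \ta\le 1}\|\wt y\|_{X^{s-\ta,\ta}_T}\les \|(\dt+i\dx^2)\wt y\|_{L^2_TH^{s-1}_x}+\|\wt y\|_{L^\infty_TH^s_x}.
\end{align*}
This reduces the lemma to two fixed-time (then time-integrated) product estimates.

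\textbf{The $L^\infty_TH^s_x$ term.} I would split $\wt y=\Pblo\wt y+\Pbhi\wt y$. The low part is bounded by $\|\wt u\|_{L^2_x}$. For the high part I apply \eqref{eFg} with $f=g=\wt u$. This gives
\begin{align*}
\|\wt u\|_{H^{\frac12}}^2\|\wt u\|_{H^s}+\|\wt u\|_{H^{\frac12}}\big(\|\wt u\|_{H^{\frac12}}^3+\|\wt u\|_{H^{\max(\frac12,s-\frac34)}}\big)\|\wt u\|_{H^{\frac12}}.
\end{align*}
Using $\|\wt u\|_{H^{\max(\frac12,s-\frac34)}}\le\|\wt u\|_{H^s}$, everything fits inside $\|\wt u\|_{L^\infty_TH^{\frac12}_x}(1+\|\wt u\|^5_{L^\infty_TH^{\frac12}_x})\|\wt u\|_{L^\infty_TH^s_x}$.

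\textbf{The equation for $\wt y$.} Next I compute the equation satisfied by $\wt y$, exactly as in the proof of Lemma~\ref{LEM:Xsbu}(i), but now starting from \eqref{ueq2}. I also need $\dt\wt F$, which is the analogue of \eqref{dtF} with the extra transport term $-\frac\be\pi M(\wt u)\dx\wt F$. The leading cancellation is the gauge property: the bad term $2\be\wt u\,\P_+\dx(|\wt u|^2)$ combines with $2i\be\,\dx\wt F\,\dx\wt u$ coming from $i\dx^2$ acting on the exponential. What remains is:
\begin{itemize}
\item the term $-2\be e^{i\be\wt F}\wt u\,\P_-\dx(|\wt u|^2)$, together with terms of the form $e^{i\be\wt F}\wt u\,\dx\cj{\wt u}\,\wt u$;
\item the quintic terms $e^{i\be\wt F}|\wt u|^4\wt u$;
\item the terms $e^{i\be\wt F}\wt u\,\GG_h(|\wt u|^2)$ and $e^{i\be\wt F}|\wt u|^2\wt u$;
\item the constant-coefficient terms $M(\wt u)^2\wt y$ and $P(\wt u)\wt y$;
\item the transport term $M(\wt u)\dx\wt y$.
\end{itemize}
The transport term is bounded in $L^2_TH^{s-1}_x$ by $\|\wt u\|^2_{H^{\frac12}}\|\wt y\|_{L^\infty_TH^s_x}$, which is already controlled. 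The term $P(\wt u)\wt y$ needs care, since $P$ is only bounded by $\|\wt u\|_{H^{\frac12}}^2+\|\wt u\|_{L^4}^4$.

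\textbf{Estimating the nonlinear terms in $L^2_TH^{s-1}_x$.} I would handle the derivative-cubic terms by writing each as $g\,\dx h$ with $g=e^{i\be\wt F}\wt u\,\wt u$ (or with a projection $\P_\pm$), and apply \eqref{prodest} from Lemma~\ref{LEM:dxs1}(i), followed by H\"older in time. The factor $e^{i\be\wt F}$ has to be distributed using \eqref{LinftyE11}/\eqref{LpE}: its low frequencies act as bounded multipliers, and its high frequencies decay like $N^{-1+}$. This yields the factor $\|J^{\frac12}_x\wt u\|_{\wt{L^4_{T,x}}}\|\wt u\|_{L^\infty_TH^s_x}$ plus $\|\wt u\|^2_{H^{\frac12}}\|J^s_x\wt u\|_{\wt{L^4_{T,x}}}$. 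The $\GG_h$ and cubic terms go through exactly as in \eqref{uXsb1aa}--\eqref{uXsb1ab}, producing the $(1+\|\GG_h\|_{\op})\|J^s_x\wt u\|_{\wt{L^4_{T,x}}}$ contribution. The quintic terms are handled by Sobolev embedding.

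\textbf{Main obstacle.} I expect the hardest part to be the products involving $e^{i\be\wt F}$ at regularity $s-1$ with $s$ near $\frac12$, where $H^{\frac12}\not\subset L^\infty$. My first attempt would be a dyadic decomposition of $e^{i\be\wt F}$, as in Lemma~\ref{LEM:L2prod}, placed into \eqref{prodest}. Alternatively, I would avoid the derivative loss by noting that $H^{s-1}$ with $s-1\le 0$ lets me put the derivative term in $L^2_TH^{-\frac12}$-type norms using \eqref{uniqueL2}-style duality.
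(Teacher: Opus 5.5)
Your skeleton is the paper's. You reduce, via the analogue of \eqref{uXsb1a}, to $\|(\dt+i\dx^2)\wt y\|_{L^2_TH^{s-1}_x}+\|\wt y\|_{L^\infty_TH^s_x}$, bound the second term by \eqref{eFg}, derive the equation for $\wt y$, and treat the $\GG_h$, cubic and constant-coefficient terms as in \eqref{uXsb1aa}--\eqref{uXsb1ab}.

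\textbf{The gap.} It is at the one delicate term, $e^{i\be\wt F}\wt u\,\P_-\dx(|\wt u|^2)$. The projection $\P_-$ sits on the \emph{differentiated} product. So this term is not of the form $g\,\dx h$ with both undifferentiated $\wt u$'s inside $g$. Nor does \eqref{prodest1} apply as is, since it needs the projection on the undifferentiated pair.

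The only way to feed it to \eqref{prodest} is $g=e^{i\be\wt F}\wt u$, $h=\P_-(|\wt u|^2)$. That requires $\|J^s_x\P_-(|\wt u|^2)\|_{\wt{L^4_{T,x}}}$. The high--low piece $\P_{\ll N}\cj{\wt u}\cdot\P_N\wt u$ then forces an $L^\infty_{T,x}$ bound on $\P_{\ll N}\wt u$. At $H^{\frac12}$ this holds only with the loss $(\log N)^{\frac12}$ of \eqref{Linfty0+}, so the estimate does not close with the norms in \eqref{yXsb}.

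Your fallbacks do not fix this. The exponential is not the obstruction. And \eqref{uniqueL2} only lands in $H^{-1}_x$, half a derivative short of $H^{s-1}_x=H^{-\frac12}_x$ at $s=\frac12$.

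The missing idea is the Leibniz rearrangement used in the paper:
\begin{align*}
e^{i\be\wt F}\wt u\,\P_-\dx(|\wt u|^2)=\dx\big(e^{i\be\wt F}\wt u\,\P_-(|\wt u|^2)\big)-\dx\big(e^{i\be\wt F}\wt u\big)\,\P_-(|\wt u|^2).
\end{align*}
The first piece is estimated in $L^2_TH^s_x$ by \eqref{eFg} and the fractional Leibniz rule. The second now has exactly the structure $\P_\pm(u_1u_2)\dx u_3$ of \eqref{prodest1}, with $u_3=e^{i\be\wt F}\wt u$. So one only needs bounds on $\|J^{s}_x(e^{i\be\wt F}\wt u)\|_{\wt{L^4_{T,x}}}$ and $\|J^{\frac12}_x(e^{i\be\wt F}\wt u)\|_{\wt{L^4_{T,x}}}$. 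These follow from the paraproduct you sketch: low frequencies of $e^{i\be\wt F}$ act boundedly, high ones are handled by \eqref{LinftyE11}--\eqref{LpE}. That is where your remark about distributing the exponential belongs.

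\textbf{Other issues.} Your list of terms in the $\wt y$-equation is wrong, though harmlessly.
\begin{itemize}
\item The quintic terms cancel: $i\be\,\dt\wt F\,\wt u$ against $-i\be^2(\dx\wt F)^2\wt u$.
\item The $\wt u\,\wt u\,\dx\cj{\wt u}$ pieces are entirely absorbed into $-2\be e^{i\be\wt F}\wt u\,\P_-\dx(|\wt u|^2)$.
\item There is no transport term. The $-\frac\be\pi M(\wt u)\dx\wt y$ produced by the extra terms in \eqref{ueq2} and in $\dt\wt F$ cancels the $+\frac\be\pi M(\wt u)\dx\wt y$ coming from the mean of $|\wt u|^2$ inside $\dx\wt F$. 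Removing it is the point of $\J_u$, and the result is \eqref{yeqn}.
\end{itemize}
Finally, your bound $\|\Pblo\wt y\|_{L^2_x}\le\|\wt u\|_{L^2_x}$ is linear in $\wt u$. It does not fit the stated right-hand side, which is quadratic because of the prefactor $\|\wt u\|_{L^\infty_TH^{\frac12}_x}$; constant data $u_0=\epsilon$ gives a counterexample as $\epsilon\to0$. This imprecision is in the statement itself, inherited from \eqref{eFg}. The prefactor should be a polynomial in $\|\wt u\|_{L^\infty_TH^{\frac12}_x}$ with nonzero constant term. This does not affect the use of the lemma in Proposition~\ref{PROP:tight}.
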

\begin{proof}
We proceed as in the proof of Lemma~\ref{LEM:uinfo}. 
Let $y^{\dag}:= S(-t)[ e^{i\be \wt{F}}\wt{u}]$ on $[0,T]$ and extend $y^\dag$ to $[-1,2]$ by setting $\dt y^\dag= 0$ on $[-1,2]\setminus [0,T]$. Then, $\eta(t) S(t) y^{\dag}(t)$ is an extension of $\wt{y}$ on $[0,T]$. Then, as in \eqref{uXsb1a}, it follows that 
\begin{align*}
\sup_{0\leq \ta \leq 1} \| \wt{y}\|_{X^{s-\ta,\ta}_{T}} & \les \| (\dt +i\dx^2 )(\wt{y})\|_{L^{2}_{T}H^{s-1}_x} + \| \wt{y}\|_{L^{\infty}_{T}H^s_x} 
.
\end{align*}
By \eqref{eFg}, we have 
\begin{align*}
\| \wt{y}\|_{L^{\infty}_{T}H^s_x} \les\|\wt{u}\|_{L^{\infty}_{T}H^{\frac 12}_x}^{2} ( 1+ \|\wt{u}\|_{L^{\infty}_{T}H^{\frac 12}_x}^{2} ) \|\wt{u}\|_{L^{\infty}_{T}H^{s}_x}.
\end{align*}
Next, following the computations in Lemma~\ref{LEM:Xsbu}, we find
\begin{align}\label{yeqn}
\begin{split}
(\dt +i\dx^2 )(\wt{y})  =& -2\be e^{i\be \wt{F}} \wt{u} \P_{-}\dx(|\wt{u}|^2) \\
&
+ ie^{i\be \wt{F}}
\big\{  
\tfrac\be\pi\big[ 
 \tfrac{\be}{4\pi} M(\wt u)^2 - P(\wt u)
\big] 
\wt{u} 
 -\be \wt{u}\GG_{h}(|\wt{u}|^2) +\g |\wt{u}|^2 \wt{u}  \big\}. 
 \end{split}
\end{align}

For the second contribution on the right-hand side of \eqref{yeqn}, we argue as in \eqref{uXsb1aa}-\eqref{uXsb1ab} and control it by 
\begin{align*}
T^{\frac 14} (1+\| \GG_{h}\|_{\text{op}}) \|\wt{u}\|_{L^{\infty}_{T}H^{\frac 12}_x}^{2}(1+\|\wt{u}\|_{L^{\infty}_{T}H^{\frac 12}_x}^{4}) \| J^{s}\wt{u}\|_{L^{4}_{T,x}}.
\end{align*}

\noi
For the contribution from the first term on the right-hand side of \eqref{yeqn}, we write it as
\begin{align*}
e^{i\be \wt{F}} \wt{u} \P_{-}\dx(|\wt{u}|^2)  = \dx\big( e^{i\be \wt{F}} \wt{u} \P_{-}(|\wt{u}|^2)\big) -  \dx( e^{i\be \wt{F}} \wt{u} )\P_{-}(|\wt{u}|^2) .
\end{align*}
Then, by \eqref{eFg} and the fractional Leibniz rule, we have 
\begin{align*}
&\| \dx\big( e^{i\be \wt{F}} \wt{u} \P_{-}(|\wt{u}|^2)\big) \|_{L^{2}_{T}H^{s-1}_x}  \\
&\les \| e^{i\be \wt{F}}  \big[\wt{u} \P_{-}(|\wt{u}|^2) \big] \|_{L^{2}_{T}H^{s}_x}  \\
& \les \|\wt{u}\|_{L^{\infty}_{T}H^{\frac 12}_x} 
\Big( \|\wt u\|_{L^\infty_T H^{\frac12}_x} \| \wt{u} \P_{-}(|\wt{u}|^2)\|_{L^2_T H^{s}_x} + (1+ \|\wt{u}\|_{L^{\infty}_{T}H^{\frac 12}_x}^2 ) \| \wt{u}\|_{L^{\infty}_{T}H^{s}_x} \|\wt{u} \P_{-}(|\wt{u}|^2)\|_{L^{2}_{T}H^{\frac 12}_x} \Big)  \\
& \les T^\frac14 \|\wt{u}\|_{L^{\infty}_{T}H^{\frac 12}_x}^4
(1+ \|\wt{u}\|_{L^{\infty}_{T}H^{\frac 12}_x}^2 )  
\big(  \| J^{s}\wt{u}\|_{\wt{L^{4}_{T,x}}}+ \|\wt{u}\|_{L^{\infty}_{T}H^{s}_x}  \big).
\end{align*}

\noi
Finally, by splitting $\wt{u}= (\P_{\ll N}+\wt{\P}_{\ges N} )\wt{u}$, and using \eqref{LinftyE11}-\eqref{LpE}, we find
\begin{align*}
\| J_x^{s} ( e^{i\be \wt{F}} \wt{u}) \|_{\wt{L^{4}_{T,x}}} 
& \les (1+ \|\wt{u}\|_{L^{\infty}_{T}H^{\frac 12}_x}^2 ) \Big( \|J^{s}\wt{u}\|_{\wt{L^{4}_{T,x}}} +   \|\wt{u}\|_{L^{\infty}_{T}H^{\frac 12}_x}^2 \|\wt{u}\|_{L^{\infty}_{T}H^{s}_x}\Big). 
\end{align*}
It then follows from \eqref{prodest1} that 
\begin{align*}
\|\dx( e^{i\be \wt{F}} \wt{u} )\P_{-}(|\wt{u}|^2)\|_{L^{2}_{T}H^{s-1}_{x}} 
& \les 
T^\frac14 \|\wt{u}\|_{L^{\infty}_{T}H^{\frac 12}_x} (1+ \|\wt{u}\|_{L^{\infty}_{T}H^{\frac 12}_x}^3 )  \\
 &\quad \times \Big(  \|J^{s}_x \wt{u}\|_{\wt{L^{4}_{T,x}}}+ \|\wt{u}\|_{L^{\infty}_{T}H^{\frac 12}_x}^2 \|\wt{u}\|_{L^{\infty}_{T}H^{s}_x} + \|J^{\frac 12}_x \wt{u}\|_{\wt{L^{4}_{T,x}}} \|\wt{u}\|_{L^{\infty}_{T}H^{s}_x} \Big).
\end{align*}
Combining these results then yields \eqref{yXsb}.
\end{proof}

We now prove a tightness (on the frequency side) result for the trilinear form $y \mapsto \P_{\pm} [ y \mathcal{Q}_{h}(|y|^2)]$ in Fourier restriction norm spaces. In particular, we exploit a nonlinear smoothing effect for some parts of this contribution.

\begin{lemma} \label{LEM:tightv}
Let $0<T\leq 1$, $s\geq \frac 12$, $b'=\frac12-2\dl$, $b= \frac12+\dl$, for small $0<\dl<1$, and $y\in  X^{s-\ta,\ta}_{T}$ for every $0\leq \ta\leq 1$ and such that $\P_{ +,\textup{hi}}y\in X^{s,b}_{T}$ or $\P_{ -,\textup{hi}}y\in X^{s,b}_{T}$. Then, for $\mathcal{Q}_{h}\in \{ \textup{Id}, \GG_{h}\}$, then for all $\Ld \in \mathbb{N}$ sufficiently large, it holds that
\begin{align}
\begin{split}
\| \P_{\pm} \Pi_{>\Ld} \big[  y \mathcal{Q}_{h}(|y|^2)    \big]\|_{X^{s,-b'}_{T}} 
&  \les \Ld^{-\frac{1}{10}} \|\mathcal{Q}_{h}\|_{\textup{op}} \|y\|_{\mathcal{X}^{\frac 12}_{T}}^{2}  \Big[ \|y\|_{\mathcal{X}^{s}_{T}} + \|\P_{\pm ,\textup{hi}} y\|_{X^{s,b}_{T}}  \Big] \\
& \qquad +  T^{\frac 12}\|\mathcal{Q}_{h}\|_{\textup{op}}\|y\|_{L^{\infty}_{T}L^2_x}^{2} \| \Pi_{>\Ld} \P_{\pm}y\|_{X^{s,b}_{T}},
\end{split} \label{ysmoothing}
\end{align}
where $\| y\|_{\mathcal{X}^{s}_{T}} : = \max_{0\leq \ta \leq 1} \|y\|_{X^{s-\ta,\ta}_{T}}.$
\end{lemma}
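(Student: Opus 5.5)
Extending $y$ and reducing by duality as in the proof of Proposition~\ref{PROP:tri}, I will bound, for $\|g\|_{X^{0,b'}}\le 1$, the quadrilinear form
\[
\sum N^{s}\int_{\R}\int_{\T}\cj{\P_N g}\,\P_{\pm}\Pi_{>\Ld}\big[\P_{N_1}y_T\,\mathcal{Q}_h(\cj{\P_{N_2}y}\,\P_{N_3}y)\big]\,dx\,dt,
\]
with frequencies $\xi=\xi_1-\xi_2+\xi_3$, $|\xi|>\Ld$, so $N_{\max}\gtrsim\Ld$. The multiplier of $\mathcal{Q}_h$ is bounded, so it is carried as $\|\mathcal{Q}_h\|_{\textup{op}}$ throughout. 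I split into two regimes. The \emph{bad} regime is where the output frequency is carried by the undifferentiated factor: $|\xi_1|\sim|\xi|$ with $N_2\vee N_3\le \Ld^{\epsilon}$, or more precisely $\xi_1$ lies in the same sign half-line as $\xi$ and $|\xi_1|>\Ld$ up to a small shift. Everything else is the \emph{good} regime, where I aim for a negative power of $\Ld$.

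In the bad regime I place $\Pi_{>\Ld}\P_{\pm}$ on the $y$ factor of highest frequency. This costs only a low-frequency shift, which is absorbed by moving a cutoff $\Ld\pm O(\Ld^{\epsilon})$; the leftover shell, where $\Ld-O(\Ld^{\epsilon})<|\xi_1|\le\Ld$, is handled by the good-regime bounds, since there $N_2\vee N_3$ or $|\xi-\xi_1|$ is comparable to the threshold. The remaining term I estimate crudely. I use \eqref{ZXembed} to pass to $L^2_TH^s_x$, then Hölder and Bernstein on the low-frequency factors, $\|\P_{\le\Ld^\epsilon}y\|_{L^\infty_x}\lesssim \Ld^{\epsilon/2}\|y\|_{L^2}$, giving $T^{1/2}$ times $\|y\|_{L^\infty_TL^2_x}^2\|\Pi_{>\Ld}\P_{\pm}y\|_{L^\infty_TH^s}$, and finish with \eqref{YsCTHs}. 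A cleaner variant avoids the $\Ld^{\epsilon}$ loss by bounding the low factors in $\ell^1_\xi$; I would take that route if possible, since the statement has no power loss. If it fails, I would instead put $\P_{\le N}$ factors through $L^4$ via \eqref{L4}.

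In the good regime I use the resonance identity $\Phi=2(\xi-\xi_3)(\xi_3-\xi_2)$ (or the analogous one $-2(\xi-\xi_1)(\xi-\xi_3)$), so that $\s_{\max}\gtrsim K$ whenever the interaction is nonresonant. I decompose with $\Q_{\ges K}$ and $\Q_{\ll K}$ as in \eqref{X0}--\eqref{Xres}. On the factor with high modulation I use $X^{s-1,1}$, the $\ta=1$ piece of $\mathcal{X}^s_T$, which gains $K^{-1}$ against a one-derivative loss. The other factors go into $L^4$ via \eqref{L4} with $X^{\cdot,3/8}\subset$ interpolated $\mathcal{X}$ norms, or into $L^\infty$ via Bernstein and $\mathcal{X}^{1/2}$. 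The multiplier should then be at most $N_{\max}^{-1/10}\lesssim\Ld^{-1/10}$. When the large modulation falls on $\P_{N_1}y$ at output frequency and the high factor must be measured in $X^{s,b}$, I use the hypothesis $\P_{\pm,\textup{hi}}y\in X^{s,b}_T$, since by sign restrictions the high factor sharing the sign of $\xi$ is $\P_{\pm,\textup{hi}}y$. This is why $\|\P_{\pm,\textup{hi}}y\|_{X^{s,b}_T}$ appears in the bound.

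The resonant pieces $\xi=\xi_3$ or $\xi_2=\xi_3$ I treat exactly. The case $\xi_2=\xi_3$ gives $y\,\P_0\mathcal{Q}_h(|y|^2)$, which is zero since $\mathcal{Q}_h$ kills mode $0$ for $\GG_h$; for $\mathcal{Q}_h=\textup{Id}$ it is $\|y\|_{L^2}^2\,\Pi_{>\Ld}\P_{\pm}y$, which is absorbed into the second term on the right-hand side. The case $\xi=\xi_3$ has $\xi_1=\xi_2$ and gives a similar paraproduct with the high factor $y_3$ at output frequency, also absorbed into the second term. I expect the main obstacle to be the nearly resonant region $|\xi-\xi_3|\ll N_{\max}^{\epsilon}$ with $y$ only in $X^{s-\frac12-,\frac12}$. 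There I would first try gaining from the high-modulation piece with $K\gtrsim N_{\max}|\xi-\xi_3|$ and interpolating $\mathcal{X}$ at $\ta\approx\frac12$, accepting a loss of $N^{\frac12}$ and relying on $|\xi-\xi_3|\ge1$. If that loss is not compensated, I would move those interactions into the bad regime.
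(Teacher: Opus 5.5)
\paragraph{Gap: the crude bound on the bad regime.}
Your estimate for the bad regime does not prove the stated inequality. After \eqref{ZXembed} and Bernstein you get $T^{\frac12}\Ld^{\epsilon}\|\mathcal{Q}_h\|_{\textup{op}}\|y\|_{L^\infty_TL^2_x}^2\|\Pi_{>\Ld}\P_\pm y\|_{X^{s,b}_T}$. The $\ell^1_\xi$ variant has exactly the same loss, because $\|\ft y\|_{\ell^1(|\xi|\le\Ld^{\epsilon})}\les\Ld^{\epsilon/2}\|y\|_{L^2}$ is sharp. The $L^4$ fallback gives neither $T^{\frac12}\|y\|_{L^\infty_TL^2_x}^2$ nor a negative power of $\Ld$. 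So none of these gives \eqref{ysmoothing}.

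The loss is not cosmetic. In the proof of Proposition~\ref{PROP:tight}, the second term is absorbed by choosing $T_{**}$ independently of $\Ld$, and $\Ld\to\infty$ is then taken at fixed $T$.

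Your treatment of the shell $\Ld-O(\Ld^{\epsilon})<|\xi_1|\le\Ld<|\xi|$ is also unjustified. There one can have $N_2\vee N_3\sim1$ and $|\xi-\xi_1|=1$, so nothing is comparable to a large threshold. Whatever bound actually controls the shell would control your entire bad regime.

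\paragraph{Missing idea: peel off only the exact resonances.}
The interactions $\xi_1=\xi$ (so $\xi_2=\xi_3$) and $\xi_3=\xi$ (so $\xi_1=\xi_2$), together with the doubly counted diagonal, produce $\ft y(\xi)\sum_{\xi'}\ft{\mathcal{Q}_h}(\cdot)|\ft y(\xi')|^2$. These are bounded by $\|\mathcal{Q}_h\|_{\textup{op}}\|y\|_{L^2}^2|\ft y(\xi)|$ with no loss. You handle them correctly, and they are the sole source of the second term in \eqref{ysmoothing}.

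Everything else gains a power of $N_{\max}\ges\Ld$ because frequencies are integers. Off the exact resonances, $|\xi-\xi_1|\ge1$ and $|\xi-\xi_3|\ge1$, so $|\Phi|=2|\xi-\xi_1||\xi-\xi_3|\ges N_{\max}$ as soon as one of the two differences is $\sim N_{\max}$. This covers your bad regime, where $|\xi-\xi_3|\sim N_{\max}$. It also covers the near-resonant region you flagged: $|\xi-\xi_3|\ll N_{\max}$ forces either $|\xi-\xi_1|\sim N_{\max}$ or all four frequencies comparable.

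To exploit this, put all four factors in $L^4$ via \eqref{L4} at $b=\frac38$. The surplus modulation of the factor carrying $\s_{\MAX}$ yields $|\Phi|^{-\frac18+2\dl}$. The remaining choices depend on the size of $|\Phi|$:
\begin{enumerate}[(a)]
\item If $|\Phi|\ll N_{\max}|\xi|$, one of $\xi_1,\xi_3$ lies within $o(|\xi|)$ of $\xi$. That factor has the sign of $\xi$, so it is a $\P_{\pm,\hi}y$ factor and can be measured in $X^{s,b}_T$. The other two factors go in $X^{\frac18,\frac38}$ and $X^{0,\frac12}$, both controlled by $\|y\|_{\mathcal{X}^{\frac12}_T}$.
\item If $|\Phi|\ges N_{\max}|\xi|$, the larger gain lets you afford $X^{s-\frac38,\frac38}$ or $X^{s-1,1}$ on the top-frequency factor.
\item If all frequencies are comparable, $X^{\frac18,\frac38}$ alone gives $N_{\min}^{-\frac18}\sim N_{\max}^{-\frac18}$.
\end{enumerate}

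Your $K^{-1}$-versus-one-derivative trade on the top-frequency factor gains nothing when $|\Phi|$ is only $\sim N_{\max}$. In that regime it is the $\frac18$ modulation surplus, combined with the $X^{s,b}$ control of $\P_{\pm,\hi}y$, that produces the decay.
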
 
\begin{proof}

We begin by writing 
\begin{align}
\mathcal{F}_{x}\{ y\mathcal{Q}_{h}(|y|^2)\}(\xi) & = \sum_{\xi=\xi_1 -\xi_2+\xi_3}  \ft{\mathcal{Q}_{h}}(\xi_3-\xi_2) \ft y (\xi_1) \cj{ \ft y(\xi_2)} \ft y(\xi_3) \notag \\
&  = \ft{\mathcal{Q}_{h}}(0) M(y) \ft y(\xi) +\mathcal{F}_x\{ \NN_{h}(y) +  \RR_{1}(y)+\RR_{2}(y)\}(\xi)\label{cubedecomp}
 \end{align} 
 where
 \begin{align}
 \mathcal{F}\{ \NN_{h}(y)\}(\xi)   &:= \sum_{\substack{\xi=\xi_1 -\xi_2+\xi_3 \\ \xi_1 ,\xi_3\neq \xi }}  \ft{\mathcal{Q}_{h}}(\xi_3-\xi_2) \ft y (\xi_1) \cj{ \ft y(\xi_2)} \ft y(\xi_3), \label{Nh} \\
\mathcal{F}\{ \RR_{1}(y)\}(\xi)  : =  \ft y(\xi) \sum_{\xi'} &\ft{\mathcal{Q}_{h}}(\xi-\xi') |\ft y(\xi')|^2, 
\quad \mathcal{F}\{ \RR_{2}(y)\}(\xi)   : = \ft{\mathcal{Q}_{h}}(0) |\ft y(\xi)|^2 \ft y(\xi), 
\notag
\end{align}
and proceed to estimate the contributions from each piece separately.
 Notice that $\|\ft{\mathcal{Q}_{h}}(\xi)\|_{\l^{\infty}_{\xi}} = \| \mathcal{Q}_{h}\|_{\text{op}}$.
From \eqref{YsCTHs}, we have 
\begin{align*}
\| \P_{\pm}\Pi_{>\Ld} \mathcal{R}_{2}(y)\|_{X^{s,b'}_{T}} \leq \| \P_{\pm}\Pi_{>\Ld} \mathcal{R}_{2}(y)\|_{L^2_{T}H^{s}_x}  \les T^{\frac 12} \|\mathcal{Q}_{h}\|_{\textup{op}}\|y\|_{L^{\infty}_{T}L^2_x}^{2} \| \Pi_{>\Ld} \P_{\pm }y\|_{X^{s,b}_{T}}.
\end{align*}
A similar bound holds for the contribution from $\mathcal{R}_1$, by placing $\ft{\mathcal{Q}_{h}}(\xi)$ into~$\l^{\infty}_{\xi}$. 
Similarly, the first term in \eqref{cubedecomp} is controlled by the second term on the right-hand side of~\eqref{ysmoothing}.

Thus, it remains to control the contribution in \eqref{Nh}, which is the main difficulty.
Let $y^{\dag}$ be an arbitrary extension of $y$ on $[0,T]$. We define the following set
\begin{align*}
\G_{\Ld, \pm} : = \{ (\xi_1,\xi_2,\xi_3,\xi_4)\in \Z^{4} \,: & \, \xi_1-\xi_2+\xi_3-\xi_4=0, 
\\
&  \xi_1, \xi_3 \neq \xi_4,  \,\, \sgn(\xi_4)=\pm 1, \,\, |\xi_4|>\Ld\}, 
\end{align*}
and $\s_{\l}:= |\tau_{\l}-\xi_{\l}^{2}|$, $\l=1, \ldots, 4$,  to be the modulations. Now, by duality, it suffices to control
\begin{align*}
\intt_{\tau_1-\tau_2+\tau_3-\tau_4=0} \sum_{\G_{\Ld, \pm}} \jb{\xi_4}^{s} \ft{ \mathcal{Q}_{h}}(\xi_3-\xi_2)
 \frac{ \ft g(\tau_4,\xi_4)}{ \jb{\s_4}^{\frac 12-2\dl}}  \ft{ y^{\dag}}(\tau_1,\xi_1)  \cj{\ft{ y^{\dag}}(\tau_2,\xi_2)}\ft{ y^{\dag}}(\tau_3,\xi_3) d\tau_1 d\tau_2 d\tau_3
\end{align*}
for $g\in L^2_{t,x}$. We dyadically decompose the spatial frequencies of all of the functions and use the boundedness of $\mathcal{Q}_{h}(\xi)$ to further reduce to controlling
\begin{align*}
\sum_{ \substack{ N_1, N_2,N_3,N_4 \\ N_4 \ges \Ld}} &\intt_{\tau_1-\tau_2+\tau_3-\tau_4=0} \sum_{\G_{\Ld, \pm}} \jb{\xi_4}^{s} 
 \frac{ |\ft{ \P_{N_4}g}(\tau_4,\xi_4)|}{ \jb{\s_4}^{\frac 12-2\dl}}  \prod_{j=1}^{3} |\ft{ \P_{N_j} y^{\dag}}(\tau_j,\xi_j) |d\tau_1 d\tau_2 d\tau_3  =: \sum_{ \substack{ N_1, N_2,N_3,N_4 \\ N_4 \ges \Ld}}  I_{\cj{N}}, 
 \end{align*}
 where $\cj{N} = (N_1,N_2,N_3,N_4)$. 
By symmetry, we may assume that $N_1 \geq N_3$, 
we let $N_{\max} \ge N_{\med} \ge N_{\min}$ be the decreasing rearrangement of $N_1, N_2,N_3$,
and
$\s_{\MAX} = \max_{j=1, \ldots, 4} |\s_j|$. Then, the phase function satisfies
\begin{align}
|\Phi(\cj{\xi})| =| \xi_1^2 -\xi_2^2 +\xi_3^2 -\xi^2| \sim |\xi_1 - \xi_4| |\xi_3 -\xi_4| \les \s_{\MAX}. 
\label{mods21}
\end{align}
We split the summation into a number of cases.

\smallskip 
\noi 
$\bullet$ \underline{\textbf{Case 1:} $|\Phi(\cj\xi)| \ges N_{\max} N_4$}

\noi
\underline{\textbf{Subcase 1.1:} $N_{\max} \gg N_{\med}$}
\\
Here, we have $N_{\max}\sim N_4$. 
If $N_{\max} = N_1$, then $\sgn(\xi_1) = \sgn(\xi_4)$ and we can place $\P_{\pm, \hi}$ for free into the factor with $\P_{N_1}$. Then, 
from Parseval's theorem, H\"older's inequality, \eqref{L4}, and Bernstein's inequality, we have
\begin{align}
    I_{\cj{N}}
    &
    \les N_4^s \bigg\| \Ft_{t,x}^{-1} \bigg( \frac{|\ft{\P_{N_4} g}|}{ \jb{\s_4}^{\frac12-2\dl}} \bigg) \bigg\|_{L^4_{t,x}} \| \P_{N_1} \P_{\pm, \hi} y^\dagger \|_{L^4_{t,x}} \| \P_{N_2} y^\dagger \|_{L^4_{t,x}} \|\P_{N_3} y^\dagger \|_{L^4_{t,x}} 
    \nonumber
    \\
    & 
    \les N_4^s \| g\|_{X^{0,-\frac18+ 2\dl}} \| \P_{N_1} \P_{\pm, \hi} y^\dagger\|_{X^{0,\frac38}} \prod_{j=2}^3 \| \P_{N_j} y^\dagger \|_{X^{0,\frac38}}
    \nonumber
    \\
    &
    \les N_4^s (N_{\max} N_4)^{-\frac18 +2\dl}
    N_{1}^{-s}
    N_{\min}^{-\frac18}
    \|g\|_{L^2_{t,x}}
    \|\P_{\pm,\text{hi}} y^\dagger \|_{X^{s, \frac12+\dl}}
    \| y^\dagger \|_{X^{\frac12-\frac38,\frac38}} \| y^\dagger \|_{X^{0,\frac12}}
    \label{N1N2}
    \\
    &
    \les 
    N_{\max}^{-\frac14+4\dl}
    \|g\|_{L^2_{t,x}}
    \|\P_{\pm,\text{hi}} y^\dagger \|_{X^{s, \frac12+\dl}}
    \| y^\dagger \|^2_{\mathcal{X}^{\frac12}}
    \nonumber
    \end{align}
    where the second factor in the third inequality comes from gaining a power of $|\Phi(\cj\xi)|^{-\frac18+2\dl}$ by considering the different choices of $\s_{\MAX}$. Using the negative power of $N_{\max}$, we can sum in the dyadics and gain a power of $\Ld^{-\frac{1}{10}}$ for $\dl>0$ sufficiently small.
    
Next, consider $N_{\max} = N_2$. If $\s_{\MAX} = |\s_4|$, we place factors with frequencies comparable to $(N_4, N_{\med}, N_{\max}, N_{\min})$
in $(L^2, L^4, L^4, L^\infty)$, together with \eqref{L4}, Bernstein's inequality, and \eqref{YsCTHs} to obtain
\begin{align}
    I_{\cj{N}}
    &
    \les N_4^s (N_{\max} N_4)^{-\frac12 + 2\dl} N_{\med}^{-\frac18} N_{\max}^{-s+\frac38} N_{\min}^{\frac12+} 
    \| g\|_{L^2_{t,x}} \|y^\dagger\|_{X^{\frac12-\frac38, \frac38}} \|y^\dagger\|_{X^{s-\frac38, \frac38}} \|y^\dagger\|_{X^{0-, \frac12+}}
    \label{N1N2s1a}
    \\
    &
    \les 
    N_{\max}^{-\frac14+4\dl+ } 
    \| g\|_{L^2_{t,x}} \|y^\dagger\|_{\mathcal{X}^s} \|y^\dagger\|^2_{\mathcal{X}^\frac12}
    \label{N1N2s1}
    .
\end{align}
Now let $\s_{\max}, \s_{\med}, \s_{\min}$ denote the modulations of the factors associated with frequencies around $N_{\max}, N_{\med}, N_{\min}$, respectively.
If $\s_{\MAX} = |\s_{\med}|$, we instead place the factors 
in $(L^4, L^2, L^4, L^\infty)$:
\begin{align}
    I_{\cj{N}}
    &
    \les 
    N_4^s N_{\med}^{\frac12+} (N_{\max} N_4)^{-1} N_{\max}^{-s+\frac38} N_{\min}^{\frac12+} 
    \| g\|_{L^2_{t,x}} \|y^\dagger\|_{X^{\frac12-1, 1}} \|y^\dagger\|_{X^{s-\frac38, \frac38}} \|y^\dagger\|_{X^{0-, \frac12+}}
    \label{N1N2s1b}
\end{align}
which is controlled by \eqref{N1N2s1} for small $\dl$. 
If $\s_{\MAX} = |\s_{\min}|$, we exchange the roles of the second and fourth factors in \eqref{N1N2s1b}. 
Lastly, if $\s_{\MAX} = |\s_{\max}|$, we place the factors in $(L^4, L^4, L^2, L^\infty)$:
\begin{align}
    I_{\cj{N}}
    &
    \les 
    N_4^s N_{\med}^{-\frac18}  N_{\max}^{-s+1} (N_{\max} N_4)^{-1} N_{\min}^{\frac12+} 
    \| g\|_{L^2_{t,x}}  \|y^\dagger\|_{X^{\frac12-\frac38, \frac38}}  \|y^\dagger\|_{X^{s-1, 1}}\|y^\dagger\|_{X^{0-, \frac12+}}, 
    \label{N1N2s1c}
\end{align}
which is also controlled by \eqref{N1N2s1}. 

\smallskip

\noi
\underline{\textbf{Subcase 1.2:} $N_{\max} \sim N_{\med} \ges N_4$}
\\
In this region, we proceed as in the part of Subcase 1.1 where $N_{\max}=N_2$, 
applying the estimates in \eqref{N1N2s1a}, \eqref{N1N2s1b}, and \eqref{N1N2s1c} to obtain
\begin{align*}
    I_{\cj{N}}
    &
    \les 
    \big( 
    N_4^{s-\frac12+2\dl} N_{\max}^{-s - \frac14 + 2\dl } 
    +
    N_4^{s-1} N_{\max}^{-s - \frac18 +} 
    \big) 
    N_{\min}^{\frac12+} 
    \|g\|_{L^2_{t,x}}
    \|y^\dagger\|_{\mathcal{X}^s} \|y^\dagger \|_{\mathcal{X}^\frac12}^2
    \\
    &
    \les 
    N_{\max}^{-\frac18 + 4\dl +  }
    \|g\|_{L^2_{t,x}}
    \|y^\dagger\|_{\mathcal{X}^s} \|y^\dagger \|_{\mathcal{X}^\frac12}^2, 
\end{align*}
and the estimate follows for $\dl>0$ small.

\smallskip 
\noi 
$\bullet$ \underline{\textbf{Case 2:} $|\Phi(\cj\xi)| \ll N_{\max} N_4$}
\\
\noi
\underline{\textbf{Subcase 2.1:} $N_{\max}  \gg N_{\med}$}
\\
Here, $N_4 \sim N_{\max}$. 
If $N_{\max} = N_1$, then $N_1 \sim N_4 \gg N_2, N_3$, and thus $|\xi_1-\xi_2| \sim N_1$. From the assumption on the phase and the restriction to $\G_{\Ld, \pm}$, this implies that $1 \le |\xi_3-\xi_2| = |\xi_1-\xi_4| \ll N_4 \sim N_1 $ and $\sgn(\xi_1) = \sgn(\xi_4)$, allowing us to add $\P_{\pm, \hi}$ to $\P_{N_1}$. 
Thus, we have $|\Phi(\cj\xi)| \ges |\xi_1-\xi_2| \sim N_1$,
and we proceed as in \eqref{N1N2}, 
but gaining only $N_{\max}^{-\frac18+2\dl}$ from the~phase:
\begin{align*}
   I_{\cj{N}}
    &
    \les 
    N_4^s (N_{\max})^{-\frac18+2\dl} N_{\max}^{-s} N_{\min}^{-\frac18} \| g\|_{L^2_{t,x}} \|\P_{\pm, \hi} y^\dagger\|_{X^{s, \frac12+\dl}} \|y^\dagger \|^2_{\mathcal{X}^{\frac12}}
\end{align*}
which gives a negative power of the maximum frequency, as intended.

The case $N_{\max} = N_2 \sim N_4 \gg N_1 \ge N_3$ cannot happen, since $|\Phi(\cj\xi)| \sim |\xi_1 - \xi_2| |\xi_3-\xi_2| \sim N_2^2 \sim N_{\max} N_4$ which contradicts the phase assumption.

\smallskip 
\noi 
\underline{\textbf{Subcase 2.2:} $N_{\max} \sim N_{\med} \ges N_{4}$}
\\
Since $N_1\ge N_3$, we have $N_1 \sim N_{\max} \sim N_{\med} \gg N_4$. 
From the factorization in \eqref{mods21}, if $|\xi_1 - \xi_4| \ges N_1$, then $|\xi_3-\xi_4| \ll N_4 \sim N_3$ and hence $\sgn(\xi_3) = \sgn(\xi_4)$ allowing us to add $\P_{\pm, \hi}$ to the $\P_{N_3}$ contribution. 
Similarly, if $|\xi_3 - \xi_4| \ges N_1$, then $|\xi_1 - \xi_4| \ll N_4 \sim N_1$ and $\sgn(\xi_1) = \sgn(\xi_4)$, allowing us to add $\P_{\pm, \hi}$ to the $\P_{N_1}$ term instead. In both cases, proceeding as in \eqref{N1N2}, gaining only $N_{\max}$ from the phase and $N_{j}^{-s} \sim N_4^{-s}$ from the term $\P_{\pm, \hi} \P_{N_j} y^\dagger$, $j=1,3$, we obtain
\begin{align*}
    I_{\cj{N}}
    &
    \les 
    N_4^s (N_{\max})^{-\frac18+2\dl} N_j^{-s} 
    \|g\|_{L^2_{t,x}}
    \|\P_{\pm,\text{hi}} y^\dagger \|_{X^{s, \frac12+\dl}}
    \| y^\dagger \|^2_{\mathcal{X}^{\frac12}},
\end{align*}
and the estimate follows. 

It remains to consider $|\xi_1-\xi_4|, |\xi_3-\xi_4| \ll N_1 $, which implies $N_1 \sim N_2 \sim N_3 \sim N_4$ and $\sgn(\xi_1) = \sgn(\xi_4)$. Then, we proceed as in \eqref{N1N2}, with no gain from the phase function:
\begin{align*}
    I_{\cj{N}}
    \les N_4^s N_{1}^{-s} N_{\min}^{-\frac18} 
    \|g\|_{L^2_{t,x}}
    \| \P_{\pm, \hi} y^\dagger \|_{X^{s, \frac12+\dl}} \|y^\dagger\|_{\mathcal{X}^{\frac12}}^2, 
\end{align*}
which again gives a negative power of the maximum frequency, completing the estimate for $\mathcal{N}_h$ in \eqref{Nh}, and hence also that of \eqref{ysmoothing}. 
\end{proof}

With the two auxiliary estimates in Lemmas~\ref{LEM:Xsby}-\ref{LEM:tightv}, we now show tightness for $\wt v$ and $w$.

\begin{proof}[Proof of Proposition~\ref{PROP:tight}]

Let $0\le T \le T_{**} \le T_{*}$, with $T_{**}$ to be chosen later.
We begin with \eqref{tightv}. 
From the Duhamel formula for \eqref{veq2b}, the linear estimates in Lemma~\ref{LEM:linXsb}, and the conservation of $M(\wt u)$ and $P(\wt u)$, we obtain:
\begin{align*}
\|\Pi_{>\Ld}\wt{v}\|_{X^{s,b}_{T}} &  \leq  C_0\|\Pi_{>\Ld}\wt{v}(0)\|_{H^s} +C_0 T^{\dl_0} \| \Pi_{>\Ld}\P_{+}[ \wt{v}\P_{-}\dx( |\wt{u}|^2)]\|_{X^{s,-b'}_{T}} \\
&  +C_0 T^{\dl_0} \max_{\mathcal{Q}_h \in\{\Id, \GG_{h}\}} \| \P_{+} \Pi_{>\Ld}[  e^{i\be \wt{F}}  \wt{u} \mathcal{Q}_{h}(|\wt{u}|^2)  ]\|_{X^{s,-b'}_{T}}  \\
&+ C_0 T^{\dl_0} \|\wt{u}(0)\|_{H^{\frac 12}}^{2} \big( 1 + \|\wt u(0)\|^2_{L^2} \big) \| \Pi_{>\Ld} \wt{v}\|_{X^{s,-b'}_{T}} .
\end{align*}
By reducing $ T_{**}$, if necessary, so that $C_0 T_{**}^{\dl_0} 4 R^{2}(1+4 R^2) \leq \frac 12$, it follows from \eqref{Ns0Tbd} and \eqref{NTu}~that 
\begin{align*}
\|\Pi_{>\Ld}\wt{v}\|_{X^{s,b}_{T}} &  \leq  2C_0\|\Pi_{>\Ld}\wt{v}(0)\|_{H^s} +2C_0 T^{\dl_0} \| \Pi_{>\Ld}\P_{+}[ \wt{v}\P_{-}\dx( |\wt{u}|^2)]\|_{X^{s,-b'}_{T}} \\
&  +2C_0 T^{\dl_0} \max_{\mathcal{Q}_h \in\{\Id, \GG_{h}\}} \| \P_+ \Pi_{>\Ld}[  e^{i\be \wt{F}}  \wt{u} \mathcal{Q}_{h}(|\wt{u}|^2)  ]\|_{X^{s,-b'}_{T}}.
\end{align*}
In view of the signs of the frequencies, we have the (key) identity 
\begin{align*}
 \Pi_{>\Ld}\P_{+}[ \wt{v}\P_{-}\dx( |\wt{u}|^2)] =  \Pi_{>\Ld}\P_{+}[ \Pi_{>\Ld}\wt{v} \cdot\P_{-}\dx( |\wt{u}|^2)].
\end{align*} 
It then follows from Proposition~\ref{PROP:tri}, 
\eqref{Xsbu} in Lemma~\ref{LEM:uinfo}, and \eqref{Ns0Tbd}
that 
\begin{align*}
\| \Pi_{>\Ld}\P_{+}[ \wt{v}\P_{-}\dx( |\wt{u}|^2)]\|_{X^{s,b'}_{T}}  \leq C(R) \|\Pi_{>\Ld}\wt{v}\|_{X^{s,b}_{T}}.
\end{align*}

\noi 
Thus, by again reducing $T_{**}$ if necessary, we have 
\begin{align}
\|\Pi_{>\Ld}\wt{v}\|_{X^{s,b}_{T}}  \leq  4C_0 \Big(\|\Pi_{>\Ld}\wt{v}(0)\|_{H^s}   + T^{\dl_0} \max_{\mathcal{Q}_h \in\{\Id, \GG_{h}\}} \| \P_{+}\Pi_{>\Ld}[  e^{i\be \wt{F}}  \wt{u} \mathcal{Q}_{h}(|\wt{u}|^2)  ]\|_{X^{s,-b'}_{T}} \Big). \label{tightv0}
\end{align}
It remains to control the last on the right-hand side of \eqref{tightv0}. We write 
\begin{align*}
e^{i\be \wt{F}}  \wt{u} \mathcal{Q}_{h}(|\wt{u}|^2)  = \wt{y} \mathcal{Q}_{h}(|\wt{y}|^2) ,
\end{align*}
where $\wt{y}:= e^{i\be \wt{F}} \wt{u}$. It then follows from Lemma~\ref{LEM:tightv}, Lemma~\ref{LEM:Xsby}, \eqref{Ns0Tbd}, and \eqref{Hsscale}, that 
\begin{align*}
\max_{\mathcal{Q}_h \in\{\Id, \GG_{h}\}}& \| \P_{+}\Pi_{>\Ld}[  e^{i\be \wt{F}}  \wt{u} \mathcal{Q}_{h}(|\wt{u}|^2)  ]\|_{X^{s,-b'}_{T}}  \\
& \leq  C(1+L_{h}) \Ld^{-\frac{1}{10}} \| \wt{y}\|_{\mathcal{X}^{\frac{1}{2}}_{T}}^2 \big[  \| \wt{y}\|_{\mathcal{X}^{s}_{T}}+\|\wt{v}\|_{X^{s,b}_{T}}  \big] +C(1 + L_{h}) T^{\frac12}R^2 \|\Pi_{>\Ld}\wt{v}\|_{X^{s,b}_{T}} \\
&\leq C(R,L_{h})\Ld^{-\frac{1}{10}} \|u(0)\|_{H^{s}} + C(R,L_{h})T^{\frac 12}  \|\Pi_{>\Ld}\wt{v}\|_{X^{s,b}_{T}}
, 
\end{align*}
recalling that $L_h : = \| \GG_h\|_{\op} + \|\GG_h\|_{L^4\to L^4}$
Inserting this into \eqref{tightv0} and reducing $T_{**}$ further, if necessary, then establishes \eqref{tightv}.

The estimate for \eqref{tightw} is done similarly, using the Duhamel formula for $w$ based on \eqref{weq}. We note again the key identity 
\begin{align*}
\Pi_{>\Ld}\P_{-}[ w\P_{+}\dx( |u|^2)] = \Pi_{>\Ld}\P_{-}[ \Pi_{>\Ld}w \cdot \P_{+}\dx( |u|^2)],
\end{align*}
which, combined with Proposition~\ref{PROP:tri}, allows us to handle the contribution from the main term in \eqref{weq}, as we did for the main term in the equation for $\wt{v}$ above. The contribution from the remaining nonlinear terms in \eqref{weq} are handled by using Lemma~\ref{LEM:tightv} (with $y=u$) and recalling that $\P_{-,\text{hi}}u=w$. 
This completes the proof of Proposition~\ref{PROP:tight}.
\end{proof}

\subsection{Difference estimates}\label{SEC:diffs}

In this subsection, our aim is to control the difference of solutions and their corresponding gauge variables in suitable norms. 
Before stating the main result of this subsection (Lemma~\ref{LEM:sdiff}), we introduce some notations,  for $\frac12 \le s\le 1 $, $b=\frac12+\dl_0$, and $b'=\frac12-2\dl_0$, as before:
\begin{align}
\| f  \|_{S^{s}(T)}&:=  \|J^{s}_x \PbHI f\|_{\wt{L^{4}_{T,x}}} + \|f\|_{L^{\infty}_{T}H^{s}_x},  \label{Bs0} 
\\
\|({\bf{U}}, {\bf{\wt{U}}}, {\bf{ \wt{V}}}, {\bf{W}})\|_{R^{s}(T)}& : = \|{ \bf U}\|_{S^{s}(T)} + \|{ \bf \wt{U}}\|_{S^{s}(T)} + \|{\bf \wt{V}}\|_{X^{s,b}_{T}} + \| {\bf W}\|_{X^{s,b}_{T}}
.
\label{RsT}
\end{align}

\begin{lemma}
\label{LEM:sdiff}
Let $\frac12 \le s \le 1$, $u_j$, $j=1,2$ be two solutions to \eqref{INLS} with initial data $u_j(0)\in H^{1}(\T)$, $j=1,2$, which exist at least on the same time interval $[0,T_{0}]$ for $T_{0}>0$ only depending on $L_h : = \|\GG_h\|_{\op} + \|\GG_h\|_{L^4\to L^4}$ and 
\begin{align}
K_{s}: = \max_{j} \|u_{j}(0)\|_{H^{s}(\T)},  \label{L}
\end{align}
with $K:=K_{\frac12}$. 
Let $F_{j}=F_{j}[u_j]$ and $(v_j, w_j)$ be as in \eqref{F} and \eqref{gauge}, respectively, and $(\wt{u}_j,\wt{v}_j)$ as in \eqref{gauge2}, $j=1,2$.   
Also, define the differences 
$U:=u_1-u_2$, $\wt U = \wt u_1 - \wt u_2$, $\wt{V} := \wt{v}_1 - \wt{v}_2$, and $W: = w_1-w_2$, then the following estimates hold 
\begin{align}
\| (U, \wt{U}, \wt{V},W)\|_{R^{\frac 12}(T)} & 
\leq  
C(K)
\big( 1+K+T^{\frac{5}{8}}\Ld^{\frac{5}{4}}C_0(K)K+T^{\frac 14}C_0(K)
\big)
\|U(0)\|_{H^{ \frac 12}}  
\notag
\\
&   + C(K) (1+L_h)(1+T^{\frac 18}C_0 (K))\big[ K \Ld^{-\ta_3} + \|\Pi_{>\Ld}u_1(0)\|_{H^{\frac 12}} \big]
\label{sdiff2}
, 
\\
\| (U, \wt{U}, \wt{V},W)\|_{R^{s}(T)}&  \leq 
 C(K) K_s  (1+T^{\frac{5}{8}}\Ld^{\frac{5}{4}} ) \|U(0)\|_{H^{ s}}  
\notag 
\\
&   +C(K) (1+T^{\frac 18}C_0 (K))\big[ K \Ld^{-\ta_3} + \|\Pi_{>\Ld}u_1(0)\|_{H^{s}} \big]
\notag 
\\
& +C(K)\big[  T^\frac34 M^3 + K_s    \big]\|(U,\wt{U},\wt{V},W)\|_{R^{\frac 12}(T)}
, 
 \label{sdiff3}
\end{align}
for some  constants $C(K), C_0(K)>0$ depending only on $K$,$\Ld, M>0$ sufficiently large, and for $0 \le T \le T_0 \le 1$ sufficiently small, where $T_0 = T_0(K, L_h)$ for \eqref{sdiff2} and $T_0 = T_0(K_s,L_h, M)$ for \eqref{sdiff3}. 
\end{lemma}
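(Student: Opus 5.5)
The proof is a bootstrap on the quantity $\|(U,\wt U,\wt V,W)\|_{R^{s}(T)}$, combining the difference estimates of Lemma~\ref{LEM:diffests} with Duhamel estimates for $\wt V$ and $W$ in $X^{s,b}_T$.

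\emph{Preliminaries.} First restrict to $0<T\le T_0\le T_{**}$. Then the a priori bounds \eqref{Ns0Tbd}, \eqref{Hsscale} and the tightness bounds of Proposition~\ref{PROP:tight} hold for both $u_1$ and $u_2$. All quantities $N^{1/2}_T(u_j)$, $\|\wt v_j\|_{X^{1/2,b}_T}$ and $\|w_j\|_{X^{1/2,b}_T}$ are then bounded by $C(K)$.

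\emph{Duhamel step for $\wt V$ and $W$.} Write the Duhamel formulas for $\wt V$ and $W$ from \eqref{veq2b} and \eqref{weq}, and apply Lemma~\ref{LEM:linXsb}. Expand each nonlinearity multilinearly in the differences.
\begin{itemize}
\item The main terms $\Pbhip[\wt v\,\P_-\dx(|\wt u|^2)]$ and its $w$-analogue split into a part linear in $\wt V$ (or $W$) and parts linear in $\wt U$ (or $U$). Proposition~\ref{PROP:tri} bounds these by $T^{\dl_0}$ times $B$-quantities. Bound the $B$-quantities of differences by $\|\cdot\|_{S^{1/2}(T)}$ and by \eqref{XsbU}.
\item The remaining cubic and $\GG_h$ terms are treated with Lemma~\ref{LEM:easyterms} in multilinear form, together with \eqref{eFgdiff}, \eqref{L4est2} and \eqref{umod2b}.
\item The factors $\|e^{i\be F_1}-e^{i\be F_2}\|_{L^\infty_{T,x}}$ are controlled via $\dx(F_1-F_2)=\P_{\ne0}(|u_1|^2-|u_2|^2)$ and Sobolev/Cauchy--Schwarz, giving a bound $\les R_{\frac12}\|U\|_{L^\infty_TH^{1/2}_x}$.
\item The linear phase term produces $|M(\wt u_1)-M(\wt u_2)|+|P(\wt u_1)-P(\wt u_2)|$ evaluated at time $0$. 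These are bounded by $C(K)\|U(0)\|_{H^{1/2}}$.
\item For the initial data, use $\|\wt V(0)\|_{H^s}+\|W(0)\|_{H^s}\le C(K)K_s\|U(0)\|_{H^s}$, which follows from \eqref{eFgdiff}.
\end{itemize}
The outcome is $\|\wt V\|_{X^{s,b}_T}+\|W\|_{X^{s,b}_T}\le C(K)\|U(0)\|_{H^s}+T^{\ta}C(K)\|(U,\wt U,\wt V,W)\|_{R^{s}(T)}$, plus lower-order $R^{1/2}$ terms when $s>\frac12$.

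\emph{Step for $U$ and $\wt U$ in $S^s(T)$.} Apply \eqref{JsU}--\eqref{uHsbddiff2}. The new ingredients are the following.
\begin{itemize}
\item The mass-difference terms come with factors $T^{1/8}\Ld^{5/4}$ and $\Ld T$. Mass is conserved, so $|M(u_1(0))-M(u_2(0))|\le 2K\|U(0)\|_{L^2}$; this is the origin of the factor $T^{5/8}\Ld^{5/4}C_0(K)K$ in the statement.
\item The tightness terms $\|\Pi_{>\Ld}\wt v_1\|_{X^{s,b}_T}$ and $\|\Pi_{>\Ld}w_1\|_{X^{s,b}_T}$ are bounded by Proposition~\ref{PROP:tight}. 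Then $\|\Pi_{>\Ld}v_1(0)\|_{H^s}\les C(K)(\|\Pi_{>\Ld}u_1(0)\|_{H^s}+\Ld^{-\ta}K_s)$, by the same dyadic argument as Lemma~\ref{LEM:L2prod}. This yields the $K\Ld^{-\ta_3}+\|\Pi_{>\Ld}u_1(0)\|$ contribution.
\item The delicate term is $\|\P_{\ll M}[e^{-i\be F_1}-e^{-i\be F_2}]\|_{L^\infty_{T,x}}\|\P_{\ges M}\wt v_1\|_{X^{s,b}_T}$, which carries no small factor. Here I will use the fundamental theorem of calculus idea mentioned in the introduction, referred to as Lemma~\ref{LEM:expdiffM} there. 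Write $\P_{\ll M}(e^{-i\be F_1}-e^{-i\be F_2})(t)$ as its value at $t=0$ plus $\int_0^t\dt(\cdots)$. Bound the time-$0$ part by $C(K)\|U(0)\|_{H^{1/2}}$. Use \eqref{dtF} and the frequency localization to bound the time integral by $T^{\ta}M^{C}C(K)\|(U,\wt U,\wt V,W)\|_{R^{1/2}(T)}$.
\end{itemize}

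\emph{Closing \eqref{sdiff2}.} Sum the two steps with $s=\frac12$. First choose $M$ large so that $M^{-\ta}Q(R_\s)\le\frac14$; this is possible because $R_\s=R_{1/2}\le C(K)$ when $s=\frac12$. Then choose $T_0(K,L_h)$ small so that all terms of the form $T^{\ta}M^3C(K)(1+L_h)$ multiplying the $R^{1/2}$ norm are at most $\frac14$. Absorbing these into the left-hand side gives \eqref{sdiff2}.

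\emph{Closing \eqref{sdiff3}.} Repeat the argument at regularity $s$. Terms in which the derivative falls on the difference are absorbed as before. Terms in which it falls on $u_j$ produce $K_s$ times $R^{1/2}$ norms of differences, and the low-frequency Duhamel part produces $T^{3/4}M^3$ times $R^{1/2}$ norms. These are simply kept on the right-hand side, which gives the last line of \eqref{sdiff3}.

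I expect the main obstacle to be the $\P_{\ll M}$ exponential-difference term. It must be handled with constants that depend only on norms of the data (not on the data profile) so that $T_0$ depends only on $K$ and $L_h$. A second difficulty is verifying that every mass-difference term is indeed controlled by $\|U(0)\|$, via conservation of mass, rather than by $U$ at later times.
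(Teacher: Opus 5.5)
Your proposal is correct and follows the paper's proof: Duhamel estimates for $(\wt V,W)$ via Proposition~\ref{PROP:tri} and Lemma~\ref{LEM:easyterms}, Lemma~\ref{LEM:diffests} for $(U,\wt U)$, Proposition~\ref{PROP:tight} for the $\Pi_{>\Ld}$ terms, and a fundamental-theorem-of-calculus bound on the $\P_{\ll M}$ exponential-difference term, so that $M$ and then $T_0$ depend only on $K$ and $L_h$. The one difference is minor: you apply the time-FTC directly to $\P_{\ll M}(e^{-i\be F_1}-e^{-i\be F_2})$, whereas Lemma~\ref{LEM:expdiffM} first writes $e^{-i\be F_1}-e^{-i\be F_2}=(F_1-F_2)G(F_1,F_2)$ and splits into high and low frequencies; your version works equally well (multiplication by $e^{-i\be F_j}$ is bounded on $H^{-1}$) and avoids the extra $M^{-\ta_4}$ terms.
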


Before proceeding to the proof, we prove some additional estimates involving differences of $F_j$ and of $e^{i\be F_j}$. In particular, \eqref{expdiffM} is crucial in proving \eqref{sdiff2} at the endpoint $s=\frac12$.

\begin{lemma}\label{LEM:expdiffM}
Let $0\le T \le 1$, $u_j$ be smooth solutions to \eqref{INLS}, $\wt u_j, F_j, \wt F_j, U, \wt U, K$ be as in Lemma~\ref{sdiff}. Then, the following estimates hold
\begin{align}
\| F_1 - F_{2}\|_{L^{\infty}_{x}} & \les   (\|u_1\|_{H^{\frac 12}_x} + \|u_2\|_{H^{\frac 12}_x})\|U\|_{H^{\frac 12}_x}
 \label{Ft0}
 , 
 \\
 \|\P_{\ll M}[e^{-i\be \wt{F}_1}-e^{-i\be \wt{F}_2}]\|_{L^{\infty}_{T,x}} &  \les   K\| {U}(0)\|_{H^{\frac 12}}
 + C(K) \big( M^{-\ta_4} + TM^{\frac 32} \big) \| \wt{U}\|_{L^{\infty}_{T}H^{\frac 12}_x}  ,
 \label{expdiffM}
\end{align}
for some $\ta_4>0$ and $C(K)>0$ only depending on $K$. Also, \eqref{Ft0}-\eqref{expdiffM} hold when exchanging $(\wt u_j, \wt F_j, \wt U)$ with $(u_j, F_j, U)$.

\end{lemma}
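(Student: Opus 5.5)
For \eqref{Ft0}, I will argue on the Fourier side. Since $F_j = \dx^{-1}\P_{\neq 0}(|u_j|^2)$, we have
\[
F_1 - F_2 = \dx^{-1}\P_{\neq 0}\big(\cj{U}u_1 + \cj{u_2}U\big).
\]
The point is that $\dx^{-1}\P_{\ne0}$ maps $L^1$ into $L^\infty$. Indeed, $\dx^{-1}\P_{\neq0}f$ is the convolution of $f$ with a bounded (sawtooth) kernel, so $\|\dx^{-1}\P_{\neq 0} f\|_{L^\infty}\les \|f\|_{L^1}$. Applying Cauchy--Schwarz then gives the bound
\[
(\|u_1\|_{L^2}+\|u_2\|_{L^2})\|U\|_{L^2},
\]
which is stronger than \eqref{Ft0}. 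The same computation applies verbatim to $\wt F_j = F[\wt u_j]$.

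For \eqref{expdiffM}, I follow the strategy outlined in the introduction: evaluate at $t=0$ and use the fundamental theorem of calculus in time. First write
\[
\P_{\ll M}\big[e^{-i\be \wt F_1}-e^{-i\be \wt F_2}\big](t) = \P_{\ll M}\big[e^{-i\be \wt F_1}-e^{-i\be \wt F_2}\big](0) + \int_0^t \dt \P_{\ll M}\big[\cdots\big](t')\,dt'.
\]
The initial term has $L^\infty$ norm at most $\|e^{-i\be F_1(0)}-e^{-i\be F_2(0)}\|_{L^\infty}\les \|F_1(0)-F_2(0)\|_{L^\infty}$, since $\P_{\ll M}$ is bounded on $L^\infty$ and $\J_u$ is the identity at $t=0$. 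By \eqref{Ft0} this is $\les K\|U(0)\|_{H^{\frac12}}$, which is the first term in \eqref{expdiffM}.

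For the time-derivative term, I use the analogue of \eqref{dtF} for $\wt F_j$. From \eqref{ueq2}, $\dt\wt F$ equals $i(\wt u\dx\cj{\wt u}-\cj{\wt u}\dx \wt u)+\be|\wt u|^4 - \tfrac1\pi P(\wt u)$, plus the transport term $-\tfrac\be\pi M\,\P_{\neq0}|\wt u|^2$. Hence
\[
\dt e^{-i\be\wt F_j} = -i\be\,(\dt\wt F_j)\,e^{-i\be \wt F_j}.
\]
Taking the difference, the terms split into two types: those carrying a difference of the polynomial expressions in $\wt u_j$, and those of the form $\dt\wt F_2\,(e^{-i\be\wt F_1}-e^{-i\be\wt F_2})$. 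The main obstacle is the derivative in $\wt u\,\dx\cj{\wt u}$, together with the lack of an $L^\infty$ bound at regularity $H^{\frac12}$. To handle it, I split $e^{-i\be\wt F_j}=\P_{\ll M}e^{\cdot}+\P_{\ges M}e^{\cdot}$ inside the product.

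When the exponential factors are at frequency $\ll M$, the product is then essentially supported at frequencies $\les M$ after the outer $\P_{\ll M}$. I use Bernstein, $\|\P_{\les M} f\|_{L^\infty}\les M^{\frac32}\|f\|_{H^{-1}}$, together with \eqref{uniqueL2}: $\|\wt u\dx\cj{\wt u}\|_{H^{-1}}$ is controlled by $H^{\frac12}$ norms, and products with $\P_{\ll M}e^{\cdot}$ are harmless at frequency $\les M$. This yields the contribution $TM^{\frac32}C(K)\big(\|\wt U\|_{L^\infty_TH^{\frac12}}+\|e^{\cdot}_1-e^{\cdot}_2\|_{L^\infty_{T,x}}\big)$. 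I then bound the exponential difference by $\|\wt F_1-\wt F_2\|_{L^\infty}\les K\|\wt U\|_{H^{\frac12}}$ via \eqref{Ft0}.

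When an exponential factor sits at frequency $\ges M$ (and therefore is paired with a high-frequency factor of $\dt\wt F$), I avoid differentiating in time at all. Instead I bound the corresponding piece of $\P_{\ll M}[e_1-e_2]$ directly, using \eqref{L4est2} with $N= M$. This gives a factor $M^{-1+}$ times $K$ times $\|\wt U\|_{H^{\frac12}}+K\|e_1-e_2\|_{L^\infty}$, and the latter is again $\les K\|\wt U\|_{H^{\frac12}}$. This is the $M^{-\ta_4}$ term.

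I expect the delicate step to be organising this high/low splitting so that no high-frequency piece is ever time-differentiated. The fallback, if needed, is to apply the fundamental theorem of calculus only to the fully low-frequency product $\P_{\ll M}(\P_{\ll M}e^{\cdot})$ and absorb the remainder using \eqref{L4est2}.
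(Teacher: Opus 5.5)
Your proof of \eqref{Ft0} is correct, and cleaner than the Bernstein-type argument the paper alludes to. Since $\dx^{-1}\P_{\neq0}$ is convolution with a bounded kernel, you even get the bound with $L^2$ norms. The initial-time term in \eqref{expdiffM} is also handled correctly, as is the part of the time derivative where the exponential is localized to frequencies $\ll M$.

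The gap is in how you treat the exponential at frequencies $\ges M$. Write $e_j=e^{-i\be\wt F_j}$. Once you apply the fundamental theorem of calculus to $\P_{\ll M}[e_1-e_2]$, the integrand is $-i\be\,\P_{\ll M}[(\dt\wt F_1)e_1-(\dt\wt F_2)e_2]$. Splitting $e_j$ inside it produces the high$\times$high$\to$low terms $\P_{\ll M}[(\dt\wt F_2)\,\P_{\ges M}(e_1-e_2)]$ and $\P_{\ll M}[(\dt\wt F_1-\dt\wt F_2)\,\P_{\ges M}e_1]$. These are pieces of the time derivative, not time derivatives of pieces of $\P_{\ll M}[e_1-e_2]$. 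So there is no ``corresponding piece'' you can bound instead without differentiating. Moreover, \eqref{L4est2} controls only the undifferentiated $\|\P_{\ges M}(e_1-e_2)\|_{L^\infty}$ and says nothing about these terms.

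Your fallback has the same defect. The derivative $\dt\,\P_{\ll M}\P_{\ll M}(e_1-e_2)=-i\be\,\P_{\ll M}\P_{\ll M}[(\dt\wt F_1)e_1-(\dt\wt F_2)e_2]$ still contains exactly these interactions, because projecting $e_j$ in frequency does not commute with the product $(\dt\wt F_j)e_j$. As written, these contributions are never estimated.

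There are two ways to close this.

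The paper decomposes \emph{before} differentiating in time. It writes $e_1-e_2=(\wt F_1-\wt F_2)G$ with $G=-i\be\int_0^1 e^{-i\be(\mu\wt F_1+(1-\mu)\wt F_2)}\,d\mu$ and splits each factor at frequency $2^{-10}M$. The two pieces containing a high-frequency factor are bounded directly in $L^\infty$, using \eqref{L4est1} and \eqref{Ft0} with \eqref{LinftyE11} respectively; this is the source of the $M^{-\ta_4}$ term. The fundamental theorem of calculus is then applied only to $\P_{\ll M}[\P_{\le 2^{-10}M}(\wt F_1-\wt F_2)\,\P_{\le 2^{-10}M}G]$.

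Alternatively, keep your direct approach but simply estimate the whole integrand, with no splitting at all. The ingredients are:
\begin{itemize}
\item Bernstein: $\|\P_{\ll M}f\|_{L^\infty}\les M^{\frac32}\|f\|_{H^{-1}}$.
\item $H^{-1}(\T)$ is an $H^1(\T)$-module.
\item $\|e_j\|_{H^1}\les 1+\|\wt u_j\|_{L^4}^2$.
\item $\|e_1-e_2\|_{H^1}\le C(K)\|\wt U\|_{H^{\frac12}}$, from $\dx e_j=-i\be\P_{\neq0}(|\wt u_j|^2)e_j$ and \eqref{Ft0}.
\item $\|\dt\wt F_j\|_{H^{-1}}$ and $\|\dt\wt F_1-\dt\wt F_2\|_{H^{-1}}$ are controlled through \eqref{uniqueL2}, together with $L^1(\T)\hookrightarrow H^{-1}(\T)$ for the lower-order terms.
\end{itemize}
This even gives \eqref{expdiffM} without the $M^{-\ta_4}$ term. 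Some such product bound is needed in the paper's route too, since $\dt\P_{\le 2^{-10}M}G$ contains $\P_{\le 2^{-10}M}[(\dt\wt F_j)\,e^{-i\be(\mu\wt F_1+(1-\mu)\wt F_2)}]$. The high-frequency interaction you were trying to avoid is therefore harmless; it just has to be estimated rather than sidestepped.
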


\begin{proof}

We omit details of the proof of \eqref{Ft0}, as it follows from \eqref{F} and Bernstein's inequality; see \eqref{L4est1}, for example.

To show \eqref{expdiffM}, we first write
\begin{align*}
e^{-i\be \wt{F_1}} - e^{-i\be \wt{F_2}} = -i\be (\wt{F_1}-\wt{F_2}) \int_{0}^{1} e^{-i\be ( \mu \wt{F_1} +(1-\mu)\wt{F_2})} d\mu =: (\wt{F_1}-\wt{F_2})  G(\wt{F_1},\wt{F_2}),
\end{align*}
and decompose the relevant term as follows
\begin{align}
\P_{\ll M}[ e^{-i\be \wt{F_1}} - e^{-i\be \wt{F_2}}]
& = \P_{\ll M} \big[ \P_{>2^{-10}M}( \wt{F_1}-\wt{F_2} )  G(\wt{F_1},\wt{F_2}) \big]  \label{expdiffM1}\\
& \quad + \P_{\ll M} \big[ \P_{\leq 2^{-10}M}( \wt{F_1}-\wt{F_2} )  \P_{>2^{-10}M}G(\wt{F_1},\wt{F_2}) \big]  \label{expdiffM2} \\
& \quad +\P_{\ll M} \big[ \P_{\leq 2^{-10}M}( \wt{F_1}-\wt{F_2} )  \P_{\leq 2^{-10}M} G(\wt{F_1},\wt{F_2}) \big].  \label{expdiffM3}
\end{align}
From \eqref{L4est1}, we have 
\begin{align*}
\| \eqref{expdiffM1}\|_{L^{\infty}_{T,x}} \les M^{-1} \| \P_{>2^{-10}M}(|\wt u_1|^2 - |\wt u_2|^2)\|_{L^{\infty}_{T, x}} \les M^{-1+} K \| \wt{U}\|_{L^{\infty}_{T}H^{\frac 12}_x}.
\end{align*}
For the second contribution, it follows from \eqref{Ft0} and \eqref{LinftyE11} that   
\begin{align*}
\| \eqref{expdiffM2}\|_{L^{\infty}_{T,x}} &
\les \| \wt{F_1}-\wt{F_2}\|_{L^{\infty}_{T,x}} \|\P_{>2^{-10}M} G\|_{L^{\infty}_{T,x}}  
\\
& 
\les K \|\wt{U}\|_{L^\infty_T H^\frac12_x}
\Big( \max_{j=1,2} \sup_{0\le \mu\le1} \| \P_{>2^{-20}M} e^{-i\be \mu \wt{F}_j} \|_{L^\infty_{T,x}}
\Big)
\\
&
\les M^{-1+ } K^3 \|\wt{U}\|_{L^\infty_T H^\frac12_x}
.
\end{align*}
As for the main term \eqref{expdiffM3}, by the fundamental theorem of calculus in time, \eqref{Ft0}, \eqref{dtF}, and \eqref{uniqueL2}, we have 
\begin{align*}
\| \eqref{expdiffM3}\|_{L^{\infty}_{T,x}} & \leq  C(K) \|\wt{U}(0)\|_{H^{\frac 12}} +  CTM^{\frac 32+} \| \dt\wt{F}_1 -\dt \wt{F}_2\|_{L^{\infty}_{T}H^{-1}_x}  \\
& \leq C(K) \|\wt{U}(0)\|_{H^{\frac 12}} + C(K)TM^{\frac 32+} \| \wt{U}\|_{L^{\infty}_{T}H^{\frac 12}_{x}}. 
\end{align*}
This proves \eqref{expdiffM}. The only difference for the ungauged variables $e^{-i\be F_j}$ is that there is one fewer (harmless) term in $ \dt F_j$ as compared to $\dt\wt{F}_j$, $j=1,2$. We omit the similar details.
\end{proof}

\begin{proof}[Proof of Lemma~\ref{LEM:sdiff}]

Let $s_1 \in\{\frac12, s\}$. 
We first recall a priori bounds for $u_j, \wt{v}_j, w_j$, $j=1,2$. 
By the results of Subsection~\ref{SEC:apriori}, namely \eqref{Ns0Tbd}, \eqref{M}, and \eqref{Hsscale}, we have the bounds
\begin{align}
\begin{split}
\max\big( \|u_{j} \|_{L^{\infty}_{T_{*}} H^{s_1}_x} 
, \| J^{s_1}_x u_j \|_{\wt{L^4_{T_*,x}}}
\big) 
&\leq 2C_1(1+\|u_{j}(0)\|_{H^{\frac 12}})^5 \|u_j(0)\|_{H^{s_1}} \leq Q_0 (K)K_{s_1}, 
\end{split} \label{ujsmall}
\end{align}
for $j=1,2$, $T_{\ast}>0$ is as in \eqref{Tast}, and $Q_0(x) = 2C_1 (1+x)^5 $. 
Then, 
from \eqref{vNs}-\eqref{wNs} and \eqref{ujsmall}, we~get
 \begin{align}
 \begin{split}
\max_{j=1,2} \big\{ \|\wt{v}_{j}\|_{X^{s_1,b}_T}+\|w_{j}\|_{X^{s_1,b}_T}\big\}& \les  \max_{j=1,2}  N_{T}^{s_1}(u_j) \leq Q_0 (K)K_{s_1},
\end{split}
 \label{vwnorms}
\end{align}
for any $0<T\leq T_{\ast}$.

\smallskip

We start by estimating the $\wt{V}$ and $W$ contributions in the $R^{s_1}(T)$-norm in \eqref{RsT}. 
At time $t=0$, from \eqref{gauge2}, \eqref{TT}, and \eqref{gauge}, we have
\begin{align*}
\wt{V}(0) = V(0) 
= 
\P_{+, \hi}\big[ (e^{i\be F_1} - e^{i\be F_2}) u_1\big](0) + \P_{+, \hi} \big[ e^{i\be F_2} U \big] (0)
 \quad \text{and} \quad 
W(0) =\P_{-, \text{hi}} U(0).
\end{align*}
Then, from \eqref{eFgdiff}, \eqref{eFg}, \eqref{ujsmall}, the mean value theorem, and \eqref{Ft0}, we have
\begin{align}
\begin{split}
\| \wt{V}(0)\|_{H^{s_1}}& \les Q(K)\big[ K \| {F_1}(0)- {F_2}(0)\|_{L^{\infty}} +  \| {U}(0)\|_{H^{s_1}} + (1 + K_{s_1}) \| U(0)\|_{H^\frac12} \big]
\\
&
\les Q(K) \big[  \| {U}(0)\|_{H^{s_1}} + (1 + K_{s_1}) \| U(0)\|_{H^\frac12} \big]
,
\\
\| W(0)\|_{H^{s_1}}  & \les \| U(0)\|_{H^{s_1}}
, 
\end{split} \label{V0W0}
\end{align}
for some polynomial $Q\ge0 $. 
Next, from \eqref{veq2b} and \eqref{weq}, we obtain equations for $\wt{V}$ and $W$:
\begin{align}
\dt \wt{V}+i\dx^2 \wt{V}  &=\NN_{\wt{v}_1}(\wt{u}_2)-\NN_{\wt{v}_2}(\wt{u}_2) \notag \\
 &= -2\be\Pbhip[ \wt{V} \P_{-}\dx(|\wt{u}_1|^2)]  -2\be\Pbhip[ \wt{v}_2 \P_{-}\dx(|\wt{u}_1|^2-|\wt{u}_2|^2)] \notag\\
&\hphantom{XX} +i\g \Pbhip[ (e^{i \be \wt{F_1}}-e^{i \be \wt{F_2}}) |\wt{u}_1|^2 \wt{u}_1] +i\g \Pbhip[ e^{i\be \wt{F_2}}( |\wt{u}_1|^2 \wt{u_1} - |\wt{u}_2|^2 \wt{u}_2)] \notag\\
& \hphantom{XX}  -i\be \Pbhip[(e^{i \be \wt{F_1}}-e^{i\be \wt{F_2}}) \wt{u}_1 \GG_{h}(|\wt{u}_1|^2)]   -i\be \Pbhip[ e^{i\be\wt{F_2}} (\wt{u}_1 \GG_{h}(|\wt{u}_1|^2)-\wt{u_2} \GG_{h}(|\wt{u}_2|^2 ))] \notag \\
& \hphantom{XX} +i\tfrac{\be}{\pi} \big[ \tfrac{\be}{4\pi}\big(M(\wt{u}_1)^2- M(\wt{u}_2)^2 \big) - P(\wt{u}_1)+ P(\wt{u}_2) \big]\wt{v}_1 + i\tfrac{\be}{\pi} \big[ \tfrac{\be}{4\pi}M(\wt{u}_2)^2  -P(\wt{u}_2) \big]\wt{V} \notag\\
&  =:\sum_{j=1}^{8} \mathcal{A}_{j}\label{Veqn}
, 
\\
\dt W+i\dx^2 W  &=\NN_{w_1}(u_1)-\NN_{w_2}(u_2) 
\notag
\\
&= 2\be \P_{-,\textup{hi}}[ W \P_{+}\dx(|u_1|^2)]+2\be \P_{-,\textup{hi}}[ w_2 \P_{+}\dx(|u_1|^2-|u_2|^2)] 
\notag
\\
&\hphantom{XX} -i\be \P_{-,\text{hi}}[ u_1 \GG_{h}(|u_1|^2)-u_2 \GG_{h}(|u_2|^2)] +  i\g \P_{-,\text{hi}}[ |u_1|^2 u_1 - |u_2|^2 u_2]
\notag
\\
& =: \sum_{j=1}^{4} \mathcal{B}_{j}.
\label{Weqn}
\end{align}

\noi 
We now estimate these terms using the Duhamel formulation of these equations and Lemma~\ref{LEM:linXsb}.
From Proposition~\ref{PROP:tri}, we have 
\begin{align}
\begin{split}
\max_{j=1,2}\| \mathcal{A}_j \|_{X^{s_1,-b'}_T } &  \les  T^{\dl_0} \Big(B(\wt{u}_1,\wt{u}_1)\|\wt{V}\|_{X^{s_1,b}_T }  +\|\wt{v}_2\|_{X^{s_1,b}_T}   [B(\wt{U}, \wt{u}_2)+ B(\wt{U},\wt{u}_2)]  \Big), 
\\ 
\max_{j=1,2}\| \mathcal{B}_j \|_{X^{s_1,-b'}_T } &  \les  T^{\dl_0} \Big(B(u_1,u_1)\|W\|_{X^{s_1,b}_T }  +  \|w_2\|_{X^{s_1,b}_T}   [B(U, u_1)+ B(U,u_2)] \Big), 
\end{split} \label{A12}
\end{align}
for $B$ as in \eqref{Bu}. 
Next, from \eqref{ZXembed}, \eqref{eFgdiff}, \eqref{eFg}, \eqref{Ft0}, the multilinear version of \eqref{cubic}, we have
\begin{align}
\begin{split}
\max_{j=3,\ldots, 6}\| \mathcal{A}_j \|_{X^{s_1,-b'}_T} & \les T^{\dl_0} (1 + L_h) C_0(K)[ K_{s_1} \|\wt{U}\|_{S^{\frac{1}{2}}(T)} +K \|\wt{U}\|_{S^{s_1}(T)} \big] , 
 \\
\max_{j=3,4} \| \mathcal{B}_{j}\|_{X^{s_1,-b'}_T} &\les T^{\dl_0}(1 + L_h)C_0(K)[ K_{s_1} \|U\|_{S^{\frac 12}(T)} +K \|U\|_{S^{s_1}(T)} \big] 
, 
\end{split} \label{A8}
\end{align}
for some $C_0(K)>0$ depending only on $K$.
Using the conservation of mass and momentum,  and \eqref{vwnorms}, we have
\begin{align}
\| \mathcal{A}_{7}+\mathcal{A}_{8}\|_{X^{s_1,-b'}_T}& \leq  T^{\frac 12} Q_{0}(K) 
 \|\wt{U}\|_{L^{\infty}_{T} H^{\frac 12}_x} +  T^{\frac 12} Q_{0}(K)^2 \|\wt{V}\|_{X^{s_1,b}_T}. 
 \label{A9}
 \end{align}
Combining \eqref{A12}, \eqref{A8}, \eqref{A9},   and recalling \eqref{Bu}, \eqref{NTu}, \eqref{Ns0Tbd},  and \eqref{vwnorms},
we then obtain
\begin{align}\label{VWbd}
\begin{split}
\|\wt{V}\|_{X^{s_1,b}_{T}} + \|W\|_{X^{s_1,b}_T}   &
\leq 
C_1 [\| \wt{V}(0)\|_{H^{s_1}} +  \| W(0)\|_{H^{s_1}}] 
\\
&
+C_0(K)T^{\ta} (1+L_h)  
\Big\{
 \| U\|_{\mathcal{X}^{\frac 12}_T}  +\| \wt{U}\|_{\mathcal{X}^{\frac 12}_T}+\|(U,\wt{U},\wt{V},W)\|_{R^{s_1}(T)} 
\\
&
\hspace{3.5cm}
+K_{s_1}(\|U\|_{S^{\frac 12}(T)} +\|\wt{U}\|_{S^{\frac 12}(T)}) 
\Big\},
\end{split} 
\end{align}
for some $\ta>0$. 

We now proceed to estimating the $\mathcal{X}^{\frac12}_T$- and $S^{s_1}(T)$-norms of $U, \wt{U}$ in \eqref{RsT}.
 From \eqref{XsbU}, \eqref{ujsmall}, and \eqref{Bs0}, we have 
\begin{align}
\begin{split}
\| U\|_{\mathcal{X}^{\frac 12}_T}
+
\| \wt U\|_{\mathcal{X}^{\frac 12}_T}
 &\leq C(K) [1+T^{\frac 14}(1+ L_h)] \|U\|_{S^{\frac 12}(T)}
  \end{split}\label{XsbU2}
\end{align}
for any $0<T\leq T_{\ast}$. 

Combining \eqref{JsU}, \eqref{JsU2}, \eqref{uHsbddiff}, \eqref{uHsbddiff2}, with \eqref{ujsmall}, \eqref{vwnorms}, and \eqref{Ft0}, we obtain
\begin{align}
\| U\|_{S^{s_1}(T)} & \leq C_1(1+T^{\frac{5}{8}}\Ld^{\frac{5}{4}}C_0(K)K_{s_1}) \|U(0)\|_{H^{ s_1}}  \notag \\
&+ (T^{\frac 18}C_0(K)+1)\big[ \| \wt{V}\|_{X^{s_1,b}_{T}}+\|W\|_{X^{s_1,b}_{T}} +\|\Pi_{>\Ld}\wt{v}_1\|_{X^{s_1,b}_{T}}\big]  \notag\\
&
+C_2(K) \big[T^{\frac34} M^3(1+L_h) +T^{\frac{1}{8}}(1+K_{s_1})  \big]\|U\|_{S^{\frac{1}{2}}(T)}  \notag\\
& +  \|\P_{\ll M}[e^{-i\be \wt{F_1}}-e^{-i\be \wt{F_2}}]\|_{L^{\infty}_{T,x}}  \|\P_{\ges M}\wt{v}_1\|_{X^{s_1,b}_{T}} \notag\\
&  + M^{-\ta} C_{2}(K)\big\{ \| \wt{V}\|_{X^{s_1,b}_{T}} + \|U\|_{S^{s_1}(T)}\big\},
 \label{UBsbd} \\
 \| \wt{U}\|_{S^{s_1}(T)} 
& \leq
 C_{1}(1+T^{\frac{5}{8}}\Ld^{\frac{5}{4}}C_0(K)K_{s_1}) \|U(0)\|_{H^{ s_1}}  \notag \\
&+ (T^{\frac 18}C_0(K)+1)\big[ \| \wt{V}\|_{X^{s_1,b}_{T}}+\|W\|_{X^{s_1,b}_{T}} +\|\Pi_{>\Ld}w_1\|_{X^{s_1,b}_{T}}\big]  \notag\\
&
+C_2(K) \big[T^{\frac34}M^3(1+L_h) +T^{\frac{1}{8}}(1+K_{s_1})  \big]\|\wt{U}\|_{S^{\frac{1}{2}}(T)}  \notag\\
&+  \|\P_{\ll M}[e^{-i\be F_1}-e^{-i\be F_2}]\|_{L^{\infty}_{T,x}}  \|\P_{\ges M}w_1\|_{X^{s_1,b}_{T}} \notag\\
&  + M^{-\ta} C_{2}(K)\big\{ \| \wt{V}\|_{X^{s_1,b}_{T}} + \|\wt{U}\|_{S^{s_1}(T)}\big\}
 .
\label{UBsbd2} 
\end{align}
Then, putting   \eqref{UBsbd}, \eqref{UBsbd2}, \eqref{VWbd}, \eqref{XsbU2}, \eqref{V0W0},together, we obtain
\begin{align}
\| (U, \wt{U}, \wt{V},W)\|_{R^{s_1}(T)} 
&
\leq 
 C_{1}\big( 1+T^{\frac{5}{8}}\Ld^{\frac{5}{4}}C_0(K)K_{s_1} +T^{\frac 14}C_0(K) + C_2(K) K_{s_1} \big) \|U(0)\|_{H^{ s_1}}  
 \notag
 \\
&   +(1+T^{\frac 18}C_0 (K))\big[ \|\Pi_{>\Ld}\wt{v}_1\|_{X^{s_1,b}_{T}} + \|\Pi_{>\Ld}w_1\|_{X^{s_1,b}_{T}}  \big] 
\notag
\\
& + C_2(K) \big\{ T^{\ta}(1+L_{h}) + M^{-\ta}  \big\} \| (U, \wt{U}, \wt{V},W)\|_{R^{s_1}(T)} 
\notag
\\
& + C_{2}(K)(1+L_{h})\big[ T^{\ta}K_{s_1}+T^{\frac34} M^{3} \big] \| (U, \wt{U}, \wt{V},W)\|_{R^{\frac 12}(T)} 
\notag
\\
& +\|\P_{\ll M}[e^{-i\be \wt{F_1}}-e^{-i\be \wt{F_2}}]\|_{L^{\infty}_{T,x}} \|\P_{\ges M}\wt{v}_1\|_{X^{s_1,b}_{T}}  
\notag
\\
& +\|\P_{\ll M}[e^{-i\be F_1}-e^{-i\be F_2}]\|_{L^{\infty}_{T,x}} \|\P_{\ges M}w_1\|_{X^{s_1,b}_{T}}  
.
\label{sdiff}
\end{align}
By choosing $T=T(K,L_{h})>0$ small enough and $M=M(K)>0$ large enough so that $C_{2}(K)T^{\ta}(1+L_{h})  <\frac{1}{4}$ and $M^{-\ta} C_2(K) < \frac14$, we can absorb the third term on the right-hand side of \eqref{sdiff} by the left-hand side. 

Next, we estimate the fifth and sixth terms on the right-hand side of \eqref{sdiff}.
Note that from \eqref{gauge}, fractional Leibniz, \eqref{LinftyE11}, and \eqref{LpE}, we have 
\begin{align}
& \| J_x^{s_1} \P_{\ges M}[ e^{i\be F_1(0)}u_1(0)]\|_{L^2_x} 
\les M^{-\frac 14} (1+\|u_1(0)\|_{H^{\frac 12}_x})^2 \|u_1(0)\|_{H^{\frac 12}_x}^2 + \|\P_{\ges M}u_1(0)\|_{H^{s_1}_x}. 
\label{Pv1data}
\end{align}

\noi 
Then by the Duhamel formula for $\wt{v_1}$ and $\wt{w}_1$,  Lemma~\ref{LEM:linXsb}, \eqref{Pv1data}, Proposition~\ref{PROP:tri}, Lemma~\ref{LEM:easyterms}, \eqref{XsbU2}, \eqref{ujsmall}, and \eqref{vwnorms}, we have
\begin{align}
\|\P_{\ges M}\wt{v}_1\|_{X^{s_1,b}_{T}}  
&
\les \|\P_{\ges M}u_1(0)\|_{H^{s_1}_x} + M^{-\frac 14} C_3(K)K +  T^{\dl_0} (1+L_h) Q_{0}(K)K_{s_1}\label{tightvM}
, 
\\
 \|\P_{\ges M}w_1\|_{X^{s_1,b}_{T}}  
 &
 \les \|\P_{\ges M}u_1(0)\|_{H^{s_1}_x} +  T^{\dl_0} Q_{0}(K)K_{s_1}. 
 \label{tightwM}
\end{align}

If $s>\frac12$, note that
\begin{align}
\| \P_{\ges M}u_1 (0)\|_{H^{\frac 12}} \les M^{-(s-\frac 12)} \|u_1(0)\|_{H^{s}}. \label{Mdata}
\end{align}
Then, we start by using \eqref{sdiff} when $s_1 = \frac12$, which is then used to obtain the estimate for $s_1=s$.  In particular, for $s_1=\frac12$,  combining \eqref{sdiff} (for sufficiently small $T>0$ and large $M>0$), \eqref{tightv}, \eqref{tightwM}, \eqref{Mdata}, mean value theorem, \eqref{Ft0}, Proposition~\ref{PROP:tight}, and \eqref{Pv1data},  we~get
\begin{align*}
\| (U, \wt{U}, \wt{V},W)\|_{R^{\frac12}(T)} 
& \leq 
 2  C_{1}\big( 1+T^{\frac{5}{8}}\Ld^{\frac{5}{4}}C_0(K)K +T^{\frac 14}C_0(K) + 
 C_2(K) K\big) \|U(0)\|_{H^{ \frac12}}  
\\
&   + 2 (1+T^{\frac 18}C_0 (K))
\big[
\Ld^{-\ta_3} C_2(K) + \| \Pi_{> \Ld} u_1(0) \|_{H^\frac12}
  \big]
 \\
& + 2 C_{2}(K)(1+L_{h})\big[ T^{\ta}K +T^{\frac34} M^{3} 
+ M^{-(s-\frac12)} \| u_1(0)\|_{H^s} 
\big] \| (U, \wt{U}, \wt{V},W)\|_{R^{\frac 12}(T)} 
, 
\end{align*}
where we take $M \ge M_0= M_0(K, L_h, \|u_1(0)\|_{H^s})\gg1$ potentially larger  so that 
$C_2(K)(1+L_h) M^{-(s-\frac12)} \|u_1(0)\|_{H^s} < \frac{1}{100}$ and $0 < T \le T_0 = T_0(K, L_h, M) \le 1$ possibly smaller to guarantee that 
$
C_2(K) (1+L_h) [ T^\ta K + T^{\frac34} M^3 ]  
< \tfrac{1}{100}
$, to hide the last contribution on the left-hand side:
\begin{align}
\begin{split}
\| (U, \wt{U}, \wt{V},W)\|_{R^{\frac 12}(T)} 
& \leq 
 4 C_{1}\big( 1+T^{\frac{5}{8}}\Ld^{\frac{5}{4}}C_0(K)K +T^{\frac 14}C_0(K) + C_2(K) K\big) \|U(0)\|_{H^{ \frac12}}  
\\
&   + 4 (1+T^{\frac 18}C_0 (K))
\big[
\Ld^{-\ta_3}
C_2(K) + \| \Pi_{> \Ld} u_1(0) \|_{H^\frac12}
  \big]
\end{split}
\notag
\end{align}
for any $0<T\leq T_{0} = T_0(L_h, K_s)$. Then, for the same choice of $0 \le T \le T_0$ and $M \ge M_0 \gg1$, combining \eqref{sdiff} for $s_1=s$ with Proposition~\ref{PROP:tight}, and \eqref{Pv1data} gives \eqref{sdiff3}.

It remains to show the estimate at the endpoint $s=\frac12$, namely \eqref{sdiff2}, where we cannot use \eqref{Mdata}, and thus need a more careful argument to avoid picking $T_0$ depending on the profile of $u_1(0)$ instead of only its norm.  Using \eqref{tightvM},  \eqref{tightwM}, and \eqref{expdiffM}, we control the fifth and sixth terms in \eqref{sdiff} by 
\begin{align*}
& C K \big( \| \P_{\ll M } [ e^{-i\be\wt{F}_1} - e^{-i\be \wt{F}_2}] \|_{L^\infty_{T,x}}
+
\| \P_{\ll M } [ e^{-i\be{F}_1} - e^{-i\be {F}_2}] \|_{L^\infty_{T,x}} \big)
\\
&
\qquad
+ 
C T^{\dl_0} (1+L_h) Q_0(K) K^2
 \big( \| \wt U \|_{S^\frac12(T)} + \|  U \|_{S^\frac12(T)}
\big) 
\\
&
\le 
CK^2  \|  U (0)\|_{H^\frac12}
+ 
C'(K) 
\big\{ M^{-\ta_4} + TM^{\frac32}
+
T^{\dl_0} (1+L_h) 
\big\}
 \big( \| \wt U \|_{S^\frac12(T)} + \|  U \|_{S^\frac12(T)}
\big) . 
\end{align*}
Replacing the above estimate in \eqref{sdiff}, together with Proposition~\ref{PROP:tight}, \eqref{Pv1data}, and taking $M\ge M_0 = M_0(K, L_h) \gg1 $ potentially larger and $0<T\le T_0 = T_0(K,L_h, M)\le 1$ smaller, to hide the terms depending on $\| (U, \wt U, \wt V, W) \|_{S^{\frac12}(T)}$ on the left-hand side, completing the proof of \eqref{sdiff2}. 
\end{proof}

\subsection{Proof of Theorem~\ref{THM:LWP}}

Now that we have \eqref{sdiff2} and \eqref{sdiff3} in Lemma~\ref{LEM:sdiff}, the local well-posedness of \eqref{INLS} follows from a modification of a standard argument. See \cite[Section 4C]{MP}, for example. 
Given $u_0 \in H^{s}(\T)$, we consider the sequence of smooth initial data $\{ u_{0,j}\}_{j\in \N}$ defined by $u_{0,j} = \P_{\leq j} u_0.$
Then, $\{ u_{0,j}\}_{j\in \N}$ belongs to $H^{\infty}(\T)$ and by Proposition~\ref{PROP:LWPH2}, the associated sequence of solutions $\{u_{j}\}_{j\in \N}$ to \eqref{INLS} belong to $C([0,T_{\ast}]; H^{\infty}(\T))$, for some $T_* = T_*(\|u_0\|_{H^s})>0$. 
From Subsection~\ref{SEC:apriori} and \eqref{Hsscale}, we then have that $N^{s}_{T_{\ast}}(u_{j})\leq Q_{0}(K) K_s$, uniformly in $j$, where $Q_0(x) = (1+x)^5$ and $K$ is as in \eqref{L}.
Note that in place of \eqref{Mdata}, we may choose $M$ uniformly in $j$ since
\begin{align*}
 \| \P_{\ges M} u_{0,j}\|_{H^{\frac 12}_x}
 \le 
 \| \P_{\ges M} u_{0}\|_{H^{\frac 12}_x}
 \les M^{-(s-\frac 12)} \|  u_{0}\|_{H^s_x}.
\end{align*}
Then, from Lemma~\ref{LEM:sdiff}, there exists $0< T_0 \le T_\ast$ such that both \eqref{sdiff2} and \eqref{sdiff3} hold for  $0<T\leq T_{0}$, where all constants are independent of $j$.   

Let $\wt{u}_j, \wt{v}_j$ be the gauge variables in \eqref{gauge2} associated with $u_j$, and ${v}_j, w_j$ as in \eqref{gauge}. It follows from \eqref{sdiff2} that, for $j,j'\in\N$, we have
\begin{align*}
\| (u_j-u_{j'}, \wt{u}_j-\wt{u}_{j'},& \wt{v}_{j}-\wt{v}_{j'}, w_{j}-w_{j'})\|_{R^{\frac 12}(T)}   \\
&
\leq  
C(K)
\big( 1+K+T^{\frac{5}{8}}\Ld^{\frac{5}{4}}C_0(K)K+T^{\frac 14}C_0(K)
\big)
\|(u_j - u_{j'})(0)\|_{H^{ \frac 12}}  
\notag
\\
& 
\quad 
+ C(K) (1+L_h)(1+T^{\frac 18}C_0 (K))\big[ K \Ld^{-\ta_3} + \|\Pi_{>\Ld}u_0\|_{H^{\frac 12}} \big]
.
\end{align*}
Thus, taking limits in $j,j'$, followed by taking $\Ld\to\infty$, allows us to 
conclude that $\{u_{j}\}_{j\in\N}$ is a Cauchy sequence in $S^{\frac 12}(T_0)$,
while $\{\wt{v}_j\}_{j\in\N}, \{w_j\}_{j\in\N}$ are Cauchy in $X^{\frac 12,b}_{T_{0}}$.
Consequently, $\{u_{j}\}_{j\in\N}$ converges to a limit $u \in S^{\frac 12}(T_{0})$ and $\{\wt{v}_j\}_{j\in\N}$ and $\{w_j\}_{j\in\N}$ converge to limits $\wt{v}$ and $w$ in  $X^{\frac 12,b}_{T_{0}}$, respectively.
By using this convergence property in $R^{\frac 12}(T)$ in \eqref{sdiff3}, by a similar argument, we upgrade the convergence of $\{u_j\}_{j\in\N}, \{\wt v_j\}_{j\in\N}, \{w_j\}_{j\in\N}$ to $R^{s}(T)$.
 Moreover, a computation shows that $u$ is a distributional solution to \eqref{INLS}, with $w=\P_{-,\text{hi}}u$ and $v= \Pbhip[ e^{i\be F[u]}u]$.

We now move onto the uniqueness property of these solutions, as in Theorem~\ref{THM:LWP}~(i). 
Let $\underline{u}$ be another solution to \eqref{INLS} on $[0,T]$, for some $T>0$, with $\underline{u}\vert_{t=0} =u_0$ and belonging to the same class as $u$ in \eqref{uclass}. We then define $K$ as in \eqref{L} and, by reducing $T$ if necessary, we have that  $u$ and $\underline{u}$ satisfy \eqref{ujsmall}.
 We define the gauged variables $\wt{\underline{v}} : = \P_{+, \hi}[e^{i\be F[\underline{u}]} \underline{u}]$ and $\underline{w}= \P_{-,\text{hi}} \underline{u}$. 
By \eqref{v0w0}, 
\begin{align*}
\| \underline{v}(0)\|_{H^{ \frac 12}}+ \| \underline{w}(0)\|_{H^{ \frac 12}} \leq Q_{0}(K). 
\end{align*}
By the dominated convergence theorem, we can find $N>0$ such that 
\begin{align*}
\|\ind_{[0,T]} \P_{> N}\NN_{\underline{\wt{v}}}(\wt{\underline{u}})\|_{X^{\frac 12, -b'}}+ \| \ind_{[0,T]} \P_{> N}\NN_{\underline{w}}(\underline{u})\|_{X^{\frac 12, -b'}} \leq \tfrac{1}{2}Q_{0}(K).
\end{align*}
Also, from \eqref{ZXembed}, \eqref{weq}, \eqref{veq2b},  \eqref{uniqueL2} and similar computations as in Lemma~\ref{LEM:easyterms}, we have
\begin{align*}
\|\ind_{[0,T]} \P_{\leq N}\NN_{\underline{\wt{v}}}(\underline{u})\|_{X^{\frac 12, -b'}}&+ \| \ind_{[0,T]} \P_{\leq N}\NN_{\underline{w}}(\underline{u})\|_{X^{\frac 12, -b'}} \\
& \les T^{\ta}N^{-\frac 12} \big(   \|\NN_{\underline{\wt{v}}}(\wt{\underline{u}})\|_{L^{2}_{T}H_x^{-1}} +  \|\NN_{\underline{w}}(\underline{u})\|_{L^{2}_{T}H^{-1}_{x}}      \big) \\
&\les T^{\ta}N^{-\frac 12}(1 + \|\wt{\underline{v}}\|_{L^{2}_{T}H_x^{\frac 12}}) \les T^{\ta}N^{-\frac 12}(1+ \|\wt{\underline{v}}\|_{X^{\frac 12,0}_{T}}).
\end{align*}
Then, by reducing $T>0$ if necessary, it follows that $N_{T}^{1/2}(\underline{u})\leq 2R$, with $R$ as in \eqref{M}, thus satisfying \eqref{Ns0Tbd}. Then, we may apply \eqref{sdiff2} to $ (u - \underline{u}, \wt u - \wt{\underline{u}}, \wt v - \wt{\underline{v}}, w - \underline{w})$ and take $\Ld \to \infty$ to see that $\underline{u} \equiv u$ on $[0,T]$. This establishes the uniqueness claim (iterating in time, if necessary).

As for the continuous dependence, we can follow a similar argument as in \cite[Section 4C]{MP} relying on \eqref{sdiff2} and \eqref{sdiff3}. We point out that the choices of $\Ld$ and $M$ can be made uniformly over any convergent approximating sequence of initial data.

\section{The infinite depth limit} \label{SEC:conv}

In this section, we establish the local-in-time convergence of solutions to \eqref{INLS} to solutions to \eqref{CCM} as $h\to \infty$.
We focus on the low regularity setting $\frac 12 \leq s\leq 1$. The convergence in the remaining range $1<s\leq \frac 32$ follows from the same ideas and the estimates already appearing earlier. Fix $u_0\in H^{s}(\T)$ and let $\{u_{0,h}\}_{1\leq h<\infty}$ be a net converging to $u_0$ in $H^s(\T)$ as $h\to \infty$.

By Theorem~\ref{THM:LWP}, and in view of Remark \ref{REM:unifLh}, there exists $T_{\ast} = T_\ast(\|u_0\|_{H^s}) >0$ such that we have a unique solution $u_{\infty}$ to \eqref{CCM} with $u_{\infty}|_{t=0}=u_0$ and, for each $1\leq h<\infty$, a unique solution $u_{h}$ to \eqref{INLS} with $u_{h}\vert_{t=0}=u_{0,h}$ on $[0,T_{\ast}]$. 
We then define $F_{\infty}:=F[u_{\infty}]$ and $F_{h}:=F[u_{h}]$, where $F$ is the primitive defined in \eqref{F}, and the gauged variables $(v_{\infty}, w_{\infty}), (v_{h},w_{h})$ as in \eqref{gauge} and $\wt v_\infty, \wt v_h$ as in \eqref{gauge2}. 
For $(u_{\infty}, \wt{v}_{\infty}, w_{\infty})$ and $(u_h, \wt{v}_{h}, w_{h})$, we define $N_{T}^{s}$ as in \eqref{NTu}.
As $u_{0,h}$ converges to $u_0$, there exists $h_0\geq 1$ such that 
\begin{align*}
\sup_{h\geq h_0} \| u_{0,h}\|_{H^{s}} \leq \|u_0\|_{H^{s}}+1 =:K_s.
\end{align*}
Then, by repeating the arguments of Subsection~\ref{SEC:apriori} (noting that $T_{\ast}$ can be chosen uniformly in $h_0\leq h\leq \infty$), we have 
\begin{align}
N^{s_1}_{T}(u_{h})  \leq 2^{7}Q_{0}(K)K_{s_1} =: \wt{Q}_{0}(K) K_{s_1}\label{Nsh}
\end{align}
for $s_1\in\{\frac12, s\}$, any $0<T\leq T_{\ast}$, and uniformly in $h_0 \leq h\leq \infty$.
First we consider the Fourier restriction norm estimates for the difference $U_{h} := u_{h}-u_{\infty}$. Using \eqref{INLS2} and \eqref{INLS2}, we see that $U_{h}$ satisfies
\begin{align*}
(\dt +i\dx^2)U_{h}  =& 2\be \big[u_{h}\P_{+}\dx( |u_{h}|^2) - u_{\infty}\P_{+}\dx( |u_{\infty}|^2) ] +i\g( |u_h|^2 u_h- |u_{\infty}|^2 u_{\infty}) 
  -i\be u_{h}\mathcal{G}_{h}(|u_h|^2),
\end{align*}
noting that there is no dependence on $u_\infty$ on the third term on the right-hand side. 
By repeating the proof of \eqref{XsbU} and using \eqref{Nsh}, we then obtain
\begin{align}
 \| U_{h}\|_{\mathcal{X}^{ \frac 12}_{T}} \leq C( 1 + \wt{Q}_{0}(K)^2 K^2   ) \big( \|U_{h}\|_{L^{\infty}_{T}H^{\frac 12}_{x}}  + \|J^\frac12_x U_{h}\|_{\wt{L^4_{T,x}}}   \big) 
 + C \wt{Q}_{0}(K)^3 K^3 \| \mathcal{G}_{h}\|_{\text{op}}, 
 \notag
\end{align}
where $C>0$ is independent of $h_0\leq h\leq \infty$.
Proceeding analogously for $\wt{U}_h$ and combining both estimates, we obtain
\begin{align}
\| U_h\|_{\mathcal{X}^\frac12_T} + \| \wt U_h \|_{\mathcal{X}^\frac12_T} 
\le 
C( K )
\big\{ 
\| U_h\|_{S^\frac12(T)} + \|\wt U_h \|_{S^\frac12(T)}
\big\}
 + C (K) \| \mathcal{G}_{h}\|_{\text{op}}
 .
    \label{UhXsb}
\end{align}
Next, we estimate the differences $\wt{V}_{h}:= \wt{v}_{h} - \wt{v}_{\infty}$ and $W_{h} : = w_{h}-w_{\infty}$.
Namely, using that $\wt{V}_{h}$ satisfies \eqref{Veqn} with $\wt{u}_1=\wt{u}_h$, $\wt{u}_2 =\wt{u}_{\infty}$, and with $\mathcal{A}_5, \mathcal{A}_6$ replaced by
\begin{align*}
\mathcal{A}_{5} = -i\be \Pbhip[ e^{i\wt{F_{h}}}\wt{u_{h}} \GG_{h}(|\wt{u_h}|^2)] \quad \text{and} \quad \mathcal{A}_{6}=0,
\end{align*}
and $W_{h}$ satisfies \eqref{Weqn} with the same modifications and with $\mathcal{B}_{3}$ replaced by
\begin{align*}
\mathcal{B}_{3} = -i\be \P_{-,\text{hi}}[ u_{h}\GG_{h}(|u_h|^2)], 
\end{align*}
we repeat the analysis leading to \eqref{VWbd}, which together with \eqref{Nsh} and \eqref{UhXsb} gives
\begin{align}
\begin{split}
\|\wt{V}_h\|_{X^{s_1,b}_{T}} +\|W_h\|_{X^{s_1,b}_T}   
&
\leq C_1 [\| \wt{V}_h (0)\|_{H^{s_1}} +  \| W_h(0)\|_{H^{s_1}}]  
+ C(K)K_{s_1}T^{\ta}  \|\GG_{h}\|_{\text{op}}
\\
& 
\quad 
+C_0(K)T^{\ta}\big\{ 
\|(U_h,\wt{U}_h,\wt{V}_h,W_h)\|_{R^{s_1}(T)} \\
&\hphantom{XXXXXXXX}+(1+K_{s_1})(\|U_h\|_{S^{\frac 12}(T)} +\|\wt{U}_h\|_{S^{\frac 12}(T)}) \big\} ,
\end{split} \label{VWbdh}
\end{align}
for some $\ta>0$ and constants $C, C_1, C_0>0$ independent of $h_0 \leq h\leq \infty$, where we used that $\sup_{h_0 \leq h}L_{h}\les 1$. Note that the second term on the RHS of \eqref{VWbdh} comes from bounding $\mathcal{A}_{5}$ and $\mathcal{B}_3$.
We can similarly repeat the arguments used in the proof of \eqref{UBsbd}-\eqref{UBsbd2} to obtain bounds for $U_h, \wt U_h$ in $S^\frac12(T)$, which include an additional term  $C(K)T \|\GG_h\|_{\op}$ coming from the term $\P_{\le M} (u_h \GG_h(|u_h|^2)$ appearing when adapting \eqref{uHsbddiff} (and \eqref{uHsbddiff2}). 
Altogether, we  find
\begin{align}
\| (U, \wt{U}, \wt{V},W)\|_{R^{s_1}(T)} 
&
\leq 
 C_{1}\big( 1+T^{\frac{5}{8}}\Ld^{\frac{5}{4}}C_0(K)K_{s_1} +T^{\frac 14}C_0(K) + C_2(K) K_{s_1}\big) \|U_h(0)\|_{H^{ s_1}} 
 \notag
 \\
&   
\quad 
+(1+T^{\frac 18}C_0 (K))\big[ \|\Pi_{>\Ld}\wt{v}_\infty\|_{X^{s_1,b}_{T}} + \|\Pi_{>\Ld}w_\infty\|_{X^{s_1,b}_{T}}  \big] 
\notag
\\
& 
\quad 
+ C_{2}(K)\big[ T^{\ta}K_{s_1}+T^{\frac34} M^{3} \big] \| (U_h, \wt{U}_h, \wt{V}_h,W_h)\|_{R^{\frac 12}(T)} 
\notag
\\
& 
\quad +\|\P_{\ll M}[e^{-i\be \wt{F_h}}-e^{-i\be \wt{F_\infty}}]\|_{L^{\infty}_{T,x}} \|\P_{\ges M}\wt{v}_\infty\|_{X^{s_1,b}_{T}} 
\notag\\
& \quad 
+\|\P_{\ll M}[e^{-i\be F_h}-e^{-i\be F_\infty}]\|_{L^{\infty}_{T,x}} \|\P_{\ges M}w_\infty\|_{X^{s_1,b}_{T}}  
\notag
\\
&
\quad 
+ C (K)K_{s_1}T^\ta \|\GG_{h}\|_{\text{op}}. 
\notag
\end{align}
By repeating the arguments in the proof of Lemma~\ref{LEM:sdiff}, involving choosing $M=M(K_{s_1})$ sufficiently large, and then $T=T(K_{s_1})$ sufficiently small, we get
\begin{align*}
 \| (U_h, \wt{U}_h, \wt{V}_h,W_h)\|_{R^{\frac12}(T)} 
 &\les
 C(K, \Ld, T)  \| U_h(0)\|_{H^{\frac12}} +C(K)[ \Ld^{-\ta_3} +  \|\Pi_{>\Ld}u_{\infty}(0)\|_{H^\frac12}] +    \|\GG_{h}\|_{\text{op}},
 \\
 \| (U_h, \wt{U}_h, \wt{V}_h,W_h)\|_{R^{s}(T)} 
 &\les
 C(K, \Ld, T)  \| U_h(0)\|_{H^{s}} +C(K)[ \Ld^{-\ta_3} +  \|\Pi_{>\Ld}u_{\infty}(0)\|_{H^s}] 
 \\
 &\quad 
 + C(K,T,M,K_s) \|(U_h, \wt U_h, \wt V_h, W_h)\|_{R^\frac12(T)} +    \|\GG_{h}\|_{\text{op}},
\end{align*}
for any $0<T\leq T_0$, and where the constants are uniform in $h_0\leq h\leq \infty$. Now by taking $h\to \infty$ using that $\lim_{h\to\infty}\| \mathcal{G}_{h}\|_{L^p\to L^p}=0$ for any $1<p<\infty$ and $U_{h}(0)\to 0$ in $H^{s}(\T)$, and then taking $\Ld\to \infty$, this establishes that $u_{h}\to u_{\infty}$ in $C([0,T_0];H^{s}_x)\cap L^{4}([0,T_0];W^{s,4}(\T))$ as $h\to \infty$. 
By iterating this argument, we obtain the convergence up to time $T=T_{\ast}$.

\section{Unconditional uniqueness}\label{SEC:UU}

In this section, we complete the proof of Theorem~\ref{THM:LWP} by showing the uniqueness claims in (ii)-(iii). 
In particular, 
we show that if $u_0 \in H^{s}(\T)$ with $s>\frac 35$, then uniqueness holds in the class $L^{\infty}_{T}H^{s}_x \cap L^{4}_{T}W^{s,4}_{x}$, proving (ii).  By the Sobolev embedding $H^{s+\frac{1}{4}}(\T) \embeds W^{s,4}(\T)$, this implies (iii), namely, unconditional uniqueness of solutions in $L^{\infty}_{T}H^{s}_x$ for $s>\frac{3}{5}+\frac{1}{4}= \frac{17}{20}$.
From the uniqueness in Theorem~\ref{THM:LWP}~(i), matters are reduced to verifying that under these assumptions, the gauged variables $\wt{v}$ and $w$, defined in \eqref{gaugeclass} belong to $X^{\frac 12,\frac 12+}_{T}$. We broadly follow the approach in \cite{MP}; however, in order to obtain unconditional well-posedness in the energy space ($s=1$), we need  to refine the argument.

In the following, let $u\in L^\infty_T H^s_x \cap L^4_T W^{s,4}_x$ be a solution to \eqref{INLS}. From \eqref{Xsbu}, we get that $u\in X^{s-\ta,\ta}_T$ for any $\ta \in[0,1]$, and thus $w =\P_{-,\text{hi}}u \in X^{s-\ta,\ta}_{T}$. 
If $s>1$, this suffices to show that $w \in X^{\frac12, \frac12+}_T$. When $\frac12<s\le 1$, we rely on the following crucial smoothing property for~$w$.

\begin{lemma}\label{LEM:wreg}
Let $s>\frac12$, $u\in L^{\infty}_{T}H^{ s}_x \cap L^4_{T}W^{s,4}_x $ be a distributional solution to \eqref{INLS}, and $w := \P_{-,\textup{hi}}u$. Then,  $w\in X^{ \min(\frac{3s-1}{2}, s)-,\frac 12+}_{T}$.

\end{lemma}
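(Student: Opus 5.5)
We will use the Duhamel formulation of \eqref{weq} together with the linear estimates of Lemma~\ref{LEM:linXsb}(i). Since $w(0)=\P_{-,\textup{hi}}u_0\in H^s$, the linear part is harmless. It therefore suffices to show that, for $\sigma:=\min(\frac{3s-1}{2},s)-$,
\begin{align*}
\|\ind_{[0,T]}\NN_w(u)\|_{X^{\sigma,-\frac12+}}<\infty,
\end{align*}
using only $u\in L^\infty_TH^s_x\cap L^4_TW^{s,4}_x$. By \eqref{Xsbu}, this also gives $u\in X^{s-\ta,\ta}_T$ for every $\ta\in[0,1]$.

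The cubic terms $-i\be\P_{-,\hi}[u\GG_h(|u|^2)]$ and $i\g\P_{-,\hi}[|u|^2u]$ are handled by \eqref{ZXembed} and \eqref{cubic}. These place them in $X^{s,-\frac12+}$, and a fortiori in $X^{\sigma,-\frac12+}$, since $\sigma\le s$.

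The main term is $\P_{-,\hi}[w\,\P_+\dx(|u|^2)]$. Here $w$ is only known to lie in $X^{s-\ta,\ta}$ and not yet in $X^{s,\frac12+}$, so Proposition~\ref{PROP:tri} cannot be applied directly. We redo its dyadic analysis with the $w$ factor measured in $X^{s-\ta,\ta}_T$ for suitable $\ta$, for example $\ta=\frac38$ in the $L^4$ arguments and $\ta=1$ when $w$ carries the largest modulation.

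The structure \eqref{N1cond}, namely $N_1\gtrsim N_{23}\vee N$, still holds. In Cases 1 and 2 of the proof of Proposition~\ref{PROP:tri}, the $L^4$ argument \eqref{L4arg} can be run with $w\in X^{s-\frac38,\frac38}$. Compared to $X^{s,\frac38}$ this loses $N_1^{3/8}$. We compensate by lowering the output regularity to $\sigma$ and by using the extra regularity of $u_2,u_3$: they lie in $W^{s,4}$ rather than only $W^{\frac12,4}$, which gains $(N_2N_3)^{-(s-\frac12)}$.

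In Case 3 ($N_2\wedge N_3\ll N$) we use the resonance bound \eqref{nonres}, $\s_{\max}\gtrsim (N_2\vee N_3)(N\vee N_3)$, and split by modulations as in \eqref{X0}--\eqref{Xres}.
\begin{itemize}
\item When $w$ has the largest modulation, we place it in $X^{s-1,1}$, which gains a full factor $K^{-1}$.
\item When $u_2$ or $u_3$ has the largest modulation, we use $u_j\in X^{s-1,1}$ exactly as in \eqref{X2L4}, now with an improved weight.
\item When the dual function $g$ has the largest modulation, we gain $K^{-\frac12+}$.
\end{itemize}
The nearly resonant contribution vanishes as before.

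The expected obstacle is the high--low interaction with $N_1\sim N\gg N_2\vee N_3$ and $N_3\approx N$ (up to the sign constraint). Here the resonance only gives $K\sim N_{\max}N_{23}$, with $N_{23}$ possibly of size $1$, so modulation gains alone are insufficient. Our plan for this region is to balance as follows. The loss from taking $w$ at regularity $s-\ta$ is $N^{\sigma-s+\ta}$. Against it, the derivative $N_{23}$ falls on low frequencies. We must optimize $\ta$ between the $L^4$ ($\ta=\frac38$) and modulation ($\ta=\frac12$) estimates, interpolating via $\sup_\ta$.

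Bookkeeping suggests that the gain available from the low-frequency factors $u_2,u_3\in H^s$ is about $N^{-(s-\frac12)/2}$ relative to the endpoint. This should yield exactly the threshold $\sigma<\frac{3s-1}{2}=s-\frac{1-s}{2}$, and it explains why no loss occurs once $s\ge1$.

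Finally, having $\|\ind_{[0,T]}\NN_w(u)\|_{X^{\sigma,-\frac12+}}<\infty$, Lemma~\ref{LEM:linXsb}(i) gives $w\in X^{\sigma,\frac12+}_T$.
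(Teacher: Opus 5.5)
\textbf{There is a genuine gap.} The decisive estimate is never carried out. That estimate is a bound for $\ind_{[0,T]}\P_{-,\hi}[w\,\P_+\dx(|u|^2)]$ in $X^{\sigma,-\frac12+}$ with $\sigma=\frac{3s-1}{2}-$, when $w$ is only known in $X^{s-\ta,\ta}_T$. Your proposal ends with ``bookkeeping suggests'' and ``this should yield exactly the threshold''.

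Where you do commit to a specific estimate, it is too weak. In Case~1, take $N_2\sim N_3\sim N_{23}\sim 1$ and $N_1\sim N\to\infty$. Running \eqref{L4arg} with $w\in X^{s-\frac38,\frac38}$ gives the multiplier $N^{\sigma}N_{23}N_1^{-(s-\frac38)}(N_2N_3)^{-s}\sim N^{\sigma-s+\frac38}$. There is no extra regularity of $u_2,u_3$ to exploit at frequency $1$, so you only get $\sigma\le s-\frac38$. This is strictly below $\frac{3s-1}{2}$ for every $s>\frac14$.

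Your heuristic count does not produce $\frac{3s-1}{2}$ either. Read literally, a loss of $N^{\frac12}$ against a gain of $N^{-(s-\frac12)/2}$ gives $\sigma<\frac{3s}{2}-\frac34$. Also, $\ind_{[0,T]}w$ does not lie in $X^{s-1,1}$, since the sharp cutoff is only bounded for $|b|<\frac12$. So ``placing $w$ in $X^{s-1,1}$'' needs an extension argument you do not address.

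\textbf{The missing idea.} No Fourier-restriction trilinear analysis is needed, and the number $\frac{3s-1}{2}$ is an interpolation artifact: it is the midpoint of $2s-1$ and $s$. Since $w=\P_{-,\hi}u$, you have $w\in L^4_TW^{s,4}_x$ directly, with no loss from passing through $X^{s-\frac38,\frac38}$.

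The paper estimates $(\dt+i\dx^2)w$ in $L^2_TH^{\sigma}_x$ by H\"older's inequality:
\begin{itemize}
\item $w$ and $\PbHI u$ are placed in $L^4_TW^{s,4}_x$, and the lower-frequency $u$ factor in $L^\infty_{T,x}$ (using $s>\frac12$).
\item The sign constraint $N_1\ges N\vee N_{23}$ and $N_{23}\les N_2$ (for $N_2\ge N_3$) give the dyadic multiplier $N^{\sigma}N_{23}(N_1N_2)^{-s}\les N_1^{1-2s+\sigma}$, which is summable for $\sigma<2s-1$.
\item The piece $N_{23}\les 1$ and the cubic terms are harmless for $\sigma<s$.
\end{itemize}
This gives $w\in X^{\min(2s-1,s)-,1}_T$. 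Interpolating with $w\in X^{s,0}_T=L^2_TH^s_x$ then gives $X^{\min(\frac{3s-1}{2},s)-,\frac12+}_T$.

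The same repair fits your Duhamel framework. Interpolate $\NN_w\in L^2_TH^{2s-1-}_x$ with $\NN_w=(\dt+i\dx^2)w\in X^{s,-1}_T$, which is automatic from $w\in X^{s,0}_T$. This gives $\NN_w\in X^{\frac{3s-1}{2}-,-\frac12+}_T$.
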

\begin{proof}
First, we note that $w\in L^\infty_T H^s_x \cap  L^{4}_{T}W^{s,4}_x$ and it solves \eqref{weq}.
Proceeding as in the proof of \eqref{Xsbu}, we start by estimating $\| w\|_{X^{\s,1}_T}$, for some $\frac12<\s \le s$ to be chosen later, which reduces to estimating the terms
 \begin{align}
\| \P_{-,\text{hi}}( w\P_{+}\dx(|u|^2))\|_{L^2_{T}H^{\s}_x} \quad \text{and} \quad \| -i\be u \GG_{h}(|u|^2)+i\g |u|^2 u\|_{L^2_{T}H^{\s}_x} 
.
\label{wXsb1}
\end{align}
 The second term in \eqref{wXsb1} is estimated using the algebra property of $H^\s$, $\s>\frac 12$, and the boundedness of $\GG_h$. 
 Thus, we focus on estimating the first term in \eqref{wXsb1}.
By dyadic decomposition, we have
\begin{align}
\|& \P_{-,\text{hi}}( w\P_{+}\dx(|u|^2))\|_{L^2_{T}H^{\s}_x}^{2} 
\notag
\\
& \les \sum_{N \geq 1} N^{2\s} \bigg(  \sum_{  \substack{N_1, N_2,N_3, N_{23} \\ N_1 \ges N\vee N_{23}  }   } \big\|\P_{N} \big[  \P_{N_1}w \cdot \P_{+}\dx\P_{N_{23}}( \cj{\P_{N_2}u} \P_{N_3}u)    \big] \big\|_{L^{2}_{T,x}}\bigg)^2.
\label{uu1a}
\end{align}
Note that the extra condition $N_1 \ges N \vee N_{23}$ arises from the signs of the frequencies as we have $|\xi_1| = |\xi|+|\xi_3 -\xi_2|$.
First, if $N_{23} \les 1$, then using H\"older and Bernstein's inequalities,
\begin{align*}
\big\|\P_{N} \big[  \P_{N_1}w \cdot \P_{+}\dx\P_{N_{23}}( \cj{\P_{N_2}u} \P_{N_3}u)    \big] \big\|_{L^{2}_{T,x}}& \les N_{23} \|\P_{N_1}w\|_{L^{4}_{T,x}} \| \P_{N_2}u \cdot \P_{N_3}u\|_{L^{4}_{T,x}} \\ 
& \les T^{\frac 14} N_1^{-s} (N_2 N_3)^{-\frac18} \|J^{s}\P_{N_1}w\|_{L^{4}_{T,x}}\prod_{j=2}^3 \|\P_{N_j}u\|_{L^{\infty}_{T}H^{\frac{1}{2}}_x} .
\end{align*}
We can then perform the dyadic sums provided that $\s < s$, obtaining the bound
\begin{align*}
\text{LHS }\eqref{uu1a}
\les 
T^{\frac 14} \|w\|_{L^{4}_{T}W^{s,4}_x} \|u\|_{L^{\infty}_{T}H^{\frac 12}_{x}}^2.
\end{align*}
Now we consider the case when $N_{23}\ges 1$. Without loss of generality, we may assume that $N_2 \geq N_3$ and so $N_{23}\les N_2$. 
Then, by H\"{o}lder's inequality, we have
\begin{align*}
\big\|\P_{N} \big[  \P_{N_1}w \cdot \P_{+}\dx\P_{N_{23}}( \cj{\P_{N_2}u} \P_{N_3}u)    \big] \big\|_{L^{2}_{T,x}}& \les \frac{N_{23}}{N_{1}^{s}N_2^s} \|J^s \P_{N_1}w\|_{L^{4}_{T,x}} \| J^{s}\P_{N_2}u \|_{L^{4}_{T,x}} \|  \P_{N_3}u\|_{L^{\infty}_{T,x}} .
\end{align*}
In order to perform the dyadic summations, we need to control the dyadic factor which is 
\begin{align*}
N^{\s} N_{23} (N_1 N_2)^{-s} \les N^{\s} N_{23}^{1-s}  N_1^{-s}  \les N_{1}^{1- 2 s + \s}.
\end{align*}
This power is negative provided that $\s<2s-1$, and we obtain the bound
\begin{align*}
\text{LHS }\eqref{uu1a} 
\les 
\| w\|_{L^{4}_{T} W^{s,4}_x} \| \PbHI u\|_{L^{4}_{T}W^{s,4}_x} \| u\|_{L^{\infty}_{T} H^{s}_x}.
\end{align*}
This show that $w\in X^{\min(2s-1,s)-, 1}_{T}$. Interpolating this with $w\in X^{ s, 0}_{T}$ we get $w\in X^{\min(\frac{3s-1}{2},s)-, \frac{1}{2}+}_{T}$, as intended.
\end{proof}

At this point, Lemma~\ref{LEM:wreg} already implies that $w\in X_{T}^{\frac 12,\frac 12+}$ for $s>\frac 23$. 
The following nonlinear estimate allows us to further improve this restriction (to $s>\frac 35$).

\begin{lemma}\label{LEM:wunique}
Let $\frac 12 < s \le 1$, $ 1- s < \s \le  s $, and $0 < \dl \ll 1 $. Then,
it holds that 
\begin{align}
\begin{split}
\| \ind_{[0,T]}& \P_{-\textup{hi}}[ w \P_{+}\dx(|u|^2)] \|_{X^{ \s, -\frac 12 +2\dl}}\\
&  \les \|w\|_{X^{\s, \frac{1}{2}+\dl}_{T}}\big( \| u\|_{L^{\infty}_{T}H_x^{ s}} +\|J_x^{s}\PbHI u\|_{L^{4}_{T,x}} + \| u\|_{X^{s-1,1}_{T}})^2.
\end{split} \label{wuniq1}
\end{align}
\end{lemma}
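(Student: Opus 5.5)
We would prove \eqref{wuniq1} by rerunning the proof of \eqref{wX1} in Proposition~\ref{PROP:tri}, now measuring the output at regularity $\s$ rather than $s$. The $u$-factors are controlled by $L^\infty_T H^s_x$, $L^4_T W^{s,4}_x$ and $X^{s-1,1}_T$, which is stronger than the $\frac12$-level norms in $B$ of \eqref{Bu} since $s>\frac12$. The regularity of $w$ is only $\s$, which may lie below $\frac12$, and this is the source of the constraint $\s>1-s$.

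By duality, we test against $g\in X^{0,\frac12-2\dl}$ and decompose dyadically as in \eqref{Xdyad}. The sign structure gives \eqref{N1cond}, namely $N_1\ges N\vee N_{23}$, and \eqref{N23}. The multiplier is then $N^{\s}N_{23}N_1^{-\s}$ in place of $N^sN_{23}N_1^{-s}$.

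\textbf{Case 1} ($N_2\sim N_3$) and \textbf{Case 2} ($N_2\wedge N_3\ges N$) are handled by the $L^4$-argument \eqref{L4arg}. We place $g$ and $w$ in $X^{0,\frac38}$, and the $u_j$ in $L^4_{T,x}$ with weight $N_j^{-s}\|J^s\P_{N_j}u\|_{L^4}$. Low frequencies $N_j\les 1$ are instead bounded via $L^\infty_TH^s_x$ and Bernstein. Compared with \eqref{L4arg}, the factor $(N_2N_3)^{-\frac12}$ improves to $(N_2N_3)^{-s}$. In Case 1 this gives $N_{23}N_2^{-2s}\les N_2^{1-2s}$, which is summable. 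In Case 2 we have $N_1\sim N_{23}\sim N_2$, so the multiplier is $N^{\s}N_1^{1-\s-s}N_3^{-s}$ with $N\les N_3$. Since $\s\le s$, we have $N^{\s}N_3^{-s}\le 1$, and the remaining factor $N_1^{1-\s-s}$ has a negative power precisely because $\s>1-s$. This is the first place the hypothesis enters.

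\textbf{Case 3} ($N_2\wedge N_3\ll N$) uses the resonance relation \eqref{nonres} with $K=(N_2\vee N_3)(N\vee N_3)$ and the splitting \eqref{X0}--\eqref{Xres}. As before, the nearly resonant term \eqref{Xres} vanishes. The term \eqref{X0} (high modulation on $g$) and the term \eqref{X1} (high modulation on $w$, which uses the $X^{\s,\frac12+\dl}$ norm) are estimated as in \cite{CFL}, since $K^{-\frac12+}$ easily beats $N_{23}\les N_2\vee N_3$. For \eqref{X2} and \eqref{X3} we place the high-modulation $u_j$ in $X^{s-1,1}$. Compared with \eqref{X2L4}, this gains the factor $N_j^{\frac12-s}$ relative to $X^{-\frac12,1}$.

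Concretely, for \eqref{X2} we put the remaining $u$-factor in $L^\infty_{t,x}$ via $H^s_x$ (recall $s>\frac12$). The multiplier becomes $N^{\s}N_1^{-\s}(N_2\vee N_3)N_2^{1-s}K^{-1}$. When $N_1\sim N_2\gg N\gg N_3$, this equals $N^{\s-1}N_2^{1-s-\s}$, which is negative again exactly when $\s>1-s$. When $N_1\sim N$, the bound is even better.

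For \eqref{X3}, Cases A and B are reproduced with Lemma~\ref{LEM:Qbd}. Where the proof of Proposition~\ref{PROP:tri} used $X^{\frac18,\frac38}$, we use $L^4_TW^{s,4}_x$ or $L^\infty_TH^s_x$. Where it used $X^{-\frac12,1}$, we use $X^{s-1,1}$.

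The main obstacle is the configuration $N_1\sim N_2\gg N$ in Case 3 (and its analogue in Case 2). Here $w$ carries only $\s$ derivatives while the derivative falls on a high frequency of size $N_1$. The required cancellation $N_1^{1-s-\s}$ is exactly borderline, so we must check that the small loss from taking modulation exponent $\frac12-2\dl$ instead of $\frac12$ is absorbed by $\s+s-1>0$, with $\dl$ chosen small depending on this gap. No factor of $T$ is needed for the statement, so we simply sum the dyadic pieces using the negative powers of $N_{\max}$ together with Cauchy--Schwarz in $N\sim N_1$.
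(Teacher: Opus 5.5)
Your proposal is correct and follows the paper's proof essentially step by step. It uses the same duality and dyadic reduction under \eqref{N1cond}, the same three frequency cases, the modulation splitting \eqref{X0}--\eqref{Xres}, and the same numerology, with $\s>1-s$ entering exactly in Case 2 and in \eqref{X2} through the factor $N_1^{1-s-\s}$. There are two points of precision:
\begin{enumerate}
\item In \eqref{X0}--\eqref{X1}, the gain $K^{-\frac12+}$ alone does not beat $N_{23}$ when $N_1\sim N_2\gg N\gg N_3$. You also need the factor $(N_2\vee N_3)^{-s}$ coming from the regularity of $u$.
\item In Case B of \eqref{X3}, Lemma~\ref{LEM:Qbd} cannot be applied to $\Q_{\ll K}\P_{N_2}u$, because $K\sim NN_2$ need not dominate $N_2^2$. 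There you should either place this factor in $X^{s-\frac38,\frac38}$, obtained by interpolating $L^2_TH^s_x$ and $X^{s-1,1}_T$, as the paper does, or use your $L^\infty_TH^s_x$ option together with the uniform boundedness of $\Q_{\ll K}$ on $L^p_tL^2_x$.
\end{enumerate}
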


\begin{proof}
We proceed as in the proof of \eqref{vX1}-\eqref{wX1}.  
By duality and dyadic decomposition, we bound
\begin{align}
 \int_{\R} \int_{\T} \jb{\dx}^{\s} \cj{\P_{N}g} \cdot \P_{-,\text{hi}}[ \P_{N_1}( \ind_{[0,T]}w) \, \P_{+}\P_{N_{23}}\dx( \cj{\P_{N_2} u} \P_{N_3}u)]dx dt
\label{wXsb2}
\end{align}
under the conditions \eqref{N1cond} and \eqref{N23}, where $g\in X^{0,\frac 12-2\dl}$, for some small $\dl>0$. When $N_2  \sim N_3$, we argue as in Case 1 of \eqref{vX1}, noting that since $s>\frac 12$, we can use $L^{4}_{T,x}$ for $u$ instead of the slightly stronger $\wt{L^{4}_{T,x}}$. When $N_2 \vee N_3 \gg N_2 \wedge N_3$ and $N_2 \land N_3 \ges N$, we use H\"{o}lder's inequality and  \eqref{L4}, to bound
\begin{align*}
\eqref{wXsb2} &\les N^{\s}N_{23}  \| \P_{N_1}w_{T}\|_{L^4_{t,x}} \|\P_{N_2}u\|_{L^{4}_{t,x}} \|\P_{N_3}u\|_{L^{4}_{t,x}} \\
& \les N^{\s}N_{23}  N_1^{-\s} N_{2}^{-s} N_3^{-s} \|\P_{N_1}w\|_{X^{ \s, \frac{3}{8}}} \|J_x^s \P_{N_2}u\|_{L^{4}_{t,x}} \|J_x^s \P_{N_3}u\|_{L^{4}_{t,x}}.
\end{align*}
We then control the dyadic factor as follows: 
\begin{align*}
N^{\s}N_{23}  N_1^{-\s} N_{2}^{-s} N_3^{-s}  
& \les  (N_2 \land N_3)^{\s - s } (N_2 \lor N_3)^{1-s} N_1^{-\s} 
\les N_{\max}^{1-s-\s}
\end{align*}
since $\s \le s$, 
which is a negative power provided that $\s>1-s$ .

Now we consider the case when $N_2 \vee N_3 \gg N_2 \wedge N_3$ and $N_2 \land N_3 \ll N$. Here we have both \eqref{nonres} and \eqref{N1max}. We decompose according to the modulation variables as in \eqref{X0}-\eqref{X3} (note that by the same argument, the corresponding term \eqref{Xres} will be identically zero due to impossible interactions).
As in the bound for \eqref{X0}, we have 
\begin{align*}
 & \bigg|\int_{\R} \int_{\T} \jb{\dx}^{\s}\cj{ \Q_{\ges K}\P_{N}g} \cdot \Pbhip[ \P_{N_1}w_T \, \P_{-}\dx( \cj{\P_{N_2}u} \P_{N_3}u)]dx dt \bigg|  \\
& \les N^{\s}  K^{-\frac 12+2\dl}N_{23} \|\P_{N_1}w\|_{L^{4}_{t,x}}  \| \P_{N_2 \vee N_3}u\|_{L^{4}_{t,x}} \|\P_{N_2\wedge N_3}u\|_{L^{\infty}_{t,x}} \\
& \les  N^{\s+\dl}  K^{-\frac 12+2\dl}N_{23} N_{1}^{-\s} (N_2 \vee N_3)^{-s} \|\P_{N_1}w\|_{X^{\s, \frac{3}{8}}} \|J_x^s \PbHI u\|_{L^{4}_{t,x}}\| \PbHI u\|_{L^{\infty}_{t}H^{\frac 12}_x}.
\end{align*}
The resulting dyadic multiplier is then 
\begin{align*}
\frac{ N^{\s+\dl} N_{23}  }{ K^{\frac 12-2\dl} N_{1}^{\s}  (N_2 \vee N_3)^{s}  }
&\les \frac{ N^{\s -\frac 12+3\dl} N_{23}^{\frac{1}{2}+2\dl -s }  }{  N_{1}^{\s}  } \les N_{1}^{-\frac 14} \sim N_{\max}^{-\frac 14 },
\end{align*}
given that $\dl>0$ is sufficiently small. The contribution for the corresponding term of the form \eqref{X1} when $w$ has high modulation is handled analogously, by placing $(g, w)\in L^4_{t,x}\times L^2_{t,x}$, which leads us to controlling the following dyadic multiplier
\begin{align*}
    \frac{N^\s N_{23} (N_2 \land N_3)^{\frac12-s+}}{K^\frac12 N_1^\s (N_2 \lor N_3)^s}
    \les \frac{N^{\s-\frac12} N_{23}^{\frac12-s} (N_2 \land N_3)^{\frac12-s + } }{ N_1^\s } \les N_{\max}^{-\frac12}. 
\end{align*}
When $\P_{N_2}u$ has high modulation as in \eqref{X2}, we have
\begin{align*}
N^{\s+\dl} N_{23} N_1^{-\s} \|\P_{N}g\|_{X^{0,\frac 12-2\dl}} \|\P_{N_1}w\|_{X^{\s,\frac 12}} K^{-1} N_{2}^{1-s} \| \P_{N_2}u\|_{X^{s-1, 1}} \|\P_{N_3}u\|_{L^{\infty}_{t}H^{\frac 12}}.
\end{align*}
For the dyadic factor, we have 
\begin{align*}
&
\frac{N^{\s+\dl} N_{23} N_2^{1-s}}{ K N_1^\s }
\les 
\frac{N^{\s+\dl} N_{23} (N_2 \lor N_3)^{1-s}}{N_{23} N  N_1^\s} 
\les N^{\s+\dl -1 } N_1^{1-s-\s} \les N_{\max}^{1-s-\s + \dl}
\end{align*}
where we used $1-s<\s\le 1$.
Finally, we have the term when $\P_{N_3}u$ has a high 
modulation, as in \eqref{X3}, where we consider the further case separation $N_2 \ll N_3$ or $N_2 \gg N_3$. If $N_2 \ll N_3$, we use H\"older's inequality, \eqref{L4}, \eqref{QKLp} since $K \sim N N_3 \gg N_2^2$, to control the quantity by
\begin{align*}
  N^\s N_{23} N^{\frac12+} K^{2\dl+} \| \P_N g \|_{X^{0,\frac12-2\dl}} N_1^{-\s} \| \P_{N_1} w\|_{X^{\s, \frac12+}} N_2^{-s} \| J^s \P_{N_2 } u\|_{L^4_{T,x}} 
  N_3^{1-s} K^{-1 } \| \P_{N_3} u \|_{X^{s-1,1}}
  ,
\end{align*}
where it remains to estimate the multiplier
\begin{align}
    \frac{N^{\s + \frac12+} N_{23} N_3^{1-s} }{ N_1^{\s} K^{1-2\dl - } N_2^{s} }
    \les \frac{ N^{\s + \frac12+} N_3^{1-s+2\dl+} }{N_1^\s (N\lor N_3)^{1-2\dl-}}
    \les N_{\max}^{\frac12 -s + 4\dl + }
    \notag
\end{align}
since $N\lor N_3 \sim N_1 \sim N_{\max}$, where the power is negative for $s>\frac12$ and $\dl>0$ sufficiently small. If $N_2 \gg N_3$, we instead use \eqref{L4} to place the $\P_{N_2}u$-term in $X^{s-\frac38, \frac38}$, giving the following multiplier 
\begin{align*}
    \frac{N^{\s+ \frac12+} N_{23} N_3^{1-s}}{ N_1^\s K^{1-2\dl-} N_2^{s-\frac38}} \les \frac{N^{\s+\frac12+} N_3^{1-s} }{ N_1^\s N_2^{s - \frac38 -2\dl -} N^{1-2\dl-} } \les \frac{N^{\s + \frac12 + 2\dl - s +}}{N_1^{\s}} \les N_{\max}^{\frac12 +2\dl -s +}, 
\end{align*}
which again has a negative power. 
This completes the proof. \qedhere
\end{proof}

Lastly, we combine the earlier results to prove unconditional uniqueness.

\begin{proof}[Proof of Theorem~\ref{THM:LWP}~(ii)]
    
Let $s> \frac35$ and $u \in L^\infty_T H^s_x \cap L^4_T W^{s,4}_x$ be a solution to \eqref{INLS}. 
We first show that $w = \P_{-,\hi} u$ is in $X^{\frac12 , \frac12+}_T$. 
If $s >1$, we argue as discussed at the start of Section~\ref{SEC:UU}. In the following, we take $\frac35 < s \le 1$. 
From the conditions on Lemma~\ref{LEM:wreg} and Lemma~\ref{LEM:wunique}, we want to take $1-s < \s < \min(\frac{3s-1}{2},s) = \frac{3s-1}{2}$, which requires that $s>\frac35$. Then, using Lemma~\ref{LEM:wunique} to estimate the first term on the RHS of \eqref{weq} and \eqref{L4} for the remaining ones, 
we can run a contraction mapping argument to obtain a unique solution to \eqref{weq} in $ X^{\s, \frac12+\dl}_{T'}$ where $\s = \frac{3s-1}{2}-\dl$, for fixed $u$, with initial data $\P_{-, \hi}u_0$, and for some $0 < T' \le T$.
Similarly, using Proposition~\ref{PROP:tri} and Lemma~\ref{LEM:easyterms}, we can solve equation \eqref{weq} via a contraction mapping argument in $X^{s, \frac12+\dl}_T$ to obtain a unique solution $w' \in X^{s, \frac12+}_T$. Since $\s \le s$, we have that $w' \in X^{\s, \frac12+\dl}_T$, and by the earlier uniqueness, we must have 
$w' = w = \P_{-, \hi} u \in X^{\frac12, \frac12+}_T$.

We now move onto showing that $\wt{v}:= \P_{+, \hi}[ e^{i\be F[u]}u] (t,x-2\be \P_{0}(|u_0|^2)t)\in X^{\frac 12,\frac 12+}_{T}$. In order to show that $\wt{v}$ solves \eqref{veq2b}, we need to first check that $\P_{0}(|u(t)|^2)$ is conserved.
To prove this, we fix $N\in \N$ and consider the truncated variable $u_{N}(t) : = \P_{\leq N}u(t)$. Then, $u_{N}\in C^{\infty}(\R\times \T)$ and solves
\begin{align}
\dt u_{N}+i\dx^2 u _{N}
&
= 2\be u_{N} \P_{+}\dx(|u_{N}|^2)-i\be u_{N} \GG_{h}(|u_{N}|^2)+i\g |u_{N}|^2 u _{N} +R_{N}\label{INLSN}
, 
\\
\text{where }\quad 
R_{N} & =  2\be \big\{  \P_{\leq N}[  u\P_{+}\dx(|u|^2)]-   u_{N} \P_{+}\dx(|u_{N}|^2) \big\} 
\notag 
\\
& \quad -i\be \big\{ \P_{\leq N}[ u \GG_{h}(|u|^2) ]-  u_{N} \GG_{h}(|u_{N}|^2) \big\}
+ i\g \big\{ \P_{\leq N}[ |u|^2 u]- |u_N|^2 u_N \big\}.
\notag
\end{align}
From the definition of the mass in \eqref{mass} and \eqref{INLSN}, it follows that
\begin{align*}
\frac{d}{dt} M(u_{N}(t)) = 2 \text{Re} \int_{\T} \cj{u_{N}(t)} R_{N} dx.
\end{align*}
Let $\frac 12 <s_0<s$, which implies that $1-s_0 \le s$.
Using Cauchy-Schwarz and \eqref{prodestalg}, we have 
\begin{align*}
\bigg| 2 \text{Re} \int_{\T} \cj{u_{N}(t)} R_{N} dx \bigg|  &
\les \| u_{N}\|_{H^{1-s_0}} \| R_{N}\|_{H^{s_0-1}} \leq C_{h}( \|u\|_{L^{\infty}_{T}H^{s}_x}) \big[ \| u-u_{N}\|_{H^{s_0}_{x}} +N^{-(s-s_0)}] \\
& \leq \wt{C}_{h}( \|u\|_{L^{\infty}_{T}H^{s}_x})N^{-(s-s_0)}.
\end{align*}
Therefore, 
\begin{align}\label{MuN}
M(u_{N}(t)) =M(\P_{\leq N}u_0)+O(N^{-(s-s_0)})
\end{align}
and since $u_{N}(t)\to u(t)$ in $L^2(\T)$ for each fixed $t\in [0,T]$, as $N\to \infty$, we conclude that $M(u(t))= M(u_0)$ for all $t\in [0,T]$.

It follows from Lemma~\ref{LEM:leib} that $\wt{v}\in L^{\infty}_{T}H^{s}_{x} \cap L^{4}_{T}W^{s,4}_{x}$. Moreover as $s>\frac 12$, one can check that $\wt{v}$ solves \eqref{veq2b} in $L^{\infty}_{T} H^{s-2}_{x}$. Then, by repeating the argument in Lemma~\ref{LEM:wreg}, we get $\wt{v}\in X^{\max(\frac{3s-1}{2}+\dl, s),\frac{1}{2}+\dl}_{T}$.  In particular, we are done if $s>1$. If $s\leq 1$, then we follow the same argument as we did for $w$ and note that \eqref{wuniq1} also holds for the nonlinear term $\P_{+,\hi}[ \wt{v}\,  \P_{-}\dx(|u|^2)].$
We thus get $\wt{v}\in X^{\frac{1}{2},\frac 12+}_{T}$ provided that $s>\frac 35$. This completes the proof of unconditional uniqueness for \eqref{INLS}.
\end{proof}

\section{Global well-posedness}\label{SEC:GWP}

We start by showing global well-posedness in the energy space.

\begin{proof}[Proof of Corollary~\ref{COR:gwp}]
 The global-in-time result of Corollary~\ref{COR:gwp} follows from iterating the local argument of Theorem~\ref{THM:LWP} once we derive an a priori bound on the $H^1(\T)$ norm of solutions to \eqref{INLS}. 
Let $u$ be a smooth solution to \eqref{INLS} with $u(0)=u_0$. From the energy in \eqref{Energy}, we have
\begin{align}
\int_\T |\dx u|^2 dx 
&
= E(u_0) 
+ \int_\T 
\big\{ 
\be |u|^2 \Im(u\dx \cj u) - \tfrac\be2 |u|^2 \TT_h\dx (|u|^2) - \tfrac\g2|u|^4 - \tfrac{\b^2}{3}|u|^6
\big\}
dx 
\label{E0}
\end{align}
For the first term in the integral, from H\"older's and Young's inequality, we have
\begin{align}
\bigg| \int_\T \be |u|^2 \Im(u\dx \cj u) dx \bigg|
&
\le |\be| \| u\|^3_{L^6} \| \dx u\|_{L^2} 
\le \tfrac{|\be|^2}{2\eps_0} \|u\|^6_{L^6} + \tfrac{\eps_0}{2} \|\dx u\|^2_{L^2}. 
\label{energy-bad}
\end{align}

In the defocusing case, $\be>0$, from \eqref{E0} with \eqref{energy-bad}, and the positive-definiteness of the operator $\TT_h \dx$, we obtain
\begin{align*}
\| \dx u \|^2_{L^2} 
\le 
E(u_0) 
+ 
\tfrac{\g^2}{8(2\pi)\eps_1} M(u_0) 
+ \big( \tfrac{\be^2}{2\eps_0} + \tfrac{\eps_1}{2}- \tfrac{\be^2}{3} \big) \|u\|^6_{L^6} 
+ \tfrac{\eps_0}{2} \| \dx u \|^2_{L^2}
,
\end{align*}
where we used that $\frac{|\g|}{2}|u|^4 \le \frac{\g^2}{8 \eps_1} |u|^2 + \frac{\eps_1}{2} |u|^6$ for $\eps_1>0$, and the conservation of mass in~\eqref{mass}. Thus, by choosing $\frac32<\eps_0 <2$ and $\eps_1 = \eps_1(\eps_0, \be) >0$ sufficiently small, we obtain the intended a priori bound. 

Next, we consider the focusing case, $\be<0$. 
In this case, we need to estimate the second term in the integral in \eqref{E0}. In particular, 
 from H\"older's inequality, $L^p$-boundedness of $\TT_h$, Gagliardo-Nirenberg-Sobolev inequality \cite[Lemma 4.1]{LRS}, and Young's inequality, we have
\begin{align*}
\int_\T |u|^2 \TT_h\dx(|u|^2) dx
&
\le 
\| u\|^2_{L^6} \| \TT_h \dx(|u|^2) \|_{L^{\frac32}}
\\
&
\le  C_h \| u\|^3_{L^6} \| \dx u \|_{L^2}
\\
&
\le C_h \big( (C_{\textup{GNS}} + \eps) \|u\|^4_{L^2} \| \dx u \|^2_{L^2} + C_\eps \|u\|^6_{L^2} \big)^\frac12 \| \dx u \|_{L^2}
\\
&
\le C_h (C_{\textup{GNS}}+ \eps)^\frac12  \|u\|^2_{L^2} \| \dx u\|^2_{L^2} + C_h C_\eps^\frac12 \|u\|^3_{L^2} \|\dx u \|_{L^2}
\\
&
\le C_h \big\{ (C_{\textup{GNS} } + \eps)^\frac12 M(u_0) + \eps' \big\} \| \dx u \|^2_{L^2}
+ 
C_h^2 C_\eps C_{\eps'} M(u_0)^3
\, 
\end{align*} 
where $C_{\textup{GNS}}>0$ denotes the optimal constant in the Gagliardo-Nirenberg-Sobolev inequality on $\R$, $C_h>0$ depends on $\|\TT_h\|_{L^p\to L^p}$, $0<\eps, \eps'\le 1$, and $C_\eps, C_{\eps'} >0$ depend only on $\eps, \eps'$, respectively. 
Thus, proceeding as in the defocusing case, combining \eqref{E0}, \eqref{energy-bad}, and the estimate above, for $\eps_0$ as before,  $\eps,\eps', \eps_1>0$ and $M(u_0)>0$ sufficiently small, we obtain an a priori bound for $\int_{\T} |\dx u|^2 dx$, completing the proof. 
\end{proof}

Next, we turn to the $\gamma=0$ case of \eqref{INLS}:
\begin{align}
\dt u + i\dx^2 u = 2\beta u \Pih \dx \big( |u|^2 \big),\label{INLSG}
\end{align}
where
\begin{equation*}
\Pih = \tfrac{1}{2}(1+i\mathcal{T}_{h}) .
\end{equation*}
Our goal is to establish a priori estimates in $H^s (\T)$ for $\frac12 \leq s < 1$ (see Theorem~\ref{THM:apriori} below), and so finish the proof of Theorem~\ref{THM:GWP}.

\begin{proposition}\label{PROP:Laxpair}
For any $0<h\leq \infty$ and $c\in\R$, $u(t)$ solves~\eqref{INLSG} on the circle if and only if the operators
\begin{equation*}
\lax_{u;h} = -i\partial_x +\beta u (\Pih + c\P_0) \cj{u} 
\quad\text{and}\quad
\peter_{u;h} = -i\partial_x^2 + 2\beta u\partial_x\Pih \cj{u}
\end{equation*}
on $L^2(\T)$ satisfy
\begin{equation}
\tfrac{d}{dt}\lax_{u;h}f = [\peter_{u;h},\lax_{u;h}]f + \tfrac{\be^2}{2} u \P_0 \big[ (1+\TT_h^2)\partial_x\big( |u|^2 \big) \cdot \cj{u}f \big] + \tfrac{\be^2}{2} u (1+\TT_h^2)\partial_x\big( |u|^2 \big) \cdot \P_0(\cj{u}f)  
\label{lax T}
\end{equation}
for any $f\in H^\infty(\T)$.
\end{proposition}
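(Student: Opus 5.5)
We compute both sides of \eqref{lax T} directly and compare. Since $\lax_{u;h}$ depends on time only through $u$,
\begin{align*}
\tfrac{d}{dt}\lax_{u;h}f = \beta (\dt u)(\Pih + c\P_0)(\cj u f) + \beta u (\Pih+c\P_0)(\cj{\dt u}\, f).
\end{align*}
Conversely, if \eqref{lax T} holds for all $f\in H^\infty(\T)$, choosing $f$ suitably (e.g.\ letting $f$ localize or using that the identity holds for every $f$) recovers \eqref{INLSG}; so the heart of the matter is the forward computation, the reverse direction following by reading off the coefficient of the terms linear in $\dt u+i\dx^2u - 2\beta u\Pih\dx(|u|^2)$. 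Thus the plan is to show
\begin{align*}
\tfrac{d}{dt}\lax_{u;h}f - [\peter_{u;h},\lax_{u;h}]f - (\text{error terms}) = \beta E\,(\Pih+c\P_0)(\cj u f) + \beta u(\Pih+c\P_0)(\cj E f),
\end{align*}
with $E:=\dt u+i\dx^2u-2\beta u\Pih\dx(|u|^2)$, which vanishes exactly for solutions.

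Key steps. First, expand $[\peter_{u;h},\lax_{u;h}]$ into four pieces: $[-i\dx^2,-i\dx]=0$; $[-i\dx^2,\beta u\Pih\cj u]$ and $[-i\dx^2,\beta u c\P_0\cj u]$, which by Leibniz produce the terms $-i\beta(\dx^2u)(\cdots)$ and $+i\beta u(\cdots)(\dx^2\cj u\,f)$ matching the dispersive parts of $\dt u,\dt\cj u$, plus cross terms $\dx u\,\dx(\cdot)$ and $\Pih$ commuting with $\dx$; $[2\beta u\dx\Pih\cj u,-i\dx]$, giving $2i\beta(\dx u)\dx\Pih(\cj u f)$-type terms that cancel the cross terms above; and the quartic term $[2\beta u\dx\Pih\cj u,\beta u(\Pih+c\P_0)\cj u]$. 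For the latter one expands $\Pih(\cj u u\, g)$ products and must express $\Pih(a\Pih b)+\Pih(b\Pih a)$-type combinations via the Cotlar identity \eqref{Tilb22} for $\TT_h$ on $\T$, namely $\TT_h(a\TT_h b+b\TT_h a)=\TT_h a\TT_h b - ab + (\text{zero-mode corrections})$; note $\P_0$ commutes with $\Pih$ and $\dx\P_0=0$, so the $c\P_0$ pieces interact only through zero modes.

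Second, collect quartic terms: on $\R$ (as in \cite{CFL}) they reduce exactly to $2\beta^2 u\Pih\dx(|u|^2)\Pih(\cj u f)$ and its conjugate counterpart, which match the nonlinear parts of $\beta(\dt u)\Pih(\cj uf)$ and $\beta u\Pih(\cj{\dt u}f)$ (using $\cj{\Pih g}=(1-\Pih)\cj g$ up to zero modes, since $\TT_h$ preserves reality). On $\T$ the Cotlar identity fails by mean terms, and the leftover is precisely $\tfrac{\beta^2}{2}u\P_0[(1+\TT_h^2)\dx(|u|^2)\cj uf]+\tfrac{\beta^2}{2}u(1+\TT_h^2)\dx(|u|^2)\P_0(\cj uf)$; here $(1+\TT_h^2)$ has symbol $1-\coth^2(h\xi)$ on $\xi\ne0$ (and $1$ at $\xi=0$, killed by $\dx$), which is exactly the defect in the periodic Cotlar identity. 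The $c$-dependent quartic terms must be checked to cancel independently, using that $\P_0\dx=0$ and that $\P_0$ projections of $|u|^2$-derivatives vanish.

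The main obstacle is the quartic bookkeeping: stating the precise periodic Cotlar identity for $\TT_h$ with its zero-mode remainder and verifying that all remainders combine into exactly the two displayed error terms (and that the $c$-terms cancel). I would first do the $h=\infty$ case ($\Pih=\P_++\tfrac12\P_0$) on Fourier modes to fix signs, then adapt to finite $h$.
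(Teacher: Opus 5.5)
Your forward computation follows the paper's route. The paper omits the details, pointing to the line case in \cite{CFL} and the periodic Cotlar identity \eqref{Tilb22}. The computation closes as you predict.
\begin{itemize}
\item The dispersive and cross terms cancel.
\item The $c$-dependent quadratic leftover is $2i\be c\,u\,\P_0\dx(\dx\cj{u}\,f)=0$.
\item Writing $a:=\dx(|u|^2)$ and $g:=\cj{u}f$, the remaining quartic discrepancy is
\begin{align*}
2\be^2 u\big\{\Pih a\cdot\Pih g-\Pih\big(a\,\Pih g+g\,\Pih a-ag\big)\big\}
=\tfrac{\be^2}{2}\,u\big\{ag-\TT_h a\,\TT_h g+\TT_h\big(a\,\TT_h g+g\,\TT_h a\big)\big\}.
\end{align*}
Apply \eqref{Tilb22} with $k=a$, which has mean zero, and use $\P_0(\TT_h a\,\TT_h g)=-\P_0(g\,\TT_h^2a)$. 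This gives exactly the two error terms in \eqref{lax T}.
\end{itemize}
One small correction: the $c$-dependent quartic terms reduce to $2\be^2c\,u\,\P_0\big[g(1-\Pih)a-a\,\Pih g\big]=-i\be^2c\,u\,\P_0\big[g\,\TT_h a+a\,\TT_h g\big]$. To kill them you need, besides $\P_0\dx=0$, the antisymmetry of $\TT_h$ under $\int_\T$ (its symbol is odd).

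The genuine gap is the converse. You correctly note that for every smooth $u$,
\begin{align*}
\tfrac{d}{dt}\lax_{u;h}f-[\peter_{u;h},\lax_{u;h}]f-(\text{error terms})=\be E(\Pih+c\P_0)(\cj{u}f)+\be u(\Pih+c\P_0)(\cj{E}f).
\end{align*}
However, the map $E\mapsto \be E(\Pih+c\P_0)(\cj{u}\,\cdot\,)+\be u(\Pih+c\P_0)(\cj{E}\,\cdot\,)$ is not injective, so you cannot read off $E=0$. It annihilates $E=i\theta(t)u$ for every real $\theta$, and it annihilates everything if $\be=0$.

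This reflects gauge invariance. The operators $\lax_{u;h}$, $\peter_{u;h}$ and both error terms are unchanged under $u\mapsto e^{i\phi(t)}u$. So if $v$ solves \eqref{INLSG}, then $u=e^{i\phi(t)}v$ satisfies \eqref{lax T}, yet $u$ solves \eqref{INLSG} only with the extra term $i\phi'(t)u$. Consistently, testing with $f$ concentrated at frequency $k\to\pm\infty$ only yields $\mathrm{Re}(\cj{u}E)=0$.

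No choice of $f$ can therefore recover \eqref{INLSG}. The most the identity gives, for $\be\neq0$, is that $u$ solves \eqref{INLSG} up to a time-dependent phase, and the ``if'' direction must be understood or restated in that sense. The substantive content, which is what the paper's computation provides, is the forward implication.
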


The proof of Proposition~\ref{PROP:Laxpair} follows an argument parallel to the line case in \cite[Proposition 6.1]{CFL}, thus we omit details. We note that, on $\T$, the operators $\L_{u;h}, \peter_{u;h}$ only constitute an \emph{almost} Lax pair, due to the second and third terms on the right-hand side of \eqref{lax T}. These arise from the application of the following Cotlar-type identity on the circle 
\begin{align}
 \TT_{h}[ (\P_{\neq 0}g) \TT_{h} k + (\P_{\neq 0}k) \TT_{h}g] = \P_{\neq 0}\big[ \TT_{h} g \cdot \TT_{h}k - (\P_{\neq 0}g)( \P_{\neq 0}k)\big] , 
\label{Tilb22}
\end{align}
as mean zero corrections needed for \eqref{Tilb22} to apply. 
However, the Lax relation is recovered in the limit $h=\infty$, as we see that \eqref{lax T} becomes 
\begin{equation}
\tfrac{d}{dt}\lax_{u;\infty} = [\peter_{u;\infty},\lax_{u;\infty}] ,
\label{lax T 2}
\end{equation}
for a solution $u$ to \eqref{CCM},
since $\TT_{\infty}^2 = \H^2 = -\P_{\neq 0}$ and $\P_{0}\partial_x(|u|^2) = 0$.
In this case, we take the Lax pair as
\begin{align}
\lax_{u;\infty} & 
= -i\partial_x + \beta u \Pi_+ \cj u , 
\qquad 
\peter_{u;\infty}
= -i \dx^2 + 2 \be u \dx
\Pi_+ \cj u, 
\label{LPinfty}
\end{align}
where the projector $\Pi_+$ is given by
\begin{align}
    \Pi_+ : = \P_+ + \P_0  = \tfrac12(\Id + i \H + \P_0 ). 
    \label{Pi+}
\end{align}
Note that \eqref{LPinfty} comes from Proposition~\ref{PROP:Laxpair}, by setting $h=\infty$, $c=\frac12$, and writing $\dx \Pi_{+, \infty} = \dx \Pi_+$. We make these choices to have agreement with the operators that have appeared previously in the  literature for CCM \eqref{CCM}, while having no effect in either~\eqref{lax T} or~\eqref{lax T 2}.

The following result summarizes what we will need about the Lax operator.  Note that these results are new, as the potential $u$ is not required to lie in a Hardy space.  We omit the details however, since the proof is identical to that in the line case from our previous work. See \cite[Proposition 6.2]{CFL}.
\begin{proposition}[Lax operator]
Fix $0<h\leq\infty$.  Given $u\in H^{\frac14}(\T)$, the operator
\begin{equation*}
\lax_{u;h} f = -i\partial_x f + \beta u(\Pih + \tfrac12 \P_0) (\cj{u}f)
\end{equation*}
with domain $H^1 (\T)$ is self-adjoint.  Moreover, there is a constant $C\geq 1$ so that whenever
\begin{equation}
\kappa \geq C \max\big( \|u\|_{L^2}^2 , \|u\|_{H^\frac14}^4 \big), 
\label{k0 T}
\end{equation}
we have
\begin{equation}
\tfrac12 ( \lax_0^2 + \kappa^2 ) \leq \lax_{u; h}^2 + \kappa^2 \leq \tfrac32 (\lax_0^2 + \kappa^2)
\label{mono T}
\end{equation}
as quadratic forms, where $\lax_0 := \lax_{0;h} = -i\partial_x$.
\end{proposition}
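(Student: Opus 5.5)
The plan has two parts: self-adjointness by a Kato–Rellich type perturbation argument, and the quadratic form bound \eqref{mono T} by a relative form-boundedness estimate. Write $\lax_{u;h} = \lax_0 + \be B$, where $B f := u(\Pih + \tfrac12\P_0)(\cj u f)$.

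\textbf{Step 1 (symmetry).} Since $\TT_h$ is skew-adjoint with real odd symbol, $i\TT_h$ is self-adjoint. Hence $\Pih + \tfrac12 \P_0 = \tfrac12(1 + i\TT_h + \P_0)$ is a bounded self-adjoint Fourier multiplier, and $B$ is formally symmetric.

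\textbf{Step 2 (relative bound).} Next I show that $B$ is $\lax_0$-bounded with arbitrarily small relative bound. By H\"older's inequality and Sobolev embedding $H^{\frac14}\embeds L^4$,
\[
\|Bf\|_{L^2} \les \|u\|_{L^4}\|\cj u f\|_{L^4} \les \|u\|_{H^{\frac14}}^2 \|f\|_{L^\infty}.
\]
Then I use $\|f\|_{L^\infty}\les \|f\|_{L^2}^{1/2}\|f\|_{H^1}^{1/2}$ together with Young's inequality. This gives
\[
\|Bf\|_{L^2}\le \eps\|\lax_0 f\|_{L^2} + C_\eps \|u\|_{H^{\frac14}}^4\|f\|_{L^2}.
\]
Kato–Rellich then gives self-adjointness on $H^1$.

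\textbf{Step 3 (form bound).} For \eqref{mono T}, I compare $\lax_{u;h}^2$ with $\lax_0^2$ via the operator inequalities
\[
(A+\be B)^2 \le 2A^2 + 2\be^2B^2 \quad\text{and}\quad A^2 \le 2(A+\be B)^2 + 2\be^2 B^2.
\]
These are too lossy with the constants $\tfrac12,\tfrac32$, so instead I work directly at the level of quadratic forms:
\[
\|\lax_{u;h}f\|^2 + \kappa^2\|f\|^2 = \|\lax_0 f\|^2 + \kappa^2\|f\|^2 + 2\be\,\mathrm{Re}\jb{\lax_0 f, Bf} + \be^2\|Bf\|^2 .
\]
It then suffices to show that
\[
2|\jb{\lax_0 f,Bf}| + \|Bf\|^2 \le \tfrac12\big(\|\lax_0 f\|^2+\kappa^2\|f\|^2\big).
\]

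This is the key estimate and the main obstacle, because it must hold with the stated thresholds. For $\|Bf\|$, the bound from Step 2 with $\eps$ small works once $\kappa \gtrsim \|u\|_{H^{1/4}}^2 \cdot C$, since
\[
C_\eps^2\|u\|_{H^{\frac14}}^4\|f\|^2 \le \tfrac18 \kappa^2\|f\|^2
\quad\text{whenever}\quad \kappa\ge C\|u\|_{H^{\frac14}}^2 .
\]
Note that \eqref{k0 T} gives $\kappa \ge C\|u\|_{H^{\frac14}}^4$, not the square. I would handle this either by squaring in the other normalization or by using the $L^2$ threshold for the low-frequency part.

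The cross term $\jb{\lax_0 f, Bf}$ is a form term of order one. I bound it by
\[
\|u f\|_{\dot H^{\frac12}}^2 \les \|u\|_{H^{\frac14}}^2 \|f\|_{H^{\frac34}}^2 ,
\]
using the fractional Leibniz rule and that $(1+i\TT_h+\P_0)$ is bounded on $\dot H^{\frac12}$. I then interpolate $\|f\|_{H^{3/4}}^2 \le \eps\|f\|_{H^1}^2 + C_\eps\|f\|_{L^2}^2$. The $\|u\|_{L^2}^2$ term in \eqref{k0 T} handles the zero mode $\P_0$ and the contribution $\kappa^{-1}|\jb{f,Bf}|\le \kappa^{-1}\|u\|_{L^2}^2\|f\|_{L^\infty}^2$ coming from low frequencies.

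Choosing $C$ large in \eqref{k0 T} absorbs all the errors and yields both inequalities in \eqref{mono T}. The argument is identical to the line case \cite[Proposition 6.2]{CFL}, with the only change being the zero mode, which is bounded and harmless.
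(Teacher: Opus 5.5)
Your Steps 1--2 are correct. $B$ is symmetric on $H^1(\T)$, and the bound $\|Bf\|_{L^2}\les\|u\|_{H^{\frac14}}^2\|f\|_{L^\infty}$ together with Agmon's inequality makes $\be B$ infinitesimally $\lax_0$-bounded, so Kato--Rellich gives self-adjointness. The gap is in Step~3, and it is not bookkeeping.

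On $\T$, Agmon's inequality is inhomogeneous, $\|f\|_{L^\infty}\les\|f\|_{L^2}+\|f\|_{L^2}^{\frac12}\|\dx f\|_{L^2}^{\frac12}$. The correct relative bound is therefore
\[
\|Bf\|_{L^2}\le \eps\|\lax_0 f\|_{L^2}+C\big(\|u\|_{H^{\frac14}}^2+C_\eps\|u\|_{H^{\frac14}}^4\big)\|f\|_{L^2} .
\]
The middle term comes from constant $f$ and is missing from your Step~2. The $\|u\|_{H^{\frac14}}^4$ term is exactly matched by \eqref{k0 T} once it is squared correctly: it needs $\kk^2\gtrsim\|u\|_{H^{\frac14}}^8$, whereas your Step~3 squares it to $\|u\|_{H^{\frac14}}^4$. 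The dropped term, however, needs $\kk\gtrsim\|u\|_{H^{\frac14}}^2$.

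Your proposed remedy, using the $\|u\|_{L^2}^2$ part of \eqref{k0 T} for low frequencies, cannot work. Test the upper bound in \eqref{mono T} on $f\equiv 1$: it reads $\|\lax_{u;h}1\|_{L^2}^2\le\pi\kk^2$, where $\lax_{u;h}1=\be u(\Pih+\frac12\P_0)\cj u$. The symbol of $\Pih+\frac12\P_0$ is real, sums to $1$ at $\pm\xi$ for $\xi\neq0$, and equals $1$ at $\xi=0$. Hence for real $u\ge0$ we get $2\,\mathrm{Re}\,(\Pih+\frac12\P_0)u=u+\P_0u\ge u$, and so $\|\lax_{u;h}1\|_{L^2}^2\ge\frac{\be^2}{4}\|u\|_{L^4}^4$. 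Now take $u(x)=\phi(x/\dl)$, with $\phi$ a bump equal to $1$ on $[-\frac12,\frac12]$. Then $\|u\|_{L^2}^2+\|u\|_{H^{\frac14}}^4\les\dl$ but $\|u\|_{L^4}^4\ge\dl$. So $\kk\sim C\dl$ is admissible in \eqref{k0 T}, yet \eqref{mono T} fails once $\dl\ll\be^2C^{-2}$.

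Consequently no argument can close Step~3 with the threshold as written. The zero mode is exactly what is not harmless compared with the line. One needs something like $\kk\ge C\max(\|u\|_{H^{\frac14}}^2,\|u\|_{H^{\frac14}}^4)$, or additionally $\kk\ge1$. The choice of $\kk$ in the proof of Theorem~\ref{THM:apriori} does meet this stronger condition, since $\|u\|_{L^2}\le1$ and the interpolation behind \eqref{k0 T 2} give $\|u\|_{H^{\frac14}}^2\le\max(\|u\|_{L^2},\|u\|_{H^s}^2)^{\frac1{2s}}$.

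With the corrected threshold, drop your cross-term argument. The bound $\|uf\|_{\dot H^{\frac12}}\les\|u\|_{H^{\frac14}}\|f\|_{H^{\frac34}}$ is false (take $f\equiv1$), and no order-one analysis of $\jb{\lax_0 f,Bf}$ is needed. Write $Q_\kk(f):=\|\lax_0 f\|_{L^2}^2+\kk^2\|f\|_{L^2}^2$. Cauchy--Schwarz on the Fourier side, together with $\sum_{\xi\in\Z}(\xi^2+\kk^2)^{-1}\les\kk^{-1}+\kk^{-2}$, gives $\|f\|_{L^\infty}^2\les(\kk^{-1}+\kk^{-2})Q_\kk(f)$. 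Hence $\be^2\|Bf\|_{L^2}^2\le\eta\,Q_\kk(f)$ with $\eta\les\be^2C^{-1}$. Then $|2\be\,\mathrm{Re}\jb{\lax_0 f,Bf}|\le2\eta^{\frac12}Q_\kk(f)$ by Cauchy--Schwarz. Both inequalities in \eqref{mono T} follow for $C$ large, depending on $\be$ and on $h$ through the $L^4$-bound of $\Pih$.
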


Since for $0< h <\infty$, the pair of operators $\L_{u;h},\peter_{u;h}$ on $\T$ do not form an exact Lax pair, in particular,
the relation \eqref{lax T} presents a rank-two perturbation of the usual Lax relation.
Instead, we treat \eqref{INLSG} as a perturbation of the limiting case $h=\infty$, where the relation \eqref{lax T 2} holds exactly. 
Specifically, \eqref{INLSG} is equivalent to
\begin{equation}
\dt u + i\dx^2 u = 2\beta u \Pi_+ \dx \big( |u|^2 \big) -i\beta u \GG_h\big(|u|^2\big) ,
\label{INLSG 2}
\end{equation}
where $\GG_h$ is as in \eqref{Lh}. 
Note that this operator satisfies $\cj{ \GG_h(f) } = \GG_h (\cj{f})$, and so the function $\GG_h\big(|u|^2\big)$ appearing in \eqref{INLSG 2} is real-valued.
Comparing \eqref{INLSG 2} with \eqref{CCM}, we see that the equations agree up to the second term on the right-hand side of \eqref{INLSG 2}.  
Moreover, given a smooth solution $u$ of \eqref{INLSG 2}, 
we can write 
\begin{align}
\partial_t u 
& 
= \peter_{u;\infty} u - i\beta u \GG_h\big( |u|^2 \big),  
\label{INLSP}
\\
\tfrac{d}{dt} \lax_{u, \infty} f 
&= [ \peter_{u;\infty}, \lax_{u,\infty} ] f + iu\big[ \Pi_+, \GG_h \big( |u|^2 \big)\big] \cj{u}f  .
\label{lax T 3}
\end{align}

\noi
Algebraically, this represents a further departure from the Lax pair relation~\eqref{lax T 2}.  Analytically though, \eqref{lax T 3} is preferable to \eqref{lax T} for proving a priori estimates, since the operator $\GG_h$ and the commutator $[\Pi_+, \GG_h(|u|^2)]$ are smoothing, as shown in the following lemma.

\begin{lemma}
For any $0<h\leq \infty$, we have
\begin{align}
\big\| \GG_h\big(|u|^2\big) \big\|_{H^3}
&\lesssim \|u\|_{L^2}^2 ,
\label{G smooth}\\
\big\| \big[ \Pi_+, \GG_h\big(|u|^2\big) \big] f \big\|_{H^1} 
&\lesssim \|u\|_{L^2}^2 \|f\|_{H^{-1}} .
\label{G smooth 2}
\end{align}
Moreover, the implicit constants can be chosen uniformly for $1\leq h\leq \infty$.
\end{lemma}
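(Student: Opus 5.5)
Both bounds come down to an explicit computation of the Fourier symbol of $\GG_h$. By \eqref{Lh} and $\ft{\H f}(\xi)=-i\sgn(\xi)\ft f(\xi)$, we have for $\xi\neq 0$
\begin{align*}
\ft{\GG_h f}(\xi) = (-i\sgn\xi + i\coth(h\xi))(i\xi)\ft f(\xi) = \xi\big(\sgn \xi-\coth(h\xi)\big)\ft f(\xi) =: m_h(\xi)\ft f(\xi),
\end{align*}
and $m_h(0)=0$. For $\xi\neq0$ the identity $\coth|y|-1=\frac{2}{e^{2|y|}-1}$ gives $|m_h(\xi)| = \frac{2|\xi|}{e^{2h|\xi|}-1}$. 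For $h\ge1$ and $|\xi|\ge1$ this is $\les |\xi| e^{-2|\xi|}$. Hence $\sup_{\xi}\jb{\xi}^{k}|m_h(\xi)|\les_k 1$ for every $k$, uniformly in $1\le h\le\infty$ (with $m_\infty\equiv0$). For fixed $0<h<1$ the bound still holds, with a constant depending on $h$; only uniformity for $h\ge1$ is claimed. So $\GG_h$ is infinitely smoothing on $\T$.

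For \eqref{G smooth}, write $\|\GG_h(|u|^2)\|_{H^3}\le \sup_\xi \jb{\xi}^{3}|m_h(\xi)|\cdot\big\| \ft{|u|^2}\big\|_{\l^2_\xi}$. This only controls $|u|^2$ in $L^2$, which is not bounded by $\|u\|_{L^2}^2$, so I would instead use $\l^\infty$ on the Fourier side. Since $\|\ft{|u|^2}\|_{\l^\infty}\les\||u|^2\|_{L^1}=\|u\|_{L^2}^2$, we get
\begin{align*}
\|\GG_h(|u|^2)\|_{H^3}\le \big\|\jb{\xi}^3 m_h\big\|_{\l^2_\xi}\|\ft{|u|^2}\|_{\l^\infty}\les \|u\|_{L^2}^2.
\end{align*}
The exponential decay of $m_h$ makes $\jb{\xi}^3m_h\in\l^2$ uniformly in $h\ge1$, so this step is routine.

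For \eqref{G smooth 2}, set $g=\GG_h(|u|^2)$, and note that $\ft g$ is rapidly decaying with $\|\jb{\eta}^{k}\ft g\|_{\l^1}\les_k\|u\|_{L^2}^2$ by the same argument. On the Fourier side,
\begin{align*}
\F\big([\Pi_+,g]f\big)(\xi)=\tfrac{1}{\sqrt{2\pi}}\sum_{\eta}\big(\ind_{\xi\ge0}-\ind_{\xi-\eta\ge0}\big)\ft g(\eta)\ft f(\xi-\eta).
\end{align*}
The bracket vanishes unless $\xi$ and $\xi-\eta$ lie on opposite sides of $0$ (in the sense of $\Pi_+$), which forces $|\xi|\le|\eta|$ and $|\xi-\eta|\le|\eta|$. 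So on the support we have $\jb{\xi}\jb{\xi-\eta}\le\jb{\eta}^2$, which yields
\begin{align*}
\jb{\xi}\big|\F([\Pi_+,g]f)(\xi)\big|\les \sum_\eta \jb{\eta}^2|\ft g(\eta)|\,\jb{\xi-\eta}^{-1}|\ft f(\xi-\eta)|.
\end{align*}
Young's inequality $\l^1*\l^2\to\l^2$ then bounds the $H^1$ norm by $\|\jb{\eta}^2\ft g\|_{\l^1}\|f\|_{H^{-1}}\les\|u\|_{L^2}^2\|f\|_{H^{-1}}$, where the last step uses the $\l^\infty$ bound on $\ft{|u|^2}$ together with $\jb{\eta}^2 m_h\in\l^1$.

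The only point needing care is the commutator support observation, i.e. that the frequency transfer across $0$ costs at most $|\eta|$. I expect it to be the main (mild) obstacle. It holds because $\Pi_+$ is a sharp projection whose symbol jumps only at the origin.
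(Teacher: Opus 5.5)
Your proof is correct. For \eqref{G smooth} it is the paper's argument made explicit. The paper cites the smoothing property of $\GG_h$ together with $L^1\hookrightarrow H^{-1}$. You instead compute the symbol $m_h(\xi)=\xi(\sgn(\xi)-\coth(h\xi))$, note its exponential decay uniformly in $h\ge 1$, and pair $\jb{\xi}^3 m_h\in\ell^2$ with $\|\ft{|u|^2}\|_{\ell^\infty}\les\|u\|_{L^2}^2$.

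For \eqref{G smooth 2} the organization differs. The paper reduces $\Pi_+$ to $\frac12\H$, treating the $\P_0$ part as harmless. It proves the general Calder\'on-type bound $\|[\H,g]f\|_{L^2}\les\|g\|_{H^{\sigma+1}}\|f\|_{H^{-\sigma}}$. It then reaches the $H^1$ norm through $\dx[\Pi_+,g]f=[\Pi_+,\dx g]f+[\Pi_+,g]\dx f$ with $\sigma=1$ and $\sigma=2$, which is why \eqref{G smooth} is stated at the $H^3$ level.

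You instead work with the symbol $\ind_{\{\xi\ge0\}}$ of $\Pi_+$ directly, so the zero mode needs no separate treatment. You then use that on the support of the commutator symbol both $|\xi|$ and $|\xi-\eta|$ are at most $|\eta|$. This moves the output weight $\jb{\xi}$ and the input weight $\jb{\xi-\eta}$ onto $g$ at once, and a single Young inequality with $\|\jb{\eta}^2\ft g\|_{\ell^1}$ finishes.

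Your version is shorter and avoids the product rule. The paper's produces a standalone commutator estimate valid for every $\sigma\ge0$. Your commutator argument also needs only $\jb{\eta}^2\ft g\in\ell^1$, which follows from $g\in H^3$ by Cauchy--Schwarz. So it does not really depend on the explicit exponential decay of $m_h$ beyond what \eqref{G smooth} already provides.
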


\begin{proof}

The estimate~\eqref{G smooth} follows from the smoothing property of the $\GG_h$ operator which maps $H^{s_1}(\T)$ to $H^{s_2}(\T)$ for any $s_1 \le s_2$ (see \cite[Lemma 2.1]{CFLOP-2}, for example) together with the embedding $L^1(\T)\hookrightarrow H^{-1}(\T)$:
\begin{equation*}
\big\| \GG_h\big(|u|^2\big) \big\|_{H^3}
\lesssim \big\| |u|^2 \big\|_{H^{-1}}
\lesssim \big\| |u|^2 \big\|_{L^1}
\lesssim \|u\|_{L^2}^2 .
\end{equation*}

Next, we turn our attention to~\eqref{G smooth 2}.  As a first step, we claim that for any $\sigma\geq 0$ we have
\begin{equation}
\big\| [ \H, g ] f \big\|_{L^2} \lesssim \|g\|_{H^{\sigma+1}} \|f\|_{H^{-\sigma}} .
\label{calderon 2}
\end{equation}
In Fourier variables, this is equivalent to the estimate
\begin{equation}
\bigg\| \sum_{\xi = \xi_1+\xi_2} \frac{\langle\xi_2\rangle^\sigma}{\langle\xi_1\rangle^{\sigma +1}} (\sgn(\xi) - \sgn(\xi_2)) u(\xi_1)v(\xi_2) \bigg\|_{\l^2_\xi} \lesssim \|u\|_{\l^2} \|v\|_{\l^2} .
\label{calderon}
\end{equation}
If $\xi_2 \ne 0$, the factor $\sgn(\xi) - \sgn(\xi_2)$ is only nonzero when $|\xi_1|\geq |\xi_2|$, and thus by Young's inequality, we have
\begin{align*}
\text{LHS \eqref{calderon}}
&\lesssim \big\| (\langle\xi\rangle^{-1} u)\ast v \big\|_{\l^2} 
\leq \| \langle\xi\rangle^{-1} u \|_{\l^1} \|v\|_{\l^2}
\lesssim \|u\|_{\l^2} \|v\|_{\l^2} , 
\end{align*}
while if $\xi_2=0$, it reduces to
\begin{equation*}
\text{LHS \eqref{calderon}}
= \big\|  \langle\xi\rangle^{-\sigma-1} u(\xi)v(0) \big\|_{\l^2_\xi}
\leq \|u\|_{\l^2} \|v\|_{\l^2} .
\end{equation*}
Thus, \eqref{calderon} follows, and so too does \eqref{calderon 2}.

We can now consider $[\Pi_+, \GG_h(|u|^2)]$, with $\Pi_+$ as in \eqref{Pi+}. Since the contributions from $\frac12[1 + \P_0, \GG_h(|u|^2)]$ are harmless, we omit details, and focus on those from $\frac12[\H, \GG_h(|u|^2)]$.
 Combining \eqref{G smooth} and the $\sigma=1$ case of \eqref{calderon 2}, we deduce
\begin{align*}
\big\| \big[ \Pi_+, \GG_h\big(|u|^2\big) \big] f \big\|_{L^2} 
+
\big\| \big[ \Pi_+, \dx \GG_h\big(|u|^2\big) \big] f \big\|_{L^2} 
&\lesssim \|u\|_{L^2}^2 \|f\|_{H^{-1}}
.
\end{align*}
On the other hand, the $\sigma=2$ case of \eqref{calderon 2} implies
\begin{equation*}
\big\| \big[ \Pi_+, \GG_h\big(|u|^2\big) \big] \dx f \big\|_{L^2} 
\lesssim \|u\|_{L^2}^2 \|\dx f\|_{H^{-2}} 
\lesssim \|u\|_{L^2}^2 \|f\|_{H^{-1}} .
\end{equation*}
Collecting the previous three inequalities, we obtain \eqref{G smooth 2}.
\end{proof}

We are now equipped to prove our a priori estimates.
\begin{theorem}[A priori estimates] \label{THM:apriori}
Let $\frac12\leq s < 1$ and $0 < h \leq \infty$. Then, there exists $r>0$ such that for any $A,T>0$, there exists $B>0$ so that 
\begin{equation}
\|u(0)\|_{L^2} \leq r \quad\text{and}\quad \| u(0) \|_{H^s} \leq A \quad\implies\quad \sup_{|t|\leq T}\, \| u(t) \|_{H^s}\leq B , 
\label{ap T}
\end{equation}
for all (global) $H^\infty(\T)$ solutions to~\eqref{INLSG}. 
Moreover, the constants $r$ and $B$ can be chosen uniformly for $1\leq h \leq \infty$.
\end{theorem}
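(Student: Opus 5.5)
We will use the functional
\[
\beta_\kappa(u) := \jb{u, (\lax_{u;\infty}^2+\kappa^2)^{\frac s2} u},
\]
with $\lax_{u;\infty}$ as in \eqref{LPinfty} (equivalently the $h=\infty$ case of the Lax operator, with $c=\frac12$). The plan is to show that, for $\kappa$ chosen as in \eqref{k0 T} and $\|u\|_{L^2}\le r$ small, $\beta_\kappa(u)$ is comparable to $\|u\|_{H^s}^2$ up to constants and powers of $\kappa$, and that along solutions of \eqref{INLSG 2} its time derivative is bounded linearly by $\beta_\kappa(u)$. Gronwall then gives \eqref{ap T}.

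Since $L^2$ is conserved, $\|u(t)\|_{L^2}\le r$ for all $t$. For the equivalence, use \eqref{mono T} and Loewner's theorem: $x\mapsto x^{s/2}$ is operator monotone for $0\le s/2<1$. This gives $(\lax_0^2+\kappa^2)^{s/2}\sim(\lax_{u;\infty}^2+\kappa^2)^{s/2}$ as forms, hence $\beta_\kappa(u)\sim\|(\lax_0^2+\kappa^2)^{s/4}u\|_{L^2}^2\sim_\kappa\|u\|_{H^s}^2$.

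The obstacle is that \eqref{k0 T} requires $\kappa\gtrsim\|u\|_{H^{1/4}}^4$, and this is not controlled by conserved quantities. We therefore run a bootstrap on $[-T,T]$. Fix $\kappa$ large in terms of $A$ and $B$, assume $\|u(t)\|_{H^s}\le 2B$ on a subinterval, and close the bound with $B$ in place of $2B$. Since $s\ge\frac12>\frac14$, the $H^{1/4}$ norm is controlled by the $H^s$ norm, and the bootstrap closes by continuity.

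To compute the time derivative, write
\[
(\lax^2+\kappa^2)^{s/2}=c_s\int_0^\infty \eta^{s/2}\Big[\tfrac1\eta-(\lax^2+\kappa^2+\eta)^{-1}\Big]\,d\eta
\]
with $\lax=\lax_{u;\infty}$, and differentiate using \eqref{INLSP} and \eqref{lax T 3}. The pure $\peter$ contributions cancel exactly, because $\partial_t u=\peter u$ together with the Lax equation and antisymmetry of $\peter$ make $\jb{u,g(\lax)u}$ conserved for CCM; this is the same mechanism as in \cite{KLV2} and \cite{CFLOP-2}. Two kinds of error terms remain:
\begin{enumerate}[(a)]
\item terms of the form $2\mathrm{Re}\jb{-i\be u\GG_h(|u|^2),(\lax^2+\kappa^2)^{s/2}u}$, and
\item resolvent terms
\[
\int\eta^{s/2}\jb{u,R(\eta)\{\lax,E\}R(\eta)u}\,d\eta,\qquad E=iu[\Pi_+,\GG_h(|u|^2)]\cj u,\quad R(\eta)=(\lax^2+\kappa^2+\eta)^{-1}.
\]
\end{enumerate}
For (a), use \eqref{G smooth}: $\GG_h(|u|^2)\in H^3$ with norm at most $r^2$, so multiplication by it is bounded on $H^s$, and the term is $\lesssim r^2\beta_\kappa$.

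For (b), place $R(\eta)^{1/2}$-type factors onto $u$ and use \eqref{mono T} to convert $\lax$-resolvents into $\lax_0$-resolvents. Then \eqref{G smooth 2} shows that $E$ maps $H^{-1}\to H^1$ with norm at most $r^2\|u\|_{\mathcal M}^2$, where $\mathcal M$ is a multiplier algebra bounding the outer factors $u$ and $\cj u$. Handling these outer factors is the step we expect to be hardest, and it is exactly where $s\ge\frac12$ enters. Multiplication by $u\in H^s$ maps $H^1\to H^{s}$ only marginally, so we would put $u$ in $H^{1/2}$ and use the product bounds of Lemma~\ref{LEM:dxs1}, or simply $H^{1}\cdot H^{s}\subset H^{s}$ and $H^{-s}\cdot H^{s}\subset H^{-1}$ on $\T$. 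This should give a bound $\lesssim r^2\,\|u\|_{H^s}^2\,\kappa^{-\delta}$ after the $\eta$-integration, and hence
\[
\big|\tfrac{d}{dt}\beta_\kappa\big|\le C(\kappa)\,r^2\,\beta_\kappa .
\]
Gronwall over $[-T,T]$ then yields $\|u(t)\|_{H^s}\le C(\kappa)e^{CT}A$. This fixes $B$ and closes the bootstrap. All constants are uniform for $1\le h\le\infty$, because the bounds \eqref{G smooth}–\eqref{G smooth 2} are.
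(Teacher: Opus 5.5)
Your overall architecture matches the paper's (a Lax-operator functional, Loewner with \eqref{mono T}, the resolvent integral, the smoothing of $\GG_h$ and of the commutator, then Gronwall and a bootstrap). However, two steps fail as written.

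\textbf{The exponent.} By your own computation, $\jb{u,(\lax_{u;\infty}^2+\kappa^2)^{s/2}u}\sim\sum_\xi(\xi^2+\kappa^2)^{s/2}|\ft u(\xi)|^2$. That quantity is comparable to $\|u\|_{H^{s/2}}^2$, not to $\|u\|_{H^s}^2$. You need $F=\jb{u,(\lax_{u;\infty}^2+\kappa^2)^{s}u}$, which is what the paper uses. Operator monotonicity of $x\mapsto x^s$ and the resolvent representation are where $s<1$ is used; your exponent $s/2$ would have allowed $s<2$, which signals the mismatch.

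With the correct exponent, every error bound must be uniform in $\kappa\ge1$. Your mapping claim for $E=iu[\Pi_+,\GG_h(|u|^2)]\cj u$ does not hold: $E$ does not map $H^{-1}\to H^1$ when $u\in H^s$, $s<1$. What works is the following:
\begin{itemize}
\item The basic bound is $\|Ef\|_{L^2}\lesssim\|u\|_{L^2}^4\|f\|_{L^2}$. Put the outer $u$ in $L^2$, send the commutator into $H^1\subset L^\infty$ by \eqref{G smooth 2}, and use $\cj u f\in L^1\subset H^{-1}$.
\item An outer $\lax_u$ is absorbed by the factor $(\lax_u^2+\kappa^2)^{s-1}\lax_u$.
\item An inner $\lax_u$ is handled by $\|\cj u\lax_u f\|_{H^{-1}}\lesssim\|u\|_{H^s}\|f\|_{H^s}+\|u\|_{H^{1/4}}^3\|f\|_{H^{1/4}}$, which comes from \eqref{uniqueL2}. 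This is where $s\ge\frac12$ enters.
\end{itemize}
The outcome is $\frac{d}{dt}F\le K(\|u\|_{L^2})F$, with $K$ depending only on the conserved mass and not on $\kappa$. A $\kappa$-dependent rate $C(\kappa)$, as in your proposal, would be fatal below.

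\textbf{The bootstrap.} As designed, it cannot give $r$ independent of $A$ and $T$, as the statement requires. The equivalence between $F$ and $\|u\|_{H^s}^2$ is not uniform in $\kappa$: at $t=0$ you only have $F(0)\lesssim\|u(0)\|_{H^s}^2+\kappa^{2s}\|u(0)\|_{L^2}^2$.
\begin{itemize}
\item If you enforce \eqref{k0 T} through $\|u\|_{H^{1/4}}\le\|u\|_{H^s}\le 2B$, then $\kappa\gtrsim B^4$. The second term can then be of size $B^{8s}r^2$, which exceeds $B^2$ unless $r\lesssim B^{1-4s}$. So $r$ would depend on $A$.
\item For the same reason, your step ``$\|u(t)\|_{H^s}\le C(\kappa)e^{CT}A$; this fixes $B$'' is circular, because $\kappa=\kappa(B)$.
\item The paper avoids this with the interpolation $\|u\|_{H^{1/4}}^4\le\|u\|_{L^2}^{4-1/s}\|u\|_{H^s}^{1/s}$. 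It chooses $\kappa\sim\|u(0)\|_{H^s}^{1/s}$, up to $e^{K}$-factors, so that $\kappa^{2s}\|u(0)\|_{L^2}^2$ is a multiple of $\|u(0)\|_{H^s}^2$ that is small when $r$ is small.
\item Even with that choice, a single Gronwall over $[-T,T]$ would force $r^2\ll e^{-KT}$, i.e.\ $r=r(T)$. The paper closes the bootstrap only on unit time intervals, where the smallness condition involves $e^{K}$ alone. It then iterates, re-choosing $\kappa$ from $\|u(n)\|_{H^s}$ at each step, which yields $B=R^{\lceil T\rceil}A$ with $r$ fixed before $A$ and $T$.

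Neither the interpolation-based choice of $\kappa$ nor the unit-interval iteration appears in your proposal, and both are needed for the quantifiers in the statement.
\end{itemize}
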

\begin{proof}

Fix $\frac12 \leq s < 1$, recall that $\L_0 = -i\dx$, and set $\kk$ as in \eqref{k0 T}. Using the short-hand notation $\lax_u = \lax_{u;\infty}$,  we consider
\begin{equation*}
F(\lax_u^2+\kappa^2) := \big\langle u, (\lax_u^2+\kappa^2)^{s} u \big\rangle .
\end{equation*}
By Loewner's Theorem, the function $x\mapsto x^{s}$ on $(0,\infty)$ is operator monotone (see \cite{Simon} for details).
In particular, the relation~\eqref{mono T} implies
\begin{equation*}
F\big(\tfrac12 (\lax_0^2 + \kappa^2)\big) \leq F( \lax_{u; \infty}^2 + \kappa^2 ) \leq F\big(\tfrac32(\lax_0^2 + \kappa^2)\big) ,
\end{equation*}
and so
\begin{equation}
\tfrac{1}{C} F (\lax_0^2 + \kappa^2) \leq F( \lax_{u; \infty}^2 + \kappa^2 ) \leq C F(\lax_0^2 + \kappa^2)
\label{equiv}
\end{equation}
for some constant $C>0$.

Next, we will compute the time evolution of $F( \lax_{u}^2 + \kappa^2 )$.
Let $u$ be a global $H^\infty(\T)$-solution to~\eqref{INLSG}.    For any $E\geq 0$, a simple residue computation demonstrates 
\begin{equation*}
\frac{\sin(\pi s)}{\pi} \int_0^\infty \frac{E\lambda^{s-1}}{E+\lambda}\, d\lambda = E^{s} .
\end{equation*}
Consequently, if $A$ is a positive self-adjoint operator and $f$ is in the domain of $A$, then,  by the functional calculus, we have
\begin{equation}
A^{s}f = \frac{\sin(\pi s)}{\pi} \int_0^\infty \lambda^{s-1} (A+\lambda)^{-1}Af\, d\lambda .
\notag
\end{equation}
We apply this to $A = \lax_u^2 + \kappa^2$, which is positive by~\eqref{mono T}.  Combining this with the resolvent identity and~\eqref{lax T 3}, we find
\begin{align*}
\tfrac{d}{dt} \big( \lax_u^2 + \kappa^2 \big)^{s}f
={} &\frac{\sin(\pi s)}{\pi} \int_0^\infty \lambda^{s-1} (\lax_u^2 + \kappa^2+\lambda)^{-1} \big(\tfrac{d}{dt} \lax_u^2 \big)f\, d\lambda   \\
& - \frac{\sin(\pi s)}{\pi} \int_0^\infty \lambda^{s-1} (\lax_u^2 + \kappa^2+\lambda)^{-1} \big( \tfrac{d}{dt} \lax_u^2 \big) (\lax_u^2 + \kappa^2+\lambda)^{-1} (\lax_u^2 + \kappa^2)f\, d\lambda  \\
={} & \big[ \peter_{u;\infty}, \big( \lax_u^2 + \kappa^2 \big)^{s} \big] f  \\
& + i (\lax_u^2 + \kappa^2)^{s-1}  \lax_u \big\{ u \big[ \Pi_+ , \big( \GG_h \big( |u|^2 \big) \big] \cj{u}f \big\} \\
& + i (\lax_u^2 + \kappa^2)^{s-1} \big\{ u \big[ \Pi_+ , \GG_h \big( |u|^2 \big) \big] \cj{u}\lax_uf \big\} \\
&- \frac{i\sin(\pi s)}{\pi} \int_0^\infty \lambda^{s-1} (\lax_u^2 + \kappa^2+\lambda)^{-1} \lax_u \Big\{ u \big[ \Pi_+ , \GG_h  \big( |u|^2 \big) \big] \cj{u} \tfrac{\lax_u^2+\kappa^2}{\lax^2_u+\kappa^2+\lambda} f \big] \Big\}\,d\lambda \\
&- \frac{i\sin(\pi s)}{\pi} \int_0^\infty \lambda^{s-1} (\lax_u^2 + \kappa^2+\lambda)^{-1} \Big\{ u \big[ \Pi_+ , \GG_h  \big( |u|^2 \big) \big] \cj{u} \lax_u \tfrac{\lax_u^2+\kappa^2}{\lax^2_u+\kappa^2+\lambda} f \big] \Big\}\,d\lambda .
\end{align*}

Combing this with~\eqref{INLSP}, we arrive at
\begin{align}
\tfrac{d}{dt} F\big( &\lax_u^2 + \kappa^2 \big) 
= \tfrac{d}{dt} \big\langle u , \big( \lax_u^2 + \kappa^2 \big)^{s} u\big\rangle 
\nonumber \\
&= {-2} \beta\Im\big\langle u \GG_h\big( |u|^2 \big) , \big( \lax_u^2 + \kappa^2 \big)^{s} u\big\rangle 
\label{ap1}\\
&\phantom{=} + i \big\langle u, (\lax_u^2 + \kappa^2)^{s-1} \lax_u \big\{ u \big[ \Pi_+, \GG_h \big( |u|^2 \big) \big] |u|^2 \big\} \big\rangle 
\label{ap5} \\
&\phantom{=} + i \big\langle u, (\lax_u^2 + \kappa^2)^{s-1} \big\{ u\big[ \Pi_+ ,\GG_h \big( |u|^2 \big)\big] \cj{u}\lax_uu \big\} \big\rangle
\label{ap4} \\
&\phantom{=}- \frac{i\sin(\pi s)}{\pi} \int_0^\infty \lambda^{s-1} \big\langle u , (\lax_u^2 + \kappa^2+\lambda)^{-1} \lax_u \big\{ u \big[ \Pi_+ , \GG_h  \big( |u|^2 \big) \big] \cj{u} \tfrac{\lax_u^2+\kappa^2}{\lax^2_u+\kappa^2+\lambda} u \big\} \big\rangle\,d\lambda 
\label{ap3} \\
&\phantom{=}- \frac{i\sin(\pi s)}{\pi} \int_0^\infty \lambda^{s-1} \big\langle u , (\lax_u^2 + \kappa^2+\lambda)^{-1} \big\{ u \big[ \Pi_+ , \GG_h  \big( |u|^2 \big) \big] \cj{u} \lax_u \tfrac{\lax_u^2+\kappa^2}{\lax^2_u+\kappa^2+\lambda} u \big\} \big\rangle\,d\lambda .
\label{ap2} 
\end{align}

We will estimate the contributions from \eqref{ap1}--\eqref{ap2} individually.
First,  for $\sigma\in\R$, we define the norms
\begin{equation*}
\| f\|_{H^\sigma_\kappa}^2 := \| (\lax_0^2+\kappa^2)^{\frac{\sigma}{2}} f \|_{L^2}^2 = \sum_{\xi} |\ft{f}(\xi)|^2 (\xi^2+\kappa^2)^{\sigma} .
\end{equation*}
The $H^{\s}_{\kk}$-norms are particularly useful because~\eqref{mono T} and Loewner's theorem imply that
\begin{equation*}
\big\| (\lax^2_u + \kappa^2)^{\frac{\sigma}{2}} f \big\|_{L^2}^2
= \big\langle f, (\lax^2_u + \kappa^2)^{\sigma} f \big\rangle 
\sim
\| f\|_{H^{\sigma}_\kappa}^2
\end{equation*}
uniformly for $-1\leq \sigma\leq 1$ and $\kappa$ satisfying~\eqref{k0 T}.  These norms are equivalent to the usual $H^\sigma$-norm for fixed $\kappa\geq 1$ 
and they enjoy similar estimates, such as 
\begin{equation}
\| fg \|_{H^{s}_\kappa} \lesssim \|f\|_{H^{\s
}} \|g\|_{H^s_\kappa} 
\qquad
\text{for}
\qquad 
\s \ge \max(s ,  \tfrac12+), 
\label{Hsk 1}
\end{equation}
where the constants are uniform in $\kk \ge 1$.

We begin with \eqref{ap1}.  Using \eqref{Hsk 1} and \eqref{G smooth}, we bound
\begin{align*}
|\eqref{ap1}| 
\lesssim \big\| u \GG_h\big( |u|^2 \big) \big\|_{H^{s}_\kappa}  \| u \|_{H^{s}_\kappa} 
\lesssim \big\| \GG_h\big( |u|^2 \big) \big\|_{H^{s+1}}  \| u \|_{H^{s}_\kappa}^2
\lesssim 
\| u \|_{L^2}^2 \| u \|_{H^{s}_\kappa}^2 .
\end{align*}

Next, we turn to~\eqref{ap5}.  Using \eqref{G smooth 2}, we find 
\begin{align}
\big\|  u \big[ \Pi_+, \GG_h \big( |u|^2 \big) \big] \cj{u} f \big\|_{L^2}
\lesssim \| u\|_{L^2} \big\| \big[ \Pi_+, \GG_h \big( |u|^2 \big) \big] \cj{u} f \big\|_{L^\infty} 
&\lesssim \|u\|_{L^2}^3 \| \cj{u} f \|_{H^{-1}} 
\nonumber 
\\
&\lesssim \|u\|_{L^2}^4 \|f\|_{L^2} .
\label{G smooth 3}
\end{align}

\noi
Employing this with $f=u$, we obtain
\begin{align*}
|\eqref{ap5}|
&\lesssim \|u\|_{H^{s}_\kappa}  \big\| (\lax_u^2+\kappa^2)^{\frac{s}{2}-1} \lax_u \big\{ u \big[ \Pi_+, \GG_h \big( |u|^2 \big) \big] |u|^2 \big\} \big\|_{L^2} \\
&\lesssim \|u\|_{H^{s}_\kappa}  \big\| (\lax_u^2+\kappa^2)^{\frac{s}{2}-\frac12} \big\{  u \big[ \Pi_+, \GG_h \big( |u|^2 \big) \big] |u|^2 \big\} \big\|_{L^2} \\
&\lesssim \|u\|_{H^{s}_\kappa}  \big\| u \big[ \Pi_+, \GG_h \big( |u|^2 \big) \big] |u|^2 \big\|_{L^2} \\
&\lesssim \|u\|_{L^2}^5 \|u\|_{H^s_\kappa} \\
&\lesssim \|u\|_{L^2}^4 \|u\|_{H^s_\kappa}^2 .
\end{align*}

Let us skip now to~\eqref{ap3}, as it also uses~\eqref{G smooth 3}.  Applying \eqref{G smooth 3} to $f=\tfrac{\lax_u^2+\kappa^2}{\lax^2_u+\kappa^2+\lambda} u$ and following an argument parallel to that for \eqref{ap5}, we obtain
\begin{align*}
|\eqref{ap3}|
&\lesssim \|u\|_{H^s_\kappa} \int_0^\infty \lambda^{s-1} \langle \lambda \rangle^{-\frac12-\frac{s}{2}} \big\| (\lax_u^2 + \kappa^2)^{-\frac{1}{2}} \lax_u \big\{ u \big[ \Pi_+ , \GG_h  \big( |u|^2 \big) \big] \cj{u} \tfrac{\lax_u^2+\kappa^2}{\lax^2_u+\kappa^2+\lambda} u \big\} \big\|_{L^2} \,d\lambda \\
&\lesssim \|u\|_{H^s_\kappa} \int_0^\infty \lambda^{s-1} \langle \lambda \rangle^{-\frac12-\frac{s}{2}} \big\| u \big[ \Pi_+ , \GG_h  \big( |u|^2 \big) \big] \cj{u} \tfrac{\lax_u^2+\kappa^2}{\lax^2_u+\kappa^2+\lambda} u \big\|_{L^2} \,d\lambda \\
&\lesssim \|u\|_{L^2}^4 \|u\|_{H^s_\kappa} \int_0^\infty \lambda^{s-1} \langle \lambda \rangle^{-\frac12-\frac{s}{2}} \big\| \tfrac{\lax_u^2+\kappa^2}{\lax^2_u+\kappa^2+\lambda} u \big\|_{L^2} \,d\lambda \\
&\lesssim \|u\|_{L^2}^5 \|u\|_{H^s_\kappa} \bigg( \int_0^1 \lambda^{s-1}\,d\lambda + \int_1^\infty \lambda^{\frac{s}{2}-\frac32} \,d\lambda \bigg) \\
&\lesssim \|u\|_{L^2}^4 \|u\|_{H^{s}_\kappa}^2 .
\end{align*}

It remains to estimate~\eqref{ap4} and~\eqref{ap2}.  Using \eqref{uniqueL2} and Sobolev embedding, we bound
\begin{align}
\| \cj{u}\lax_uf \|_{H^{-1}}
&\leq \| \cj{u}\dx f \|_{H^{-1}} + \big\| |u|^2 \Pi_+ \big( \cj{u}f \big) \big\|_{H^{-1}} 
\nonumber \\
&\lesssim \| u\|_{H^s} \|f\|_{H^s} + \| u\|_{L^4}^3 \|f\|_{L^4}
\nonumber \\
&\lesssim \| u\|_{H^s} \|f\|_{H^{s}} + \| u\|_{H^{\frac14}}^3 \|f\|_{H^{\frac14}}
\nonumber \\
&\lesssim \| u\|_{H^s} \|f\|_{H^{s}} + \| u\|_{L^2}^{3-\frac{3}{4s}} \|u\|_{H^s}^{\frac{3}{4s}} \|f\|_{L^2}^{1-\frac{1}{4s}} \|f\|_{H^s}^{\frac{1}{4s}} .
\label{ap42}
\end{align}

To estimate \eqref{ap4}, we apply \eqref{ap42} with $f=u$, to obtain
\begin{equation*}
\| \cj{u}\lax_uu \|_{H^{-1}}
\lesssim  \big( 1 + \| u\|_{L^2}^2 \big) \| u\|_{H^s_\kappa}^2 
\end{equation*}
uniformly in $\kappa\geq 1$.  Combining this with \eqref{G smooth 2}, we obtain
\begin{align*}
|\eqref{ap4}|
&\lesssim \|u\|_{L^2} \big\| (\lax_u^2 + \kappa^2)^{s-1} \big\{ u\big[ \Pi_+ ,\GG_h \big( |u|^2 \big)\big] \cj{u}\lax_uu \big\} \big\|_{L^2} \\
&\lesssim \|u\|_{L^2} \big\| u\big[ \Pi_+ ,\GG_h \big( |u|^2 \big)\big] \cj{u}\lax_uu \big\|_{L^2} \\
&\lesssim \|u\|_{L^2}^2 \big\| \big[ \Pi_+ ,\GG_h \big( |u|^2 \big)\big] \cj{u}\lax_uu \big\|_{L^\infty} \\
&\lesssim \|u\|_{L^2}^4 \| \cj{u}\lax_uu \|_{H^{-1}} \\
&\lesssim \big( 1 + \| u\|_{L^2}^2 \big) \|u\|_{L^2}^4 \| u\|_{H^s_\kappa}^2 .
\end{align*}

For \eqref{ap2}, we will take $f=\tfrac{\lax_u^2+\kappa^2}{\lax^2_u+\kappa^2+\lambda} u$ in \eqref{ap42}.  Note that
\begin{equation}
\big\| \tfrac{\lax_u^2+\kappa^2}{\lax^2_u+\kappa^2+\lambda} u \big\|_{H^\sigma_\kappa}
\lesssim \big\| \tfrac{\lax_u^2+\kappa^2}{\lax^2_u+\kappa^2+\lambda} \big( \lax_u^2 + \kappa^2 \big)^{\frac{\sigma}{2}} u \big\|_{L^2}
\lesssim \big\| \big( \lax_u^2 + \kappa^2 \big)^{\frac{\sigma}{2}} u \big\|_{L^2}
\lesssim \|u\|_{H^\sigma_\kappa} 
\notag
\end{equation}
for any $-1\leq \sigma\leq 1$, and so \eqref{ap42} yields
\begin{equation*}
\| \cj{u}\lax_u\tfrac{\lax_u^2+\kappa^2}{\lax^2_u+\kappa^2+\lambda} u \|_{H^{-1}}
\lesssim  \big( 1 + \| u\|_{L^2}^2 \big) \| u\|_{H^s_\kappa}^2 
\end{equation*}
uniformly for $\kappa\geq 1$.  Following an argument parallel to that in the previous paragraph for~\eqref{ap3}, we see that
\begin{align*}
|\eqref{ap2}|
&\lesssim \|u\|_{L^2} \int_0^\infty \lambda^{s-1} \langle \lambda\rangle^{-1} \big\| u \big[ \Pi_+ , \GG_h  \big( |u|^2 \big) \big] \cj{u} \lax_u \tfrac{\lax_u^2+\kappa^2}{\lax^2_u+\kappa^2+\lambda} u \big\|_{L^2} \,d\lambda \\
&\lesssim \|u\|_{L^2}^2 \int_0^\infty \lambda^{s-1} \langle \lambda\rangle^{-1} \big\| \big[ \Pi_+ , \GG_h  \big( |u|^2 \big) \big] \cj{u} \lax_u \tfrac{\lax_u^2+\kappa^2}{\lax^2_u+\kappa^2+\lambda} u \big\|_{L^\infty} \,d\lambda \\
&\lesssim \|u\|_{L^2}^4 \int_0^\infty \lambda^{s-1} \langle \lambda\rangle^{-1} \| \cj{u}\lax_u\tfrac{\lax_u^2+\kappa^2}{\lax^2_u+\kappa^2+\lambda} u \|_{H^{-1}} \,d\lambda \\
&\lesssim \big( 1+ \| u\|_{L^2}^2 \big) \|u\|_{L^2}^4  \| u\|_{H^s_\kappa}^2 \bigg( \int_0^1 \lambda^{s-1}\, d\lambda + \int_1^\infty \lambda^{s-2}\,d\lambda \bigg) \\
&\lesssim \big( 1 + \| u\|_{L^2}^2 \big) \|u\|_{L^2}^4  \| u\|_{H^s_\kappa}^2 , 
\end{align*}
since $\frac12 \le s <1$. 

Collecting the previous estimates, we obtain
\begin{equation*}
\tfrac{d}{dt} F(\lax_{u(t)}^2+\kappa^2) 
\le 
K\big( \| u(t)\|_{L^2} \big) 
F(\lax_{u(t)}^2+\kappa^2) 
\end{equation*}
for some polynomial $K$ with no constant term. Note that $K(\|u(t)\|_{L^2})=K(\|u(0)\|_{L^2})=K$ is constant in time, by conservation of the $L^2$-norm.  Combining the Gronwall inequality with \eqref{equiv}, this yields
\begin{align}
\|u(t)\|_{H^s}^2
&\leq \big\langle u(t), \big( \lax_0^2 + \kappa^2 \big)^s u(t) \big\rangle
\nonumber\\
&\leq C F(\lax_{u(t)}^2+\kappa^2) 
\nonumber\\
&\leq Ce^{K|t|} F(\lax_{u(0)}^2+\kappa^2) 
\nonumber\\
&\leq C^2e^{K|t|} \big\langle u(0), \big( \lax_0^2 + \kappa^2 \big)^s u(0) \big\rangle
\nonumber\\
&\leq C^2e^{K|t|} \big( \|u(0)\|_{H^s}^2 + \kappa^{2s} \|u(0)\|_{L^2}^2 \big) .
\label{gronwall}
\end{align}

 Now we will prove \eqref{ap T}
in the case $T=1$ using a bootstrap argument. Later in the argument we will iterate this to cover the full interval $[0,T]$, for $T\geq 1$.
Given $u(0)\in H^s(\T)$ with $u(0)\neq 0$, set
\begin{equation*}
\kappa = C \max\big( \|u(0)\|_{L^2} ,  4C^2 e^{K} \|u(0)\|_{H^s}^2 \big)^{\frac{1}{2s}} .
\end{equation*}
Here, $C \geq 1$ is larger than the constants from \eqref{k0 T} and \eqref{gronwall}.
Also, let $0< r \le 1$, so that 
\begin{equation}
\|u(0)\|_{H^{\frac14}}^4 \leq \|u(0)\|_{L^2}^{4-\frac1s} \|u(0)\|_{H^s}^{\frac1s} \leq \|u(0)\|_{H^s}^{\frac{1}{s}} .
\label{k0 T 2}
\end{equation}
For this $\kappa$ and any time interval $[0,t_0]\subset [0,1]$ on which
\begin{equation}
\| u(t) \|_{H^s} \leq 2C e^{\frac{K}{2}} \| u(0) \|_{H^s} ,
\label{boot 3}
\end{equation}
the condition \eqref{k0 T 2} implies that \eqref{k0 T} holds, and thus so too does the monotonicity relation~\eqref{mono T}.  Then, \eqref{gronwall} yields
\begin{align*}
\|u(t)\|_{H^s}^2 
&\leq C^2e^{K} \|u(0)\|_{H^s}^2 + C^{2+2s} e^{K} \max\big( \|u(0)\|_{L^2},4C^2 e^{K} \|u(0)\|_{H^s}^2 \big) \|u(0)\|_{L^2}^2 \\
&\leq C^2e^{K} \big\{ 1 + C^{2s}\|u(0)\|_{L^2} \max\big( 1,4C^2 e^{K} \|u(0)\|_{L^2} \big) \big\} \|u(0)\|_{H^s}^2 .
\end{align*}
Taking $r$ possibly smaller so that $ r^2 \ll C^{-2-2s} e^{-K}$, we deduce
\begin{equation}
\|u(t)\|_{H^s} \leq \tfrac{3}{2} C e^{\frac{K}{2}} \| u(0) \|_{H^s} .
\label{boot 4}
\end{equation}
Comparing this with~\eqref{boot 3}, we conclude that~\eqref{boot 4} holds for all $t\in [0,1]$.

Finally, we iterate this argument to cover an arbitrary time interval $[0,T]$.  We choose $r^2 \ll C^{-2-2s} e^{-K}$ for the time interval $[0,1]$ as before, depending only on $\|u(0)\|_{L^2}$.  In turn, this constant $r$ guarantees that \eqref{boot 4} holds, and so the $H^s$-norm can grow to at most 
\begin{equation*}
\|u(1)\|_{H^s} \leq R \| u(0)\|_{H^s} \quad\text{with}\quad R = \tfrac{3}{2} C e^{\frac{K}{2}}.
\end{equation*}
Turning to the time interval $[1,2]$, we now take
\begin{equation*}
\kappa = C \max\big( \|u(1)\|_{L^2} ,  4C^2 e^{K} \|u(1)\|_{H^s}^2 \big)^{\frac{1}{2s}} ,
\end{equation*}
and deduce 
\begin{align*}
\|u(t)\|_{H^s}^2 \leq C^2e^{K} \big\{ 1 + C^{2s}\|u(1)\|_{L^2} \max\big( 1,4C^2 e^{K} \|u(1)\|_{L^2} \big) \big\} \|u(1)\|_{H^s}^2 .
\end{align*}
By conservation of the $L^2$-norm, the value of $\|u(1)\|_{L^2}$ is still bounded by $r$, and so precisely the same choice of $r^2 \ll C^{-2-2s} e^{-K}$ closes the bootstrap argument on $[1,2]$.  In this way, the argument may be iterated indefinitely.  After $N=\lceil T \rceil\geq 1$ iterations, we conclude that the $H^s$-norm can grow to at most $B= R^NA$, completing the proof. 
\end{proof}

\begin{ackno}\rm 
J.F.~was partially supported by the ARC project FT230100588.
A.C.~was partially supported by CNRS-INSMI through a
grant ``PEPS Jeunes chercheurs et jeunes chercheuses 2025".
A.C.~is thankful to Monash University for their hospitality during her visit in August 2025, during which this project was started. 
T.L.~was supported by an AMS-Simons Travel Grant.

\end{ackno}

\end{document}